\newtheorem{theorem}{Theorem}[section]
\newtheorem{proposition}[theorem]{Proposition}
\newtheorem{corollary}[theorem]{Corollary}
\theoremstyle{definition}
\newtheorem{definition}[theorem]{Definition}
\newtheorem{examples}[theorem]{Example}
\theoremstyle{remark}
\newtheorem{remark}[theorem]{Remark}
\numberwithin{equation}{section}
\renewcommand{\epsilon}{\varepsilon}
\renewcommand{\phi}{\varphi}
\begin{document}
	
	{
		\title{Geometric and algebraic  presentations of Weinstein domains}
		\author{Oleg Lazarev}
		
		\maketitle
		
		\begin{abstract}
		We prove that geometric intersections between Weinstein handles induce algebraic relations in the wrapped Fukaya category, which we use to study the Grothendieck group. We produce a surjective map from middle-dimensional singular cohomology to the Grothendieck group, show that the geometric acceleration map to symplectic cohomology factors through the categorical Dennis trace map,  and introduce a Viterbo functor for $C^0$-close Weinstein hypersurfaces, which gives an obstruction for  Legendrians to be $C^0$-close. We show that symplectic flexibility is a geometric manifestation of Thomason's correspondence between split-generating subcategories and subgroups of the Grothendieck group, which we use to upgrade Abouzaid's split-generation criterion to a generation criterion for Weinstein domains. Thomason's theorem produces exotic presentations for certain categories and we give geometric analogs:  exotic Weinstein presentations for standard cotangent bundles and Legendrians whose Chekanov-Eliashberg algebras are not quasi-isomorphic but are derived Morita equivalent.

			\end{abstract}
		
		\section{Introduction}\label{sec: intro}

		\subsection{Geometric and algebraic relations}\label{sec_intro: geom_alg_relations}

Weinstein domains are exact symplectic manifolds equipped with Morse functions compatible with their symplectic structures, giving them a symplectic handle-body presentation. The handles in a Weinstein domain $X^{2n}$ have index at most $n$, the middle-dimension. Furthermore, the handles of index \textit{less than} $n$ satisfy an h-principle \cite{gromov_hprinciple}. So the only symplectically interesting handles have index $n$ and the symplectic topology of the domain is controlled by the Legendrian attaching spheres and the Lagrangian co-core disks of these handles.  In this paper, we study  Weinstein presentations, particularly the index $n$ handles and their interaction with the index $n-1$ handles, and show how these geometric presentations give rise to presentations of certain algebraic invariants. 
	
The main invariant associated to a Weinstein domain $X$ is the (pre-triangulated) wrapped Fukaya category $\mathcal{W}(X)$. The objects of this  $A_\infty$-category are twisted complexes of graded exact Lagrangians in $X$ that are closed or have Legendrian boundary in $\partial X$; the morphisms are wrapped Floer cochains with $\mathbb{Z}/2$-coefficients. We let $D^b \mathcal{W}(X)$ denote the \textit{derived} wrapped Fukaya category, the homology category $H^0(\mathcal{W}(X))$ of $\mathcal{W}(X)$, which is a genuine triangulated category. We will mainly work with the canonical orientation $\mathbb{Z}/2$-grading so that a grading of a Lagrangian is an orientation; see Section \ref{subsec:twisted_gradings} for more general gradings. In particular, an oriented co-core $C^n$ of an index $n$ handle is an object of $\mathcal{W}(X^{2n})$.

		To obtain a more explicit description of $\mathcal{W}(X)$, it is useful to find a set of \textit{generators}, i.e. a set of  objects $G_i$  so that every object is quasi-isomorphic to a twisted complex	of $G_i$.  Recently, 	
		\cite{chantraine_cocores_generate, ganatra_generation}
		proved that $\mathcal{W}(X)$ is generated by the Lagrangian co-cores of the index $n$ handles, providing a link between the geometric presentation of $X^{2n}$ and the categorical presentation of $\mathcal{W}(X)$.
In this paper, we extend this work by showing that the geometry of Weinstein presentations also induces algebraic \textit{relations} in $\mathcal{W}(X)$ in terms of twisted complexes that are acyclic, i.e. quasi-isomorphic to the zero object.  

Just like for degree $n$ singular cohomology, the relations in $\mathcal{W}(X)$ arise from  index $n-1$ handles. Although these handles satisfy an h-principle \cite{gromov_hprinciple}, there are cases when they are symplectically necessary.
For example, any exotic Weinstein ball \cite{MM} that is not symplectomorphic to the standard ball $B^{2n}_{std}$ requires some index $n$ handles and therefore some $n-1$ handles; we assume it has the form $\Sigma^{2n} = H^0 \cup H^{n-1} \cup H^n$, which is fact always the case \cite{Lazarev_critical_points}. The symplectic structure on $\Sigma^{2n}$ depends on the  interaction between the index $n-1, n$ handles. Let $\Gamma^n$ denote the coisotropic belt sphere of $H^{n-1}$ and $\Lambda^{n-1}$ the Legendrian attaching sphere of $H^n$. Generically $\Lambda^{n-1}$ and $\Gamma^{n}$ intersect in finitely many points. Assuming that $\Lambda, \Gamma$ are oriented, we can associate signs to these intersection points; let $p, q$ denote the number of positive, negative intersection points. Then the \textit{geometric} intersection number $|\Lambda \cap \Gamma|$ is $p+q$ while the \textit{algebraic} intersection number $\Lambda \cdot \Gamma$, a smoothly isotopy invariant, is $p -q$. In the proof of the h-cobordism theorem, Smale \cite{smale_structure_manifolds} showed that if the algebraic intersection number is one and $n \ge 3$, the smooth Whitney trick implies that the index $n-1, n$ handles are \textit{smoothly} canceling and the domain is diffeomorphic to the ball. Cieliebak and Eliashberg \cite{CE12} showed that if the \textit{geometric} intersection number is one, then the handles are \textit{symplectically} canceling and the domain is Weinstein homotopic to the standard symplectic ball. So for any exotic Weinstein ball, the  algebraic intersection  $\Lambda \cdot \Gamma$ is one but the geometric intersection $|\Lambda \cap \Gamma|$ must be greater than one.

We give a categorical interpretation of the geometric intersection number via relations in $\mathcal{W}(X)$. 
Orient the co-cores $C^n$ of the index $n$ handles and let $\overline{C}^n$ denote $C^n$ with the opposite orientation. This induces orientations of the attaching spheres  $\Lambda^{n-1}$. Also orient the co-isotropic co-cores $C^{n+1}$ of the index $n-1$ handles, which induces  orientations on the belt spheres $\Gamma$. 
	\begin{theorem}\label{thm: twisted_complex} 
		If $X^{2n}=X^{2n}_{0} \cup H^{n-1}_1 \cup \cdots \cup H^{n-1}_s \cup H^n_{1} \cup \cdots \cup H^n_{t}$ and the attaching sphere $\Lambda_i^{n-1}$ of $H^n_{i}$ intersects the belt  sphere $\Gamma^{n}_j$ of $H^{n-1}_j$ $p_{i,j}, q_{i,j}$ times positively, negatively respectively, then for each $1\le j \le s$, there is a acyclic twisted complex $T_j$ in $\mathcal{W}(X)$ whose terms are $p_{i,j}, q_{i,j}$ quasi-isomorphic copies of $C_i, \overline{C_i}$ respectively for $1\le i \le t$.
		\end{theorem}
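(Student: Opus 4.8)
\emph{Proof idea.} The plan is to fix an index $n-1$ handle $H^{n-1}_j$ and to produce, from its belt sphere $\Gamma_j$, a single object of $\mathcal{W}(X)$ that simultaneously (i) decomposes as a twisted complex whose terms are the prescribed $p_{i,j}$ copies of $C_i$ and $q_{i,j}$ copies of $\overline{C_i}$, and (ii) is quasi-isomorphic to the zero object; the theorem then follows by taking $T_j$ to be this decomposition. Since the co-cores $C_1,\dots,C_t$ generate $\mathcal{W}(X)$ by \cite{chantraine_cocores_generate, ganatra_generation}, all of the content is in matching the \emph{multiplicities} and \emph{gradings} of the terms to the local intersection data $p_{i,j},q_{i,j}$ and in establishing acyclicity.

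First I would reduce to local geometry. After a Weinstein homotopy I may assume that the attaching spheres $\Lambda_i$ meet $\Gamma_j$ transversally in the contact boundary of $X_0\cup H^{n-1}_1\cup\cdots\cup H^{n-1}_s$, with all intersections occurring in a standard handle/Darboux model near $\Gamma_j$; note that such a homotopy cannot lower the geometric intersection number, only localize and transversalize it. In this model the belt sphere $\Gamma_j$ is coisotropic of dimension $n$ and its characteristic foliation is explicit, so each transverse point of $\Lambda_i\cap\Gamma_j$ is promoted to a short intersection/Reeb-chord generator, the sign of the intersection recording the relative orientation and hence the induced $\mathbb{Z}/2$-grading.

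Next I would build the object and its twisted-complex decomposition. Using the coisotropic co-core disk $C^{n+1}_j$ of $H^{n-1}_j$, whose boundary is $\Gamma_j$, I would construct an honest exact Lagrangian $L_j$ in $X$ associated to the subcritical handle whose relevant intersections with the cores of the index $n$ handles are exactly the points of $\bigcup_i(\Lambda_i\cap\Gamma_j)$. Resolving these intersections one at a time by Lagrangian surgery and invoking the cobordism-to-iterated-cone dictionary of Biran and Cornea, as used in the generation results, each resolution contributes a single mapping-cone term: the term attached at a point of $\Lambda_i\cap\Gamma_j$ is a copy of $C_i$, and its $\mathbb{Z}/2$-grading, i.e. its orientation, is $0$ or $1$ according to the sign of the intersection. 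Reading these off yields exactly $p_{i,j}$ copies of $C_i$ and $q_{i,j}$ copies of $\overline{C_i}$, so $L_j$ is quasi-isomorphic to a twisted complex $T_j$ of the required shape; because we work over $\mathbb{Z}/2$, the only remaining bookkeeping is the grading, and no further sign analysis is needed.

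Finally, for acyclicity the decisive geometric input is that $H^{n-1}_j$ is subcritical: the extra coisotropic directions of $C^{n+1}_j$ give room to sweep $L_j$, under positive wrapping, into a collar of the Liouville boundary, disjoint from the skeleton and from every other Lagrangian. Then $HW^*(L_j,K)=0$ for all test objects $K$, so in particular the identity of $L_j$ vanishes and $L_j$, hence $T_j$, is quasi-isomorphic to the zero object. The hard part will be step three made rigorous: since $\Gamma_j$ is \emph{coisotropic} rather than Lagrangian, producing a genuine object of $\mathcal{W}(X)$ from it and proving that the resulting twisted-complex differential is supported precisely on the points of $\Lambda_i\cap\Gamma_j$, with no stray holomorphic-disk contributions, is the delicate point; controlling these counts in the standard model and checking that acyclicity survives the passage from the local model to all of $\mathcal{W}(X)$ is where the real work lies. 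The $K_0$ shadow of the statement, $\sum_i(p_{i,j}-q_{i,j})[C_i]=0$, is by contrast immediate and serves as a consistency check, since $p_{i,j}-q_{i,j}=\Lambda_i\cdot\Gamma_j$ reproduces the incidence coefficients of the handle cochain complex, i.e. the relations defining middle-dimensional cohomology.
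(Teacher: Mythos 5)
Your blueprint has the right shape---and matches the paper's at the level of strategy: reduce to a standard model near the belt sphere, produce one Lagrangian that is simultaneously displaceable from the skeleton (hence a zero object) and decomposable as a twisted complex on the co-cores, with intersection signs dictating orientations. But the step you yourself flag as ``the delicate point'' is precisely the step the proof must supply, and the route you sketch through it would not go through. You propose to extract an exact Lagrangian $L_j$ from the coisotropic co-core $C^{n+1}_j$ and then resolve its intersections with the cores of the $n$-handles by Lagrangian surgery, invoking the Biran--Cornea cone dictionary. However, $C^{n+1}_j$ is $(n+1)$-dimensional and not Lagrangian, the relevant incidences occur where the Legendrians $\Lambda_i$ cross the sphere $\Gamma_j$ in the contact level set---not at transverse interior intersection points of two Lagrangians, which is what surgery and the cobordism-to-cone dictionary require---and you would then have to exclude ``stray'' holomorphic-disk contributions to the twisted differential, a hard analytic problem the paper never confronts. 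The paper's mechanism is different and soft: it works in the stopped model $(T^*D^{n-1}, T^*\partial D^{n-1}) \times (T^*D^1, P)$, where after the normal form of Proposition \ref{prop: attaching_spheres_n-1_handle} the attaching spheres appear as the stops $D^{n-1}_\epsilon \times p_i$. The acyclic object is the Lagrangian disk $T^*_0 D^{n-1}\times T^*_0 D^1$, which by an explicit two-dimensional isotopy crossed with $T^*_0 D^{n-1}$ is Lagrangian isotopic to the iterated boundary connected sum $L_{p_1}\natural_{a_1}\cdots\natural_{a_{m-1}}L_{p_m}$ of the linking disks of the stops along short Reeb chords (Proposition \ref{prop: boundary_sum_local}); the twisted-complex decomposition is then the quoted identity $L\natural_a K \simeq \{L \xrightarrow{a} K\}$ of \cite{ganatra_generation}, so the differentials are the chords $a_i$, not disk counts, and acyclicity is disjointness from the skeleton plus the quoted vanishing criterion. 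Proper inclusions of sectors then transport the relation into $\mathcal{W}(X)$ (Corollary \ref{cor: twisted_complex_global}), where each linking disk is isotopic to a co-core $C_i$. Note also that your aside that a Weinstein homotopy ``cannot lower the geometric intersection number'' is false for a generic isotopy, which can cancel pairs of intersection points; the paper keeps the normal-form isotopy transverse to $\Gamma_j$ for all times precisely so that $p_{i,j}$ and $q_{i,j}$ are frozen.

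A second, smaller gap: you assert that each cone term carries grading $0$ or $1$ ``according to the sign of the intersection'' with no argument. Since the theorem distinguishes $C_i$ from $\overline{C_i}$ with exact multiplicities, and gradings here are orientations, this is genuine content, not bookkeeping. In the paper it is Proposition \ref{prop: signs}: one tracks the orientation induced on each linking disk $L_p$ by restriction from the displaceable disk, transports it along the radial Lagrangian isotopy $T^*_{u_p(t)}D^n$ inside the $n$-handle to the co-core, and verifies by a diagram chase of orientation lines that the composite agrees with the Morse-theoretic sign map $\gamma^*$. Your $K_0$ shadow $\sum_i (p_{i,j}-q_{i,j})[C_i]=0$ is a correct consistency check but, as you note, carries none of the categorical content of the statement.
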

	See Corollary \ref{cor: twisted_complex_global}. Here $X_0$ can be Liouville, not necessarily Weinstein. 
	The main idea of the proof is that there is a Lagrangian disk in $X^{2n}$ inside the $n-1$ handles that is displaceable from the skeleton of $X$ (and hence acyclic) and is a twisted complex of the co-cores of the $n$-handles with the prescribed terms. Said another way, the \textit{skeleton} of $X$, restricted to $H^{n-1}_j$, looks like the skeleton of the 2-disk with $m$ stops (times $D^{n-1}$), whose partially wrapped category is representations of the $A_{m-1}$-quiver; see \cite{nadler_cyclic}. An acyclic twisted complex in that category gives rise to the acyclic twisted complex $T_j$ in Theorem \ref{thm: twisted_complex}.
	Of course there may be more relations in $\mathcal{W}(X)$  than described in Theorem \ref{thm: twisted_complex} coming from J-holomorphic curves, i.e. the particular structure of the Chekanov-Eliashberg algebra of the Legendrian attaching spheres $\Lambda_i$, which describes $\mathcal{W}(X)$ completely. Theorem \ref{thm: twisted_complex} gives relations in $\mathcal{W}(X)$ that come from the geometry of the Weinstein presentation without having to compute any  J-holomorphic curve invariants.

		The length of the complex $T_j$ is $\sum_i p_{i,j} + q_{i,j}$, which is the geometrical intersection numbers of all the $n$-handles $H^n_i$ with $H^{n-1}_j$. 
		It would be interesting to see if Theorem \ref{thm: twisted_complex} can be used to give a lower bound on this geometric intersection number. In \cite{Lazarev_critical_points}, 
we showed that there is a universal bound on this number when $X^{2n}$ is a Weinstein ball; however that proof does not seem to hold for domains with arbitrary topology (or even \textit{rational} homology balls) and there may be non-trivial lower bounds in general.

		\begin{examples}\label{ex: exotic_ball_acyclic}
			If $\Lambda^{n-1}$ intersects $\Gamma^n$ exactly once, i.e. $p = 1, q= 0$, then  $C^n$ itself is acyclic; indeed $H^{n-1}_j$ and $H^n$ are  symplectically cancelling and $C^n$ is the Lagrangian unknot.   If $H^{n-1}, H^n_i$ are only smoothly cancelling, i.e. $\Lambda^{n-1}$ intersects $\Gamma^n$ \textit{algebraically} once, then there is an acyclic twisted complex $T_j$ with $k+1$ copies of $C$ and $k$ copies of $\overline{C}$ and if $k > 1$, then $C$ itself need not be acyclic; for example, there are Weinstein balls with non-trivial wrapped Fukaya categories  \cite{MM}.  		
		\end{examples}
	Also see Example \ref{ex: cotangent_isomorphism_acyclic} for an application of Theorem \ref{thm: twisted_complex} to $X = T^*M$. 

	\subsection{$C^0$-close Legendrians}\label{sec: intro_c0_close}	
	
The relation $T_j$ in $\mathcal{W}(X)$ in Theorem \ref{thm: twisted_complex} can be interpreted as the existence of a functor from the trivial category to $\mathcal{W}(X)$ with image $T_j$.
We generalize this result by producing functors between wrapped categories of $C^0$-close Legendrians. 	Namely, let $\mathcal{W}(X, \Lambda)$ denote the  \textit{partially} wrapped Fukaya category of $X$ stopped at $\Lambda$, whose objects are Lagrangian $L$ with $\partial L \subset \partial X \backslash \Lambda$; see \cite{sylvan_partially, ganatra_generation}.	
	Let $N(\Lambda)$ denote a standard neighborhood of a Legendrian $\Lambda \subset (Y, \xi)$; this is contactomorphic to the 1-jet space 
$J^1(\Lambda) = T^* \Lambda \times \mathbb{R}$. In the following result, we show that if $\Lambda_0 \subset N(\Lambda_1)$,  there is a functor $\mathcal{W}(X, \Lambda_1) \rightarrow \mathcal{W}(X, \Lambda_0)$, which takes Lagrangians $L$ with $\partial L \subset \partial X \backslash \Lambda_1$ and (possibly after a small isotopy) considers them as Lagrangians with $\partial L \subset X\backslash \Lambda_0$, since $\Lambda_0 \subset N(\Lambda_1)$. We describe the effect of this functor on the linking disks of $\Lambda_1$, which generate $\mathcal{W}(X, \Lambda_1)$ (along with the co-cores of $X$); see \cite{ chantraine_cocores_generate, ganatra_generation}. For a generic point $x \in \Lambda_1$, the intersection $\Lambda_0 \cap T^*_x \Lambda_1 \times \mathbb{R}$ is a finite collection of points, with $p, q$ points of positive, negative sign respectively. 
	\begin{theorem}\label{thm: c0_close_intro}
If $\Lambda_0, \Lambda_1 \subset \partial X$ are Legendrians and $\Lambda_0 \subset N(\Lambda_1)$, then there is a homotopy pushout diagram of the form:
	\begin{equation}\label{comm: pushout}
\begin{tikzcd} 
\mathcal{W}(T^*\Lambda_1) \arrow{r} \arrow{d} &  \mathcal{W}(X, \Lambda_1) \arrow{d}\\
\mathcal{W}(T^*\Lambda_1 \times T^*D^1, \Lambda_0 \coprod \Lambda_1 \times 1) \arrow{r}  &  \mathcal{W}(X, \Lambda_0)
\end{tikzcd}
\end{equation}
The functor $\mathcal{W}(X, \Lambda_1) \rightarrow \mathcal{W}(X, \Lambda_0)$ takes the linking disk $L_1$ of $\Lambda_1$
to a twisted complex $T$ consisting of $p, q$ copies of the linking disk $L_0, \overline{L_0}$ respectively of $\Lambda_0$. 		
	\end{theorem}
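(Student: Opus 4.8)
The plan is to obtain the square \eqref{comm: pushout} from the sectorial gluing (Mayer--Vietoris) formula for partially wrapped Fukaya categories of Ganatra--Pardon--Shende, applied to a decomposition of $(X, \Lambda_0)$ that isolates the neighborhood $N(\Lambda_1)$. Stopping $X$ at the Legendrian $\Lambda_1$ amounts to removing from the contact boundary a neighborhood of the associated Weinstein hypersurface $F_1 \cong T^*\Lambda_1$, whose wrapped category is the interface category $\mathcal{W}(T^*\Lambda_1)$ occupying the top-left corner. Since $\Lambda_0 \subset N(\Lambda_1) \cong J^1(\Lambda_1) = T^*\Lambda_1 \times \mathbb{R}$, the stopped domains $(X,\Lambda_1)$ and $(X,\Lambda_0)$ differ only inside this neighborhood, which I would model as the Liouville sector $T^*\Lambda_1 \times T^*D^1$: the contact $\mathbb{R}$-direction is spread into the cotangent direction of the interval $D^1$, the original stop is recorded as $\Lambda_1 \times 1$ at one end of this factor, and the refined stop $\Lambda_0$ sits inside. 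Thus $(X, \Lambda_0)$ is presented as the gluing of the coarse sector $(X, \Lambda_1)$ with the local sector $(T^*\Lambda_1 \times T^*D^1, \Lambda_0 \coprod \Lambda_1 \times 1)$ along their common hypersurface $T^*\Lambda_1$.

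Granting this decomposition, the gluing formula identifies $\mathcal{W}(X, \Lambda_0)$ with the homotopy pushout of $\mathcal{W}(X, \Lambda_1)$ and $\mathcal{W}(T^*\Lambda_1 \times T^*D^1, \Lambda_0 \coprod \Lambda_1 \times 1)$ over $\mathcal{W}(T^*\Lambda_1)$, which is precisely \eqref{comm: pushout}. The top horizontal functor is the standard one sending the generating cotangent fiber of $\mathcal{W}(T^*\Lambda_1)$ to the linking disk $L_1$ of $\Lambda_1$ \cite{ganatra_generation}, while the left vertical functor sends it to the linking disk of $\Lambda_1 \times 1$ inside the local model. To verify the statement about linking disks I would chase the generator around the square: by commutativity, the image of $L_1$ under the right vertical functor $\mathcal{W}(X,\Lambda_1) \to \mathcal{W}(X, \Lambda_0)$ equals the image of the linking disk of $\Lambda_1 \times 1$ under the bottom functor, so it suffices to express that linking disk as a twisted complex in the local model and push it across.

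This last computation reduces to the one-dimensional factor $T^*D^1$. Over a generic point $x \in \Lambda_1$, the Legendrian $\Lambda_0$ meets the fiber $T^*_x \Lambda_1 \times \mathbb{R}$ in $p + q$ signed points, so in the $D^1$-direction $\Lambda_0$ presents $p+q$ sheets interpolating between the interior and $\Lambda_1 \times 1$; this is the local model of a disk with several stops whose partially wrapped category is a type-$A$ quiver representation category, as in the discussion following Theorem \ref{thm: twisted_complex}. Wrapping the linking disk of $\Lambda_1 \times 1$ across the interval makes it cross these sheets and presents it as an iterated cone with one copy of the linking disk $L_0$ of $\Lambda_0$ for each crossing. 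The sign of each intersection point fixes the orientation, and hence the grading, of the corresponding copy, so that positive points contribute $L_0$ and negative points contribute $\overline{L_0}$, yielding the twisted complex $T$ with $p$ copies of $L_0$ and $q$ copies of $\overline{L_0}$.

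The main obstacle is establishing the geometric decomposition underlying the gluing: I must check that refining the stop from $\Lambda_1$ to $\Lambda_0 \subset N(\Lambda_1)$ genuinely presents $(X, \Lambda_0)$ as the sectorial gluing above, with the stops $\Lambda_0$ and $\Lambda_1 \times 1$ correctly positioned after spreading the contact direction into $T^*D^1$ and with the hypotheses of the gluing formula verified for the completed sectors. Once this is in place, the identification of the interface with $\mathcal{W}(T^*\Lambda_1)$ and the linking-disk computation are comparatively formal, the latter being a fibered version of the type-$A$ argument already used for Theorem \ref{thm: twisted_complex}.
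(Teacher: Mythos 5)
Your proposal is essentially the paper's own proof (Theorem \ref{thm: c0_weinstein_hypersurfaces}): there too, $(X,\Lambda_0)$ is presented as the gluing of $(X,\Lambda_1)$ with $(\Lambda_1 \times T^*D^1,\ \Lambda_0 \coprod \Lambda_1 \times 1)$ along $\Lambda_1$, the square \eqref{comm: pushout} comes from the gluing formula of \cite{ganatra_generation} with all functors induced by proper inclusions of sectors, and the image of $L_1$ is computed by wrapping the linking disk, viewed as $T^*_x\Lambda_1 \times T^*_0D^1$ in the local piece, across the $T^*D^1$-factor so that each signed crossing of a sheet of $\Lambda_0$ contributes a cone on $L_0$ or $\overline{L_0}$. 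The one step you leave implicit, and which the paper spells out, is that the terminal wrapped disk $T^*_x\Lambda_1 \times \gamma_1$ ends near the north pole of $T^*D^1$, disjoint from the skeleton of $(X,\Lambda_0)$, and is therefore a zero object; this is what lets you discard the last term of the iterated wrapping triangles and conclude that $L_1$ itself is quasi-isomorphic to the twisted complex $T$ built only from the $p$ copies of $L_0$ and $q$ copies of $\overline{L_0}$.
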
	
The pushout diagram  allows one to compute invariants of the satellite $\Lambda_0$ in terms of invariants of the companion $\Lambda_1$ and pattern $\Lambda_0 \subset N(\Lambda_1) \subset \partial (T^*\Lambda_1 \times T^*D^1)$.  The functors in Diagram \ref{comm: pushout} are induced by proper inclusions of sectors; see \cite{ganatra_generation}. See Theorem \ref{thm: c0_weinstein_hypersurfaces} for a proof of a more general statement involving $C^0$-close Weinstein hypersurfaces. The functor there generalizes the usual Viterbo functor for Weinstein subdomains constructed by \cite{Sylvan_talks, ganatra_generation}.
Unlike the usual Viterbo functor, the functor in Theorem \ref{thm: c0_close_intro} need not be a localization. 
For example, any Legendrian $\Lambda_0 \subset \partial B_{std}^{2n}$ can be isotoped into a neighborhood of any other Legendrian $\Lambda_1 \subset \partial B_{std}^{2n}$
so that  $p = q = 0$ for some $x \in \Lambda_1$, which induces the zero functor $\mathcal{W}(B^{2n}_{std}, \Lambda_1) \rightarrow \mathcal{W}(B^{2n}_{std}, \Lambda_0)$. 
Finally, we note that if $\Lambda_1$ is a loose Legendrian (or more generally a loose Weinstein hypersurface), then $\mathcal{W}(X, \Lambda_1)$ is trivial and so the twisted complex $T$ in $\mathcal{W}(X, \Lambda_0)$ is acyclic. There exists a loose Weinstein hypersurface $\Lambda_1$ so that the  
attaching spheres $\Lambda_j$ in Theorem \ref{thm: twisted_complex} are $C^0$-close to $\Lambda_1$ and so Theorem \ref{thm: c0_close_intro} recovers Theorem \ref{thm: twisted_complex}. 

		\subsection{Grothendieck group of the wrapped category}\label{sec_intro: grot_group_map}
		
		The  Grothendieck group $K_0(\mathcal{C})$ of a triangulated category $\mathcal{C}$ is the free abelian group generated by isomorphism classes of objects of $\mathcal{C}$ modulo the relation that exact triangles split.  $D^b \mathcal{W}(X)$ is triangulated and we set $K_0(\mathcal{W}(X)):= K_0(D^b \mathcal{W}(X))$.	
		The acyclic complex $T$ from Theorem \ref{thm: twisted_complex}  gives the relation $[T] = 0$ in $K_0(\mathcal{W}(X))$. We show that this relation is the same as the differential for singular cohomology $H^n(X; \mathbb{Z})$.
		\begin{theorem}\label{thm: K_0_surjective_map}
			For Weinstein $X^{2n}$, there is a surjective homomorphism $\mathcal{L}: H^n(X; \mathbb{Z}) \rightarrow K_0(\mathcal{W}(X))$ taking an $n$-cocycle to any Poincar\'e-dual exact Lagrangian representative.
	In particular, if two Lagrangians $L_1, L_2$ have $[L_1] = [L_2] \in H^n(X; \mathbb{Z})$, then $[L_1] = [L_2] \in K_0(\mathcal{W}(X))$. 	
		\end{theorem}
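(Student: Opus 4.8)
The plan is to build $\mathcal{L}$ from the Morse--Smale cochain complex of the Weinstein Morse function and to match its relations with those supplied by Theorem \ref{thm: twisted_complex}. First I would fix the handle decomposition $X^{2n}=X_0\cup H_1^{n-1}\cup\cdots\cup H_s^{n-1}\cup H_1^n\cup\cdots\cup H_t^n$ and recall that, since there are no handles of index $>n$, the Morse cochain complex gives $H^n(X;\mathbb{Z})=\mathbb{Z}^t/\operatorname{im}(\delta)$, where $\mathbb{Z}^t$ is freely generated by the index-$n$ handles $e_i$ and the coboundary out of the index-$(n-1)$ handles has matrix entries equal to the algebraic intersection numbers, $\delta(e_j)=\sum_i(\Lambda_i\cdot\Gamma_j)\,e_i=\sum_i(p_{i,j}-q_{i,j})\,e_i$. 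Under Poincar\'e--Lefschetz duality $H^n(X;\mathbb{Z})\cong H_n(X,\partial X;\mathbb{Z})$ the generator $e_i$ is dual to the cocore $C_i$, so the Poincar\'e-dual exact Lagrangian representative of $e_i$ is exactly $C_i$, and $H_n(X,\partial X;\mathbb{Z})$ is presented by the classes $[C_i]$ subject to the same relations $\sum_i(p_{i,j}-q_{i,j})[C_i]=0$.

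Next I would define $\mathcal{L}$ on generators by $e_i\mapsto[C_i]\in K_0(\mathcal{W}(X))$ and check that it descends to the quotient, for which the only relations to verify are the images of $\delta$. Here Theorem \ref{thm: twisted_complex} does the work: the acyclic twisted complex $T_j$ consists of $p_{i,j}$ copies of $C_i$ and $q_{i,j}$ copies of $\overline{C_i}$, and since reversing the orientation grading is an odd shift we have $[\overline{C_i}]=-[C_i]$ in $K_0$; hence $0=[T_j]=\sum_i(p_{i,j}-q_{i,j})[C_i]$. Thus every defining relation of $H^n(X;\mathbb{Z})$ maps to zero and $\mathcal{L}$ is a well-defined homomorphism. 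Surjectivity is immediate: by \cite{chantraine_cocores_generate,ganatra_generation} the cocores generate $\mathcal{W}(X)$, so the classes $[C_i]$ generate $K_0(\mathcal{W}(X))$, and each lies in the image of $\mathcal{L}$.

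It remains to justify the geometric description, namely that $\mathcal{L}$ sends a cocycle to the class of \emph{any} Poincar\'e-dual exact Lagrangian representative $L$; equivalently, that $[L]\in K_0(\mathcal{W}(X))$ depends only on the homology class $[L]_{\mathrm{hom}}\in H_n(X,\partial X;\mathbb{Z})\cong H^n(X;\mathbb{Z})$. By generation $L$ is quasi-isomorphic to a twisted complex of cocores, giving $[L]=\sum_i n_i[C_i]$ in $K_0$ for integers $n_i$. I would then show the \emph{same} integers compute the homology class, $[L]_{\mathrm{hom}}=\sum_i n_i e_i$; granting this, $\mathcal{L}([L]_{\mathrm{hom}})=\sum_i n_i[C_i]=[L]$, and the ``in particular'' clause follows at once, since $[L_1]_{\mathrm{hom}}=[L_2]_{\mathrm{hom}}$ then yields $[L_1]=\mathcal{L}([L_1]_{\mathrm{hom}})=\mathcal{L}([L_2]_{\mathrm{hom}})=[L_2]$ in $K_0$. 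The identity $[L]_{\mathrm{hom}}=\sum_i n_i e_i$ should come from invariance of the homology class under quasi-isomorphism together with its additivity over the exact triangles assembling the twisted complex.

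The main obstacle is precisely this additivity of the homology class over exact triangles. A cone of a morphism $f\colon A\to B$ between Lagrangians is a priori only an algebraic object of $\mathcal{W}(X)$, and to see that $[\operatorname{Cone}(f)]_{\mathrm{hom}}=[B]_{\mathrm{hom}}-[A]_{\mathrm{hom}}$ I would realize the cone geometrically as a Lagrange surgery of $A$ and $B$ along the intersection points underlying $f$, using that such a surgery is an oriented bordism rel boundary and hence preserves the relative homology class up to the expected sign. Matching these signs against the $\mathbb{Z}/2$-orientation grading---in particular the identity $[\overline{C_i}]_{\mathrm{hom}}=-[C_i]_{\mathrm{hom}}$ used above---is the delicate bookkeeping. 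Once this compatibility between the geometric and categorical decompositions of a Lagrangian into cocores is in place, the remaining ingredients (the Morse presentation of $H^n$, Poincar\'e--Lefschetz duality, and the matching of relations supplied by Theorem \ref{thm: twisted_complex}) combine formally to yield the surjective homomorphism $\mathcal{L}$.
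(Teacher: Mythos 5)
Your construction of $\mathcal{L}$ on a fixed Weinstein presentation is correct and follows the paper's route: present $H^n(X;\mathbb{Z})$ by the Morse cochain complex (no critical points of index above $n$), send $e_i\mapsto[C_i]$, kill the relations using the acyclic complexes of Theorem \ref{thm: twisted_complex} together with $[\overline{C_i}]=-[C_i]$, and get surjectivity from generation by co-cores; this is exactly Proposition \ref{prop: map_fixed_presentation}. (One remark: the identification of the Morse coboundary coefficient with $p_{i,j}-q_{i,j}$, which you treat as standard, is where the paper spends real effort --- Proposition \ref{prop: signs} matches the Fukaya-side orientations in the twisted complex against the Morse-theoretic signs via orientation lines; but since you take Theorem \ref{thm: twisted_complex} as input, with $p_{i,j},q_{i,j}$ defined by intersection signs, this is a fair use of the stated result.)

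The genuine gap is in your final step, and the repair you sketch would fail. You want $[L]_{\mathrm{hom}}=\sum_i n_i e_i$ for the integers $n_i$ of a twisted-complex presentation of $L$, and you propose to derive it from invariance of the homology class under quasi-isomorphism plus additivity of homology classes over exact triangles, with cones realized by Lagrange surgery. But the homology class is \emph{not} a quasi-isomorphism invariant in $\mathcal{W}(X)$ --- the paper says precisely this when explaining why the reverse map $K_0(\mathcal{W}(X))\rightarrow H^n(X;\mathbb{Z})$ is ill-defined. Concretely, for a flexible $X$ almost symplectomorphic to $T^*S^n$, the co-core $C$ generates $H^n(X;\mathbb{Z})\cong\mathbb{Z}$ yet is quasi-isomorphic to the zero object, so $C$ admits the empty twisted-complex presentation with all $n_i=0$: the integers $n_i$ depend on the presentation, and no object-level cone-additivity of homology classes can hold (nor are cones between noncompact wrapped objects realized by Lagrange surgeries in any bordism-controlled way). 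What rescues the statement is that the identity $[L]_{\mathrm{hom}}=\sum_i n_i e_i$ needs to hold only for one \emph{specific geometric} presentation, and this is the paper's Proposition \ref{prop: map_canonical}: isotope $L$ transverse to the cores of the $n$-handles, so that near the cores $L$ is a disjoint union of parallel copies of the co-cores; Proposition 1.25 of \cite{ganatra_generation} exhibits $L$ as a twisted complex of exactly these copies, and for this presentation $[L]_{\mathrm{hom}}=\sum_j[C_{i,j}]$ holds by construction, since the class of $L$ in $H_n(X,\partial X;\mathbb{Z})$ is computed by its signed intersections with the cores. Splitting in $K_0$ plus your presentation-level map then gives the ``any Poincar\'e-dual representative'' clause; the argument is geometric, not formal, at exactly the point where you tried to make it formal.
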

	See Section \ref{sec: twisted_complexes_proof} for a proof. Implicit is the fact that any $n$-cohomology class has a Poincare-dual exact Lagrangian representative, e.g. the disjoint union of the index $n$ co-cores.  This homomorphism $\mathcal{L}$ is not injective, e.g. $X$ is flexible and so $K_0(\mathcal{W}(X)) = 0$ but $H^n(X; \mathbb{Z}) \ne 0$, since there may be more relations in $\mathcal{W}(X)$ than those from Theorem \ref{thm: twisted_complex}.

Theorem \ref{thm: K_0_surjective_map} strengthens previous work \cite{Lazarev_critical_points}, where 
	 we used symplectic flexibility techniques to show that if $n \ge 3$ the number of generators (as an abelian group) of $K_0(\mathcal{W}(X))$ is at most the number of generators of $H^n(X; \mathbb{Z})$; see Section \ref{sec_intro: flexible_complements} for more discussion about flexibility. Shende has informed us that a similar map can also be extracted from his work with Takeda for domains with arboreal singularities  \cite{shende_takeda_CY}. 
	A version of Theorem \ref{thm: K_0_surjective_map} holds for Weinstein domains with \textit{stops} where singular cohomology is replaced with relative singular cohomology; see Proposition \ref{prop:  relative_cohomology}. There is also a version involving different gradings of the wrapped Fukaya category, in which case we need to use \textit{twisted}	singular cohomology; see Proposition \ref{prop: map_twisted_grading}. In particular, the Grothendieck group depends very much on the grading of the symplectic manifold; see Example \ref{ex: grading_cotangent_bundle}. 
Biran and Cornea \cite{Biran_Cornea_Lag_cob_fukayacat}
	proved an analog of Theorem \ref{thm: K_0_surjective_map} for \textit{closed} symplectic manifolds:  there is a well-defined surjective map from the Lagrangian cobordism group (instead of singular cohomology) to the Grothendieck group of the Fukaya category (of closed monotone Lagrangians). 
	In this case, the Grothendieck group can be much larger than the singular cohomology, even infinite-dimensional. 
	\begin{examples}			
		If $L^n\subset X^{2n}$ is primitive in $K_0(\mathcal{W}(X))$, then $[L^n] \in H^n(X; \mathbb{Z})$ is primitive. In particular, if $K_0(\mathcal{W}(X)) \cong \mathbb{Z}$ and $L$ is a generator of $\mathcal{W}(X)$, then $L$ is primitive in $H^n(X; \mathbb{Z})$.
	\end{examples}			
	
	\begin{examples}
		If $X^{2n}$ is a rational homology ball, i.e. $H^n(X; \mathbb{Z}) \cong \mathbb{Z}/k\mathbb{Z}$ for some $k \ge 1$, then $K_0(\mathcal{W}(X)) \cong \mathbb{Z}/m \mathbb{Z}$ for some $m$ dividing $k$. 
		So if $X^{2n}$ is a homology ball, i.e. $H^n(X; \mathbb{Z}) = 0$, then $K_0(\mathcal{W}(X)) = 0$, recovering a result proven in \cite{Lazarev_critical_points}. Hence exotic Weinstein balls with non-zero symplectic homology \cite{mclean_dissertation} give examples of \textit{phantom} categories with non-zero Hochschild homology but vanishing Grothendieck group.		
	\end{examples} 	

	\begin{examples}\label{ex: lower_bound_k0_lagrangians}
	In general, the map $\mathcal{L}$  is not an isomorphism, e.g. flexible domains have non-trivial $H^n(X; \mathbb{Z})$ but trivial $\mathcal{W}(X)$. To find examples where $\mathcal{L}$ and $K_0(\mathcal{W}(X))$ are non-trivial, note that for any \textit{closed} exact oriented Lagrangian $L \subset X$, the Euler characteristic gives a well-defined map  $\chi(CW(L, \_)): K_0(\mathcal{W}(X)) \rightarrow \mathbb{Z}$; here $L$ must be closed so that it has finite-dimensional Hom-spaces with all other objects. For any other Lagrangian $K \subset X$, the Euler characteristic $\chi(CW(L, K))$ equals the algebraic intersection number $L \cdot K$. The non-degeneracy of the intersection form $H_n(X; \mathbb{Z}) \otimes H_n(X, \partial X; \mathbb{Z}) \rightarrow \mathbb{Z}$  shows that the number of generators of $K_0(\mathcal{W}(X))$ is at least as large as the rank of the subgroup of $H_n(X; \mathbb{Z})$ generated by closed exact Lagrangians.
	\end{examples}

		A perhaps more natural map than the map $\mathcal{L}$ in Theorem \ref{thm: K_0_surjective_map} would be
		a map  $K_0(\mathcal{W}(X)) \rightarrow H^n(X; \mathbb{Z})$ in the reverse direction taking a Lagrangian to its cohomology class. 	
		However this map does not take quasi-isomorphic objects of the wrapped Fukaya category to the same cohomology class and is not well-defined. However there is a natural map from $K_0(\mathcal{W}(X))$ to \textit{symplectic} cohomology as we now explain. 		 For any Liouville domain $X$, there is an \textit{acceleration} map $\mathcal{A}: H^*(X) \rightarrow SH^*(X)$ from singular cohomology to symplectic cohomology; see  \cite{S_06}. 	
		  If $X$ is a Weinstein 
		  domain, 
		  the  open-closed map $\mathcal{OC}: HH_{*-n }(\mathcal{W}(X)) \rightarrow SH^*(X)$  is an isomorphism \cite{Ganatra_thesis}. On the other hand, for any dg (or $A_\infty$) category $\mathcal{C}$, there is a map	  
		  $\mathcal{T}: K_*(\mathcal{C}) \rightarrow HH_*(\mathcal{C})$
		  from the K-theory to the Hochschild homology called the Dennis trace.  The following result shows that the geometric acceleration map factors through the categorical Dennis trace in degree zero. 
\begin{theorem}\label{thm: Dennis_trace}
	The following diagram commutes: 
	\begin{equation}\label{eqn: Dennis_trace}
	\begin{tikzcd} 
	H^n(X; \mathbb{Z})  \arrow{d}{\mathcal{L}}
	\arrow{r}{\mathcal{A}} & SH^n(X)  \\
	K_0(\mathcal{W}(X)) \arrow{r}{\mathcal{T}}  &  HH_0(\mathcal{W}(X))  \arrow{u}{\mathcal{OC}}
	\end{tikzcd}
	\end{equation}
\end{theorem}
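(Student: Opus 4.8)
The plan is to reduce the commutativity of Diagram~\eqref{eqn: Dennis_trace} to a single geometric identity and then to establish that identity by comparing moduli spaces. By construction of $\mathcal{L}$ (Theorem~\ref{thm: K_0_surjective_map}), every class $\alpha \in H^n(X;\mathbb{Z})$ has a Poincar\'e-dual exact Lagrangian representative $L$ that may be taken to be a union of (oriented) index $n$ co-cores, so that $\mathcal{L}(\alpha) = [L] \in K_0(\mathcal{W}(X))$ and $\mathrm{PD}[L] = \alpha$. Since all four maps are additive, it suffices to treat a single co-core $C$ with $\alpha = \mathrm{PD}[C]$. The Dennis trace on $K_0$ sends the class of an object to the Hochschild class of its identity endomorphism, so $\mathcal{T}(\mathcal{L}(\alpha)) = \mathcal{T}([C]) = [\mathrm{id}_C] \in HH_0(\mathcal{W}(X))$. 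Thus the whole theorem reduces to the identity
\[ \mathcal{OC}([\mathrm{id}_C]) = \mathcal{A}(\mathrm{PD}[C]) \in SH^n(X), \]
which asserts that the open-closed image of the categorical unit of $C$ is the acceleration of the cohomology class dual to $C$.

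To compute the left-hand side I would unwind $\mathcal{OC}$ on the length-zero summand of the cyclic bar complex. The relevant cyclic chain is simply $\mathrm{id}_C = e_C \in CW^*(C,C)$, and the open-closed map evaluates it by counting $J$-holomorphic disks with boundary on $C$, one interior positive puncture asymptotic to the Hamiltonian (Reeb) orbit defining a generator of $SH^n(X)$, and one boundary marked point at which $e_C$ is inserted. The unit axiom lets me forget this boundary marked point, so $\mathcal{OC}([\mathrm{id}_C])$ is computed by the count of disks $u\colon (D,\partial D)\to (\widehat X, C)$ with a single interior positive puncture. Because $\mathcal{W}(X)$ is taken with $\mathbb{Z}/2$-coefficients, there are no orientation or sign issues to track, so the only content is the enumeration of these disks.

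The main obstacle is to match this disk count with the acceleration count. The map $\mathcal{A}(\mathrm{PD}[C])$ is computed, in its PSS model, by finite-energy planes $v\colon \mathbb{C}\to \widehat X$ carrying a point incidence condition on a cycle Poincar\'e-dual to $[C]$; taking that cycle to be $C$ itself realizes it as planes through $C$. I would identify the two moduli spaces by degenerating the Lagrangian boundary condition to the point constraint---equivalently, by setting up $\mathcal{OC}$, $\mathcal{A}$ and $SH^*(X)$ with a common choice of low-energy Floer data near the skeleton, so that a disk with small boundary on $C$ and a plane meeting $C$ at a point appear as the two ends of a single one-parameter family. The delicate points, and where I expect the real work to lie, are (i) carrying out this comparison for the noncompact wrapped Lagrangian $C$, whose Legendrian boundary requires controlling the moduli in the cylindrical end and matching the Hamiltonian perturbations used for $SH$, $\mathcal{OC}$ and $\mathcal{A}$, and (ii) ensuring the requisite transversality and compactness so that the cobordism between the two moduli problems is genuine. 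Alternatively, one may extract the identity $\mathcal{OC}([\mathrm{id}_C]) = \mathcal{A}(\mathrm{PD}[C])$ from the existing chain-level compatibilities between the open-closed map and the acceleration/Lagrangian-PSS maps in the work of Ganatra and Abouzaid, which already package the relevant moduli comparisons; the contribution here would then be to verify that those compatibilities descend to the diagram above with $\mathbb{Z}/2$-coefficients.
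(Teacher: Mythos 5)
Your reductions are exactly the ones the paper makes: additivity reduces the diagram to a single co-core $C_p$, and $\mathcal{T}(\mathcal{L}(\alpha)) = [\mathrm{id}_{C_p}]$ is immediate, so everything hinges on the identity $\mathcal{OC}([\mathrm{id}_{C_p}]) = \mathcal{A}(\mathrm{PD}[C_p])$. But at precisely this point your proposal stops being a proof. You propose to verify the identity by a PSS-type moduli comparison---degenerating the Lagrangian boundary condition on $C$ to a point-incidence constraint and building a one-parameter family interpolating between disks with boundary on $C$ and planes through $C$---and you yourself flag this comparison as ``where I expect the real work to lie,'' deferring it either to an unconstructed cobordism of moduli spaces or to unspecified compatibilities in the literature. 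That comparison is genuinely nontrivial here: $C$ is a noncompact wrapped Lagrangian with Legendrian boundary, the paper's $\mathcal{A}$ is defined via Viterbo's action-filtration inclusion rather than a PSS model (so your route would also need to identify the two models in the wrapped setting), and degenerating a Lagrangian boundary condition to a point constraint is not a standard operation one can invoke. In short, the central geometric content is asserted, not established.

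The paper sidesteps all of this with one choice that your proposal misses: take the Floer-theoretic Hamiltonian to \emph{be} the self-indexing Weinstein Morse function $f$. Then $\mathcal{A}(C_p)$ is literally the constant orbit $\gamma_p$ (the acceleration map is the inclusion of the positive-action Morse subcomplex), $\mathrm{id}_{C_p}$ is the constant chord $c_p$ at $p$ (the minimum of $f|_{C_p}$), and a two-line action computation---$A(c_p) = f(p)$ is positive and maximal among all generators, non-constant orbits have negative action, and non-constant Floer disks have positive energy---forces the only disk contributing to $\mathcal{OC}(c_p)$ to be the constant one, giving $\mathcal{OC}(c_p) = \gamma_p$ with no moduli comparison, no PSS, and no compactness analysis beyond energy positivity. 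Your argument could likely be repaired by adopting this choice. Separately, your remark that $\mathbb{Z}/2$-coefficients eliminate all sign issues is at odds with the theorem as stated: the paper takes $SH$ and the morphism spaces with $\mathbb{Z}$-coefficients (assuming $c_1(X)=0$ and spin Lagrangians), and the map $\mathcal{L}$ depends essentially on orientations via $[\overline{L}] = -[L]$, so your version would only prove the weaker mod-$2$ variant of the diagram.
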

Here we use $SH(X)$ with $\mathbb{Z}$-coefficients and assume that $c_1(X) = 0$ so that there is a $\mathbb{Z}$-grading of $X$ (and $SH^n(X)$ makes sense) and that the Lagrangians in $\mathcal{W}(X)$ are spin so that morphisms in $\mathcal{W}(X)$ have $\mathbb{Z}$-coefficients; if these assumptions are dropped, then the diagram still commutes between spaces with the appropriate coefficients and gradings. 
In particular, Theorem \ref{thm: Dennis_trace} shows that there exists a map from $K_0(\mathcal{W}(X))$ to $SH^n(X)$ as desired. Since $\mathcal{L}$ is surjective, the image of $\mathcal{A}^n$ in $SH^n(X)$ coincides with the image of $\mathcal{T}$ in  $HH_0(\mathcal{W}(X))$ under the $\mathcal{OC}$ isomorphism. 
\begin{corollary}\label{cor: lower_bound_K0}
	The image of $\mathcal{A}^n$ in $SH^n(X)$ depends just  on $\mathcal{W}(X)$ (up to isomorphism of $SH(X)$) and the number of generators of $K_0(\mathcal{W}(X))$ is at least the number of generators of $Im \ \mathcal{A}^n$.
\end{corollary}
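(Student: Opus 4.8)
The plan is to read off both assertions directly from the commuting square of Theorem~\ref{thm: Dennis_trace}, using only the surjectivity of $\mathcal{L}$ from Theorem~\ref{thm: K_0_surjective_map} together with the fact, recalled in the excerpt, that $\mathcal{OC}$ is an isomorphism. First I would record the identity encoded by Diagram~\eqref{eqn: Dennis_trace}, namely $\mathcal{A} = \mathcal{OC} \circ \mathcal{T} \circ \mathcal{L}$ as maps $H^n(X; \mathbb{Z}) \to SH^n(X)$. Passing to images and using that $\mathcal{L}$ is surjective, so that $\mathcal{L}(H^n(X; \mathbb{Z})) = K_0(\mathcal{W}(X))$, yields
\[
\im \mathcal{A}^n = \mathcal{OC}\bigl(\mathcal{T}(K_0(\mathcal{W}(X)))\bigr) = \mathcal{OC}(\im \mathcal{T}).
\]
This is the one place where surjectivity of $\mathcal{L}$ is essential: without it the right-hand side would only be $\mathcal{OC}(\mathcal{T}(\im \mathcal{L}))$, possibly a proper subgroup.

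For the first assertion I would argue that $\im \mathcal{T}$ is a categorical invariant. Both the Dennis trace $\mathcal{T}\colon K_0(\mathcal{C}) \to HH_0(\mathcal{C})$ and its target $HH_0(\mathcal{C})$ are functorial in the $A_\infty$-category $\mathcal{C}$, so any quasi-equivalence $\mathcal{W}(X) \simeq \mathcal{W}(X')$ induces an isomorphism $HH_0(\mathcal{W}(X)) \cong HH_0(\mathcal{W}(X'))$ carrying $\im \mathcal{T}$ to $\im \mathcal{T}'$. Transporting along the $\mathcal{OC}$ isomorphisms, which identify $HH_{\ast - n}$ with $SH^\ast$, this shows that the subgroup $\im \mathcal{A}^n \subset SH^n(X)$ is determined by $\mathcal{W}(X)$ up to the induced isomorphism of symplectic cohomology. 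The content here is only bookkeeping with the functoriality of $\mathcal{T}$ and $\mathcal{OC}$, and I expect no analytic input.

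For the second assertion I would invoke the purely algebraic fact that the image of a homomorphism $f\colon G \to H$ of abelian groups is a quotient of $G$, namely $G/\ker f$, so its minimal number of generators is at most that of $G$ (the images of a generating set generate the image). Applying this to $\mathcal{T}\colon K_0(\mathcal{W}(X)) \to HH_0(\mathcal{W}(X))$ shows that $\im \mathcal{T}$ needs no more generators than $K_0(\mathcal{W}(X))$; since $\mathcal{OC}$ is an isomorphism, the same bound holds for $\im \mathcal{A}^n \cong \im \mathcal{T}$. Combined with the first step, this gives exactly the claimed inequality. Note that $H^n(X;\mathbb{Z})$, and hence its quotient $K_0(\mathcal{W}(X))$, is finitely generated because $X$ admits a finite handle decomposition, so all the groups involved are finitely generated and "number of generators" is unambiguous.

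The only genuine subtlety---more a point to state carefully than a real obstacle---is the precise meaning of \emph{depends just on} $\mathcal{W}(X)$: one must verify that the $\mathcal{OC}$ isomorphism intertwines the categorically defined subgroup $\im \mathcal{T}$ with the geometric subgroup $\im \mathcal{A}^n$ coherently under quasi-equivalences, which is why the statement is phrased only \emph{up to isomorphism of} $SH(X)$. Granting the ingredients already asserted in the excerpt---surjectivity of $\mathcal{L}$, that $\mathcal{OC}$ is an isomorphism, and commutativity of Diagram~\eqref{eqn: Dennis_trace}---the corollary is then immediate.
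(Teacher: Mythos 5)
Your proposal is correct and is essentially the paper's own argument: the paper justifies the corollary by the remark that, since $\mathcal{L}$ is surjective and Diagram \eqref{eqn: Dennis_trace} commutes, $\im \mathcal{A}^n = \mathcal{OC}(\im \mathcal{T})$, so that $\im \mathcal{A}^n$ is the image under the $\mathcal{OC}$ isomorphism of a subgroup defined purely from the category $\mathcal{W}(X)$. Your two additional observations---functoriality of $\mathcal{T}$ and $HH_0$ under quasi-equivalence, and that images of a generating set generate the image---are exactly the (implicit) bookkeeping the paper relies on, so nothing is missing.
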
 
 This gives an algebraic method to get lower bounds on $K_0(\mathcal{W}(X))$ without needing closed Lagrangians as in Example \ref{ex: lower_bound_k0_lagrangians}.
  Theorem \ref{thm: Dennis_trace} has also applications to the Weinstein conjecture:  any contact form on a closed contact manifold has a closed Reeb orbit. The \textit{algebraic} Weinstein conjecture, that the acceleration map $\mathcal{A}$ is not an isomorphism, implies the existence of Reeb orbits. As explained to us by Vivek Shende, Theorem \ref{thm: Dennis_trace} and the surjectivity of $\mathcal{L}$ give a categorical
condition for the algebraic Weinstein conjecture. 
\begin{corollary}\label{cor: weinstein}
	If the Dennis trace  $\mathcal{T}: K_0(\mathcal{C}) \rightarrow HH_0(\mathcal{C})$ is not an isomorphism, the algebraic Weinstein conjecture holds for any Weinstein $X$ with $\mathcal{W}(X) = \mathcal{C}$.
\end{corollary}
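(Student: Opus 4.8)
The plan is to prove the contrapositive: assuming the algebraic Weinstein conjecture fails for $X$, i.e. that the acceleration map $\mathcal{A}$ \emph{is} an isomorphism, I would deduce that the Dennis trace $\mathcal{T}$ must be an isomorphism. Since $\mathcal{W}(X) = \mathcal{C}$, this contradicts the standing hypothesis that $\mathcal{T}$ is not an isomorphism, and the corollary follows. The whole argument is a formal diagram chase inside the commutative square of Theorem \ref{thm: Dennis_trace}, so the substantive work is entirely outsourced to the earlier results.

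First I would isolate the relevant degrees. By Theorem \ref{thm: Dennis_trace}, the square in Diagram \ref{eqn: Dennis_trace} commutes, so that $\mathcal{A}^n = \mathcal{OC} \circ \mathcal{T} \circ \mathcal{L}$ as maps $H^n(X;\mathbb{Z}) \to SH^n(X)$; note that only the degree-$n$ part of $\mathcal{A}$ and the degree-$0$ part of $\mathcal{T}$ enter. If $\mathcal{A}$ is an isomorphism, then in particular $\mathcal{A}^n$ is an isomorphism. Since $X$ is Weinstein, the open--closed map $\mathcal{OC}\colon HH_0(\mathcal{W}(X)) \to SH^n(X)$ is an isomorphism by \cite{Ganatra_thesis}. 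Post-composing $\mathcal{A}^n$ with $\mathcal{OC}^{-1}$ then shows that $\mathcal{T} \circ \mathcal{L}\colon H^n(X;\mathbb{Z}) \to HH_0(\mathcal{W}(X))$ is an isomorphism.

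Next I would invoke the surjectivity of $\mathcal{L}$ from Theorem \ref{thm: K_0_surjective_map}. Because $\mathcal{T} \circ \mathcal{L}$ is injective, $\mathcal{L}$ is injective; combined with its surjectivity, $\mathcal{L}$ is an isomorphism. Consequently $\mathcal{T} = (\mathcal{T} \circ \mathcal{L}) \circ \mathcal{L}^{-1}$ is a composite of isomorphisms, hence itself an isomorphism. This contradicts the hypothesis, completing the contrapositive and hence the proof that $\mathcal{A}$ is not an isomorphism, i.e. that the algebraic Weinstein conjecture holds for $X$.

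I do not expect a genuine obstacle here, since every nontrivial input---the commutativity of the Dennis-trace square (Theorem \ref{thm: Dennis_trace}), the surjectivity of $\mathcal{L}$ (Theorem \ref{thm: K_0_surjective_map}), and the fact that $\mathcal{OC}$ is an isomorphism for Weinstein domains---has already been established. The only points requiring mild care are the degree bookkeeping noted above and the elementary observation that a surjection whose post-composition with $\mathcal{T}$ is injective must itself be injective; both are routine. It is worth emphasizing that the implication goes only one way: a non-isomorphic $\mathcal{T}$ \emph{detects} the failure of $\mathcal{A}$, but $\mathcal{A}$ could fail to be an isomorphism for reasons invisible to $\mathcal{T}$ (for instance through $\mathcal{A}$ in degrees other than $n$), so the corollary provides a sufficient categorical criterion rather than a characterization.
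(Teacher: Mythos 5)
Your proof is correct and is essentially the paper's own argument: the paper does not spell out a proof of Corollary~\ref{cor: weinstein}, noting only that it follows from Theorem~\ref{thm: Dennis_trace} together with the surjectivity of $\mathcal{L}$, and your diagram chase (using $\mathcal{A}^n = \mathcal{OC}\circ\mathcal{T}\circ\mathcal{L}$, the $\mathcal{OC}$ isomorphism of \cite{Ganatra_thesis}, and the observation that a surjective $\mathcal{L}$ with $\mathcal{T}\circ\mathcal{L}$ injective forces both $\mathcal{L}$ and $\mathcal{T}$ to be isomorphisms) is exactly the intended filling-in of that sketch. Your closing caveat that the criterion is sufficient but not necessary, since $\mathcal{A}$ may fail to be an isomorphism in degrees invisible to $\mathcal{T}$, is also accurate.
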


Next we state a relative analog of Theorem \ref{thm: K_0_surjective_map} for $C^0$-close Legendrians.
Recall that in Theorem \ref{thm: c0_close_intro}, we stated that if $\Lambda_0 \subset N(\Lambda_1) \subset \partial X$, there is a functor $\mathcal{W}(X, \Lambda_1) \rightarrow \mathcal{W}(X, \Lambda_0)$. The following result describes the induced homomorphism on Grothendieck groups. 
	\begin{theorem}\label{thm: c0_legendrians_map_k0}
			If $X$ is Weinstein and $\Lambda_0, \Lambda_1 \subset \partial X$ are  Legendrians so that $\Lambda_0 \subset N(\Lambda_1)$, then the following diagram commutes: 
			\begin{equation}\label{comm: 1}
			\begin{tikzcd} 
			H^n(X, \Lambda_1; \mathbb{Z})  \arrow{r} \arrow{d}{\mathcal{L}} & 	H^n(X, \Lambda_0; \mathbb{Z})  \arrow{d}{\mathcal{L}}\\
			K_0(\mathcal{W}(X, \Lambda_1)) \arrow{r}  &  K_0(\mathcal{W}(X, \Lambda_0))
			\end{tikzcd}
			\end{equation}
		\end{theorem}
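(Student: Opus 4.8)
The plan is to deduce this commuting square by combining the functoriality established in Theorem \ref{thm: c0_close_intro} with the naturality of the map $\mathcal{L}$ from Theorem \ref{thm: K_0_surjective_map}. The bottom horizontal arrow is the map on Grothendieck groups induced by the functor $\mathcal{W}(X, \Lambda_1) \rightarrow \mathcal{W}(X, \Lambda_0)$ of Theorem \ref{thm: c0_close_intro}, while the top horizontal arrow is the restriction map in relative singular cohomology coming from the inclusion $(X, \Lambda_0) \hookrightarrow (X, \Lambda_1)$, which is well-defined precisely because $\Lambda_0 \subset N(\Lambda_1)$ so that $\Lambda_1$ contains $\Lambda_0$ up to the relevant neighborhood. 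First I would recall that the relative version of $\mathcal{L}$ (Proposition \ref{prop: relative_cohomology}) sends an $n$-cocycle in $H^n(X, \Lambda_i; \mathbb{Z})$ to a Poincar\'e-dual exact Lagrangian with boundary in $\partial X \backslash \Lambda_i$, where the canonical such representatives are the linking disks $L_i$ of $\Lambda_i$ together with the co-cores of $X$.

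The core of the argument is a diagram chase on generators. Since $\mathcal{L}$ is surjective, it suffices to verify commutativity on classes represented by the linking disks and co-cores. For the co-cores of $X$ the two paths agree trivially, since the functor of Theorem \ref{thm: c0_close_intro} is induced by a proper inclusion of sectors and so fixes the co-cores of $X$ (it only modifies the behavior near the stop). The essential case is the linking disk $L_1$ of $\Lambda_1$: going down-then-right, Theorem \ref{thm: c0_close_intro} tells us that the functor sends $L_1$ to the twisted complex $T$ built from $p$ copies of $L_0$ and $q$ copies of $\overline{L_0}$, so that in $K_0(\mathcal{W}(X, \Lambda_0))$ we obtain $[T] = (p - q)\,[L_0]$. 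Going right-then-down, I would compute the restriction map in cohomology and show that the generator of $H^n(X, \Lambda_1; \mathbb{Z})$ dual to the linking disk $L_1$ maps to $(p-q)$ times the class dual to $L_0$; here $p - q$ is exactly the algebraic intersection number $\Lambda_0 \cdot T^*_x\Lambda_1 \times \mathbb{R}$ appearing before Theorem \ref{thm: c0_close_intro}, which measures how $\Lambda_0$ sits homologically inside $N(\Lambda_1)$.

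The key identification to make rigorous is thus that this cohomological restriction coefficient coincides with the signed count $p - q$ of the intersection points. I would establish this by a Poincar\'e–Lefschetz duality computation: the linking disk $L_i$ is Poincar\'e-dual to the meridian class of $\Lambda_i$, and under the map induced by $\Lambda_0 \subset N(\Lambda_1) \cong J^1(\Lambda_1)$ the meridian of $\Lambda_1$ pulls back to the class whose pairing with $\Lambda_0$ is precisely the signed intersection count in the cotangent fiber $T^*_x \Lambda_1 \times \mathbb{R}$. This is the same bookkeeping of signs that produces the coefficients $p, q$ in Theorem \ref{thm: twisted_complex}, so the two computations are forced to agree.

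\textbf{Main obstacle.} I expect the hard part to be matching orientation and grading conventions so that the sign $p - q$ in $K_0$ genuinely equals the coefficient in the cohomological restriction, rather than differing by a global sign or a shift. Concretely, one must ensure that the orientations induced on $\Lambda_0, \Lambda_1$ (and hence on the linking disks $L_0, L_1$) used in Theorem \ref{thm: c0_close_intro} are the same ones that orient the Poincar\'e-dual representatives in Proposition \ref{prop: relative_cohomology}; the passage $[\overline{L_0}] = -[L_0]$ in $K_0$ must correspond exactly to reversing the Lagrangian orientation, which in the $\mathbb{Z}/2$-graded setting is the canonical orientation grading. Once these conventions are pinned down, the chase closes and commutativity follows; the only input beyond the already-cited Theorems \ref{thm: c0_close_intro} and \ref{thm: K_0_surjective_map} is the duality identification of the restriction coefficient with the signed intersection number.
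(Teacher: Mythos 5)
Your proposal is correct in substance, but it takes a genuinely different and more computational route than the paper. The paper proves this statement as Corollary \ref{cor: c0-close_grothendieck} by a pure tautology: every map in the square is ``view the same Lagrangian in a different space'' --- the bottom functor comes from a proper inclusion and reinterprets a Lagrangian of $(X, \Lambda_1)$ as a Lagrangian of $(X, \Lambda_0)$, the top map sends the class Poincar\'e-dual to a Lagrangian rel $\Lambda_1$ to the class dual to the \emph{same} Lagrangian rel $\Lambda_0$, and $\mathcal{L}$ converts a Lagrangian-as-cohomology-class into the same Lagrangian-as-$K_0$-class. Since every class in $H^n(X, \Lambda_1; \mathbb{Z})$ is represented by Lagrangians in $(X,\Lambda_1)$ (co-cores of $X$ and linking disks of $\Lambda_1$, via the stopped Morse-cohomology description of Proposition \ref{prop: relative_cohomology}), both compositions land on the same Lagrangian viewed in $K_0(\mathcal{W}(X,\Lambda_0))$, and the coefficient $p-q$ never appears. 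You instead decompose $[L_1]$ on both sides of the square: $[T] = (p-q)[L_0]$ in $K_0$ via Theorem \ref{thm: c0_close_intro}, matched against a Poincar\'e--Lefschetz computation showing the restriction carries the class dual to $L_1$ to $(p-q)$ times the class dual to $L_0$. This works, and the ``main obstacle'' you flag --- matching orientation conventions so that $[\overline{L_0}] = -[L_0]$ in $K_0$ corresponds to the sign in the intersection count --- is genuine for your route; note, however, that it can be bypassed by observing that both coefficients are produced by one and the same displacing isotopy in the proof of Theorem \ref{thm: c0_weinstein_hypersurfaces}: each crossing of a core of $\Lambda_0$ simultaneously modifies the object by a mapping cone on $L_0$ or $\overline{L_0}$ and shifts the class in $H_n(X, \partial X \backslash \Lambda_0)$ by the correspondingly signed copy of $[L_0]$, and the endpoint, being disjoint from the skeleton, vanishes in both theories. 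Two small corrections: your reduction to generators is justified not by surjectivity of the left-hand $\mathcal{L}$ (which is irrelevant when the input lives in the top-left corner) but by the fact that the co-cores and linking disks generate $H^n(X, \Lambda_1; \mathbb{Z})$ --- true, since that group is the cokernel of the Morse differential mapping into the span of exactly those disks; and for general Weinstein hypersurface stops, where $\Lambda_1$ and $\Lambda_0$ have several cores and several linking disks, your fiber-by-fiber duality bookkeeping must be run for each core $D_j^{n-1}$, whereas the paper's tautological argument needs no computation at all --- which is what each approach buys: yours makes the coefficient $p-q$ visible and ties the square explicitly to Theorem \ref{thm: c0_close_intro}, while the paper's is shorter, sign-free, and uniform.
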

See Corollary \ref{cor: c0-close_grothendieck}.  The top horizontal map is the restriction map on cohomology. The bottom horizontal map is induced by the functor in Theorem \ref{thm: c0_close_intro}. The vertical maps are the analogs of the map $\mathcal{L}$ in Theorem \ref{thm: K_0_surjective_map} for stopped domains; see Proposition \ref{prop:  relative_cohomology}.

Theorem \ref{thm: c0_legendrians_map_k0} gives an obstruction for Legendrians to be $C^0$-close. 
Murphy \cite{Murphy11} proved that any Legendrian can be $C^0$-approximated by a loose Legendrian. On the other hand, Dimitroglou-Rizell and Sullivan \cite{Rizell_sullivan_c0} proved that loose Legendrians cannot be $C^0$-approximated by certain non-loose Legendrians: if $\Lambda_1$ is loose, $\Lambda_0 \subset N(\Lambda_1) \subset (S^{2n-1},\xi_{std})$, 
and the degree $d$ of the projection map 
$\Lambda_0 \rightarrow N(\Lambda_1) \rightarrow \Lambda_1$ is odd, then the Chekanov-Eliashberg DGA $CE(\Lambda_0)$ has no augmentations; also see \cite{Lazarev_critical_points}. 
Using Theorem \ref{thm: c0_legendrians_map_k0}, we prove  a generalization of this result that does not rely on the geometric property of looseness, only the vanishing of $K_0(\mathcal{W}(B^{2n}, \Lambda_1))$. 
\begin{corollary}\label{cor: c0_leg_obstruction}
			If  $K_0(\mathcal{W}(B^{2n}, \Lambda_1)) =0$ and the degree $d$ of the projection $\Lambda_0 \rightarrow N(\Lambda_1) \rightarrow \Lambda_0$ is $\pm 1$, then $K_0(\mathcal{W}(B^{2n}, 
			\Lambda_0)) =0$. If 		$K_0(\mathcal{W}(B^{2n}, \Lambda_0)) \cong \mathbb{Z}$ and $d \ne 0$, then 
			$K_0(\mathcal{W}(B^{2n}, \Lambda_1)) \cong \mathbb{Z}$. 				
		\end{corollary}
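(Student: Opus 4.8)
The plan is to deduce everything from the commutative square of Theorem \ref{thm: c0_legendrians_map_k0}, specialized to $X = B^{2n}$, together with two topological inputs: a computation of the relative cohomology groups $H^n(B^{2n},\Lambda_i;\mathbb{Z})$, and the identification of the top (restriction) map with multiplication by the degree $d$. Once these are established, both assertions follow from an elementary diagram chase that uses only the surjectivity of the vertical maps $\mathcal{L}$ (the stopped analog of Theorem \ref{thm: K_0_surjective_map}, Proposition \ref{prop:  relative_cohomology}). Throughout I assume $n \ge 2$ and that $\Lambda_0, \Lambda_1$ are closed, connected, oriented Legendrians of dimension $n-1$, which is what makes the degree $d$ of the projection well-defined.

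First I would compute the groups. Since $B^{2n}$ is contractible, the long exact sequence of the pair $(B^{2n},\Lambda_i)$ has $H^{n-1}(B^{2n};\mathbb{Z}) = H^n(B^{2n};\mathbb{Z}) = 0$, so the connecting homomorphism gives an isomorphism $H^{n-1}(\Lambda_i;\mathbb{Z}) \xrightarrow{\sim} H^n(B^{2n},\Lambda_i;\mathbb{Z})$. As each $\Lambda_i$ is closed, connected and oriented of dimension $n-1$, its top cohomology satisfies $H^{n-1}(\Lambda_i;\mathbb{Z}) \cong \mathbb{Z}$, whence $H^n(B^{2n},\Lambda_i;\mathbb{Z}) \cong \mathbb{Z}$. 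Next I would identify the top map: the neighborhood $N(\Lambda_1) \cong J^1(\Lambda_1) = T^*\Lambda_1 \times \mathbb{R}$ deformation retracts onto its zero section $\Lambda_1$, and under this retraction the inclusion $\Lambda_0 \subset N(\Lambda_1)$ becomes the front projection $\pi : \Lambda_0 \to \Lambda_1$ of degree $d$. This realizes the restriction map as the pullback along a map of pairs $(B^{2n},\Lambda_0) \to (B^{2n},\Lambda_1)$, and by naturality of the connecting homomorphism it is identified with $\pi^* : H^{n-1}(\Lambda_1;\mathbb{Z}) \to H^{n-1}(\Lambda_0;\mathbb{Z})$, which on top cohomology of closed oriented $(n-1)$-manifolds is multiplication by $\deg \pi = d$. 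The square of Theorem \ref{thm: c0_legendrians_map_k0} thus reads
\begin{equation*}
\begin{tikzcd}
\mathbb{Z} \arrow{r}{\times d} \arrow{d}{\mathcal{L}} & \mathbb{Z} \arrow{d}{\mathcal{L}} \\
K_0(\mathcal{W}(B^{2n},\Lambda_1)) \arrow{r}{\Phi} & K_0(\mathcal{W}(B^{2n},\Lambda_0))
\end{tikzcd}
\end{equation*}
with both vertical maps $\mathcal{L}$ surjective, giving the commutation $\mathcal{L} \circ (\times d) = \Phi \circ \mathcal{L}$.

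Finally I would chase the diagram. For the first claim, $d = \pm 1$ makes $\times d$ an isomorphism; since $K_0(\mathcal{W}(B^{2n},\Lambda_1)) = 0$ the down-then-across composite $\Phi \circ \mathcal{L}$ vanishes, so by commutativity $\mathcal{L} \circ (\times d) = 0$, and inverting $\times d$ forces the right-hand $\mathcal{L} : H^n(B^{2n},\Lambda_0;\mathbb{Z}) \to K_0(\mathcal{W}(B^{2n},\Lambda_0))$ to be zero; as this $\mathcal{L}$ is surjective, $K_0(\mathcal{W}(B^{2n},\Lambda_0)) = 0$. For the second claim, a surjection $\mathbb{Z} \to K_0(\mathcal{W}(B^{2n},\Lambda_0)) \cong \mathbb{Z}$ is automatically an isomorphism, and since $d \ne 0$ the map $\times d$ is injective; hence $\mathcal{L} \circ (\times d)$ is injective, so $\Phi \circ \mathcal{L}$ is injective, forcing the left-hand $\mathcal{L} : \mathbb{Z} \to K_0(\mathcal{W}(B^{2n},\Lambda_1))$ to be injective. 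Being already surjective, it is an isomorphism, so $K_0(\mathcal{W}(B^{2n},\Lambda_1)) \cong \mathbb{Z}$.

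I expect the main obstacle to be the second step: verifying that the ``restriction map on cohomology'' appearing in Theorem \ref{thm: c0_legendrians_map_k0}, which is defined through the geometric functor and the relative-cohomology construction of Proposition \ref{prop:  relative_cohomology}, genuinely coincides with the topological pullback $\pi^*$ along the Legendrian front projection, and hence with multiplication by $d$. One must also track orientations and signs through the connecting isomorphisms and confirm the connectedness hypotheses on $\Lambda_i$ that make $H^{n-1}(\Lambda_i;\mathbb{Z}) \cong \mathbb{Z}$; the remaining diagram chase is entirely formal.
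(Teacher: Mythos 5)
Your proof is correct and takes essentially the same route as the paper: the paper deduces the corollary directly from Corollary \ref{cor: c0-close_grothendieck} (Theorem \ref{thm: c0_legendrians_map_k0}) by observing that $H^n(B^{2n}, \Lambda_i; \mathbb{Z}) \cong \mathbb{Z}$ for closed orientable Legendrians and that the restriction map is multiplication by the degree $d$, leaving the rest implicit. You have merely supplied the details the paper omits — the long-exact-sequence computation of the relative groups, the identification of the restriction map with $\pi^*$ via the retraction of $N(\Lambda_1)$ onto $\Lambda_1$, and the formal diagram chase using the surjectivity of $\mathcal{L}$ from Proposition \ref{prop: relative_cohomology} — all of which are sound.
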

		In particular, any Legendrian $\Lambda_0 \subset N(\Lambda_{loose})$ with $d = \pm 1$ has $K_0(\mathcal{W}(B^{2n}, \Lambda_0)) = 0$; therefore $K_0(\mathcal{W}(B^{2n} \cup H^n_{\Lambda_0})) = 0$, which implies, via the surgery formula \cite{BEE12, Ekholm_surgery}, that $CE(\Lambda_0)$  has no augmentations. Also, note that  $K_0(\mathcal{W}(B^{2n}, \Lambda_{unknot})) \cong \mathbb{Z}$ and so the second statement in Corollary \ref{cor: c0_leg_obstruction} shows that if $\Lambda_{unknot} \subset N(\Lambda)$ and $d \ne 0$, then $K_0(\mathcal{W}(B^{2n}, \Lambda)) \cong \mathbb{Z}$; in particular, $\mathcal{W}(B^{2n}, \Lambda)$ is not trivial.
		Of course, this is false if $d = 0$ since any Legendrian $\Lambda$ has $\Lambda_{unknot} \subset N(\Lambda)$.

		\subsection{Split-generating subcategories and subgroups of the Grothendieck group}\label{sec_intro: subgroups_grot}
		
		Now we give some applications of Theorem \ref{thm: K_0_surjective_map} and classify split-generating subcategories of the wrapped Fukaya category of Weinstein domains. 
		Let $\mathcal{C}$ be a triangulated category. 	As noted before, a subcategory $\mathcal{D}$ of $\mathcal{C}$ \textit{generates} $\mathcal{C}$ if the triangulated closure $Tr \ \mathcal{D}$ of $\mathcal{D}$ equals $\mathcal{C}$; 
	we say $\mathcal{D}$
	\textit{split-generates} $\mathcal{C}$ if every object of $\mathcal{C}$ is a summand of  an object of $Tr \ \mathcal{D}$. By Remark 1.5 of \cite{Thomason}, this is equivalent to every object of $\mathcal{C}$ being in the triangulated closure of summands of $Tr \ \mathcal{D}$. We say an  $A_\infty$-subcategory $\mathcal{D}$ of $\mathcal{W}(X)$ generates, split-generates $\mathcal{W}(X)$ if  $H^0(\mathcal{D})$ generates, split-generates $D^b\mathcal{W}(X)$ respectively. We let $Tw \ \mathcal{D}$ denote the subcategory of twisted complexes with terms in $\mathcal{D}$ and set $Tr \ \mathcal{D} := H^0(Tw \mathcal{D})$.
	
The following result of Thomason \cite{Thomason} classifies split-generating subcategories of triangulated $\mathcal{C}$.
	\begin{theorem}\label{thm: Thomason}\cite{Thomason}
There is a one-to-one correspondence between subgroups of $K_0(\mathcal{C})$ and split-generating triangulated subcategories of $\mathcal{C}$. 			
\end{theorem}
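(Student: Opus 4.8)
The plan is to set up the correspondence explicitly and then verify it is a bijection by constructing maps in both directions and showing they are mutually inverse. Given a split-generating triangulated subcategory $\mathcal{D} \subseteq \mathcal{C}$, the inclusion induces a homomorphism $K_0(\mathcal{D}) \to K_0(\mathcal{C})$, and I would assign to $\mathcal{D}$ the image subgroup $H_{\mathcal{D}} := \im\bigl(K_0(\mathcal{D}) \to K_0(\mathcal{C})\bigr) \subseteq K_0(\mathcal{C})$. Conversely, given a subgroup $H \subseteq K_0(\mathcal{C})$, I would assign the full triangulated subcategory $\mathcal{C}_H$ whose objects are exactly those $X \in \mathcal{C}$ with $[X] \in H$. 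The content of the theorem is that these two assignments are inverse to one another, once we restrict the domain of the first assignment to \emph{dense} (split-generating) subcategories.

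The key steps, in order, are as follows. First I would check that $\mathcal{C}_H$ is a triangulated subcategory: it is closed under shifts since $[X[1]] = -[X] \in H$, and closed under cones since in an exact triangle $X \to Y \to Z \to X[1]$ one has $[Z] = [Y] - [X]$, so if two of the three classes lie in $H$ the third does as well; closure under isomorphism and the existence of the zero object are immediate. Second, I would verify that $\mathcal{C}_H$ is \emph{split-closed as a triangulated subcategory in the Karoubian sense used by Thomason}, namely that it contains every object of $\mathcal{C}$ whose class lies in $H$, including summands — this is automatic from the definition, and is exactly the sense in which Thomason's correspondence lands among \emph{thick} (dense, idempotent-respecting) subcategories. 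Third, I would show $H_{\mathcal{C}_H} = H$: the inclusion $H_{\mathcal{C}_H} \subseteq H$ is clear, and for the reverse I would use that $K_0(\mathcal{C})$ is generated by classes $[X]$ of objects, so any $h \in H$ is a $\mathbb{Z}$-combination of object classes, which after taking shifts and direct sums can be realized as $[W]$ for a single object $W$ with $[W] = h \in H$, hence $W \in \mathcal{C}_H$. Fourth, and dually, I would show $\mathcal{C}_{H_{\mathcal{D}}} = \mathcal{D}$ for split-generating $\mathcal{D}$: the inclusion $\mathcal{D} \subseteq \mathcal{C}_{H_{\mathcal{D}}}$ is clear, and the reverse containment is precisely the nontrivial input — an object $X$ with $[X] \in H_{\mathcal{D}}$ must actually belong to $\mathcal{D}$.

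The main obstacle is this last step, the reverse inclusion $\mathcal{C}_{H_{\mathcal{D}}} \subseteq \mathcal{D}$, and it is where all the real work of Thomason's theorem sits. One cannot argue purely on the level of Grothendieck classes, since a priori many nonisomorphic objects share a class; the point is that for a split-generating $\mathcal{D}$, every object of $\mathcal{C}$ is a summand of an object of $\mathcal{D}$ (this is the hypothesis, via $Tr\,\mathcal{D}$ together with Remark 1.5 of \cite{Thomason}), so an object $X$ with $[X] \in H_{\mathcal{D}}$ is already a summand of some $D \in \mathcal{D}$, and the condition $[X] \in H_{\mathcal{D}}$ forces its complementary summand also to have class in $H_{\mathcal{D}}$; one then invokes the fact that $\mathcal{D}$, being a \emph{thick} (idempotent-complete relative to $\mathcal{C}$) split-generating subcategory, is closed under taking those summands whose classes vanish in $K_0(\mathcal{C})/H_{\mathcal{D}}$. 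Making this idempotent-splitting argument precise is exactly the technical heart of \cite{Thomason}, and since the statement here simply cites that reference, I would present the correspondence and the three routine verifications above, and then invoke Thomason's theorem directly for the decisive reverse inclusion rather than reproving it.
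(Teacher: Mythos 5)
Your proposal is correct and follows essentially the same route as the paper, which likewise only records the explicit correspondence ($\mathcal{D} \mapsto K_{\mathcal{D}} = \{[a] : a \in Ob\ \mathcal{D}\}$, $K \mapsto \mathcal{D}_K = \{a : [a] \in K\}$, with split-generation of $\mathcal{D}_K$ checked via $[a \oplus a[1]] = 0$) and defers the decisive reverse inclusion $\mathcal{C}_{H_{\mathcal{D}}} \subseteq \mathcal{D}$ to \cite{Thomason}. One terminological caution: your use of \emph{thick} is misplaced, since dense (split-generating) subcategories are generally \emph{not} closed under summands (a dense thick subcategory would be all of $\mathcal{C}$); the correct statement, which is precisely the content of Thomason's theorem rather than a separate ``fact'' about $\mathcal{D}$, is the restricted summand-closure you quote, namely closure under those summands whose class lies in $K_{\mathcal{D}}$.
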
		
		The correspondence takes a triangulated split-generating subcategory $\mathcal{D} \subset \mathcal{C}$ to the subgroup  $K_\mathcal{D}:= \{[a] \in K_0(\mathcal{C}): a \in Ob \ \mathcal{D} \} \subset K_0(\mathcal{C})$ and associates to
	a subgroup $K \subset K_0(\mathcal{C})$ the subcategory $\mathcal{D}_K = \{a \in \mathcal{C}: [a] \in H \}$, which is split-generating since for any $a \in Ob \ \mathcal{C}$, we have $[a \oplus a[1]] = 0 \in K$ and so $a \oplus a[1] \in Ob \ \mathcal{D}_K$ by definition.   
For example, if $K_\mathcal{D}$ is the full group $K_0(\mathcal{C})$, Thomason's theorem implies the following.
		\begin{corollary}\cite{Thomason}
			\label{cor: split_generate}
		If  $\mathcal{D}$ is a split-generating triangulated subcategory of $\mathcal{C}$ and $K_\mathcal{D} = K_0(\mathcal{C})$, then $\mathcal{D} = \mathcal{C}$, i.e. $\mathcal{D}$ generates $\mathcal{C}$.
		\end{corollary}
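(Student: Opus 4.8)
The plan is to deduce this directly from the bijective correspondence of Theorem \ref{thm: Thomason}. The key observation is that the two assignments described immediately after that theorem --- namely $\mathcal{D} \mapsto K_\mathcal{D}$ and $K \mapsto \mathcal{D}_K$ --- are mutually inverse bijections between the split-generating triangulated subcategories of $\mathcal{C}$ and the subgroups of $K_0(\mathcal{C})$. In particular, starting from any split-generating triangulated subcategory $\mathcal{D}$, passing to its subgroup $K_\mathcal{D}$ and then back recovers $\mathcal{D}$ itself, i.e. $\mathcal{D} = \mathcal{D}_{K_\mathcal{D}}$.

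First I would identify the subcategory associated to the full group $K_0(\mathcal{C})$. By definition $\mathcal{D}_{K_0(\mathcal{C})} = \{ a \in \mathcal{C} : [a] \in K_0(\mathcal{C}) \}$, and since every object $a$ of $\mathcal{C}$ trivially satisfies $[a] \in K_0(\mathcal{C})$, this subcategory is all of $\mathcal{C}$. Then, under the hypothesis $K_\mathcal{D} = K_0(\mathcal{C})$, I would substitute into $\mathcal{D} = \mathcal{D}_{K_\mathcal{D}}$ to obtain $\mathcal{D} = \mathcal{D}_{K_0(\mathcal{C})} = \mathcal{C}$. Since $\mathcal{D}$ is triangulated and equal to $\mathcal{C}$, it in particular generates $\mathcal{C}$, which is the stated conclusion.

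There is essentially no obstacle here, as the content is entirely contained in Thomason's theorem; the only thing to verify is the elementary identity $\mathcal{D}_{K_0(\mathcal{C})} = \mathcal{C}$ together with the fact that the correspondence is a genuine bijection, so that $\mathcal{D} \mapsto K_\mathcal{D}$ is injective. If one prefers to avoid invoking the inverse assignment explicitly, one can argue by injectivity alone: both $\mathcal{D}$ and $\mathcal{C}$ are split-generating triangulated subcategories of $\mathcal{C}$ whose associated subgroup is $K_0(\mathcal{C})$, so the injectivity of $\mathcal{D} \mapsto K_\mathcal{D}$ forces $\mathcal{D} = \mathcal{C}$.
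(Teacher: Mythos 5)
Your proposal is correct and matches the paper's (implicit) argument: the paper states this corollary as an immediate consequence of the bijection in Theorem \ref{thm: Thomason}, exactly as you do by noting $\mathcal{D}_{K_0(\mathcal{C})} = \mathcal{C}$ and invoking injectivity of $\mathcal{D} \mapsto K_\mathcal{D}$. Your spelled-out version, including the alternative injectivity-only phrasing, is a faithful expansion of what the paper leaves to the reader.
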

		
		Abouzaid \cite{Abouzaid_splitgenerate} gave a geometric criterion for split-generation of the wrapped Fukaya category $\mathcal{W}(X)$.
Namely, for a finite collection of Lagrangians $L_1, \cdots, L_k$ in a Liouville domain $X$, Abouzaid defined the open-closed map $OC: HH_*(\mathcal{D}, \mathcal{D}) \rightarrow SH^*(X)$, where $\mathcal{D}= Tw(L_1, \cdots, L_k)$, and proved that if this map hits the unit in $SH^*(X)$,  then $\mathcal{D}$ split-generates $\mathcal{W}(X)$. Using Theorem \ref{thm: K_0_surjective_map} and Corollary \ref{cor: split_generate}, we upgrade Abouzaid's split-generation criterion to a generation criterion for
\textit{Weinstein} domains.   
		\begin{corollary}
			Let $X^{2n}$ be a Weinstein domain. If $OC: HH_*(\mathcal{D}, \mathcal{D})  \rightarrow SH^*(X)$ hits the unit and $[L_1], \cdots, [L_k]$ generate $H^n(X; \mathbb{Z})$, then $L_1, \cdots, L_k$ generate 
			$\mathcal{W}(X)$.
		\end{corollary}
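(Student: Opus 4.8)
The plan is to assemble three ingredients already available: Abouzaid's split-generation criterion \cite{Abouzaid_splitgenerate}, the surjectivity of the map $\mathcal{L}$ from Theorem \ref{thm: K_0_surjective_map}, and Thomason's correspondence in the form of Corollary \ref{cor: split_generate}. Let $\mathcal{D} = Tw(L_1, \ldots, L_k)$ as in Abouzaid's criterion and let $Tr\ \mathcal{D} = H^0(\mathcal{D})$ be the associated triangulated subcategory of $D^b\mathcal{W}(X)$. Since $OC$ hits the unit by hypothesis, Abouzaid's theorem gives that $Tr\ \mathcal{D}$ split-generates $D^b\mathcal{W}(X)$. To upgrade this to genuine generation via Corollary \ref{cor: split_generate}, it suffices to show that the subgroup $K_{Tr\ \mathcal{D}} \subset K_0(\mathcal{W}(X))$ is the full group.

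First I would identify $K_{Tr\ \mathcal{D}}$ explicitly. Every object of $Tr\ \mathcal{D}$ is an iterated mapping cone of shifted copies of the $L_i$, and in the Grothendieck group an exact triangle forces $[b] = [a] + [c]$ while a shift satisfies $[a[1]] = -[a]$. Hence the class of any object of $Tr\ \mathcal{D}$ lies in the subgroup $\langle [L_1], \ldots, [L_k] \rangle$ generated by the classes $[L_i] \in K_0(\mathcal{W}(X))$, and conversely each $[L_i]$ is itself the class of an object of $\mathcal{D}$. Therefore $K_{Tr\ \mathcal{D}} = \langle [L_1], \ldots, [L_k]\rangle$.

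Next I would invoke Theorem \ref{thm: K_0_surjective_map}. Interpreting the cohomology classes via Poincaré--Lefschetz duality, each $L_i$ (closed, or with Legendrian boundary) is a Poincaré-dual exact Lagrangian representative of its own class $[L_i] \in H^n(X; \mathbb{Z})$, so the defining property of $\mathcal{L}$ gives $\mathcal{L}([L_i]) = [L_i] \in K_0(\mathcal{W}(X))$. By hypothesis the classes $[L_1], \ldots, [L_k]$ generate $H^n(X; \mathbb{Z})$, and $\mathcal{L}$ is a surjective homomorphism; consequently the images $\mathcal{L}([L_i]) = [L_i]$ generate $\mathcal{L}(H^n(X;\mathbb{Z})) = K_0(\mathcal{W}(X))$. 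Combining with the previous step, $K_{Tr\ \mathcal{D}} = \langle [L_1], \ldots, [L_k]\rangle = K_0(\mathcal{W}(X))$.

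Finally, since $Tr\ \mathcal{D}$ split-generates $D^b\mathcal{W}(X)$ and $K_{Tr\ \mathcal{D}} = K_0(\mathcal{W}(X))$, Corollary \ref{cor: split_generate} yields $Tr\ \mathcal{D} = D^b\mathcal{W}(X)$, i.e. $L_1, \ldots, L_k$ generate $\mathcal{W}(X)$. The only point requiring genuine care — which I regard as the main potential obstacle — is the compatibility step ensuring $\mathcal{L}([L_i]) = [L_i]$, namely that the Poincaré-dual assignment defining $\mathcal{L}$ sends the a priori cohomology class of each chosen generator to its own $K_0$-class; once this matching is verified, the argument is a direct assembly of the cited results, with no J-holomorphic curve input beyond what enters Abouzaid's criterion.
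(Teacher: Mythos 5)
Your proof is correct and follows essentially the same route as the paper's own argument: Abouzaid's criterion gives split-generation, Theorem \ref{thm: K_0_surjective_map} transfers the hypothesis that $[L_1], \cdots, [L_k]$ generate $H^n(X; \mathbb{Z})$ to the statement that they generate $K_0(\mathcal{W}(X))$, and Corollary \ref{cor: split_generate} then upgrades split-generation to generation. The one compatibility point you rightly flag, namely $\mathcal{L}([L_i]) = [L_i]$, is precisely what Proposition \ref{prop: map_canonical} supplies (the tautological map $\oplus_{L \subset X} O(L) \rightarrow K_0(\mathcal{W}(X))$ factors through $H^n(X; \mathbb{Z})$ via $\mathcal{L}$), so your more detailed write-out is a complete version of the paper's proof rather than a different one.
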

		\begin{proof}
			By Abouzaid's criterion, $L_1, \cdots, L_k$ split-generate $\mathcal{W}(X)$. Since  $[L_1], \cdots, [L_k]$ generate $H^n(X; \mathbb{Z})$, by Theorem \ref{thm: K_0_surjective_map},  $[L_1], \cdots, [L_k]$ generate $K_0(\mathcal{W}(X))$ and so by 
			Corollary \ref{cor: split_generate}, 	$L_1, \cdots, L_k$ generate 
			$\mathcal{W}(X)$. 
		\end{proof}
		Hence for Weinstein domains, the only difference between split-generation and generation is the cohomology classes of the Lagrangians. 
		In Section \ref{sec_intro: flexible_complements}, we discuss a geometric interpretation of this result and give some consequences. 
 
		Next we use Thomason's theorem to compute the number of generators of triangulated category $\mathcal{C}$ in terms of the number of generators of $K_0(\mathcal{C})$.	
		For an abelian group $A$, let $g(A)$ denote the minimum number of generators of $A$ as an $\mathbb{Z}$-module. 
		\begin{proposition}\label{prop: thomason_generators}
			Suppose that $\mathcal{C}$ is a triangulated category that has a finite collection of  generators. 
			Then the minimum number of generators of $\mathcal{C}$ is $\max\{g(K_0(\mathcal{C})), 1\}$.
		\end{proposition}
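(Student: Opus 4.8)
The plan is to prove the two inequalities separately: the minimum number of generators of $\mathcal{C}$ is at least $\max\{g(K_0(\mathcal{C})),1\}$, and $\mathcal{C}$ can in fact be generated by exactly that many objects. The lower bound is soft and comes from the class map to $K_0(\mathcal{C})$, while the upper bound is where Thomason's correspondence (Theorem \ref{thm: Thomason}), in the form of Corollary \ref{cor: split_generate}, does the real work: it lets me trade a large generating set for a small \emph{split}-generating set whose Grothendieck classes I choose by hand.

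For the lower bound, suppose $H_1,\dots,H_k$ generate $\mathcal{C}$. Every object of $\mathcal{C}$ is then a twisted complex in the $H_i$, and since an exact triangle $a\to b\to c\to a[1]$ gives $[b]=[a]+[c]$, the assignment $a\mapsto[a]\in K_0(\mathcal{C})$ sends every object into the subgroup $\langle[H_1],\dots,[H_k]\rangle$. Hence these $k$ elements generate $K_0(\mathcal{C})$, so $k\ge g(K_0(\mathcal{C}))$; and if $\mathcal{C}\ne 0$ then also $k\ge 1$, since the empty collection generates only the zero category. Thus every generating set has at least $\max\{g(K_0(\mathcal{C})),1\}$ elements.

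For the upper bound, I first replace the given finite generating set $G_1,\dots,G_m$ by the single object $G:=G_1\oplus\cdots\oplus G_m$. Each $G_i$ is a summand of $G$, so the split-closure of $Tr\{G\}$ contains every $G_i$ and hence all of $\mathcal{C}$; thus $G$ split-generates $\mathcal{C}$. Now set $k:=\max\{g(K_0(\mathcal{C})),1\}$ and pick $d_1,\dots,d_k\in K_0(\mathcal{C})$ generating $K_0(\mathcal{C})$, taking $d_1=0$ if $K_0(\mathcal{C})=0$. The key elementary observation is that every class in $K_0(\mathcal{C})$ is represented by an honest object: using $[x\oplus y]=[x]+[y]$ and $[x[1]]=-[x]$, any integer combination of object-classes equals $[c]$ for a single object $c$. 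I therefore choose an object $c_1$ with $[c_1]=d_1-[G]$ and objects $c_i$ with $[c_i]=d_i$ for $i\ge 2$, and define $H_1:=G\oplus c_1$ and $H_i:=c_i$ for $i\ge 2$. Then $[H_1]=d_1$ and $[H_i]=d_i$, so the $[H_i]$ generate $K_0(\mathcal{C})$, while $G$ is still a summand of $H_1$, so $\{H_1,\dots,H_k\}$ split-generates $\mathcal{C}$. Applying Corollary \ref{cor: split_generate} to $\mathcal{D}=Tr\{H_1,\dots,H_k\}$: it is a split-generating triangulated subcategory, and $K_{\mathcal{D}}\supseteq\langle[H_1],\dots,[H_k]\rangle=K_0(\mathcal{C})$, hence equals it; the corollary gives $\mathcal{D}=\mathcal{C}$, so $H_1,\dots,H_k$ generate $\mathcal{C}$.

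I expect the only real subtlety to be the simultaneous bookkeeping that (i) keeps $G$ as a genuine summand of one of the $H_i$, so that split-generation survives, and (ii) forces the classes $[H_i]$ to hit a prescribed generating set of $K_0(\mathcal{C})$. The ``$\max\{-,1\}$'' and the choice $d_1=0$ are exactly what is needed to cover the case $K_0(\mathcal{C})=0$ with $\mathcal{C}\ne 0$ (phantom-type categories), where a single generator is still necessary; the degenerate zero category, where the displayed value $1$ reflects the convention that one zero object is allowed, is the only case requiring a remark. Everything else is formal, resting on additivity of the class map over triangles and on Corollary \ref{cor: split_generate}.
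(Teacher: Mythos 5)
Your proof is correct and follows essentially the same strategy as the paper: combine a split-generating object built from the old generators with objects realizing a generating set of $K_0(\mathcal{C})$, then invoke Corollary \ref{cor: split_generate}. The only cosmetic difference is bookkeeping — the paper zeroes out the class of the big split-generator via summands $A_i \oplus A_i[1]$ and tacks on $B_1$, whereas you absorb $[G]$ by choosing $[c_1] = d_1 - [G]$ — and you additionally spell out the lower bound and the realizability of every $K_0$-class by an object, both of which the paper leaves implicit.
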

		\begin{proof}
			Let $A_1, \cdots, A_k$ be a set of generators for $\mathcal{C}$ and let  $B_1, \cdots, B_d$ be objects of $\mathcal{C}$ that give a minimal collection of generators of $K_0(\mathcal{C})$ as an abelian group, i.e. $d = g(K_0(\mathcal{C}))$. 
			Then
			$A_1 \oplus A_1[1] \cdots \oplus A_k \oplus A_k[1] \oplus B_1, B_2, \cdots, B_d$ split-generate $\mathcal{C}$ and also generate $K_0(\mathcal{C})$ and so by Thomason's result  actually generate $\mathcal{C}$. 
			\end{proof}
	Combining Theorem \ref{thm: K_0_surjective_map} with Proposition \ref{prop: thomason_generators}, we get the following bound on the number of generators for $\mathcal{W}(X)$.
		\begin{corollary}\label{cor: number_generators}
			For Weinstein $X^{2n}$, the minimum number of generators of $\mathcal{W}(X)$ is at most 
			$\max\{g(H^n(X; \mathbb{Z} )), 1\}.$
		\end{corollary}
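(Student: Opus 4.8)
The plan is to combine Proposition \ref{prop: thomason_generators} with the surjectivity of the map $\mathcal{L}$ supplied by Theorem \ref{thm: K_0_surjective_map}. First I would verify the hypothesis of Proposition \ref{prop: thomason_generators}, namely that $\mathcal{W}(X)$ has a finite collection of generators. This is exactly the co-core generation result of \cite{chantraine_cocores_generate, ganatra_generation}: since $X^{2n}$ is Weinstein it admits a handle presentation with finitely many index $n$ handles, and the corresponding Lagrangian co-cores form a finite generating set for $\mathcal{W}(X)$. With this in hand, Proposition \ref{prop: thomason_generators} gives that the minimum number of generators of $\mathcal{W}(X)$ (equivalently of $D^b\mathcal{W}(X)$) equals $\max\{g(K_0(\mathcal{W}(X))), 1\}$.

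Next I would bound $g(K_0(\mathcal{W}(X)))$ from above. By Theorem \ref{thm: K_0_surjective_map} the homomorphism $\mathcal{L}: H^n(X;\mathbb{Z}) \to K_0(\mathcal{W}(X))$ is surjective. For abelian groups a surjection can only decrease the minimal number of generators, since the images of any generating set of the source generate the target; hence $g(K_0(\mathcal{W}(X))) \le g(H^n(X;\mathbb{Z}))$.

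Finally, because $t \mapsto \max\{t,1\}$ is monotone in $t$, this inequality upgrades to $\max\{g(K_0(\mathcal{W}(X))), 1\} \le \max\{g(H^n(X;\mathbb{Z})), 1\}$. Combining with the equality from the first step yields the stated bound on the minimum number of generators of $\mathcal{W}(X)$.

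The argument is essentially bookkeeping: both inputs are already established in the excerpt, so there is no genuine analytic or geometric obstacle. The only point requiring care is the finite-generation hypothesis of Proposition \ref{prop: thomason_generators}, which is \emph{not} automatic for an arbitrary triangulated category but holds for $\mathcal{W}(X)$ precisely by the co-core generation theorem; I would therefore flag that dependence explicitly rather than leave it implicit.
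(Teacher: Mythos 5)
Your proposal is correct and follows exactly the paper's route: the paper deduces Corollary \ref{cor: number_generators} by combining Theorem \ref{thm: K_0_surjective_map} with Proposition \ref{prop: thomason_generators}, with finite generation supplied by the co-core generation theorem of \cite{chantraine_cocores_generate, ganatra_generation}, just as you do. Your explicit flagging of the finite-generation hypothesis (which the paper leaves implicit) is a reasonable bit of extra care but does not change the argument.
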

	
	Corollary \ref{cor: number_generators} was first proven in \cite{Lazarev_critical_points} using symplectic flexibility techniques for $n \ge 3$; the result here also holds for $n =2$. In Section \ref{sec_intro: flexible_complements}, we discuss the relation with symplectic flexibility. This result is sharp since the number of generators for $\mathcal{W}(X)$ is lower bounded by the number of generators of $K_0(\mathcal{W}(X))$, which can be isomorphic to $H^n(X; \mathbb{Z})$; see Example \ref{ex: lower_bound_k0_lagrangians}.
\\

Now we give some examples of split-generating subcategories and explain how to construct exotic presentations for categories. Let $A_1, \cdots, A_k$ be a set of generators for $\mathcal{C}$, i.e. $\mathcal{C} = Tr(A_1, \cdots, A_k)$. Then $A_1 \oplus \cdots \oplus A_k$ is a split-generator and hence 
	$Tr(A_1 \oplus \cdots \oplus A_k) \subset \mathcal{C}$ is a split-generating subcategory; in fact, Thomason's theorem shows that any split-generating subcategory $\mathcal{D}$ is generated by a collection of  direct sums of $A_i$, i.e. $\mathcal{D} = Tr(\oplus_{i \in I_1} A_i, \cdots, \oplus_{i \in I_j} A_i)$
	where $I_1, \cdots, I_j$ are subsets of $\{1, \cdots, k\}$ (possibly with repeated elements) so that $I_1 \cup \cdots \cup I_j = \{1, \cdots, k\}$.
	By taking different sums of generators that generate the same subgroup of the Grothendieck group, we get different choices of generators for the same category, 
i.e. an `exotic' presentation.  
For example, 
$Tr A$ and $Tr(A \oplus A \oplus A[1])$ are equivalent categories since they both split-generate $Tr A$ and define the same subgroups of the Grothendieck group; one can explicitly express $A$ as a twisted complex of $A \oplus A \oplus A[1]$.

	By
	\cite{chantraine_cocores_generate, ganatra_generation}, the main examples of generators for the Fukaya category of a Weinstein domain $X^{2n}$ are the index $n$ co-cores $C_1, \cdots, C_i$. 
	The geometric boundary connected sum $C_1 \natural \cdots \natural C_i$ (along isotropic arcs)
	is quasi-isomorphic to the algebraic direct sum 
	$C_1 \oplus \cdots \oplus C_i$ in $\mathcal{W}(X)$, see \cite{ganatra_generation}, and so this geometric Lagrangian split-generates. 
	By applying Thomason's result and Theorem \ref{thm: K_0_surjective_map}, we can classify the subcategories generated by such sums. 
		\begin{corollary}\label{cor: different_presentations_generate}
	If $X^{2n}$ has two Weinstein presentations with co-cores $C_1, \cdots, C_i$ and $D_1, \cdots, D_j$ respectively and
			$[C_1 \natural \cdots \natural C_i] = 
			[D_1 \natural \cdots \natural D_j] \in H^n(X; \mathbb{Z})$,  then
			$Tr (C_1 \natural \cdots \natural C_i), Tr (D_1 \natural \cdots \natural D_j) $  coincide. Also, if $[L] = k[C_1 \natural \cdots C_i] \in H^n(X; \mathbb{Z})$ for some $k$, then $L$ is generated by $C_1\natural \cdots \natural C_i$. 			
		\end{corollary}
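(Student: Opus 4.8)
The plan is to realize both $C := C_1 \natural \cdots \natural C_i$ and $D := D_1 \natural \cdots \natural D_j$ as split-generators of $\mathcal{W}(X)$ and then feed Thomason's correspondence (Theorem \ref{thm: Thomason}) the hypothesis on cohomology classes, translating it into an equality of the associated subgroups of $K_0(\mathcal{W}(X))$. First I would recall that for either Weinstein presentation the co-cores generate $\mathcal{W}(X)$ by \cite{chantraine_cocores_generate, ganatra_generation}, and that the boundary connected sum $C_1 \natural \cdots \natural C_i$ is quasi-isomorphic to the direct sum $C_1 \oplus \cdots \oplus C_i$, so that the single object $C$ split-generates $\mathcal{W}(X)$; likewise for $D$. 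Hence $Tr(C)$ and $Tr(D)$ are split-generating triangulated subcategories of $D^b\mathcal{W}(X)$, and are eligible for the correspondence.

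For the first statement, the key computation is that the subgroup of $K_0(\mathcal{W}(X))$ associated to $Tr(C)$ is exactly the cyclic subgroup $\mathbb{Z}\cdot[C]$. This holds because $Tr(C) = H^0(Tw(C))$: every object of $Tw(C)$ is an iterated cone of shifts of the single object $C$, so its class in $K_0$ is an integer multiple of $[C]$, and conversely $m[C]$ is realized by $C^{\oplus m}$ and $-[C]$ by $C[1]$. Thus $K_{Tr(C)} = \mathbb{Z}\cdot[C]$ and similarly $K_{Tr(D)} = \mathbb{Z}\cdot[D]$. Now the hypothesis $[C_1 \natural \cdots \natural C_i] = [D_1 \natural \cdots \natural D_j]$ in $H^n(X;\mathbb{Z})$ invokes the ``in particular'' clause of Theorem \ref{thm: K_0_surjective_map}: equal cohomology classes of Lagrangians give $[C] = [D]$ in $K_0(\mathcal{W}(X))$. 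Therefore $K_{Tr(C)} = K_{Tr(D)}$, and since Thomason's correspondence is a bijection, the two split-generating subcategories coincide, $Tr(C) = Tr(D)$.

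For the second statement, I would argue that the hypothesis $[L] = k[C]$ in $H^n(X;\mathbb{Z})$ together with the fact that $\mathcal{L}$ is a homomorphism gives $[L] = \mathcal{L}([L]) = k\,\mathcal{L}([C]) = k[C]$ in $K_0(\mathcal{W}(X))$, so that $[L] \in \mathbb{Z}\cdot[C] = K_{Tr(C)}$. Under Thomason's correspondence the subgroup $K_{Tr(C)}$ recovers the subcategory $\mathcal{D}_{K_{Tr(C)}} = \{a : [a] \in K_{Tr(C)}\}$, and because the correspondence is inverse to $\mathcal{D} \mapsto K_\mathcal{D}$ on split-generating subcategories, this subcategory is precisely $Tr(C)$. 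Hence $[L] \in K_{Tr(C)}$ forces $L \in Tr(C)$, i.e. $L$ is generated by $C_1 \natural \cdots \natural C_i$, as claimed.

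The argument is essentially formal once these inputs are in place; the only point requiring care is the identification $K_{Tr(C)} = \mathbb{Z}\cdot[C]$ and, dually, the identity $\mathcal{D}_{K_{Tr(C)}} = Tr(C)$ --- that is, checking that the split-generating subcategory generated by the single split-generator $C$ is exactly the full subcategory of objects whose $K_0$-class lies in $\mathbb{Z}\cdot[C]$. This is where the bijectivity, and not merely the surjectivity, of Thomason's correspondence is used. I do not expect a genuine obstacle here, but one should confirm that $C$, being quasi-isomorphic to $C_1 \oplus \cdots \oplus C_i$, is a split-generator in the precise sense required by Theorem \ref{thm: Thomason}, so that $Tr(C)$ genuinely qualifies as a split-generating triangulated subcategory to which the correspondence applies.
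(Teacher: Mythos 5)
Your proof is correct and follows essentially the same route as the paper: translate the cohomological hypothesis into an equality of $K_0$-classes via Theorem \ref{thm: K_0_surjective_map}, identify the subgroup attached to $Tr(C_1 \natural \cdots \natural C_i)$ as $\mathbb{Z}\cdot[C_1 \natural \cdots \natural C_i]$, and conclude by the bijectivity of Thomason's correspondence. The only cosmetic difference is in the second claim, where the paper compares the two split-generating subcategories $Tr(L, C_1 \natural \cdots \natural C_i)$ and $Tr(C_1 \natural \cdots \natural C_i)$ instead of applying the inverse map $K \mapsto \mathcal{D}_K$ directly, but both are the same use of the one-to-one correspondence.
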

	\begin{proof}
		Since $Tr(C_1 \natural \cdots \natural C_i), Tr(D_1 \natural \cdots \natural D_j)$ are split-generating subcategories and define the same subgroup of the Grothendieck group by Theorem \ref{thm: K_0_surjective_map}, they coincide by Thomason's theorem. The same holds for
		$Tr(L, C_1 \natural \cdots \natural C_i), Tr(C_1 \natural \cdots \natural C_i)$ and so $L$ is generated by $C_1\natural \cdots \natural C_i$. 	
	\end{proof}			
		Therefore $Tr(C_1 \natural \cdots \natural C_i)$ is the subcategory of Lagrangians whose cohomology class is generated by $C_1 \natural \cdots \natural C_i$. In Section \ref{sec_intro: flexible_complements}, we will discuss some geometric analogs.
		 \begin{examples}
		 	If $\Sigma^{2n}$ is a Weinstein ball 
		 with index $n$ co-cores $C_1, \cdots, C_i$, then $D:=C_1 \natural \cdots \natural C_i$ generates $\mathcal{W}(\Sigma)$ since 
		 $H^n(\Sigma; \mathbb{Z}) = 0$ implies
		 $K_0(\mathcal{W}(\Sigma)) = 0$.	Furthermore, 
		 $\natural^k_{i=1} D$ also generates $\mathcal{W}(\Sigma)$ for any $k \ge 1$, giving different presentations for this category. 	 
		 \end{examples}		 
		\begin{examples}\label{example: cotangent_generators}
		The `standard' Weinstein presentation of $T^*S^n_{std}$ is $B^{2n}_{std} \cup H^n_{\Lambda_{u}}$, i.e.  a single index $n$ handle attached along the Legendrian unknot $\Lambda_u \subset \partial B^{2n}_{std}$ with co-core the cotangent fiber $T^*_p S^n \subset T^*S^n$. Hence by 
			\cite{chantraine_cocores_generate, ganatra_generation}, $T^*_p S^n$ generates $\mathcal{W}(T^*S^n)$;  also see \cite{abouzaid_cotangent_fiber}.		
		Let  $D_{k,m}: = \natural_{i=1}^k T^*_{x_i} S^n
	\natural_{i=1}^m \overline{T^*_{y_i}S^n}$ for distinct points $x_i, y_i$ in $S^n$. Since 
	$D_{k, m } \cong \oplus_{i=1}^k T^*_p S^n  \oplus_{i=1}^m T^*_p S^n[1]$ in $\mathcal{W}(T^*S^n)$, $D_{k,m}$ 
	split-generates $\mathcal{W}(T^*S^n)$ if either $k \ge 1$ or $m \ge 1$. 	Furthermore, 
		$[D_{k,m}] \in  H^n(T^*S^n; \mathbb{Z}) \cong \mathbb{Z}$ is $k-m \in \mathbb{Z}$. So  $D_{k+1,k}$ generates $H^n(T^*S^n; \mathbb{Z})$ and 
	hence generates $K_0(\mathcal{W}(T^*S^n))$. So by Thomason's theorem $D_{k+1, k}$ generates $\mathcal{W}(T^*S^n)$
	and $\mathcal{W}(T^*S^n_{std}) = Tw \ D_{k+1,k} $ is an `exotic' presentation for the category $\mathcal{W}(T^*S^n) = Tw \ T^*_p S^n = Tw \ D_{1,0}$; namely, the $A_\infty$-algebras $CW(D_{k+1, k}, D_{k+1,k})$ are not quasi-isomorphic for different $k$ but are derived Morita equivalent. 	
	In Section \ref{sec_intro: flexible_complements}, we  give a geometric exotic Weinstein presentation for $T^*S^n_{std}$.  We also have the following refined generation result:  $[L] = c [\natural_{i=1}^k T_{x_i}^* S^n] \in H^n(T^*S^n; \mathbb{Z})$ for some $c \in \mathbb{Z}$ if and only if $L$ is generated by $\natural_{i=1}^k T^*S^n_{x_i}$ (although $\natural_{i=1}^k T^*S^n_{x_i}$  is not a generator of the full category $\mathcal{W}(T^*S^n)$ for $k > 1$). 
		\end{examples}

		\subsection{Flexible complements and exotic Weinstein presentations}\label{sec_intro: flexible_complements}
		
 If $X^{2n}$ has index $n$ co-cores $C_1, \cdots, C_i$, then Thomason's result and Theorem \ref{thm: K_0_surjective_map} show that the only invariant of the split-generating subcategory $Tr (C_1 \natural \cdots \natural C_i)$ is 
the  class $[C_1 \natural \cdots \natural C_i]\in H^n(X; \mathbb{Z})$. So if $[C_1 \natural \cdots \natural C_i]$ generates $H^n(X; \mathbb{Z})$, then $C_1 \natural \cdots \natural C_i$ generates $\mathcal{W}(X)$. The next symplectic flexibility result is the geometric incarnation of Thomason's theorem, which is a kind of algebraic flexibility statement.
		\begin{theorem}\label{thm: flexible_complement}
			If $X^{2n}, n \ge 3$, is  a Weinstein domain with index $n$ co-cores  $C_1, \cdots, C_i$, then $X \backslash (C_1 \natural \cdots \natural C_i)$ is a flexible subdomain of $X$ and hence determined by the formal Lagrangian class of $C_1 \natural \cdots \natural C_i \subset X$. 
	In particular, if $C_1 \natural \cdots \natural C_i$ generates $H^n(X; \mathbb{Z})$ and $\pi_1(X) = 0$, then $X$ has a Weinstein presentation with a single index $n$ handle with co-core $C_1 \natural \cdots \natural C_i$.
		\end{theorem}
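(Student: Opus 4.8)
The plan is to pin down the Weinstein homotopy type of the complement $W := X \setminus \nu(C)$, where $C = C_1 \natural \cdots \natural C_i$ and $\nu(C)$ is a standard Weinstein neighborhood of the disk $C$, and then feed it into the flexible h-principle. First I would record that $C$ is a \emph{regular} Lagrangian disk: each co-core $C_j$ is regular (its neighborhood is a standard index $n$ Weinstein handle, with the Liouville flow transverse to it), and boundary connected sum along isotropic arcs in $\partial X$ preserves regularity, so $\nu(C)$ is again a single index $n$ Weinstein handle and $X = W \cup_\partial \nu(C)$ is recovered by attaching it. The co-core of that handle is, by construction, $C = C_1 \natural \cdots \natural C_i$, so the ``in particular'' clause reduces to showing that $W$ is Weinstein homotopic to a subcritical domain under the stated hypotheses.

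The heart of the argument --- and the step I expect to be the main obstacle --- is showing that $W$ is flexible. Geometrically, the disk $C$ threads through every index $n$ critical point $p_j$ of $X$ (it contains an isotopic copy of each co-core $C_j$), so excising $\nu(C)$ removes all $i$ index $n$ critical points at once; the Liouville flow should then retract $W$ onto the union of the subcritical skeleton of $X$ with the connecting arcs. I would make this precise by exhibiting a Weinstein homotopy of $W$ onto the subdomain $X_0$ assembled from the index $\le n-1$ handles of $X$, so that any index $n$ handles surviving in a handle presentation of $W$ are attached along Legendrians bounding the regions vacated by the removed co-cores and are therefore loose. Since subcritical (a fortiori) domains are flexible, $W$ is flexible. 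The delicate points are (i) that the connected-sum arcs create no new index $n$ handles, and (ii) that looseness, equivalently subcriticality, is realized symplectically and not merely smoothly, for which I would invoke the Whitney-trick-to-Weinstein-cancellation mechanism of Cieliebak--Eliashberg \cite{CE12} in the range $n \ge 3$.

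Granting flexibility, the first assertion follows from the flexible h-principle of \cite{CE12}: a flexible Weinstein domain is determined up to Weinstein homotopy by its underlying almost complex and formal Lagrangian data, and the formal data of $W = X \setminus \nu(C)$ is exactly encoded by the formal Lagrangian homotopy class of $C \subset X$ together with the ambient $X$. Hence $W$ is determined by the formal Lagrangian class of $C_1 \natural \cdots \natural C_i$, as claimed.

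For the final clause I would add the cohomological and fundamental-group hypotheses. Since $C$ has real codimension $n \ge 3$ in $X$, the inclusion $W \hookrightarrow X$ is a $\pi_1$-isomorphism, so $\pi_1(W) = \pi_1(X) = 0$. Because the co-cores represent a basis of $H^n(X;\Z) \cong H_n(X,\partial X;\Z)$ under Lefschetz duality and $[C] = [C_1 \natural \cdots \natural C_i]$ generates $H^n(X;\Z)$, attaching the single handle $\nu(C)$ to $W$ accounts for all of $H^n(X;\Z)$; the handle-attachment (Mayer--Vietoris) sequence then forces $H^n(W;\Z) = 0$. Thus $W$ is a simply connected flexible Weinstein domain with vanishing middle cohomology, and by flexible handle cancellation \cite{CE12} (the algebraically canceling index $n-1,n$ handles cancel symplectically because their attaching spheres are loose) $W$ is Weinstein homotopic to a subcritical domain $X_{sub}$. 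Replacing $W$ by $X_{sub}$ in $X = W \cup_\partial \nu(C)$ exhibits $X = X_{sub} \cup H^n$ as a subcritical piece together with a single index $n$ handle whose co-core is $C_1 \natural \cdots \natural C_i$, completing the proof; cf. \cite{Lazarev_critical_points}.
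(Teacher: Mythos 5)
The heart of your argument does not work as proposed, and the gap is exactly where you predicted the main obstacle would be. You propose to exhibit a Weinstein homotopy of $W = X \setminus \nu(C)$ onto the subcritical subdomain $X_0$, i.e.\ that excising the single disk $C = C_1 \natural \cdots \natural C_i$ ``removes all $i$ index $n$ critical points at once.'' This is false in general: removing one Lagrangian disk can kill at most one class, so $H^n(W;\Z)$ need not vanish and $W$ cannot retract onto $X_0$. Concretely, in the paper's Corollary \ref{cor: Legendrian_no_reps_flex}, $T^*S^n_{std} \setminus \natural_{i=1}^s T^*_{x_i}S^n$ has $H^n \cong \Z/s\Z$; and whenever $H^n(X;\Z)$ has rank at least $2$, the complement retains nonzero $H^n$. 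So $W$ genuinely keeps $i-1$ index $n$ handles, and the entire content of the theorem is that their attaching spheres can be arranged to be \emph{loose}. Your fallback — that surviving handles are ``attached along Legendrians bounding the regions vacated by the removed co-cores and are therefore loose'' — has no mechanism behind it and is false without further geometric input: removing the single co-core $C_1$ (no connected sum) leaves $W_{sub} \cup H^n_{\Lambda_2} \cup \cdots \cup H^n_{\Lambda_k}$ with the \emph{original} attaching spheres $\Lambda_j$, which is typically not flexible. The paper's mechanism is Propositions \ref{prop:handle_slide_cocores} and \ref{prop: handleslide_reidemeister}: handle-slide each $H^n_{\Lambda_j}$ over $H^n_{\Lambda_1}$ \emph{after a Reidemeister twist}. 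This simultaneously changes the co-core of the first handle into $C_1 \natural \cdots \natural C_k$ (so that deleting this one disk deletes exactly one handle) and replaces the remaining attaching spheres by $h_{\Lambda_1}(R(\Lambda_j))$, whose link is loose once $H^n_{\Lambda_1}$ is removed (though, as the paper stresses, not loose in the complement of $\Lambda_1$) by \cite{Lazarev_critical_points}. Nothing in your outline produces this twisted handle-slide description, and without it the looseness assertion is unsupported.

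Two subsidiary problems. First, there is a circularity in your setup: you take $X = W \cup_\partial \nu(C)$ with $\nu(C)$ a single Weinstein handle whose co-core is $C$ as a starting point, but $C$ is not a co-core of the given presentation, and ``removing a co-core removes a handle'' applies only after $X$ has been re-presented so that $C$ \emph{is} a co-core of a single handle — which is precisely what the handle-slide propositions accomplish; a priori the complement carries only a Liouville structure (as the paper notes just after the theorem statement), and producing the Weinstein, indeed flexible, structure is the theorem itself. Second, your appeal to the Cieliebak--Eliashberg cancellation is circular as used: their symplectic cancellation requires \emph{geometric} intersection number one, and upgrading algebraic to geometric intersection one requires Murphy's h-principle for loose Legendrians — i.e.\ it presupposes the looseness you are trying to establish. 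By contrast, your treatment of the final clause ($\pi_1(W)=0$ by codimension, $H^n(W;\Z)=0$ from $[C]$ generating $H^n(X;\Z)$ via the pair sequence, then smooth handle trading transferred to the Weinstein category by flexibility) is sound in outline and matches how the paper argues in Corollary \ref{cor: non_injective_TSn}, but it only becomes available once flexibility of $W$ has actually been established.
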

See Section \ref{sec: flexible_complements} for the proof. Since they are co-cores for a fixed Weinstein presentation, $C_1 \coprod \cdots \coprod C_i \subset X$ is a collection of disjointly embedded Lagrangian disks. In Theorem \ref{thm: flexible_complement}, we take any (framed) isotropic arc $\gamma_i$ from $\partial C_{i-1}$ to $\partial C_{i}$  (and disjoint from all other $\partial C_j$) and use this to form the boundary connected sum  $C_1 \natural \cdots \natural C_i := C_1 \natural_{\gamma_2} \cdots \natural_{\gamma_{i}} C_i$; see \cite{Riz} for details. We can also use any orientations on $C_i$. 
Since we use each disk $C_i$ only once in the connected sum, the boundary connnected sum $C_1 \natural \cdots \natural C_i$ is also an exact Lagrangian disk and so $X \backslash C_1 \natural \cdots \natural C_i$ has a natural Liouville structure; our result is that this complement actually has a Weinstein structure, which is in fact flexible. 
	
Theorem \ref{thm: flexible_complement} refines the main result of previous work \cite{Lazarev_critical_points}: there is a Weinstein homotopy from $W^{2n}$ to $V_{flex}^{2n} \cup H^n_{\Lambda}$ for some flexible domain $V_{flex}$ and  Legendrian $\Lambda \subset \partial V_{flex}$.		
	Theorem \ref{thm: flexible_complement} identifies the co-core of 
	$H^n_{\Lambda}$. Namely, the flexible domain $W \backslash (C_1 \natural \cdots \natural C_k)$ is precisely $V_{flex}$ and the co-core of $H^n_{\Lambda}$ is $C_1 \natural \cdots \natural C_k$. The Weinstein homotopy in \cite{Lazarev_critical_points}  involves handle-sliding all handles over one fixed handle. So  to prove Theorem \ref{thm: flexible_complement}, we show that handle-slides change the co-cores by a boundary connected sum along a `short' Reeb chord; see Propositions \ref{prop:handle_slide_cocores}, \ref{prop: handleslide_reidemeister}.

	As noted in Example \ref{example: cotangent_generators}, the `standard' Weinstein presentation for this domain is $B^{2n}_{std} \cup H^n_{\Lambda_u}$, a single $n$-handle attached the Legendrian unknot $\Lambda_u \subset (S^{2n-1}, \xi_{std}) = \partial B^{2n}_{std}$. Theorem \ref{thm: flexible_complement} gives exotic presentations  for $T^*S^n_{std}$ with a single $n$-handle attached along different Legendrians. 
\begin{corollary}\label{cor: non_injective_TSn_intro}
			If $n \ge 3, k \ge 1$, there is a Legendrian sphere $\Lambda_k \subset (S^{2n-1}, \xi_{std})$ so that $B^{2n}_{std} \cup H^n_{\Lambda_k}$ is Weinstein homotopic to $B^{2n}_{std}\cup H^n_{\Lambda_u}$ and the co-core of $H^n_{\Lambda_k}$ is $D_{k+1,k}:= \natural^{k+1}_{i=1} T^*_{x_i} S^n \natural^{k}_{i=1} \overline{T^*_{y_i} S^n}$.	The $\Lambda_k$ are formally isotopic but not Legendrian isotopic for different $k$ and the Chekanov-Eliashberg DGA $CE(\Lambda_k)$ has no graded representations but has an ungraded $2k+1$-dimensional representation.			
		\end{corollary}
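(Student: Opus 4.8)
The plan is to apply Theorem~\ref{thm: flexible_complement} to $X = T^*S^n_{std}$ after realizing $D_{k+1,k}$ as the boundary connected sum of the index $n$ co-cores of a suitable Weinstein presentation. I first recall from Example~\ref{example: cotangent_generators} that $D_{k+1,k}$ is an exact Lagrangian disk with $[D_{k+1,k}] = (k+1)-k = 1 \in H^n(T^*S^n;\mathbb{Z}) \cong \mathbb{Z}$, so its class generates, and that $D_{k+1,k} \cong \bigoplus_{i=1}^{k+1} T^*_p S^n \oplus \bigoplus_{i=1}^{k} T^*_p S^n[1]$ in $\mathcal{W}(T^*S^n)$.

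The key geometric input is a Weinstein presentation of $T^*S^n$ whose index $n$ co-cores are $2k+1$ cotangent fibers. I would take the Morse function $g$ on $S^n$ obtained from the standard two--critical--point function by $2k$ births of cancelling $(n-1,n)$ pairs, so that $g$ has one minimum, $2k$ critical points of index $n-1$, and $2k+1$ maxima (consistent with $\chi(S^n) = 1 + (-1)^n = 1 + (-1)^{n-1}2k + (-1)^n(2k+1)$). The cotangent lift of $g$ equips $T^*S^n$ with a Weinstein handle decomposition, Weinstein homotopic to the standard one, with one $0$--handle, $2k$ handles of index $n-1$, and $2k+1$ handles of index $n$ whose co-cores are precisely the cotangent fibers $T^*_{z_1} S^n, \dots, T^*_{z_{2k+1}} S^n$ over the maxima. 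Each such fiber is a generator of $H^n(T^*S^n;\mathbb{Z})$, so orienting $k+1$ of them positively and $k$ negatively makes the boundary connected sum $C_1 \natural \cdots \natural C_{2k+1}$ Lagrangian isotopic to $D_{k+1,k}$, of class $1$.

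Since $\pi_1(T^*S^n) = 0$ and this connected sum generates $H^n(T^*S^n;\mathbb{Z})$, Theorem~\ref{thm: flexible_complement} yields a Weinstein presentation $V_{flex} \cup H^n_{\Lambda_k}$ with a single index $n$ handle whose co-core is $D_{k+1,k}$ and whose complement $V_{flex} = T^*S^n \backslash D_{k+1,k}$ is flexible. I would then identify $V_{flex}$ with $B^{2n}_{std}$: removing the codimension $n \ge 3$ disk leaves $\pi_1(V_{flex}) = 0$, and since the co-core $D_{k+1,k}$ meets the zero section algebraically once, the long exact sequence of the pair shows $V_{flex}$ is a homology ball; for $2n \ge 6$ this forces $V_{flex}$ to be diffeomorphic to $D^{2n}$ via the $h$--cobordism theorem, and a flexible Weinstein structure on the ball is symplectomorphic to $B^{2n}_{std}$ by \cite{CE12}. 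Hence $\Lambda_k \subset (S^{2n-1}, \xi_{std}) = \partial B^{2n}_{std}$ and $B^{2n}_{std} \cup H^n_{\Lambda_k}$ is Weinstein homotopic to $T^*S^n = B^{2n}_{std} \cup H^n_{\Lambda_u}$ with co-core $D_{k+1,k}$.

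Finally I would read off the properties of $\Lambda_k$. Each $D_{k+1,k}$ differs from $D_{1,0} = T^*_p S^n$ by connect-summing $k$ formally trivial pairs $T^*_x S^n \natural \overline{T^*_y S^n}$, which does not change the formal Lagrangian class; since Theorem~\ref{thm: flexible_complement} shows this class determines the attaching data, all the $\Lambda_k$ are formally isotopic. They are pairwise non--Legendrian--isotopic because a Legendrian isotopy would induce an $A_\infty$-quasi-equivalence carrying co-core to co-core, hence a quasi-isomorphism between $CW(D_{k+1,k},D_{k+1,k})$ and $CW(D_{k'+1,k'},D_{k'+1,k'})$; by the splitting above these are the endomorphism algebras of the $(2k+1)$-- and $(2k'+1)$--fold sums of $T^*_p S^n$, whose ranks differ, so they are derived Morita equivalent but not quasi-isomorphic. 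The surgery formula \cite{BEE12, Ekholm_surgery} turns objects bounding the co-core into representations of $CE(\Lambda_k)$: ungraded, $D_{k+1,k}$ is a sum of $2k+1$ copies of the fiber and yields the stated ungraded $(2k+1)$--dimensional representation, while in the $\mathbb{Z}$--grading it genuinely requires the shifted summands $T^*_p S^n[1]$, so no graded representation exists. The main obstacle is the geometric core of the argument---producing the fiber--co-core presentation whose connected sum is $D_{k+1,k}$ and pinning down the flexible complement as the standard ball---since the remaining assertions then follow formally from Theorem~\ref{thm: flexible_complement} and Example~\ref{example: cotangent_generators}.
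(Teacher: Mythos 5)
Your geometric construction is exactly the paper's (Corollary \ref{cor: non_injective_TSn}): wiggle the Morse function on $S^n$ to get a presentation of $T^*S^n_{std}$ with $2k+1$ cotangent fibers as index $n$ co-cores, orient $k+1$ positively and $k$ negatively, apply Theorem \ref{thm: flexible_complement} to make the complement of $D_{k+1,k}$ flexible, and identify that complement with $B^{2n}_{std}$ via the long exact sequence of the pair, simple connectivity, the h-cobordism theorem, and the uniqueness of flexible structures. That part is fine. Your formal-isotopy argument takes a slightly different route (formal Lagrangian class of the disk plus Theorem \ref{thm: flexible_complement}) than the paper, which instead computes the classical formal Legendrian invariants $tb$ and $rot$ directly from the fact that handle attachment yields $T^*S^n$; yours is sketchier but in the right spirit.

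However, two of the statement's assertions are asserted rather than proved, and this is where the paper does genuine work. First, ``in the $\mathbb{Z}$-grading it genuinely requires the shifted summands $T^*_p S^n[1]$, so no graded representation exists'' is not an argument: a priori an $A_\infty$-map $CW(D_{k+1,k},D_{k+1,k}) \rightarrow \mathrm{Mat}(m,\mathbb{K})$ concentrated in degree zero could exist despite the shifted decomposition. The paper's proof uses that $T^*_x S^n$ is \emph{split-generated} by $D_{k+1,k}$, so a graded representation would induce a functor on perfect complexes $Tw^\pi D_{k+1,k} \rightarrow \mathbb{K}\text{-mod}$ sending $T^*_x S^n$ to some $P$ with $\mathbb{K}^m \simeq \oplus^{k+1} P \oplus^{k} P[-1]$; counting graded dimensions then forces $H^{\pm 1}$ of the right side to be nonzero when $k \ge 1$, a contradiction. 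Second, ``whose ranks differ, so they are \ldots not quasi-isomorphic'' likewise needs proof: matrix-algebra rank alone does not distinguish $A_\infty$-quasi-isomorphism types (that is precisely why these algebras \emph{are} derived Morita equivalent). The paper runs the same split-generation trick to show there is no $A_\infty$-map at all from $CW(D_s,D_s)$ to $CW(D_t,D_t)$ for $t<s$ — again producing $P$ with $\oplus^t \mathbb{K} \oplus^{t-1}\mathbb{K}[-1] \simeq \oplus^s P \oplus^{s-1} P[-1]$ and deriving a contradiction from degreewise dimensions (alternatively, $WH(S^n, D_s) \cong \mathbb{K}^s \oplus \mathbb{K}^{s-1}$ has $s$-dependent dimension). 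Without the split-generation/dimension-count step, both the non-isotopy of the $\Lambda_k$ and the nonexistence of graded representations remain unproved in your write-up; your ungraded $(2k+1)$-dimensional representation is fine once you make explicit that it comes from the Yoneda functor $CW(S^n,\_)$ followed by the forgetful map to $\mathbb{K}$-mod, as in the paper.
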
	
See Corollary \ref{cor: non_injective_TSn} for the proof and Corollary \ref{cor: non_injective_diff_domains} for an analogous result for more general Weinstein domains. 	
Here $x_1, \cdots, x_{k+1}, y_1, \cdots, y_k$ are distinct points in $S^n$ and the boundary connected sum of their cotangent fibers  is uniquely defined. Since $D_{k+1, k}^n$ is the only index $n$ co-core, Corollary \ref{cor: non_injective_TSn_intro} gives a geometric proof that this disk generates
$\mathcal{W}(T^*S^n_{std})$, proven algebraically in Example \ref{example: cotangent_generators}. Using the fact that the co-core of $H^n_{\Lambda_k}$ is $D_{k+1, k}$ and the surgery formula \cite{Ekholm_surgery}, we prove that the Chekanov-Eliashberg DGA $CE(\Lambda_k)$ has no finite-dimensional graded representations, i.e. $A_\infty$ maps to $Mat(m, \mathbb{K})$ for any $m$. This implies that $\Lambda_k \subset (S^{2n-1}, \xi_{std})$ have no exact Lagrangian fillings; work on the nearby Lagrangian conjecture \cite{FukSS} 
implies that any filling of $\Lambda \subset (S^{2n-1}, \xi_{std})$ with $B^{2n}_{std} \cup H^n_{\Lambda} = T^*S^n_{std}$ is a disk. There are examples \cite{Sivek_maximal,  DR_Golovko_estimating} of Legendrians for which $CE(\Lambda)$ has a 2-dimensional representation but 1-dimensional representations. In previous work \cite{Lazarev_critical_points},  we produced infinitely many Legendrians $\Lambda \subset (S^{2n-1}, \xi_{std})$ with non-trivial $CE(\Lambda)$ that has no finite-dimensional representations, graded or ungraded; the Legendrians $\Lambda_k$ in Corollary \ref{cor: non_injective_TSn_intro}, however, do have \textit{ungraded} representations. Furthermore, in Corollary 
\ref{cor: Legendrian_no_reps_flex}, a variation of Corollary \ref{cor: non_injective_TSn_intro}, we produce examples of Legendrians $\Lambda_k \subset \partial T^*S^n_{flex}$ with $k$-dimensional \textit{graded} representations but no lower-dimensional graded representations, answering a question of Sivek \cite{Sivek_maximal}. 

Since the Legendrian spheres $\Lambda_k$ are not Legendrian isotopic, the Weinstein homotopies relating the different presentations $B^{2n}_{std}\cup H^n_{\Lambda_k}$ of $T^*S^n_{std}$ must involve handle creation/cancellation and handle-slides.
By following a certain standard Legendrian under this homotopy, one can in principle explicitly describe $\Lambda_k \subset (S^{2n-1}, \xi_{std})$;  see Figure \ref{fig: exotic_presentation}. 
These are the first examples of different Legendrians in $(S^{2n-1}, \xi_{std})$ so that Weinstein handle attachment produces the same domain; see \cite{Akbulut_knot_trace} for analogous results in low-dimensional smooth topology. Consider the map 
$$
\mathcal{H}_{crit}:\mathfrak{Legendrian}((S^{2n-1}, \xi_{std}); \Lambda_{u})
\rightarrow 
\mathfrak{Weinstein}(T^*S^n)
$$
taking a Legendrian sphere $\Lambda$ in $(S^{2n-1}, \xi_{std})$  formally isotopic to the Legendrian unknot $\Lambda_u$ to the Weinstein structure $B^{2n}_{std} \cup H^n_{\Lambda}$ formally symplectomorphic to
$T^*S^n_{std}$. In previous work \cite{Lazarev_critical_points}, we showed that this map is surjective.	
Corollary \ref{cor: non_injective_TSn_intro} shows that this map is not injective and the preimage of $T^*S^n_{std}$ is an infinite set. By considering different regular Lagrangian disks in $T^*S^n_{std}$ (instead of $T^*_x S^n$), we can produce many other elements in the kernel. Many other elements of $\mathfrak{Weinstein}(T^*S^n)$, e.g. those with closed exact Lagrangians, also have infinite pre-image under this map; see Corollary \ref{cor: non_inj_exotic_cotangent}.

Corollary \ref{cor: non_injective_TSn_intro} uses high-dimensional results like the symplectic flexibility result Theorem \ref{thm: flexible_complement} and the smooth h-cobordism theorem. The 4-dimensional analog is false.
	\begin{theorem}\label{thm: unique_legendrian_unknot}
		If $B^4_{std} \cup H^2_\Lambda$ is diffeomorphic $T^*S^2_{std}$, then $\Lambda$ is Legendrian isotopic to the Legendrian unknot and the co-core of $H^2_{\Lambda}$ is Lagrangian isotopic to $T^*_p S^2$.
	\end{theorem}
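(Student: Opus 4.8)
The plan is to reduce the statement to classical four-dimensional smooth topology together with the classification of Legendrian unknots, and to isolate the single genuinely four-dimensional input that has no higher-dimensional analog.

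First I would record the smooth data of the handle attachment. Writing $K \subset S^3 = \partial B^4_{std}$ for the smooth knot underlying $\Lambda$, the domain $B^4_{std} \cup H^2_\Lambda$ is diffeomorphic to the trace $X_m(K)$ obtained by attaching a smooth $2$-handle along $K$ with framing $m = tb(\Lambda) - 1$, since a Weinstein handle along a Legendrian is attached with the contact framing $tb(\Lambda)-1$. The intersection form of $T^*S^2_{std}$ is $(-2)$: the zero section is an embedded sphere generating $H_2$ with self-intersection $-\chi(S^2) = -2$ (equivalently, $T^*S^2$ is the resolution of the $A_1$ singularity, with exceptional $(-2)$-sphere). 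Hence a diffeomorphism $X_m(K) \cong T^*S^2$ forces $m = -2$, i.e. $tb(\Lambda) = -1$, and identifies the generator of $H_2(X_{-2}(K); \mathbb{Z})$ with a smoothly embedded sphere of square $-2$.

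The crux is the purely smooth claim that $X_{-2}(K) \cong T^*S^2 = X_{-2}(U)$ forces $K$ to be the unknot $U$; this is exactly the step with no higher-dimensional analog. Restricting the diffeomorphism to the boundary gives $S^3_{-2}(K) \cong \partial T^*S^2 = \mathbb{RP}^3 = L(2,1)$, so $K$ admits a slope $-2$ surgery to $\mathbb{RP}^3$. The Cyclic Surgery Theorem of Culler--Gordon--Luecke--Shalen together with Moser's classification rules out all nontrivial torus knots (whose lens-space surgery slopes satisfy $|pq \pm 1| \ge 5$), while the remaining hyperbolic and satellite knots are excluded by the Floer-theoretic surgery obstructions, feeding the embedded $(-2)$-sphere and the order-two boundary homology into the Heegaard Floer $d$-invariant and Greene's lens-space realization obstructions; the conclusion is $K = U$. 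I expect this to be the main obstacle: unlike the higher-dimensional setting, where the $h$-cobordism theorem and symplectic flexibility (Theorem \ref{thm: flexible_complement}) allow one to vary the attaching Legendrian while fixing the diffeomorphism type, here gauge-theoretic rigidity pins the smooth knot type down to the unknot.

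Having established $K = U$ and $tb(\Lambda) = -1$, I would finish contact- and symplectic-theoretically. The Bennequin inequality for the genus-zero unknot gives $tb(\Lambda) + |r(\Lambda)| \le 2g(K) - 1 = -1$, so $r(\Lambda) = 0$; by the Eliashberg--Fraser classification of Legendrian unknots, such a knot is determined up to Legendrian isotopy by the pair $(tb, r)$, whence $\Lambda$ is Legendrian isotopic to the standard Legendrian unknot $\Lambda_u$. Finally, this Legendrian isotopy induces a Weinstein homotopy of $B^4_{std}\cup H^2_\Lambda$ to the standard presentation $B^4_{std}\cup H^2_{\Lambda_u} = T^*S^2_{std}$, under which the co-core varies by a Lagrangian isotopy (as in the handle-slide discussion preceding Theorem \ref{thm: flexible_complement}), carrying the co-core of $H^2_\Lambda$ to the co-core of $H^2_{\Lambda_u}$, namely the cotangent fiber $T^*_pS^2$ (cf. Example \ref{example: cotangent_generators}). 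This yields the asserted Lagrangian isotopy and completes the argument.
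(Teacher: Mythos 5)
Your proposal is correct and follows the same skeleton as the paper's proof: restrict to the boundary $\partial T^*S^2 = \mathbb{RP}^3$ to pin down the smooth knot type, read off $tb(\Lambda) = -1$ from the $(-2)$ intersection form, invoke the Eliashberg--Fraser classification of Legendrian unknots, and transport the co-core along the resulting Legendrian isotopy. The one place you diverge is in sourcing the crux --- that $-2$-surgery (equivalently, any surgery) on a nontrivial knot cannot yield $\mathbb{RP}^3$. The paper cites this as a single theorem \cite{Kronheimer_unknot} (Kronheimer--Mrowka--Ozsv\'ath--Szab\'o), whereas you propose to reassemble it from the Cyclic Surgery Theorem, Moser's torus-knot classification, $d$-invariants, and Greene's lens-space realization. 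Your torus-knot step is fine (Moser forces $|rs+2|=1$, impossible for $|r|,|s|\ge 2$), but the exclusion of hyperbolic and satellite knots as you sketch it (``Floer-theoretic surgery obstructions... I expect this to be the main obstacle'') is exactly the content of the KMOS theorem, and Greene's realization theorem itself builds on it; so your patchwork is circular-adjacent rather than independent, and as written it is the only non-rigorous link in an otherwise complete argument. One could instead make your route honest via the lens-space genus bound $2g(K)-1 \le p = 2$ plus the fiberedness of lens-space knots, leaving only the trefoils and figure-eight to rule out by hand --- but citing \cite{Kronheimer_unknot} directly, as the paper does, is cleaner. A minor stylistic difference: you pin down $\Lambda$ among Legendrian unknots via the Bennequin inequality forcing $r(\Lambda)=0$, while the paper simply notes that all stabilizations have $tb < -1$; both are valid uses of \cite{EFraser}.
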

	See Theorem \ref{thm: unique_legendrian_unknot2} for the proof.

		\subsection*{Acknowledgements}
		We thank Mohammed Abouzaid,  Vivek Shende, Kyler Siegel, and Zach Sylvan for many helpful discussions. 
		This work was partially supported by an NSF postdoc fellowship.

		\section{Algebraic presentations of the wrapped category }\label{sec: twisted_complexes_proof}

		In this section we give proofs of  the results stated in Sections \ref{sec_intro: geom_alg_relations}, 
		\ref{sec: intro_c0_close}, \ref{sec_intro: grot_group_map}; in particular, we prove Theorem \ref{thm: twisted_complex} about the existence of acyclic twisted complexes
		and Theorem \ref{thm: K_0_surjective_map} about the map $\mathcal{L}$ from singular cohomology to the Grothendieck group. 
		
		\subsection{Relations in the wrapped category} 
		
		First, we discuss relations in the wrapped category in terms of acyclic twisted complexes. We begin by fixing some notion. 
		Let $(X, f, v)$ be a Weinstein domain, i.e. $f$ is a Morse function and $v$ is a Liouville vector field for the symplectic form that is gradient-like for $f$     	and outward pointing along $\partial X$; see \cite{CE12} for detail.  We consider a Weinstein hypersurface or Legendrian $\Lambda \subset \partial X$ and  call $(X, \Lambda)$
		a \textit{stopped} Weinstein domain, with stop $\Lambda$. 		
		We say that a Lagrangian  $L \subset X$ is in 	
		$(X, \Lambda)$ if the Legendrian boundary  $\partial L \subset \partial X$ is disjoint from $\Lambda$. Two Lagrangians $L_1, L_2$ in $(X, \Lambda)$ are Lagrangian isotopic in $(X, \Lambda)$ if they are isotopic through Lagrangians in $(X, \Lambda)$. The \textit{skeleton} of a Weinstein domain $X$ is the part of the domain that does not escape to the boundary under the  flow of the Liouville vector field $v$; the skeleton of a stopped Weinstein domain $(X, \Lambda)$ is defined similarly, with the requirement that the Liouville vector field points inward along a neighborhood of $\Lambda \subset \partial X$; see \cite{eliashberg_revisited}.
		
		As explained in the Introduction, we will study the interaction of index $n-1$ and $n$ handles in a Weinstein presentation. We first review the local models for these handles. In this paper, we use $T^*M$ to denote the \textit{compact} cotangent bundle, i.e. the unit disk bundle with respect to some metric on the zero-section, and let $S T^*M$ denote its boundary, the unit sphere cotangent bundle.  Let $D^n_{\epsilon}$ denote the radius $\epsilon$ disk equipped with the standard metric and let $D^n := D^n_{1}$. 	Then an index $n$ handle is $T^*D^n$ equipped with a certain standard Liouville vector field  \cite{W} that has a single zero  at $(0,0) \in T^*D^n$. This vector field is inward pointing along the `negative' boundary $\partial_- H^n= D^1\times T^*\partial D^n$ and outward pointing along the `positive' boundary $\partial_+ H^n= ST^*D^n$; note that 
		$\partial H^n = \partial_- H^n \coprod \partial_+ H^n$. 
	There is a Morse function on the $n$-handle for which the Liouville vector field is gradient-like and the zero of the vector field is an index $n$ critical point.		
		Since $\partial_- H^n$ is a neighborhood of the Weinstein hypersurface 
		 $T^*\partial D^n$, we 
			can consider an $n$-handle as the stopped domain  
		$(T^*D^n, T^*\partial D^n)$.	
		The core of an $n$-handle, or 	stable manifold of the zero of the vector field, is the zero-section $D^n \subset T^*D^n$ and its boundary $\partial D^n \subset \partial_- H^n$ is the attaching sphere; an $n$-handle can be attached to an arbitrary contact manifold along a neighborhood $D^1 \times T^*\partial D^n$ of the attaching sphere.  The  co-core, or unstable manifold, is $T^*_0 D^n \subset T^* D^n$, and its boundary is the belt sphere
		$\partial T^*_0 D^n \subset \partial_+ H^n$. Note that $H^n$ is a neighborhood of the co-core. More generally, for any $\epsilon > 0$, the subset $T^* D^n_{\le \epsilon }$  is a neighborhood of the co-core and we can view  $T^* D^n_{\epsilon }$ as a smaller $n$-handle.

		Similarly, an index $n-1$ handle $H^{n-1}$ is $T^*D^{n-1} \times T^*D^1$ equipped with a certain standard Liouville vector field that decomposes its boundary as $\partial H^{n-1} = \partial_- H^{n-1} \coprod \partial_+ H^{n-1}$. The negative boundary $\partial_-H^{n-1}$ is a neighborhood of  the Weinstein hypersurface $T^*\partial D^{n-1} \times T^*D^1$ and so we can view an $n-1$ handle as the stopped domain $(T^*D^{n-1} \times T^*D^1, T^* \partial D^{n-1} \times T^*D^1) = 
		(T^*D^{n-1}, T^*\partial T^*D^{n-1}) \times T^*D^1$, i.e. the product of an $n-1$ handle, viewed as a critical handle one dimension down, with $T^*D^1$.
	Note we have $\partial_+ H^{n-1} = \partial H^{n-1}\backslash \partial_- H^{n-1}= ST^*D^{n-1} \times T^*D^1 \coprod T^*D^{n-1} \times \partial T^*D^1$. 		
The core  of $H^{n-1}$ is $D^{n-1} \times 0$ and the co-core is $T^*_0 D^{n-1} \times T^*D^1$. As with the $n$-handle, $T^*D^{n-1}_{\epsilon} \times T^*D^1$ is a neighborhood of the co-core for any $\epsilon > 0$. In particular, we can view this as a smaller $n-1$-handle.

Now we add stops to the $n-1$ handle.
	Any collection of points $P = \{p_1, \cdots, p_m\} \in \partial T^*D^1$ is a stop. Then $(T^*D^{n-1}, T^*\partial D^{n-1}) \times (T^*D^1, P)$ is $H^{n-1}$ with stops $T^*D^{n-1} \times P$.  
	 Each of the stops $\{p\} \in P \subset \partial T^*D^1$ have Lagrangians linking disks in the sense of \cite{ganatra_generation} which are 
		 1-dimensional arcs $L_p^1$ that have endpoints on both sides of $p$; see Figure \ref{fig: arcs} for the case $m = 3$.
	  The linking disks of the stop 
		$T^*D^{n-1}\times \{p\}$ is the Lagrangian disk $L_p^n:=T^*D^{n-1} \times L_p^1$ 
	   in  $(T^*D^{n-1}, T^*S^{n-2}) \times (T^*D^1,  P)$.
			 	\begin{figure}
			 	\centering
			 	\includegraphics[scale=0.34]{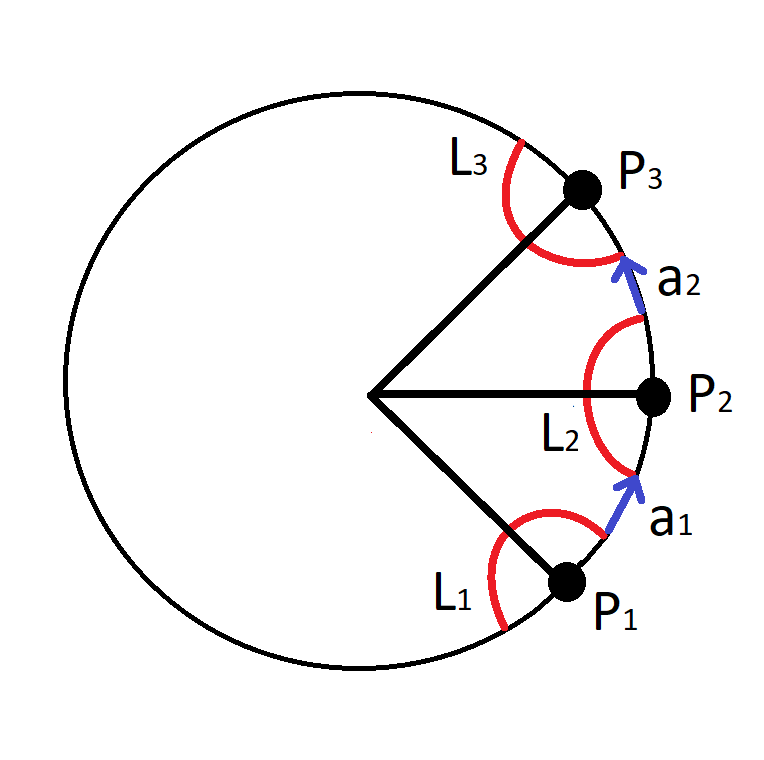}
			 	\caption{Stops $\{p_1, p_2, p_3\}$ in $\partial T^*D^1$ and their linking disks $L_1, L_2, L_3$ (in red) and Reeb chords $a_1, a_2$ (in blue). The skeleton of $(T^*D^1, \{p_1, p_2, p_3\})$ consists of the radial lines.}
			 	\label{fig: arcs}
			 \end{figure}
			 
Next we consider a geometric operation on the linking disks $L_p$. Given two disjoint, exact Lagrangian disks $L, K \subset X^{2n}$ with Legendrian boundary in $\partial X$  and a `short' Reeb chord $a$ from $\partial L$ to  $\partial K$, i.e. a Darboux chart where the Legendrians $\partial L, \partial K$ are parallel,  \cite{ganatra_generation} showed how to form a new Lagrangian disk $L \natural_a K$, the boundary connected sum of $L, K$ along $a$. 
To apply this operation to the linking disks $L_p$, we will identify $T^*D^1$ with $D^2$ using the canonical $(x,y)$-coordinates and assume for the rest of this paper that the points $\{p_1, \cdots, p_m \}$ all contained in the right-hand side of $S^1 = \partial D^2$, i.e. project to the positive $x$-axis, and  are ordered by increasing angle. Since the Reeb flow on $\partial D^2$ is counterclockwise rotation and the points $p_i$ are ordered by increasing angle, there are short Reeb chords $\gamma_i$ from the linking disk $L_{p_i}^1$ to $L_{p_{i+1}}^1$, namely the segment in $\partial D^2\backslash P$ connecting $\partial L_i^1$ to $\partial L_{i+1}^1$; see Figure \ref{fig: arcs}. So we can form the boundary connected sum $L_{p_i}^1 \natural_{a_i} L_{p_{i+1}}^1$ and its iteration $L_{p_1}^1 \natural_{a_1}   \cdots \natural_{a_{m-1}} L_{p_m}^1$. For $n >1$, there are similar Reeb chords $a_i$ from $L_{p_i}^n$ to $L_{p_{i+1}}^n$. 

		\begin{proposition}\label{prop: boundary_sum_local}
			The Lagrangian disk	$L_{p_1}\ \natural_{a_1} L_{p_2} \ \natural_{a_2} \ \cdots 
 \natural_{a_{m-1}} L_{p_m}$ is Lagrangian isotopic to $T^*_0 D^{n-1} \times T^*_{0} D^1$
in   $(T^*D^{n-1}, T^*\partial D^{n-1}) \times (T^*D^1,  P)$, which is displaceable from the skeleton of   $(T^*D^{n-1}, T^*\partial D^{n-1}) \times (T^*D^1,  P)$. 
		\end{proposition}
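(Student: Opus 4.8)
The plan is to exploit the product structure and reduce the statement to the surface factor $(T^*D^1, P) \cong (D^2, P)$. Everything in sight is a product over $T^*D^{n-1}$: the linking disk is $L^n_{p_i} = T^*D^{n-1} \times L^1_{p_i}$, the target co-core is $T^*_0 D^{n-1} \times T^*_0 D^1$, the short Reeb chords $a_i$ are $T^*D^{n-1}\times \gamma_i$ and so live in the $D^1$-factor, and the skeleton is $\mathrm{skel}(T^*D^{n-1}, T^*\partial D^{n-1}) \times \mathrm{skel}(T^*D^1, P) = \mathrm{skel}(T^*D^{n-1}, T^*\partial D^{n-1}) \times \mathrm{Sp}$, where $\mathrm{Sp}$ is the spider of radial segments of Figure \ref{fig: arcs}. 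Since the boundary connected sum along the $a_i$ is carried out entirely in the $D^1$-direction, it too factors as $T^*D^{n-1}\times(\,\cdot\,)$. Thus it suffices to prove (i) that $L^1_{p_1} \natural_{\gamma_1} \cdots \natural_{\gamma_{m-1}} L^1_{p_m}$ is Lagrangian isotopic in $(D^2, P)$ to the vertical diameter $T^*_0 D^1$, and (ii) that $T^*_0 D^1$ is displaceable from $\mathrm{Sp}$; taking products of these data with the identity on $T^*D^{n-1}$ then yields the claim.

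For (i), I would first identify the iterated boundary connected sum with a concatenation of arcs. The arc $L^1_{p_i}$ has its two endpoints bracketing $p_i$ on $\partial D^2$, and the chord $\gamma_i$ is the counterclockwise boundary segment from $p_i$ to $p_{i+1}$, joining the upper endpoint of $L^1_{p_i}$ to the lower endpoint of $L^1_{p_{i+1}}$. Performing the connected sum of \cite{ganatra_generation} along each $\gamma_i$ merges these endpoints in turn, so that the result is a single embedded arc running just inside the stops, with free endpoints slightly below $p_1$ and slightly above $p_m$. Since all the $p_i$ lie in the right half of $S^1$ (angles in $(-\pi/2, \pi/2)$), I would then slide the two free endpoints, without crossing any point of $P$, to the bottom and top of $\partial D^2$ and straighten the arc to the $y$-axis, which is precisely $T^*_0 D^1$; this is a valid Lagrangian isotopy in $(D^2, P)$.

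For (ii), I would use the fact recorded in Figure \ref{fig: arcs} that, with the stops placed on the right, $\mathrm{Sp}$ consists of the radial segments from the origin to the $p_i$ and hence lies in $\{x \ge 0\}$. A Hamiltonian isotopy translating the $D^1$-factor pushes $T^*_0 D^1$ (the $y$-axis) to the line $\{x = -\epsilon\}$, which is disjoint from $\mathrm{Sp}$. Lifting to the product, this displaces $T^*_0 D^{n-1} \times T^*_0 D^1$ off the skeleton: a common point would require its $D^1$-component to lie in both the shifted line and $\mathrm{Sp}$, which are now disjoint. Before the displacement the co-core already meets the skeleton only at the central critical point, since the fiber $T^*_0 D^{n-1}$ meets the skeleton of $(T^*D^{n-1}, T^*\partial D^{n-1})$ only at the origin and the $y$-axis meets $\mathrm{Sp}$ only at the origin.

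The main obstacle is the careful bookkeeping in step (i): verifying that the boundary connected sum along the short Reeb chords $\gamma_i$ really reproduces the concatenated arc up to Lagrangian isotopy, and not an arc with extra windings, and that the straightening isotopy stays exact and never crosses $P$. The conceptual point that makes displaceability work — and that distinguishes this from the non-displaceable co-core of an ordinary critical handle — is that all the stops lie on one side, so the spider $\mathrm{Sp}$ is one-sided and the co-core can be pushed off to the opposite side; without the stops the zero-section and the fiber of $T^*D^1$ are Poincar\'e-dual and cannot be separated.
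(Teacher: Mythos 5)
Your proposal is correct and takes essentially the same route as the paper: reduce by the product structure to the one-dimensional factor, isotope the concatenated arc $L^1_{p_1}\natural_{a_1}\cdots\natural_{a_{m-1}}L^1_{p_m}$ to the vertical fiber $T^*_0 D^1$, and then translate it into the left half-disk, where it is disjoint from the radial spider skeleton determined by the stops lying in the right half of $S^1$ (this is exactly the paper's isotopy to $T^*_{-1/2}D^1$ in Figure \ref{fig: boundary_sum2}). Two small points: the product factor should be the fiber $T^*_0 D^{n-1}$, not all of $T^*D^{n-1}$, in $L^n_{p_i}=T^*_0 D^{n-1}\times L^1_{p_i}$ (a slip the paper's own earlier notation also makes but its proof corrects), and the paper additionally records in Remark \ref{rem: transverse} that the straightening isotopy stays transverse to each radial line, meeting it in a single point --- a fact not needed for this statement but used later in the proof of Proposition \ref{prop: signs}.
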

	\begin{proof} 
		We first consider the case $n =1$. Then 		$L_{p_1}^1 \ \natural_{a_1}  L_{p_2}^1 \ \natural_{a_2} \cdots 
		 \natural_{a_{m-1}}  L_{p_m}^1$ is the boundary connected sum of the $L_i$'s. Since the points $P = \{p_1, \cdots, p_m\}$ are all contained in the right-hand side of $S^1$, there is an isotopy $L_t^1$ in $(T^*D^1, P)$
		 from   $L_{p_1}^1 \ \natural_{a_1}  L_{p_2}^1 \ \natural_{a_2} \cdots 
	    \natural_{a_{m-1}}  L_{p_m}^1$ to 
	    $T^*_0 D^1$ and then to $T^*_{-1/2} D^1$; see Figure \ref{fig: boundary_sum2}. The skeleton of $(T^*D^1, P)$ consists of $m$ radial lines $r_p$ from the origin in $D^2$ to $p \in \partial D^2$ and so $T^*_{-1/2}D^1$ is disjoint from the skeleton. 
	\begin{remark}\label{rem: transverse}
  Note that  $L_t^1$ is transverse to the radial lines $r_i$ for all $t$ and intersects them each in one point; see Figure \ref{fig: boundary_sum2}. This observation will be important in the proof of Proposition \ref{prop: signs} later. 
	\end{remark}	
				 	\begin{figure}
			\centering
			\includegraphics[scale=0.34]{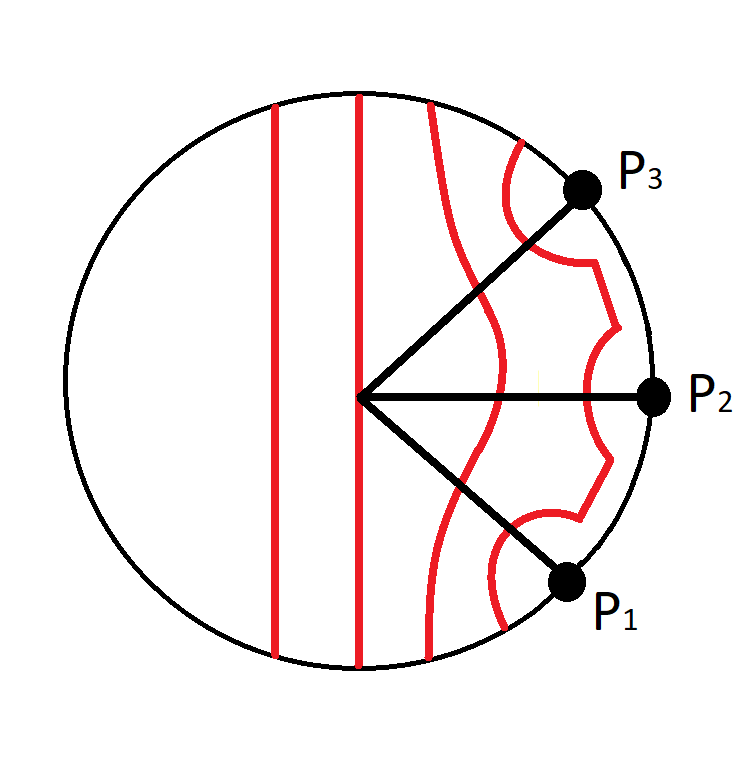}
			\caption{ 
				Isotopy in $(T^*D^1, \{p_1, p_2, p_3\})$ from $L_1 \natural_{a_1} L_2 \natural_{a_2} L_3$ (the rightmost curve) to $T^*_0 D^1$ (the middle vertical line) to $T^*_{-1/2}D^1$ (the leftmost vertical lines), which is disjoint from the skeleton.}
			\label{fig: boundary_sum2}
		\end{figure}

		 For $n > 1$, we take the product
		 of the disks and isotopies in the $n =1$ case with $T^*_0 D^{n-1}$. We  observe that  $T^*_0 D^{n-1} \times L^1_p = L_p^n$ and  
	 $T^*_0 D^{n-1}  \times (L_{p_1}^1  \natural_{a_1}  L_{p_2}^1 \ \natural_{a_2} \cdots 
	 \natural_{a_{m-1}}  L_{p_m}^1) =  L_{p_1}^n \ \natural_{a_1}  L_{p_2}^n \ \natural_{a_2} \cdots 
	 \natural_{a_{m-1}}  L_{p_m}^n$; this is because the $n$-dimensional boundary connected sum has the same core and is thickened in the transverse direction by $T^*_0 D^{n-1}$.
The 1-dimensional isotopy $L_t^1$ is done in $(T^*D^1, \{p_1, \cdots, p_m\})$ and hence after taking the product with $T^*_0 D^{n-1}$, there is an Lagrangian isotopy $L_t^n :=T^*_0 D^{n-1}\times L_t^1$  
in   $(T^*D^{n-1}, T^*S^{n-2}) \times (T^*D^1, P)$ from $L_{p_1}^n \ \natural_{a_1}   L_{p_2}^n \ \natural_{a_2} \cdots 
\natural_{a_m}  L_{p_m}^n$ to $T^*_0 D^{n-1} \times T^*_{0} D^1$ and then to $T^*_0 D^{n-1} \times T^*_{-1/2} D^1$. The last disk is disjoint from the skeleton 
$D^{n-1} \times \{r_1, \cdots, r_m\}$ of 
$(T^*D^{n-1}, T^*S^{n-2}) \times (T^*D^1, P)$  since 
$T^*_{-1/2} D^1$ is disjoint from $\{r_1, \cdots, r_m\}$.
\end{proof}

Now we give an algebraic interpretation of this result. To explain this, we consider the partially wrapped Fukaya category $\mathcal{W}(X, \Lambda)$ of the stopped Weinstein domain $(X, \Lambda)$, whose objects are exact Lagrangians $L$ in $(X, \Lambda)$; in this section, we use the canonical $\mathbb{Z}/2$-grading given by orientation. A Lagrangian isotopy in $(X, \Lambda)$ induces a quasi-isomorphism of objects. Let $a$ be a short Reeb chord between $\partial L, \partial K$ as in the definition of the boundary connected sum. 
Since morphisms in  $\mathcal{W}(X, \Lambda)$ are generated by Reeb chords between Lagrangians, we have 
$a \in Hom(L, K)$ and it is a closed morphism since it has arbitrarily small action.
Furthermore, a grading of $L \natural_a K$ restricts to a grading of $L$ and $K$ and for any grading of $L \natural_a K$, we have $a \in Hom^1(L, K)$. In \cite{ganatra_generation},  it is also proven that  $L \natural_a K$ is quasi-isomorphic to
the twisted complex $\{L \overset{a}{\rightarrow} K \}$.
They also prove that a Lagrangian displaceable from the skeleton  is quasi-isomorphic to the zero object.  
\begin{corollary}\label{cor: twisted_complex_local}
	The twisted complex $\{L_{p_1} \overset{a_1}{\rightarrow}  \cdots  \overset{a_{m-1}}{\rightarrow}  L_{p_m}\}$ is quasi-isomorphic to  $T^*_0 D^{n-1} \times T^*_{0} D^1$ in $\mathcal{W}((T^*D^{n-1}, T^*S^{n-2}) \times (T^*D^1, P))$; in particular, it is  acyclic.
\end{corollary}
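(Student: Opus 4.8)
The plan is to combine three results of \cite{ganatra_generation} with Proposition \ref{prop: boundary_sum_local}: (i) a Lagrangian isotopy in $(X,\Lambda)$ induces a quasi-isomorphism of objects of $\mathcal{W}(X,\Lambda)$; (ii) for a short Reeb chord $a$ from $\partial L$ to $\partial K$, the boundary connected sum $L \natural_a K$ is quasi-isomorphic to the two-term twisted complex $\{L \overset{a}{\to} K\}$; and (iii) a Lagrangian displaceable from the skeleton is quasi-isomorphic to the zero object. As recorded just before the statement, each $a_i$ is a closed degree-one morphism in $Hom^1(L_{p_i}, L_{p_{i+1}})$, so the expression $\{L_{p_1} \overset{a_1}{\to} \cdots \overset{a_{m-1}}{\to} L_{p_m}\}$ at least makes formal sense as a candidate object.

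First I would upgrade (ii) to its iterated form: the iterated boundary connected sum $L_{p_1} \natural_{a_1} \cdots \natural_{a_{m-1}} L_{p_m}$ is quasi-isomorphic to the twisted complex $\{L_{p_1} \overset{a_1}{\to} \cdots \overset{a_{m-1}}{\to} L_{p_m}\}$. I would argue by induction on $m$, the case $m=2$ being exactly (ii). For the inductive step I would peel off the last summand: the boundary connected sum is associative up to Lagrangian isotopy, and the ordering of the stops $\{p_1,\dots,p_m\}$ by increasing angle guarantees that the defining short chord $a_{m-1}$ (the segment of $\partial D^2 \setminus P$ joining $\partial L_{p_{m-1}}$ to $\partial L_{p_m}$) persists as a short chord from the partial sum $L_{p_1} \natural \cdots \natural L_{p_{m-1}}$ to $L_{p_m}$. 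Applying (ii) to this last connected sum, and then the inductive hypothesis to identify the partial sum with $\{L_{p_1} \to \cdots \to L_{p_{m-1}}\}$, assembles the full linear complex as an iterated mapping cone, each stage of which is manifestly a valid twisted complex. To match this iterated cone with the \emph{flat} linear complex written in the statement, I would check that the higher $A_\infty$ products among consecutive chords contribute no off-diagonal differential components beyond the $a_i$; this I expect to follow from the action filtration, since each $a_i$ has arbitrarily small positive action and the local model $(T^*D^1,P)$ admits no comparably short chord $L_{p_i} \to L_{p_j}$ for $j > i+1$.

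With the iterated correspondence in hand, the conclusion is immediate. Proposition \ref{prop: boundary_sum_local} states that $L_{p_1} \natural \cdots \natural L_{p_m}$ is Lagrangian isotopic to $T^*_0 D^{n-1} \times T^*_0 D^1$, so by (i) the twisted complex is quasi-isomorphic to $T^*_0 D^{n-1} \times T^*_0 D^1$, which is the first assertion. For the second, the same proposition records that $T^*_0 D^{n-1} \times T^*_0 D^1$ is displaceable from the skeleton of $(T^*D^{n-1}, T^*S^{n-2}) \times (T^*D^1, P)$, so by (iii) it is quasi-isomorphic to the zero object; hence the twisted complex is acyclic.

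The step I expect to be the main obstacle is the inductive upgrade of (ii): one must verify that the short Reeb chords survive the connected-sum isotopies as the connecting maps of the complex, that associativity of $\natural$ is compatible with the cone construction, and that the higher compositions among the $a_i$ vanish so that the iterated cone coincides with the flat linear twisted complex. The explicit geometry of the local model — the linear ordering of the stops and the transversality recorded in Remark \ref{rem: transverse} — is precisely what makes this compatibility transparent, so I would lean on that rather than on abstract $A_\infty$ bookkeeping.
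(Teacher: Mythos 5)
Your proposal is correct and follows the paper's route exactly: the paper deduces the corollary by combining Proposition \ref{prop: boundary_sum_local} with the same three facts from \cite{ganatra_generation} (isotopy invariance, $L \natural_a K \simeq \{L \overset{a}{\to} K\}$, and displaceability from the skeleton implies acyclicity), treating the iteration to the $m$-term complex as immediate. The one subtlety you flag — that the iterated cone is the flat linear complex — is genuine but resolves even more cleanly than your action-filtration sketch: in the local model the stops block any Reeb chord from $L_{p_i}$ to $L_{p_j}$ for $j > i+1$, so $Hom(L_{p_i}, L_{p_j})$ vanishes outright and there is no room for off-diagonal differential components or for nonzero compositions violating the Maurer--Cartan equation.
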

In fact, $L_{p_1}, \cdots, L_{p_m}$ are the generators of $\mathcal{W}((T^*D^{n-1}, T^*S^{n-2}) \times (T^*D^1, P))$, which is quasi-equivalent to the category of modules over the $A_{m-1}$-quiver. For us, it will be more useful to consider the more symmetric presentation of $\mathcal{W}((T^*D^{n-1}, T^*S^{n-2}) \times (T^*D^1, P))$  with $m$ generators and the relation in Corollary \ref{cor: twisted_complex_local}; also see \cite{nadler_cyclic}.

 A grading on $L_{p_1} \natural_{a_1} \cdots \natural_{a_{m-1}} L_{p_m}$ induces a grading on $L_p$ by restriction. Furthermore, the displaceable disk  $T^*_0 D^{n-1} \times T^*_0 D^1$ is isotopic to $L_{p_1} \natural_{a_1} \cdots \natural_{a_{m-1}} L_{p_m}$. In particular, a grading on
$T^*_0 D^{n-1} \times T^*_0 D^1$ induces a grading on $L_p$ for all $p \in P$, making the terms in the twisted complex in Corollary \ref{cor: twisted_complex_local} graded objects in $\mathcal{W}((T^*D^{n-1}, T^*S^{n-2}) \times (T^*D^1, P))$. 
It will be helpful to introduce some notation to keep track of these gradings. 
\begin{definition}\label{def: orientation_line}
	For an orientable manifold $M$, the orientation line $O(M)$ of $M$ is the free abelian group of rank one generated by the two orientations of $M$ modulo the relation that the sum of the two orientations is zero. 
\end{definition}
So for each $p \in P$, we have a restriction isomorphism of orientation lines 
\begin{equation}\label{eqn: restriction}
r_p^*: O(T^*_0 D^{n-1} \times T^*_0 D^1) \rightarrow 
O(L_p)
\end{equation}
induced by the canonical Lagrangian isotopy $L_t$ from
$T^*_0 D^{n-1} \times T^*_0 D^1$ to 
$L_{p_1} \natural_{a_1}  \cdots 
\natural_{a_{m-1}}  L_{p_m}$  and then restricting the orientation of the latter to $L_{p}$. 

	Now we globalize the previous results. 
 Let $(X^{2n}, f, v)$ be a Weinstein structure with a Morse function $f$ and a gradient-like Liouville vector field $v$.
 By applying a Weinstein homotopy \cite{CE12}, we can assume that $f$ is self-indexing.  Let $Crit_n(f) = \{y_1, \cdots, y_k\}$ denote the index $n$ critical points. Our main results involve the interaction of the index $n$ and $n-1$ critical points. Hence for the following, we assume  the set of index $n-1$ critical points  $Crit_{n-1}(f)$ is non-empty; let $x \in
  Crit_{n-1}(f)$.  
  So $X^{2n}$ has a presentation  $X_0^{2n} \cup H^{n-1}_x \cup H^n_{y_1} \cup \cdots \cup H^n_{y_k}$, where $X_0$ is a Weinstein domain with  critical points of index \textit{less than} $n$. The  attaching sphere of $H^n_y$ is a Legendrian sphere $\Lambda_y \subset \partial (X_0^{2n} \cup H^{n-1})$; so $\Lambda: = \Lambda_{y_1} \coprod \cdots \coprod \Lambda_{y_k}$ is a Legendrian link with disjoint components. The co-core of $H^n_y$ is the Lagrangian disk  $C^n_y = T^*_{0} D^n_y \subset T^*D^n_y = H^n_y$. The belt sphere of $H^{n-1}:=H^{n-1}_x$ is $\Gamma_x^n$. 
	 
	 By applying a Weinstein homotopy, we can also assume that $(f,v)$ is Morse-Smale. Namely, Thom's transversality theorem \cite{eliashberg_mishachev} states that there is a $C^0$-small Legendrian isotopy of the attaching spheres $\Lambda^{n-1}$
	 that makes it the belt sphere $\Gamma^n_x \subset \partial (X^{2n}_0 \cup H^{n-1})$ of $H^{n-1}$ transversely. 	 
	 In the following result, we show that $\Lambda$ can be put into a certain standard form near $\Gamma_x$. 
	Recall $\partial_+ H^{n-1}_\epsilon$ is a neighborhood of $\Gamma_x^n$ for any $\epsilon$; more precisely, 
	 $\Gamma_x = \partial(T^*_0 D^{n-1} \times T^*D^1) = ST^*_0 D^{n-1} \times T^*D^1 \coprod T^*_0 D^{n-1} \times \partial T^*D^1 \subset \partial_+ H^{n-1}$
	 and 	 $
	 \partial_+ H^{n-1}_\epsilon	= ST^*D^{n-1}_{\epsilon} \times T^*D^1 \coprod 
	 T^*D^{n-1}_{\epsilon} \times \partial T^*D^1
	 $.	 
\begin{proposition}\label{prop: attaching_spheres_n-1_handle}
There is a isotopy $\Lambda_t, t \in [0,1],$ of Legendrians transverse to $\Gamma_x$ and supported in $\partial_+ H^{n-1}$ so that
$\Lambda_0 = \Lambda$ and 
$\Lambda_1 \cap 
\partial_+ H^{n-1}_\epsilon= D^{n-1}_{\epsilon} \times P(x) \subset 
T^*D^{n-1}_{\epsilon} \times \partial T^*D^1 \subset 
\partial_+ H^{n-1}_{\epsilon}$
for some $\epsilon > 0$ and  points $P(x) = \{p_1, \cdots, p_m\} \subset \partial T^*D^1$.
\end{proposition}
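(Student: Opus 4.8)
The plan is to reduce the statement to a local normal-form problem in a neighborhood of the belt sphere and to solve it by realizing each sheet of $\Lambda$ as the one-jet of a function, which is then straightened to a constant. Since $\partial_+ H^{n-1}_\epsilon$ is a neighborhood of $\Gamma_x$, and since by the product description the relevant part $T^*D^{n-1}_\epsilon \times \partial T^*D^1$ of this neighborhood is contactomorphic to (a piece of) the one-jet space $J^1(D^{n-1}) = T^*D^{n-1}\times S^1$ — with $\partial T^*D^1 = S^1$ the Reeb (rotation) direction and $\Gamma_x$ corresponding to $\{q=0\}$, whose intersection with the contact distribution is Lagrangian — I would work entirely inside this model. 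In these coordinates a sheet of $\Lambda$ that meets $\Gamma_x$ transversally is automatically a graph over the $q$-base, hence the one-jet $j^1 f$ of a function $f\colon D^{n-1}_\epsilon \to S^1$, and the target standard sheet $D^{n-1}_\epsilon \times \{p\}$ is exactly the one-jet of the constant function $f\equiv p$.

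The steps, in order, are: (i) use the transversality already arranged before the proposition so that $\Lambda\cap\Gamma_x$ is a finite set of points $z_1,\dots,z_m$, each lying on some component $\Lambda_{y_i}$; (ii) by a preliminary $C^0$-small Legendrian isotopy that slides these points along $\Gamma_x$, arrange that every $z_\ell$ lies in the interior of the face $T^*_0 D^{n-1}\times \partial T^*D^1$ of $\Gamma_x$ and that their Reeb-coordinates are pairwise distinct, recording them as $P(x)=\{p_1,\dots,p_m\}\subset \partial T^*D^1$ (taking care that no intersection points are created or cancelled, so the signed counts $p_{i,j},q_{i,j}$ are preserved, and placing the $p_\ell$ on the right-hand side of $S^1$ per the conventions fixed earlier); (iii) for $\epsilon$ small enough, identify $\Lambda\cap \partial_+ H^{n-1}_\epsilon$ with a disjoint union of sheets $j^1 f_\ell$ satisfying $f_\ell(0)=p_\ell$; (iv) straighten each sheet through the one-jet isotopy associated to a homotopy of functions $f^t_\ell$ from $f_\ell$ to the constant $p_\ell$ (computed in a chart of $\partial T^*D^1$ about $p_\ell$) that fixes the value $p_\ell$ at $q=0$, sends the fiber coordinate $df_\ell(0)$ to $0$, and lands on the standard sheet $D^{n-1}_\epsilon\times\{p_\ell\}$; and (v) interpolate with a cutoff in $q$ so that the isotopy is the identity outside $\partial_+ H^{n-1}$ and realizes the claimed normal form inside $\partial_+ H^{n-1}_\epsilon$. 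Each $\Lambda_t$ is Legendrian and transverse to $\Gamma_x$ because one-jets of functions are graphs over the base, and the distinctness of the $p_\ell$ keeps the sheets disjoint, so $\Lambda_t$ stays embedded.

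The main obstacle is step (ii) together with the compact-support requirement of (v). One must know that the contact germ of $\Gamma_x$ really is the $J^1$-model above — which I would obtain from the explicit product Weinstein structure $(T^*D^{n-1},T^*\partial D^{n-1})\times T^*D^1$ on $H^{n-1}$ together with the added stops, invoking a coisotropic/Legendrian neighborhood theorem to match contact forms — and that the intersection points can be herded into the face $T^*_0 D^{n-1}\times\partial T^*D^1$ with distinct, right-hand-side Reeb-coordinates without altering the geometric or algebraic intersection data. The straightening itself is soft, since any path of functions integrates to a Legendrian isotopy; the real content is the bookkeeping of keeping $\Lambda_t$ transverse, embedded, and supported near $\Gamma_x$ while preserving the intersection pattern that will later feed into Corollary \ref{cor: twisted_complex_local} and Theorem \ref{thm: twisted_complex}.
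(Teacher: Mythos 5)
Your steps (iii)--(v) --- writing the sheets near the face as $1$-jets $j^1 f_\ell$ of functions $D^{n-1}_\epsilon \to S^1 = \partial T^*D^1$ and straightening them to constants by integrating a path of functions, with a cutoff for compact support --- are exactly how the paper finishes, and you are right that this part is soft. But the step you flag as ``the main obstacle,'' (ii), is the actual content of the proposition, and your proposal contains no mechanism for it; as written this is a genuine gap. The difficulty is that you need an isotopy of $\Lambda$, \emph{transverse to $\Gamma_x$ at every time}, that moves the finite set $\Lambda \cap \Gamma_x$ --- whose points a priori sit anywhere in $\Gamma_x$, including on or near the binding $ST^*_0 D^{n-1} \times 0$ where the Reeb/angle coordinate you want to record is not even defined --- into the face $T^*_0 D^{n-1} \times \partial T^*D^1$. ``Sliding the points along $\Gamma_x$'' is not automatic: an arbitrary diffeotopy of the coisotropic $\Gamma_x$ does \emph{not} extend to an ambient contact isotopy, so quoting a neighborhood theorem for the germ of $\Gamma_x$ (which only normalizes the model) does not by itself let you herd the points.

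The paper supplies exactly the two missing ingredients. First, it displaces all of $\Lambda$ from a neighborhood of the binding $ST^*_0 D^{n-1} \times 0$, which is codimension $2$ in $\Gamma_x$: near each intersection point it uses the standard local model $\Lambda = J^1$-base, $\Gamma_x = $ fiber $\{(0,\dots,0,y,z)\}$ in $J^1(\mathbb{R}^{n-1})$, and implements Thom transversality by the linear family of jets $J^1(tf)$, which are graphs over the base and hence stay transverse to $\Gamma_x$ for all $t$ --- this is the same trick you use later, but deployed here to control transversality during the displacement. Second, it observes that $\Gamma_x \setminus (ST^*_0 D^{n-1} \times 0) \cong \mathbb{R}^{n-1} \times S^1$ is an open book whose pages $\mathbb{R}^{n-1} \times \theta$ are the leaves of the Legendrian foliation of the coisotropic $\Gamma_x$; the leafwise radial compression of $\{|x| \le C\} \times S^1$ into $D^{n-1} \times S^1$ \emph{preserves this foliation}, and it is precisely this property that lets it extend to an ambient contact isotopy $\psi_t$. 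Pulling $\Lambda$ along $\psi_t$ moves $\Lambda \cap \Gamma_x$ into the face, and transversality is automatic since $\psi_t$ preserves $\Gamma_x$. So to repair your proof you would either reproduce this binding-avoidance-plus-foliated-compression argument, or build an honest pointwise herding argument by gluing jet-type local models along paths in $\Gamma_x$ avoiding the binding --- but some such argument must be made; it cannot be deferred to a neighborhood theorem.
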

\begin{proof} 
 We first show that there is an isotopy $\Lambda_t, t \in [0,1],$ of Legendrians transverse to $\Gamma_x$ such that $\Lambda_0 =\Lambda$ and $\Lambda_1$ is disjoint from $ST^*_0 D^{n-1} \times 0 \subset \Gamma_x$. 
Since $\dim \Lambda + \dim (ST^*_0 D^{n-1} \times 0) < \dim \partial_+ H^{n-1}$, Thom's transversality theorem implies that there is a Legendrian isotopy displacing $\Lambda$ from  $ST^*_0 D^{n-1} \times 0$; see Section 2.3 of \cite{eliashberg_mishachev}.
To show that this isotopy is transverse to $\Gamma_x$, we combine Thom's theorem with a local model for transversely intersecting isotropic and coisotropic submanifolds.
Namely, for each $q \in \Lambda \cap \Gamma_x$, there is a neighborhood $O(q)$ of $q$ 
contactomorphic to a neighborhood $N(0)$ of the origin in $(J^1(\mathbb{R}^{n-1}), \xi_{std}) = 
\{ (x_1, \cdots, x_{n-1}, y_1, \cdots, y_{n-1}, z)\}$ 
so that $\Lambda \cap O(q) = \{(x_1, \cdots, x_{n-1}, 0, \cdots, 0, 0)\} \cap N(0)$,
$\Gamma_x  \cap O(q) = \{(0, \cdots, 0, y_1, \cdots, y_{n-1}, z)\} \cap N(0)$, and $q = (0, \cdots, 0)$; see Theorem 2.28 of \cite{sackel_contact_handle}.
Since $ST^*_0 D^{n-1} \times 0$ is a codimension 2 submanifold of $\Gamma_x$, there is a compactly-supported function $f: \mathbb{R}^{n-1} \cap N(0) \rightarrow \mathbb{R}$ so that 1-jet $J^1(f) \subset N(0) \subset  J^1(\mathbb{R}^{n-1})$ is disjoint from  
$ST^*_0 D^{n-1} \times 0$ (as in Thom's transversality theorem).  
Furthermore, the family of 1-jets $J^1(tf)$ is a Legendrian isotopy $\Lambda_t, t\in [0,1],$ from $\Lambda = \Lambda_0$ to $\Lambda_1$. Since these Legendrians are 1-jets of functions, they are transverse to $\Gamma_x  \cap O(q) = \{(0, \cdots, 0, y_1, \cdots, y_{n-1}, z)\} \cap N(0)$ for all $t$, as desired. 
Since $\Lambda_1$ is closed, we can assume that it is actually disjoint from a neighborhood  
$ST_0^*D^{n-1} \times T^*D^1_{\epsilon }$ of $ST_0^*D^{n-1} \times 0$ for some $\epsilon > 0$.

Next we show that there is a Legendrian isotopy $\Lambda_t, t \in [1,2],$ transverse to $\Gamma_x$ so that $\Lambda_2 \cap \Gamma_x \subset T^*_0 D^{n-1}\times \partial T^*D^1 \subset \Gamma_x$. The decomposition 
$\Gamma_x =  ST^*_0 D^{n-1} \times T^*D^1 \coprod T^*_0 D^{n-1} \times \partial T^*D^1$ is an open book decomposition of $\Gamma_x$.
 Namely, $\Gamma_x\backslash  ST^*_0 D^{n-1} \times 0 = \mathbb{R}^{n-1} \times S^1$, 
 where the unit disk $D^{n-1} \times S^1 \subset \mathbb{R}^{n-1} \times S^1$ corresponds to 
  $T^*_0 D^{n-1} \times \partial T^*D^1 \subset \Gamma_x$. Since $\Gamma_x$ is coisotropic, it has a foliation with Legendrian leaves. The leaves of this foliation are precisely the leaves $\mathbb{R}^{n-1} \times \theta, \theta \in S^1,$ of this open book decomposition.
In the previous step, we showed that $\Lambda_1 \cap \Gamma_x$ is disjoint from 
$ST_0^*D^{n-1} \times T^*D^1_{\epsilon }$ and hence is contained in 
$\{|x| \le C\} \times S^1 \subset \mathbb{R}^{n-1} \times S^1$ for some $C >0$. 
Let $\phi_t$ be the diffeotopy of $\Gamma$ that preserves this  foliation 
and $\phi_t|_{\mathbb{R}^{n-1} \times \theta}$ is compactly supported and 
radially scales $\{|x|\le C\} \times S^1$ into 
$\{|x| \le 1\} \times S^1 = D^{n-1} \times S^1$, where $r$ is the radial coordinate on $D^{n-1}$. 
Since this diffeotopy preserves the foliation of $\Gamma_x$, it extends to a contact isotopy $\psi_t, t\in [1,2],$ of a neighborhood of $\Gamma_x$. 
In particular, $\Lambda_t: = \psi_t(\Lambda_1)$ is a Legendrian isotopy so that 
$\Lambda_2 \cap \Gamma_x = \psi_2(\Lambda_1) \cap \Gamma_x  = 
\psi_2(\Lambda_1 \cap \Gamma_x) 
\subset  D^{n-1} \times S^1 = T^*_0 D^{n-1} \times \partial T^*D^1$. 
Furthermore, $\Lambda_t, t\in [1,2],$ is transverse to $\Gamma_x$ for all $t$ since $\psi_t$ preserves $\Gamma_x$. 

An $\epsilon$-neighborhood of $\Gamma_x$ is 
$\partial_+ H^{n-1}_\epsilon = ST^*D^{n-1}_{\epsilon} \times T^*D^1 \coprod T^* D^{n-1}_{\epsilon} \times \partial T^*D^1$. 
Since $\Lambda_2, \Gamma_x$ intersect transversely and 
$\Lambda_2 \cap \Gamma_x \subset T^*_0 D^{n-1}\times \partial T^*D^1 \subset \Gamma_x$, there is sufficiently small $\epsilon$ so that 
$\Lambda_2 \cap \partial_+ H^{n-1}_\epsilon \subset
T^* D^{n-1}_{\epsilon} \times \partial T^*D^1$ and is transverse to $T^*_0 D^{n-1} \times \partial T^*D^1$. 
So by taking even smaller $\epsilon$ if necessarily, we can assume that $\Lambda \cap \partial_+ H^{n-1}_\epsilon$ 
coincides with the 1-jets $\coprod_{i=1}^m J^1(f_i) \subset 
T^* D^{n-1}_{\epsilon} \times \partial T^*D^1$ of functions $f_i: D^{n-1}_{\epsilon} \rightarrow S^1= \partial T^*D^1$. 
There are compactly supported isotopies of these functions so that $f_i$ is locally constant near the origin $0 \in D^{n-1}_{\epsilon}$, i.e. $f_i(x) = p_i$ for some distinct $p_i \in \partial T^*D^1$, and the induced Legendrian isotopy $\Lambda_t, t\in [2,3]$, is through disjoint Legendrians. 
Hence there is a possibly smaller $\epsilon$ 
so that $\Lambda_3 \cap \partial_+ H^{n-1}_{\epsilon} = D^{n-1}_\epsilon \times \{p_1, \cdots, p_m\}$, as desired. Furthermore, the isotopy $\Lambda_t, t \in [2, 3]$, is transverse to $\Gamma_x$ since the Legendrians are 1-jets of functions and $\Gamma_x$ is a cotangent fiber. Composing these three isotopies completes the proof. 
\end{proof}

Since the Legendrian isotopy is transverse to the belt sphere, the number of intersection points of $\Lambda$ with $\Gamma_x$ does not change; so we can assume that our Legendrian $\Lambda$ has the normal form in Proposition \ref{prop: attaching_spheres_n-1_handle}
from the start. We also pick an identification of $T^*D^1$ with $D^2$ so that the points $p_1, \cdots, p_m$ are contained in right-hand half of $S^1 = \partial D^2$ and are ordered by increasing angle. Let $P(x,y) \subset P(x)$ denote the subset such that  $\Lambda_y \cap 
\partial_+ H^{n-1} =  
 D^{n-1} \times P(x,y)$.
 So $\Lambda_y \cap \Gamma_x = \{0\} \times P(x, y) \subset T^*D^{n-1} \times T^*D^1$ and they intersect $|P(x,y)|$ times. Note that $P(x, y)$ are disjoint subsets for different $y$ and ${\coprod}_{y \in Crit_n(f)} P(x, y) = P(x)$. We do not assume that  $y_1, \cdots, y_m$ are ordered nor that the decomposition of $P(x)$ into $P(x,y)$ is compatible with the order of $p_1, \cdots, p_m$. 
  
By the identification in Proposition \ref{prop: attaching_spheres_n-1_handle}, there is a proper inclusion of stopped domains 
$$
(T^*D^{n-1}_{\epsilon}, T^*\partial D^{n-1}_{\epsilon}) \times (T^*D^1, P) \hookrightarrow (X_0 \cup H^{n-1}, \Lambda)
$$
taking the linking disk $L_{p} \subset (T^*D^{n-1}, T^*\partial D^{n-1}) \times (T^*D^1, P)$ of the stop $D^{n-1} \times p$ to a Lagrangian disk that we also call $L_{p} \subset X_0 \cup H^{n-1}$; this inclusion also takes $T^*_0 D^{n-1} \times T^*_0 D^1$ to a Lagrangian disk that we denote $C_x \subset X_0 \cup H^{n-1}$.  Such an inclusion induces a covariant functor \cite{ganatra_generation}
$$
\mathcal{W}((T^*D^{n-1}, T^*\partial D^{n-1}) \times (T^*D^1, P)) \rightarrow \mathcal{W}(X_0 \cup H^{n-1}, \Lambda)$$
Handle attachment along $\Lambda$ gives a proper inclusion 
$(X_0 \cup H^{n-1}, \Lambda) \hookrightarrow X$, which also induces a covariant functor of Fukaya categories. By applying these functors to the twisted complex in
Corollary \ref{cor: twisted_complex_local}, we have the following result. 
\begin{corollary}\label{cor: twisted_complex_global}
	The twisted complex $\{L_{p_1} \overset{a_1}{\rightarrow} L_{p_2} \overset{a_2}{\rightarrow}  \cdots \overset{a_{m-2}}{\rightarrow} L_{p_{m-1}} \overset{a_{m-1}}{\rightarrow}  L_{p_m}\}$ is quasi-isomorphic to  $C_x$ 
in $\mathcal{W}(X_0 \cup H^{n-1}, \Lambda)$ and $\mathcal{W}(X)$; in particular, it is acyclic.  	
	\end{corollary}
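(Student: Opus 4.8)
The plan is to deduce the global statement from the local one, Corollary \ref{cor: twisted_complex_local}, by pushing it forward along the two covariant $A_\infty$-functors constructed just above from the proper inclusions of stopped domains. Recall that the proper inclusion $(T^*D^{n-1}_\epsilon, T^*\partial D^{n-1}_\epsilon) \times (T^*D^1, P) \hookrightarrow (X_0 \cup H^{n-1}, \Lambda)$ carries each linking disk $L_{p_i}$ to the Lagrangian of the same name and carries $T^*_0 D^{n-1} \times T^*_0 D^1$ to $C_x$, while handle attachment along $\Lambda$ gives the further proper inclusion $(X_0 \cup H^{n-1}, \Lambda) \hookrightarrow X$. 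By \cite{ganatra_generation} each such inclusion induces a genuine $A_\infty$-functor, so it suffices to verify that these functors respect the relevant twisted-complex structure and preserve quasi-isomorphisms.

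First I would observe that an $A_\infty$-functor sends a twisted complex to the twisted complex built from the images of its terms and structure morphisms. Thus the functor $F_1 \colon \mathcal{W}((T^*D^{n-1}, T^*\partial D^{n-1}) \times (T^*D^1, P)) \rightarrow \mathcal{W}(X_0 \cup H^{n-1}, \Lambda)$ sends $\{L_{p_1} \overset{a_1}{\rightarrow} \cdots \overset{a_{m-1}}{\rightarrow} L_{p_m}\}$ to the twisted complex of the same shape in the target, since $F_1$ carries each $L_{p_i}$ to $L_{p_i}$ and each short Reeb chord $a_i$ to its image, which remains a short chord because the inclusion is the identity near the relevant Darboux charts. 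Since $F_1$ descends to an exact functor of derived categories it preserves quasi-isomorphisms, so applying $F_1$ to the quasi-isomorphism of Corollary \ref{cor: twisted_complex_local} yields that $\{L_{p_1} \overset{a_1}{\rightarrow} \cdots \overset{a_{m-1}}{\rightarrow} L_{p_m}\}$ is quasi-isomorphic to $C_x$ in $\mathcal{W}(X_0 \cup H^{n-1}, \Lambda)$. Composing with the functor $F_2$ induced by handle attachment, which likewise preserves twisted complexes and quasi-isomorphisms and fixes the disk $C_x$, transports the same quasi-isomorphism into $\mathcal{W}(X)$.

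For the acyclicity claim I would argue purely functorially rather than by displacing $C_x$ inside $X$ directly. In the local category the twisted complex is quasi-isomorphic to $T^*_0 D^{n-1} \times T^*_0 D^1$, which Proposition \ref{prop: boundary_sum_local} shows is displaceable from the skeleton and hence, by \cite{ganatra_generation}, quasi-isomorphic to the zero object. Since a cohomologically unital $A_\infty$-functor sends the zero object to the zero object (it sends the identity morphism to the identity morphism) and preserves quasi-isomorphisms, the images of this acyclic twisted complex under $F_1$ and under $F_2 \circ F_1$ are again acyclic; in particular $C_x$, being quasi-isomorphic to these images, is acyclic in both $\mathcal{W}(X_0 \cup H^{n-1}, \Lambda)$ and $\mathcal{W}(X)$.

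The step I expect to require the most care is confirming that the functors genuinely send the local data to the claimed global data: that the proper inclusions carry the linking disks, the cocore disk $C_x$, and especially the connecting morphisms $a_i$ to the named objects and chords with compatible orientations. This amounts to checking that the normal form of Proposition \ref{prop: attaching_spheres_n-1_handle} makes the inclusions identity-like near the chords $a_i$, so that the push-forward introduces no spurious higher terms in the differential of the twisted complex; once the functoriality package of \cite{ganatra_generation} is invoked, the remainder is bookkeeping.
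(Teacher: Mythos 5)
Your proposal matches the paper's proof: the paper likewise establishes the normal form of Proposition \ref{prop: attaching_spheres_n-1_handle}, notes that the proper inclusions $(T^*D^{n-1}_{\epsilon}, T^*\partial D^{n-1}_{\epsilon}) \times (T^*D^1, P) \hookrightarrow (X_0 \cup H^{n-1}, \Lambda) \hookrightarrow X$ induce covariant functors by \cite{ganatra_generation} carrying the $L_{p_i}$ and $T^*_0 D^{n-1} \times T^*_0 D^1$ to the named global objects, and obtains the result by applying these functors to the quasi-isomorphism of Corollary \ref{cor: twisted_complex_local}. Your extra verifications (functors preserve twisted complexes, quasi-isomorphisms, and acyclic objects) are exactly the bookkeeping the paper leaves implicit.
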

The Lagrangian disk $L_p$ in Corollary \ref{cor: twisted_complex_global} is a linking disk of one of the $\Lambda_y$; namely, $L_p$ is a linking disk of $\Lambda_y$ if $p \in P(x,y)$. In  $X^{2n}$, the linking disk $L_p$ is Lagrangian isotopic to the Lagrangian co-core $C_y$ of $H^n_y$. 
The total length $m = |P(x)|$ of the twisted complex in Corollary \ref{cor: twisted_complex_global} is precisely the geometric intersection number $|\Gamma_x\cap \Lambda|$ of $\Gamma_x$ with $\Lambda$ and  $C_y, \overline{C}_y$ appear $|P(x,y)| = |\Gamma_x  \cap \Lambda_y|$ times. Theorem \ref{thm: twisted_complex} in the Introduction is a slightly more refined version of this result that also states how many times $C_y, \overline{C_y}$ each occur. 
That result will follow from the results in this section by analyzing orientations. 
\begin{examples}\label{ex: cotangent_isomorphism_acyclic}
	Let $X^{2n} = T^*M^n$, where $M^n$ is a closed orientable smooth manifold. Then $M^n = B^n \cup H^1_1 \cup \cdots \cup H^1_s \cup N^n$, where the handles of $N^n$ have index at least $2$; 
	each $H^1_i$ gives an element $\gamma_j \in \pi_1(M)$.  Since $M^n$ is orientable, the attaching sphere $S^0$ of $H^1_j$ intersects the co-core $(S^{2n-1}, \xi_{std}) = \partial B^{2n}$ in two points with different signs. By flipping this presentation, 
	$M = M_0 \cup H^{n-1}_1 \cup \cdots \cup H^{n-1}_s \cup H^n$, where the handles of $M_0$ have index less than $n-1$ and the attaching sphere of $H^n$ goes through the belt sphere of $H^{n-1}_s$ geometrically twice, with opposite sign. The cotangent bundle $T^*M$ has the same presentation $T^*M^n = T^*M_0 \cup H^{n-1}_1 \cup \cdots \cup H^{n-1}_s \cup H^n$
and  so by Corollary \ref{cor: twisted_complex_global}, there are acyclic twisted complexes $T_j$ in  $\mathcal{W}(T^*M)$ consisting of two copies of $C, \overline{C}$, where $C$ is the co-core of $H^n$; we will later see that $T_j = \{ \overline{C} \overset{f_j}{\rightarrow} C \}$, with one $C$ and one $\overline{C}$. Indeed $C$ is the cotangent fiber $T^*_x M$ and $f_j \in CW_1(\overline{C}, C)= CW_0(C, C) = CW_0(T_x^*M, T_x^*M) \cong C_0(\Omega M)$ is precisely the isomorphism  $\gamma_j \in \pi_1 M \subset \mathbb{Z}/2[\pi_1 M] = C_0(\Omega M)$, explaining why $T_j$ is acyclic.
\end{examples}

\subsection{Grothendieck group of the wrapped category}
	The acyclic complex in Corollary \ref{cor: twisted_complex_global} induces a relation in the Grothendieck group $K_0(\mathcal{W}(X))$ of $\mathcal{W}(X)$. Every twisted complex splits in the Grothendieck group, which proves the following: 
	\begin{corollary}\label{cor: grothendieck_relation}
	$\sum_{p \in P(x)} [L_p] = 0$ in 		
		$K_0(\mathcal{W}(X_0 \cup H^{n-1}, \Lambda))$ and 
	$K_0(\mathcal{W}(X))$. 
	\end{corollary}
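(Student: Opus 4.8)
The plan is to deduce the relation directly from the acyclicity established in Corollary \ref{cor: twisted_complex_global}, together with the standard additivity of the Grothendieck group under exact triangles; this is exactly the principle that every twisted complex splits in $K_0$.

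First I would recall that a one-sided twisted complex is an iterated mapping cone: the object $T = \{L_{p_1} \overset{a_1}{\rightarrow} L_{p_2} \overset{a_2}{\rightarrow} \cdots \overset{a_{m-1}}{\rightarrow} L_{p_m}\}$ is built from the $L_{p_i}$ by successively forming cones, and each such cone sits in an exact triangle in $D^b\mathcal{W}(X_0 \cup H^{n-1}, \Lambda)$, respectively $D^b\mathcal{W}(X)$. By the defining relation of $K_0$, every exact triangle $A \rightarrow B \rightarrow C \rightarrow A[1]$ splits, so $[B] = [A] + [C]$. Applying this inductively along the filtration of $T$ by its terms expresses $[T]$ as a signed sum $\sum_{i=1}^m (-1)^{s_i}[L_{p_i}]$, where $s_i$ records the internal grading shift carried by the $i$-th term.

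Second I would pin down these shifts. The connecting morphisms $a_i$ are the short Reeb chords used to form the boundary connected sum, and they lie in $Hom^1(L_{p_i}, L_{p_{i+1}})$; since each $a_i$ is a degree-one morphism, the differential $\delta = \sum_i a_i$ already has total degree $+1$ with all terms placed in a common internal degree, i.e. every $s_i = 0$. Hence no alternating signs appear and $[T] = \sum_{i=1}^m [L_{p_i}] = \sum_{p \in P(x)} [L_p]$. This step is where the genuine care is needed: the main obstacle is precisely this sign bookkeeping — one must verify that the degree-one nature of the connecting maps forces all terms into the same grading, so that the Grothendieck class is the honest unsigned sum $\sum_p [L_p]$ rather than an alternating sum of the kind one would get for an ordinary chain complex whose differential has degree zero between graded pieces.

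Finally, Corollary \ref{cor: twisted_complex_global} asserts that $T$ is quasi-isomorphic to $C_x$, which is displaceable from the skeleton and therefore acyclic; thus $[T] = [0] = 0$ in both $K_0(\mathcal{W}(X_0 \cup H^{n-1}, \Lambda))$ and $K_0(\mathcal{W}(X))$. Combining this with the computation $[T] = \sum_{p \in P(x)} [L_p]$ from the previous step yields $\sum_{p \in P(x)} [L_p] = 0$ in both groups, as claimed. Everything beyond the shift analysis is a formal consequence of the additivity of $K_0$ under exact triangles, so no further geometric input is required here.
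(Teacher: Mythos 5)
Your proposal is correct and follows exactly the paper's argument: the twisted complex of Corollary \ref{cor: twisted_complex_global} is acyclic, and twisted complexes split in $K_0$, yielding $\sum_{p \in P(x)} [L_p] = 0$. Your careful verification that the degree-one connecting morphisms $a_i \in Hom^1(L_{p_i}, L_{p_{i+1}})$ produce the unsigned sum (since $\{A \overset{a}{\rightarrow} B\}$ is the cone on $a \colon A[-1] \rightarrow B$, giving $[B] - [A[-1]] = [A] + [B]$) is a correct elaboration of what the paper compresses into the single phrase ``every twisted complex splits in the Grothendieck group,'' with the signs already absorbed into the gradings of the $L_p$ induced from $C_x$.
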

We  reformulate Corollary \ref{cor: grothendieck_relation} to keep track of orientations of the Lagrangians.  For an orientable Lagrangian $L$, there is a tautological group homomorphism 
\begin{equation}\label{eqn: taut_k0}
O(L) \rightarrow K_0(\mathcal{W}(X)
\end{equation}
that takes a generator of $O(L)$, i.e. an orientation of $L$, to the corresponding object of $\mathcal{W}(X)$. This is well-defined since if we reverse the orientation of $L$ to get the object $\overline{L}$, then 
$\overline{L}$ is quasi-isomorphic to $L[1]$ in $\mathcal{W}(X)$ and so $[\overline{L}] = - [L] \in K_0(\mathcal{W}(X))$.
In particular, there is a tautological map
 $ \bigoplus_{p\in P(x)} O(L_p) \rightarrow K_0(\mathcal{W}(X))$. 
The  grading of $L_p$ in Corollary \ref{cor: grothendieck_relation} is induced by grading of $C_x$ by the restriction $r_p^*$, see Equation \ref{eqn: restriction}. So Corollary \ref{cor: grothendieck_relation} states that the following composition vanishes:  
\begin{equation}\label{eqn: grothendieck_relation0}
( r^* = \bigoplus_ {p \in P(x)} r_p^* )
:  O(C_x) \rightarrow 
\bigoplus_{p\in P(x)} O(L_p) \rightarrow K_0(\mathcal{W}(X))
\end{equation}

Now we reorder the sum in Corollary \ref{cor: grothendieck_relation}. As noted above, the Lagrangian disk $L_p$ in Corollary \ref{cor: grothendieck_relation} is the linking disk of $\Lambda_y$ if $p \in P(x,y)$ and therefore isotopic to the co-core $C_y$ in $H^n_y$. More precisely,  when we attach $H^n_y = T^*D^n_y$ along $\Lambda_y = \partial D^n_y$, we can identity $L_{p}$ with 
$T^*_{p} D^n_y$ for some point  $p \in \partial D^n = \Lambda_y$. There is a canonical radial path $u_p(t)$ from $p$ to $0$ in $D^n_y$. Hence there is a canonical path of Lagrangians $T_{u_p(t)}^* D^n_y$ from $L_p =T^*_{p} D^n_y$ to the Lagrangian co-core $C_y= T^*_0 D^n_y$ of $H^n_y$. The Lagrangian isotopy induces an isomorphism between the objects 
$L_{p}, C_y$ of $\mathcal{W}(X)$ and also 
an isomorphism of orientation lines
\begin{equation}
u_p^*: O(L_{p}) \rightarrow O(C_y)
\end{equation}
for $p \in P(x,y)$. The map $u_p^*$ is induced by the isomorphism between $L_p, C_y$ and so  the tautological map $O(L_{p}) \rightarrow K_0(\mathcal{W}(X))$ factors through $u_p^*$.
So Equation \ref{eqn: grothendieck_relation0} can be factored as 
 \begin{equation}
(u^* \circ r^*: = \bigoplus_ {p \in P(x)} u_p^* \circ r_p^*):
  O(C_x) 
  \rightarrow \bigoplus_{p\in P(x)} O(L_p) \rightarrow 
  \bigoplus_{y \in Crit_n(f)} O(C_y) 
  \rightarrow K_0(\mathcal{W}(X))
\end{equation}

We can regroup the terms in this map using the decomposition  
$P(x) = \coprod_{y \in Crit_n(f)} P(x,y)$ and rewrite Equation \ref{eqn: grothendieck_relation0} as
\begin{equation}\label{eqn: grothendieck_relation}
u^*\circ r^*:= \bigoplus_{y \in Crit_n(f)} ( \sum_{p \in P(x,y)} u_p^* \circ r_p^* ):  O(C_x)\rightarrow \bigoplus_{y \in Crit_n(f)} O(C_y) \rightarrow K_0(\mathcal{W}(X))
\end{equation}
Corollary \ref{cor: grothendieck_relation} says that this composition is zero. 
A key point is that the isomorphisms $u_p^* \circ r_p^*: O(C_x) \rightarrow  O(C_y)
, p \in P(x,y),$ may be different for different $p$. But as we will see, this difference depensd just on topological data of the intersection point $p \in \Lambda_y \cap \Gamma_x$. In fact, this data is the same data used to define Morse cohomology, which we now review.  
\\

Let $M^{2n}$ be a smooth manifold $M^{2n}$ with boundary $\partial M$.  Let $f$ be a Morse function on $M$ and a gradient-like vector field $v$ for $f$ so that $v$ is outward pointing along $\partial M$. For a critical point $x$ of $f$, let $W^s(x), W^u(x)$ be the $v$-stable, $v$-unstable sets of $x$ respectively. Using the flow of $v$, there are 
 diffeomorphisms $W^s(x) \cong \mbox{Int } D^{k}$, where $k = \mbox{Ind}(x) = \dim W^s(x)$, and $W^u(x) \cong \mbox{Int } D^{2n-k}$. Since $W^s(x), W^u(x)$ are disks, they are orientable manifolds and we can define the orientation lines $O(W^s(x)), O(W^u(x))$.  
We will further assume that $(f, v)$ satisfy the Morse-Smale condition, i.e. for any two critical points $x,y$, the unstable set $W^u(x)$ and stable set $W^s(y)$ intersect transversely. In particular, $\dim W^u(x) \cap W^s(y) = 
ind(y) - ind(x)$.  Let $x,y$ be critical points 
of index $i$ and $i+1$ and so that $W^u(x) \cap W^s(y)$ is 1-dimensional and consists of a finite-collection of $v$-trajectories. Let $\gamma: \mathbb{R}^1 \rightarrow X$ be a trajectory of $v$ from $x$ to $y$. As we now explain, there is an induced isomorphism  $\gamma^*: O(W^u(x)) \rightarrow O(W^u(y))$ of orientation lines of the unstable sets. 

There is a canonical map $O(W^u(x)) \cong O(T_{\gamma(s)} W^u(x))$ for any $s \in \mathbb{R}$. Using the orientation of $T\gamma$ provided by the flow of $v$, we have isomorphisms
$O(T_{\gamma(s)} W^u(x)) \cong O(T_{\gamma(s)} W^u(x) / T_{\gamma(s)}\gamma)$. 
Since $T_{\gamma(s)}\gamma= T_{\gamma(s)} W^u(x) \cap T_{\gamma(s)}W^s(y)$, we have 
$
T_{\gamma(s)} W^u(x) / T_{\gamma(s)}\gamma  \cong  T_{\gamma(s)}M / T_{\gamma(s)} W^s(y)
$ for any $s \in \mathbb{R}$, induced by inclusion and hence 
$O(T_{\gamma(s)} W^u(x) / T_{\gamma(s)}\gamma)
\cong O(T_{\gamma(s)}M / T_{\gamma(s)} W^s(y))$.
Now use parallel transport along $\gamma$ to get an isomorphism 
$O(T_{\gamma(s)} M^n) \cong O(T_y M^n)$; note that $M$ need not be orientable. Since $W^s(y)$ is a disk, which is orientable, we have isomorphisms  $O(T_{\gamma(s)} W^s(y)) \cong O(W^s(y)) \cong O(T_{y} W^s(y))$. This induces $O(T_{\gamma(s)} M^n/ T_{\gamma(s)} W^s(y)) \cong O(T_{y} M^n/T_{y} W^s(y))$. Finally, we note that $O(T_{y} M^n/T_{y} W^s(y)) = O(T_y W^u(y)) = O(W^u(y))$. Combining these isomorphisms, we get the desired isomorphism $\gamma^*: O(W^u(x))\rightarrow O(W^u(y))$. Namely, 
\begin{eqnarray*}
\gamma^*: O(W^u(x)) & \cong O(T_{\gamma(s)}  W^u(x) / T_{\gamma(s)}\gamma)
\cong   O(T_{\gamma(s)}M / T_{\gamma(s)} W^s(y))\\ & \cong 
 O(T_{y} M/T_{y} W^s(y)) \cong O(W^u(y))
\end{eqnarray*}

Now we recall the definition of Morse cohomology. Let $Crit(f)$ denote the set of critical points of $f$ and $Crit_k(f)$ denote the subset of critical points of index $k$. 
The Morse complex is the free abelian group 
 $\oplus_{x \in Crit(f)}O(W^u(x))$ generated by $Crit(f)$ with differential $d$ whose restriction to $O(W^u(x))$ equals
 	\begin{equation}\label{eqn: morse_differential}
		(	d = 	\underset{y \in Crit_{k+1}(f)}{\bigoplus}
	(\sum_{\gamma \in \mathcal{M}(x,y)} \gamma^*)) : 	O(W^u(x))
		\rightarrow	
		\underset{y \in Crit_{k+1}(f)}{\bigoplus} O(W^u(y))
			\end{equation}
		Then Morse cohomology$H^*_{Morse}(M, (f,v))$ is the cohomology of this complex. It is independent of $(f,v)$ and isomorphic to $H^*_{sing}(M; \mathcal{O})$, the singular cohomology of $M$ twisted by the  orientation line bundle $\mathcal{O}$ of $M$. So if $M$ is orientable, which is always the case for symplectic manifolds, then  this is just $H^*_{sing}(M; \mathbb{Z})$, which we denote simply by $H^*(M; \mathbb{Z})$.

		Now suppose the gradient-like vector field $v$ is a Liouville vector field so that $(X^{2n}, f, v)$ is a Weinstein structure.  
		In this case, $f$ has no index $n+1$ critical points. So 
				$H^n(X; \mathbb{Z})$ is the cokernel of the Morse differential $d$ and there is a quotient map 
				 $\oplus_{y \in Crit_{n}(f)}O(W^u(y)) \rightarrow 
				H^n(X; \mathbb{Z})$.  Also, for each $y \in Crit_{n}(f)$,  $W^u(y)$ coincides with the Lagrangian co-core $C_y$.
	So $O(W^u(y)) = O(C_y)$ and there is a tautological map $\oplus_{y \in Crit_{n}(f)}O(W^u(y)) \rightarrow K_0(\mathcal{W}(X))$.  Our main result is the following. 	
	\begin{proposition}\label{prop: map_fixed_presentation}
	Let $X^{2n}$ be a Weinstein domain of the form
	$X^{2n}_0 \cup  H^n_{\Lambda(y_1)} \cup \cdots \cup H^n_{\Lambda(y_k)}$, where all handles of $X_0$ have index less than $n$. 
	Then the tautological map 
		$\oplus_{y \in Crit_n(f)} O(C_y) \rightarrow K_0(\mathcal{W}(X))$ factors through a  surjective group homomorphism 
	$\mathcal{L}: H^n(X; \mathbb{Z}) \rightarrow K_0(\mathcal{W}(X))$.
\end{proposition}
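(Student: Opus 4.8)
The strategy is to realize $\mathcal{L}$ as the map induced on cokernels by the relations of Corollary~\ref{cor: grothendieck_relation}. Since a Weinstein function $f$ has no critical points of index $n+1$, the group $H^n(X;\mathbb{Z})$ is the cokernel of the Morse differential
\[
d:\ \bigoplus_{x\in Crit_{n-1}(f)} O(W^u(x))\ \longrightarrow\ \bigoplus_{y\in Crit_{n}(f)} O(W^u(y)),
\]
and for $y\in Crit_n(f)$ one has $W^u(y)=C_y$, so the target is exactly $\bigoplus_{y} O(C_y)$ together with its tautological map to $K_0(\mathcal{W}(X))$ from \eqref{eqn: taut_k0}. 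I would construct $\mathcal{L}$ by showing that this tautological map annihilates the image of $d$, and therefore descends to the cokernel $H^n(X;\mathbb{Z})$.

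Surjectivity is the easy half: by \cite{chantraine_cocores_generate, ganatra_generation} the co-cores $C_y$, $y\in Crit_n(f)$, generate $\mathcal{W}(X)$, so the classes $[C_y]$ generate $K_0(\mathcal{W}(X))$; hence the tautological map is surjective, and so is $\mathcal{L}$ once it is shown to factor through $H^n(X;\mathbb{Z})$.

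The content is therefore the vanishing of the tautological map on the image of $d$. Fix $x\in Crit_{n-1}(f)$. Its unstable manifold $W^u(x)$ is the coisotropic co-core of $H^{n-1}_x$ and contains the Lagrangian disk $C_x$; orienting the complementary flow direction fixes a canonical isomorphism $O(W^u(x))\cong O(C_x)$. Under this identification I would show that the restriction of $d$ to $O(W^u(x))$, given by \eqref{eqn: morse_differential}, coincides with the relation map $u^*\circ r^*$ of \eqref{eqn: grothendieck_relation}, which Corollary~\ref{cor: grothendieck_relation} asserts is zero in $K_0(\mathcal{W}(X))$. Both maps are indexed by the same set: the gradient trajectories $\mathcal{M}(x,y)$ from $x$ to $y$ are in canonical bijection with the transverse intersection points $P(x,y)=\Lambda_y\cap\Gamma_x$ of the attaching sphere of $H^n_y$ with the belt sphere of $H^{n-1}_x$. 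So it suffices to match them term by term, i.e.\ to prove that for corresponding $\gamma\in\mathcal{M}(x,y)$ and $p\in P(x,y)$ the Morse transport $\gamma^*:O(W^u(x))\to O(W^u(y))$ agrees with the symplectic transport $u_p^*\circ r_p^*:O(C_x)\to O(C_y)$.

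This last sign comparison is the main obstacle. Both isomorphisms are assembled from the same transverse data at $p$, but on the Morse side $\gamma^*$ is built from the stable/unstable splitting and parallel transport along the trajectory, whereas on the symplectic side $r_p^*$ comes from the explicit local isotopy of Proposition~\ref{prop: boundary_sum_local} (see \eqref{eqn: restriction}) and $u_p^*$ from the radial isotopy $u_p$. The key geometric input is the transversality recorded in Remark~\ref{rem: transverse}: the isotopy $L^1_t$ meets each radial line of the skeleton in a single transverse point, so the induced orientation identification is governed precisely by the intersection sign of $p$, which is the sign with which $\gamma$ contributes to $d$. Carrying out this orientation bookkeeping in the local model of Proposition~\ref{prop: attaching_spheres_n-1_handle} is the technical crux; once it is established, $d=u^*\circ r^*$ maps to $0$ in $K_0(\mathcal{W}(X))$, the tautological map factors through $H^n(X;\mathbb{Z})$ to give $\mathcal{L}$, and surjectivity from the second paragraph completes the proof.
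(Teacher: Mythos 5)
Your proposal follows essentially the same route as the paper: surjectivity via generation by co-cores \cite{chantraine_cocores_generate, ganatra_generation}, factorization through the cokernel of the Morse differential, reduction via the bijection $\mathcal{M}(x,y)\leftrightarrow P(x,y)$ to the term-by-term identity $\gamma^* = u_p^*\circ r_p^*$ (with the same identification $\phi\colon O(C_x)\cong O(W^u(x))$), and vanishing from Corollary~\ref{cor: grothendieck_relation}. The one step you defer --- the orientation bookkeeping matching the Morse transport with the symplectic transport --- is exactly the paper's Proposition~\ref{prop: signs}, which it proves handle-by-handle using the commuting diagrams \eqref{eqn: comm_orientations1}, \eqref{eqn: comm_orientations2} and Remark~\ref{rem: transverse}, the very inputs you correctly identify as the crux.
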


	\begin{proof} 
		The tautological map 
		$\oplus_{y \in Crit_n(f)} O(C_y) \rightarrow K_0(\mathcal{W}(X))$ is a surjective group homomorphism since the co-cores 
		$C_y$ generate $\mathcal{W}(X)$ by  \cite{chantraine_cocores_generate, ganatra_generation} so it is enough to prove that this map factors through $H^n(X; \mathbb{Z})$. Since $H^n(X; \mathbb{Z})$ is the cokernel of the Morse differential $d$, we need to show that composition of $d$ with this tautological map  vanishes in $K_0(\mathcal{W}(X))$		
 By the linearity of $d$, it suffices to 
	check this for every index $n-1$ critical point $x$, i.e. Equation \ref{eqn: morse_differential} composed with the tautological map to  $K_0(\mathcal{W}(X))$ vanishes. If there are no index $n -1$ critical points, there is nothing to prove; hence we will assume that the set of these points is non-empty. 

Recall that by Corollary \ref{cor: grothendieck_relation}, the map 
$u\circ r: O(C_x ) \rightarrow \oplus_{y \in Crit_n(f)} O(W^u(y))$ composed with the tautological map vanishes. 
To show that $d$ composed with the tautological map vanishes, we relate it to the vanishing map $u \circ r$. 
More precisely, note that there is an isomorphism 	$\phi: O(C_x) \cong O(W^u(x))$. Namely, in the $n-1$-handle $T^*D^{n-1} \times T^*D^1$, we have 
$W^u(x) = T^*_0 D^{n-1} \times T^*D^1$  and $C_x = T^*_0 D^{n-1} \times T^*_0 D^1$ and so define the isomorphism $\phi: O(C_x) \cong O(W^u(x))$ by taking the canonical orientation
for $D^1 \subset T^*D^1$ to the right. 
In the next proposition, we will show that the maps $d \circ \phi, u \circ r: O(C_x) \rightarrow \oplus_{y \in Crit_n(f)} O(W^u(y))$ agree, which implies that $d$ composed with the tautological map vanishes and finishes the proof of this result. 
\end{proof}
 
The $O(W^u(y))$-component of $d\circ \phi$ is 
$\sum_{\gamma\in M(x,y)} \gamma^* \circ \phi$ while the $O(W^u(y))$-component of $u \circ r$ is 
$\sum_{p \in P(x,y)} u_p^* \circ r_p^*$.
There is a one-to-one correspondence between $v$-trajectories $M(x,y)$ between $x,y$ and the intersection points $P(x,y)$ between $\Lambda_y$ and $\Gamma_x$. So it suffices to prove that for all $p \in P(x,y)$ and corresponding $\gamma = \gamma(p) \in M(x,y)$, the maps 
	$
	u_p^* \circ r_p^*: O(C_x) \rightarrow O(C_y)	
	$
and 
	$
	\gamma^* : O(W^u(x)) \rightarrow O(W^u(y))
	$ coincide; we do this in the following proposition. 
\begin{proposition}\label{prop: signs}
			For every $p \in P(x, y)$ and corresponding $\gamma \in M(x,y)$, 	
			 the following diagram of isomorphisms commutes: 
			\begin{equation}
			\begin{tikzcd} 
			O(C_x) \arrow{r}{u_p^* \circ r_p^*} \arrow{d}{\phi} & O(C_y) 
			\arrow[d, equal]\\
			O(W^u(x) )  \arrow{r}{\gamma^*} &  O(W^u(y)) 
			\end{tikzcd}
			\end{equation}		
		\end{proposition}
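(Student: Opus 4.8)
The plan is to reduce the statement to a comparison of signs in an explicit local model, since both composites are isomorphisms of rank-one free abelian groups and hence differ at most by a sign. By naturality of all the maps involved and by the locality of the defining data — the isotopy $L_t$ near the radial line $r_p$ (Remark \ref{rem: transverse}), the radial path $u_p$, and the evaluation point $\gamma(s)$ appearing in the definition of $\gamma^*$ — it suffices to work in a neighborhood of the single intersection point $p \in \Lambda_y \cap \Gamma_x$ together with the corresponding trajectory $\gamma$.

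First I would fix Darboux-type coordinates on the handle $H^{n-1}_x = T^*D^{n-1}\times T^*D^1$ near $p$, writing $(\mathbf q,\mathbf p)$ for the base/fiber coordinates of $T^*D^{n-1}$ and $(x,y)$ for those of $T^*D^1 \cong D^2$, and I would record the relevant tangent spaces, representing each orientation line by a constant frame: $T_p W^u(x) = \langle \partial_{\mathbf p}, \partial_x, \partial_y\rangle$, $T_p C_x = \langle \partial_{\mathbf p}, \partial_y \rangle$, and — using that the core $D^n_y = W^s(y)$ meets $\partial_+ H^{n-1}$ along $\Lambda_y = D^{n-1}\times p$ and then extends inward along the radial direction that flows to $y$ — $T_p W^s(y) = \langle \partial_{\mathbf q}, \partial_r\rangle$, where $\partial_r$ is the outward radial field in $D^2$ at $p$. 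In these coordinates the Morse--Smale intersection is transparent: $T_p W^u(x)\cap T_p W^s(y) = \langle \partial_r\rangle = T_p\gamma$, and the flow orients $T_p\gamma$ by $+\partial_r$. This identifies the geometric origin of $\gamma^*$ with the radial datum used on the symplectic side, since the radial line $r_p$, whose thickening is $W^s(y)$ locally, is exactly the line the isotopy $L_t$ crosses transversally once by Remark \ref{rem: transverse}.

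Second I would separate the $T^*D^{n-1}$ directions from the $T^*D^1 \cong D^2$ directions. Throughout, the isotopy $L^n_t = T^*_0 D^{n-1}\times L^1_t$ and the disks $C_x, L_p$ are products with the fixed co-core factor $T^*_0 D^{n-1} = \langle\partial_{\mathbf p}\rangle$, so both $u_p^*\circ r_p^*\circ\phi^{-1}$ and $\gamma^*$ act as the identity on the $O(T^*_0 D^{n-1})$ tensor factor; the only subtlety there is the universal sign by which the fiber directions $\partial_{\mathbf p}$ of $W^u(x)$ meet the zero-section directions $\partial_{\mathbf q}$ of $W^s(y)$, and this sign is independent of $p$ and occurs identically in both composites. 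This reduces the comparison to the two-dimensional $D^2$-model together with the handle $H^n_y$. I would then track the generator $\partial_y$ of $O(C_x)$ — equivalently, via $\phi$, the generator $\partial_x\wedge\partial_y$ of $O(W^u(x))$, inserting $\partial_x$ ``to the right'' as in the definition of $\phi$ — through each chain: on the symplectic side, $r_p^*$ carries this orientation through $L_t$ to $L_p$ using the transverse crossing with $r_p$, and $u_p^*$ transports it along the radial path $u_p$ in $D^n_y$ to $C_y = W^u(y)$; on the Morse side, the quotient-by-$T\gamma$ step, followed by the transversality identification $T_pW^u(x)/T_p\gamma \cong T_pM/T_pW^s(y)$ and parallel transport to $y$, produces the corresponding orientation of $W^u(y)$.

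The heart of the proof — and the step I expect to be the main obstacle — is checking that these two bookkeeping procedures produce the same sign rather than opposite signs. The difficulty is entirely one of reconciling conventions: the flow-orientation of $T_p\gamma = \langle +\partial_r\rangle$ used in $\gamma^*$, the ``$D^1$ to the right'' convention defining $\phi$, the co-orientation of $r_p$ implicit in the restriction $r_p^*$ (which is precisely where Remark \ref{rem: transverse} enters), and the direction of the radial isotopy $u_p$ from $p$ to $0$. I would pin this down by choosing the evaluation point $\gamma(s)=p$ on $\Gamma_x$ and matching each elementary isomorphism in the definition of $\gamma^*$ with one step of the symplectic composite: the passage $O(W^u(x))\cong O(T_pW^u(x)/T_p\gamma)$ with $\phi^{-1}$ together with the crossing datum of $r_p^*$, and the transversality-plus-transport step with $u_p^*$. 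Since after this matching every intermediate identification is the canonical orientation isomorphism attached to the \emph{same} transverse intersection at $p$, the two composites agree, completing the proof of Proposition \ref{prop: signs} and hence of Proposition \ref{prop: map_fixed_presentation}.
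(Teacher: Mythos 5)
Your proposal is correct and follows essentially the same route as the paper's proof: you localize to the two handles $H^{n-1}_x$ and $H^n_y$ and match each elementary isomorphism in the definition of $\gamma^*$ against the corresponding geometric step -- the transverse crossing of the isotopy $L_t \subset W^u(x)$ with $\gamma$ (Remark \ref{rem: transverse}) against the quotient-by-$T\gamma$ identification, the radial isotopy $u_p$ of cotangent fibers against the transversality identification $T(T^*_{u_p(t)}D^n) \cong TM/TW^s(y)$, and the right-pointing $D^1$ convention for $\phi$ against the flow orientation of $T\gamma$ (using that the stops lie on the right-hand side of $S^1$). Your explicit coordinate frames and the splitting off of the $T^*_0 D^{n-1}$ factor are just a concrete rendering of the two commutative squares (Diagrams \ref{eqn: comm_orientations1} and \ref{eqn: comm_orientations2}) the paper uses.
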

\begin{proof}
All isomorphisms in this diagram involve various identifications in the two handles $H^{n-1}_x = T^*D^{n-1} \times T^*D^1, H^{n}_y 	= T^*D^n$. Therefore, we will restrict to these handles and study the identifications one handle at a time. 

We first consider the identifications in $H^{n-1}_x$. On the Fukaya category side, recall that the map $r_p^*: O(C_x) \rightarrow O(L_p)$ is induced by a Lagrangian isotopy $L_{t}$ from the displaceable disk $C_x = T^*_0 D^{n-1} \times T^*_0 D^1$ to $L_{p_1} \natural_{a_1} \cdots \natural_{a_{m-1}} L_{p_m}$ in $(T^*D^{n-1}, T^* \partial D^{n-1}) \times (T^*D^1, \{p_1, \cdots, p_m\})$
and then restricting the orientation to an orientation of the linking disk $L_p^n = T^*_0 D^{n-1} \times L_p^1$. 
On the Morse cohomology side, we need to consider the Liouville vector field which we assume has canonical form in $H^{n-1}_x$. So $W^u(x) = T^*_0 D^{n-1} \times T^*D^1$ and $\gamma \cap H^{n-1}_x$ is the radial path $r_{p}$  from $0 \in T^*D^1 \times 0$ to $p \in \partial T^*D^1 \times 0$.
Note that  $L_{t} \subset W^u(x)$ and by Remark \ref{rem: transverse}, $\gamma \subset W^u(x)$ intersects $L_{t}$ transversely at one point $\gamma(s)$ for each $t$ (where $s$ depends on $t$). So along the $v$-trajectory $\gamma$, the inclusion map induces an isomorphism $T_{\gamma(s)}  L_{t} \cong 
T_{\gamma(s)}W^u(x)/T_{\gamma(s)}\gamma$ and hence an isomorphism $O(T_{\gamma(s)} L_{t}) \cong O(T_{\gamma(s)}W^u(x)/T_{\gamma(s)}\gamma).$ 
In particular, the following diagram of isomorphisms commutes:
	\begin{equation}\label{eqn: comm_orientations1}
\begin{tikzcd} 
O(C_x) \arrow{r}{r_p^*} \arrow{d} & O(L_p) 
\arrow{d}\\
O(T_x W^u(x)/ T_x \gamma)  \arrow{r} &  O(T_{\gamma(s_0)} W^u(x) / T_{\gamma(s_0)} \gamma) 
\end{tikzcd}
\end{equation}	
where $s_0$ is such that $\gamma(s_0) = p \in L_p$. Here the vertical maps are induced by inclusions and the horizontal maps by parallel transport.
The inclusion map $O(C_x) \rightarrow O(T_x W^u(x)/ T_x\gamma )$ in this diagram coincides with the composition
$$
O(C_x) \overset{\phi}{\rightarrow} O(T_x W^u(x) ) \rightarrow 
O(T_x W^u(x) / T_x \gamma)
$$
obtained by adding the positively oriented $D^1$ and then quotienting out by $T\gamma$ (as in the definition of the map $\gamma^*$) since $T\gamma$ projects to the positive direction since the stops $p$ are on the right-hand side of $S^1$.
	
Now we consider identifications in $H^n_y$. 
On the Fukaya category side, the map $u_p^*: O(L_p) \rightarrow O(C_y)$ is induced by the identification $L_p = T^*_p D^n$ and the Lagrangian isotopy $T^*_{u_p(t)} D^n$ in $H^n_y$ from 
$L_p =  T^*_{p} D^n$ to $C_y = T^*_0 D^n$ via the radial path $u_p(t) \subset D^n$ from $p$ to $0$.
On the Morse side, we note that $W^s(y) = D^n \subset H^n_y = T^*D^n$. So $T^*_{u_p(t)} D^n$ is transverse to $W^s(y)$ and so 
$T_{u_p(t)} (T^*_{u_p(t)} D^n) = T_{u_p(t)} M/ T_{u_p(t)} W^s(y)$ and hence the following diagram commutes:
	\begin{equation}\label{eqn: comm_orientations2}
\begin{tikzcd} 
O(L_p)   \arrow{r}{u_p^*} \arrow{d}  & O(C_y) 
\arrow[d, equals]\\
O(T_{p} M / T_p W^s(y))  \arrow{r} &  O(T_y M / T_y W^s(y))  
\end{tikzcd}
\end{equation}	
To connect this to the previous Diagram \ref{eqn: comm_orientations1}, note that the left vertical isomorphism $O(L_p) \rightarrow O(T_p M/ T_p W^s(y))$ here agrees with the 
composition
$$
O(L_p) \rightarrow O(T_{\gamma(s_0) } W^u(x) / T_{\gamma(s_0)} \gamma ) \rightarrow  O(T_p M / T_p W^s(y) )
$$ 
where the first map is the right vertical map in Diagram \ref{eqn: comm_orientations1}, and the second map is as in the definition of $\gamma^*$ in the Morse differential (since $\gamma(s_0) = p$).  Finally, we note that the 
bottom horizontal map in Diagram \ref{eqn: comm_orientations2} agrees with the corresponding map in the definition $\gamma^*$, which completes the proof. 
\end{proof}
In particular, Proposition \ref{prop: map_twisted_grading} shows that the positivity, negativity of an intersection point $p$ of $\Lambda_y$ and $\Gamma_x$ determines whether $L_p$, with the induced orientation from $C_x$, is isotopic  to $C_y$ or $\overline{C_y}$ in $X$. Since $C_x$ is the displaceable disk that gives the acyclic twisted complex in $\mathcal{W}(X)$, this proves Theorem \ref{thm: twisted_complex} from the Introduction.
\\

As defined, the map in Proposition \ref{prop: map_fixed_presentation}  
		 a priori depends on the Weinstein presentation. In the following result, we give an alternative description of this map and show that it is independent of the Weinstein presentation.  
		 More precisely, note that if $\oplus_{L \subset X} O(L)$ is the free abelian group generated by Lagrangian isotopy classes of orientable Lagrangians in $X$, then there is a tautological map $\oplus_{L \subset X} O(L) \rightarrow K_0(\mathcal{W}(X))$ independent of the Weinstein presentation of $X$. There is also a canonical map 
		 $\oplus_{L \subset X} O(L) \rightarrow H^n(X; \mathbb{Z})$
		 sending every Lagrangian to its cocycle class. 		 
		\begin{proposition}\label{prop: map_canonical} 
			 If $L_1, L_2$ are two oriented Lagrangians  in a   Weinstein domain $X^{2n}$
			and $[L_1] = [L_2] \in H^n(X; \mathbb{Z})$, then 
			$[L_1] = [L_2] \in K_0(\mathcal{W}(X))$. In particular, the tautological map $\oplus_{L \subset X} O(L) \rightarrow K_0(\mathcal{W}(X))$ factors through 
		 a surjective group homomorphism 
			$H^n(X; \mathbb{Z}) \rightarrow K_0(\mathcal{W}(X))$. 			
		\end{proposition}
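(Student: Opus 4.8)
The plan is to deduce Proposition~\ref{prop: map_canonical} from the presentation-dependent map $\mathcal{L}_0\colon H^n(X;\mathbb{Z})\to K_0(\mathcal{W}(X))$ already produced in Proposition~\ref{prop: map_fixed_presentation}, upgrading it from the co-cores of one fixed presentation to \emph{all} oriented Lagrangians. First I would observe that the two assertions of the proposition are equivalent: the factorization of the tautological map $\bigoplus_{L\subset X}O(L)\to K_0(\mathcal{W}(X))$ through the cohomology-class map $\bigoplus_{L\subset X}O(L)\to H^n(X;\mathbb{Z})$ is exactly the implication $[L_1]=[L_2]\in H^n(X;\mathbb{Z})\Rightarrow[L_1]=[L_2]\in K_0(\mathcal{W}(X))$, and surjectivity is inherited from Proposition~\ref{prop: map_fixed_presentation}. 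Everything therefore reduces to the single identity
\begin{equation*}
[L]_{K_0}=\mathcal{L}_0\bigl([L]_{H^n}\bigr)\qquad\text{for every oriented Lagrangian }L\subset X.
\end{equation*}
This one statement also forces presentation-independence for free: the co-cores of any other Weinstein presentation are themselves particular Lagrangians, so the identity pins $\mathcal{L}_0$ down on a generating set of $H^n(X;\mathbb{Z})$ and hence shows it coincides with the map built from any other presentation.

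To prove the displayed identity I would take the fixed presentation with co-cores $C_y=W^u(y)$, which generate $\mathcal{W}(X)$ by \cite{chantraine_cocores_generate, ganatra_generation}, so that $L$ is quasi-isomorphic to a twisted complex in the $C_y$ and $[L]_{K_0}=\sum_y m_y[C_y]_{K_0}$ for integers $m_y$. The key geometric input I would establish is that one may take $m_y$ to be the algebraic intersection number $L\cdot D_y$ with the core disk $D_y=W^s(y)$. Granting this, the \emph{same} integers compute the Morse cocycle of $L$: by the definition of the Morse differential and the dual pairing of cores against co-cores, the $O(W^u(y))$-coefficient of the cochain representing $[L]$ is exactly $L\cdot D_y$, so $[L]_{H^n}=\sum_y m_y[C_y]_{H^n}$ already at the cochain level. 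Applying $\mathcal{L}_0$ and using $\mathcal{L}_0([C_y]_{H^n})=[C_y]_{K_0}$ then yields the identity, the sign bookkeeping being governed by the orientation-line formalism of Proposition~\ref{prop: signs}, which I would extend from the co-cores of the $H^{n-1}$ handles to an arbitrary $L$. For a regular Lagrangian disk there is an alternative route I would run in parallel: choose a presentation $\mathcal{P}_L$ in which $L$ is a co-core and connect it to the fixed presentation by Weinstein homotopies, handle creation/cancellation, and handle slides, as in the proof of Theorem~\ref{thm: flexible_complement}. Propositions~\ref{prop:handle_slide_cocores} and \ref{prop: handleslide_reidemeister} show a slide replaces a co-core by a boundary connected sum $C_i\natural C_j$, which adds $[C_j]$ both in $H^n(X;\mathbb{Z})$ and, since boundary sum is direct sum in $\mathcal{W}(X)$ \cite{ganatra_generation}, in $K_0(\mathcal{W}(X))$, while creation/cancellation introduces a generator together with the matching acyclic relation of Corollary~\ref{cor: twisted_complex_global}; thus the identity is preserved along the homotopy and descends to $L$.

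The hard part will be the \emph{integral} matching of multiplicities. The Euler-characteristic pairings with closed Lagrangians from Example~\ref{ex: lower_bound_k0_lagrangians} detect only the free quotient of $H^n(X;\mathbb{Z})$ paired against Lagrangian homology classes, so they cannot by themselves establish the identity when $H^n(X;\mathbb{Z})$ has torsion or $H_n(X;\mathbb{Z})$ is not generated by closed Lagrangians. I therefore expect the real work to be proving $m_y=L\cdot D_y$ as an equality of integers and controlling the induced orientations, i.e.\ carrying out the chain-level comparison of Proposition~\ref{prop: signs} for a general $L$; once the Fukaya-theoretic multiplicities are identified with the Morse-cochain coefficients on the nose, passage to $H^n(X;\mathbb{Z})$ and $K_0(\mathcal{W}(X))$ is purely formal. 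A secondary difficulty is the case of closed (rather than disk) Lagrangians, which are not co-cores of any presentation: for these I would rely on the generation-by-co-cores decomposition together with the same intersection computation, rather than on realizing $L$ as a co-core.
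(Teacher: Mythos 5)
Your proposal follows essentially the same route as the paper's proof: isotope $L$ transverse to the cores of the index $n$ handles, express it as a twisted complex whose terms are the local co-core copies at the intersection points, observe that the resulting coefficients in $K_0$ agree with the Morse-cochain coefficients of $[L]\in H^n(X;\mathbb{Z})$, and then transfer equality from $H^n$ to $K_0$ via Proposition \ref{prop: map_fixed_presentation}. The one step you flag as ``the hard part'' --- matching the twisted-complex multiplicities with the signed intersection numbers $L\cdot D_y$, with orientations controlled --- is precisely what Proposition 1.25 of \cite{ganatra_generation} supplies (the terms of the twisted complex are by construction the parallel copies of the co-cores cut out by $L$ near the cores, with their induced orientations), so the paper closes this step by citation rather than by redoing a chain-level comparison in the style of Proposition \ref{prop: signs}, and your parallel handle-slide route for disks is unnecessary.
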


		\begin{proof}
			Pick any Weinstein structure on $X^{2n}$ with $n$-handles $H^n_1, \cdots, H^n_k$ and Lagrangian co-cores $C_1, \cdots, C_k$. Then $C_1, \cdots, C_k$ generate $\mathcal{W}(X)$ by \cite{chantraine_cocores_generate, ganatra_generation}
			so that $L_1, L_2$ are twisted complexes of the $C_i$. More precisely, we can Lagrangian isotope $L_1, L_2$ so that they are transverse to the cores of the $H^n_i$. Then restricting to a a small neighborhood of these cores,  $L_1, L_2$ look like disjoint copies of the co-cores $C^n_i$ of $H^n_i$. Then  Proposition 1.25 of \cite{ganatra_generation} proves that $L_i$ is a twisted complex of these copies, i.e.  $L_1 \cong Tw_j C_{1,j}, L_2 \cong Tw_j C_{2,j}$ in $\mathcal{W}(X)$. At the same time, the  restriction map  $H^n(X \cup \partial X \times [0,1]; \mathbb{Z}) \rightarrow H^n(X; \mathbb{Z})$ to a smaller neighborhood is also an isomorphism on singular cohomology. In particular, $[L_1] = \sum_j [C_{1,j}], [L_2] = \sum_j [C_{2,j}] \in H^n(X; \mathbb{Z})$ by construction. 
			
			Since $[L_1] = [L_2] \in H^n(X; \mathbb{Z})$ by assumption, $\sum_j [C_{1,j}]= \sum_j [C_{2,j}] \in  H^n(X; \mathbb{Z})$ and so by Proposition \ref{prop: map_fixed_presentation}, we have $\sum_j [C_{1,j}]= \sum_j [C_{2,j}] \in K_0(\mathcal{W}(X))$ since $C_1, \cdots, C_k$ are co-cores of a fixed Weinstein presentation. Furthermore, 
			$[L_1] = [Tw_j C_{1,j}] = \sum_j [C_{1,j}]\in K_0(\mathcal{W}(X)$ and $[L_2] = [Tw_j C_{1,2}] = \sum_j [C_{2,j}] \in K_0(\mathcal{W}(X))$ since $L_i \cong Tw_j C_{i,j}$ and twisted complexes split in the Grothendieck group. Therefore,  $[L_1] = [L_2 ]\in K_0(\mathcal{W}(X))$ as desired. 
		\end{proof}
	The map 	$H^n(X; \mathbb{Z}) \rightarrow K_0(\mathcal{W}(X))$ in Proposition \ref{prop: map_canonical} is canonical and hence independent of the Weinstein presentation. Furthermore, it agrees with the maps in Proposition \ref{prop: map_fixed_presentation} since the tautological map for a fixed Weinstein presentation factors though the tautological map in Proposition \ref{prop: map_canonical} 	via the inclusion map 
	$\oplus_{y \in Crit_n(f)} O(C_y) \rightarrow \oplus_{L\subset X} O(L)$.

	Next we prove Theorem \ref{thm: Dennis_trace}: the acceleration map factors through the Dennis trace map. 
	
	\begin{proof}[Proof of Theorem \ref{thm: Dennis_trace}]
	Let $(X^{2n}, f, v)$ be a Weinstein structure with index $n$ critical points $p_1, \cdots, p_k$  and 
corresponding Lagrangian co-cores 	$C_{p_1}, \cdots, C_{p_k}$. We assume that $f$ is  positive, $C^2$-small away from a neighborhood of $\partial X$, and self-indexing, i.e. $f(q) = \epsilon q$ if $q$ is an index $q$ critical point. 

	First we recall the definition of the acceleration map $\mathcal{A}: H^*(X; \mathbb{Z}) \rightarrow SH^*(X)$.  To compute symplectic cohomology $SH^*(X)$, we choose a Hamiltonian function $H$ on $X$ that is increasing near $\partial X$, i.e. quadratic at infinity on the completion $\widehat{X} = X \cup \partial X \times [0,\infty)$ of $X$; see	\cite{S_06} for details. The generators of symplectic cochains $SC(X)$  are time-1  orbits $\gamma$ in $\widehat{X}$ of the Hamiltonian vector field $x_H$ of $H$. By taking $H$ to be the Morse function $f$, the Hamiltonian orbits correspond to constant orbits $\gamma_p$ at Morse critical points $p$ of $f$ and non-constant orbits in $\partial X \times [0, \infty)$ corresponding to Reeb orbits at infinity. The differential is given by counts of Floer trajectories. 
	The vector space $C^*(X)$ generated by the constant Morse orbits forms a subcomplex. To see this, we use the usual action argument. Let $\lambda$ denote the Liouville 1-form, i.e. $\omega(v,\_) = \lambda$. Then the action $A(\gamma)$ of a time-1 orbit $\gamma$ of $x_H$ is $-\int_0^1 \gamma^*\lambda + \int H(\gamma(t))$. Then $A(\gamma_p) = f(p)$ is positive while the action of the non-constant orbits is negative. The differential increases action since Floer trajectories have positive energy and so it takes the constant Morse orbits to each other. 	This subcomplex $C^*(X)$ computes singular cohomology $H^*(X; \mathbb{Z})$ and then the acceleration map is induced by the inclusion of this subcomplex $C^*(X)$ into all symplectic cochains $SC^*(X)$. In particular, $\mathcal{A}(C_p)$ is precisely the constant orbit $\gamma_p$ at the critical point $p$ of $f$.
	
	Now we compare the acceleration map $\mathcal{A}$ to the other maps
	$ \mathcal{L}, \mathcal{T}, $ and  $\mathcal{OC}$ in Diagram \ref{eqn: Dennis_trace}. Since $C_{p_1}, \cdots, C_{p_k}$ generate $H^n(X; \mathbb{Z})$
(viewed as Morse cohomology of $(X, f, v)$), 	 it suffices to prove that $\mathcal{A}(C_p) = \mathcal{OC} \circ \mathcal{T} \circ \mathcal{L}(C_p)$ for all index $n$ critical points $p$ of $f$.	
The map $\mathcal{L}: H^n(X; \mathbb{Z}) \rightarrow K_0(\mathcal{W}(X))$ is tautological: it takes $C_p$, viewed as the $v$-unstable manifold of $p$, to $[C_p] \in K_0(\mathcal{W}(X))$, viewed as a Lagrangian.
	Next, the Dennis trace $\mathcal{T}: K_0(\mathcal{W}(X))  \rightarrow HH_0(\mathcal{W}(X))$ takes $[C_p]$ to $id_{C_p} \in CW^0(C_p, C_p)$, which is a Hochschild cycle and hence an element of $HH_0(\mathcal{W}(X))$. 	
	Recall that $CW(C_p, C_p)$ is generated by time-1 trajectories with endpoints on $C_p$ of a Hamiltonian vector field $x_H$.	
  Again, we take $H$ to be $f$, which restricts to a Morse function $f|_{C_p}$ on $C_p$. Then the time-1 trajectories 
   correspond to Morse critical points of $f|_{C_p}$ and Reeb chords of $\partial C_p$ at infinity. The element 
	$id_{C_p}$ is the constant chord $c_p$ at the critical point $p \in C_p$, i.e. the minimum of $f|_{C_p}$. 
	Finally, we apply the open-closed map $\mathcal{OC}: HH_0(\mathcal{W}(X)) \rightarrow SH^{n}(X)$ 
to $c_p$. This map counts Floer disks with boundary on a collection of Lagrangians, with possibly several boundary punctures  asymptotic to Hamiltonian chords between these Lagrangians, and one interior puncture asymptotic to a Hamiltonian orbit. In particular,  $\mathcal{OC}(c_p) = \sum a_i \gamma_i \in SH^n(X)$, where $\gamma_i$ is a Hamiltonian orbit and the coefficient $a_i$ equal to the number of Floer disks with boundary on $C_p$, one boundary puncture asymptotic on $c_p$, and one interior puncture asymptotic to $\gamma_i$. We claim that there is exactly one such disk, which is constant at $p$. 

We use an action argument to prove this claim. The action $A(c)$ of a time-1 chord $c$ of $x_H$ with boundary on $C_p$ is 
$-\int_0^1 c^*\lambda + \int H(c(t))$ since $\lambda|_{C_p} = 0$; here we use the conventions for action from \cite{Abouzaid_splitgenerate}. Again using the Weinstein Morse function $f$ as the Hamiltonian, we have $A(c_p) = f(p)$. Since $f$ is self-indexing and $p$ has the maximal index $n$, $f(p) \ge f(q)$ for all other critical points $q$ of $f$ (with equality if $q$ also has index $n$). Therefore $A(c_p) \ge A(\gamma_q)$ for all $q$. Also, $A(c_p) = f(p)$ is positive while the action of the non-constant Hamiltonian orbits is negative. 
Since non-constant Floer disks have positive energy, the only Floer disk contributing to  $\mathcal{OC}(c_p)$ is the constant disk and so $\mathcal{OC}(c_p) = \gamma_p$. Therefore $\mathcal{OC}\circ \mathcal{T}\circ \mathcal{L}(C_p) = \mathcal{A}(C_p)$ as desired. 
 \end{proof}

		\subsection{Twisted gradings and local systems}\label{subsec:twisted_gradings}
		In the previous section, we considered the Fukaya category $\mathcal{W}(X)$ with its canonical $\mathbb{Z}/2$-grading
	where Lagrangians are graded by orientation. In this section, we generalize this to other $\mathbb{Z}/2$-gradings.
	Since $[C[1]] = -[C]$ in the Grothendieck group of any triangulated category, changing the grading of the category changes signs in the Grothendieck group.
	In Theorem \ref{thm: K_0_surjective_map}, we considered singular cohomology with the trivial $\mathbb{Z}$-local system over $X$ and changing this local system also changes signs in Morse differential used to compute singular cohomology. We will show that a compatible choice of grading of Fukaya category and local system of the underlying space produce compatible sign changes and use this to  generalize Theorem \ref{thm: K_0_surjective_map}.
		
	First we review general $\mathbb{Z}/2$-gradings of the Fukaya category of a symplectic manifold $X$. Let $LGr(X)$ denote the fiber bundle of Lagrangian Grassmanians over $X$, i.e. the fiber at $x\in X$ is the set of Lagrangian planes 
		$LGr(T_x X)$ in $T_x X$. 		A $\mathbb{Z}/2$-grading of $X$ (or $\mathcal{W}(X)$) is a 2-to-1 covering $p: G \rightarrow LGr(X)$ of the Lagrangian Grassmanian such that 
		the restriction of $p$ to 
		$LGr(T_x X) \subset LGr(TX)$ is isomorphic to 
		$LGr^{or}(T_x X) \rightarrow LGr(T_x X)$, the bundle of \textit{oriented} Lagrangian planes in $T_x X$. In particular, the orientation covering 
		$LGr^{or}(X) \rightarrow LGr(X)$ is itself a $\mathbb{Z}/2$-grading of $X$. 
		A Lagrangian $L \subset X$ has a tautological map $L \rightarrow LGr(X)$ sending $x \in L$ to $T_x L \subset LGr(X)$. A $G$-grading of $L$ is a lift of this map to $G$. 
	Let $\mathcal{W}(X; G)$ denote the $G$-graded wrapped Fukaya category whose objects are $G$-graded Lagrangian; the morphism spaces of this category are $\mathbb{Z}/2$-graded. 		
		For example, a $LGr^{or}(TX)$-graded Lagrangian is just an oriented Lagrangian and  $\mathcal{W}(X; LGr^{or}(TX))$ is precisely the Fukaya category $\mathcal{W}(X)$ from the previous section. Note that any Lagrangian either has no $G$-grading or has exactly two $G$-gradings; so the set of $G$-gradings of a $G$-gradeable Lagrangian $L$, is affine over $\mathbb{Z}/2$. Furthermore, if $L$ is a $G$-gradeable, then the $G$-grading of $L$ is determined by a choice of element of $p^{-1}(T_x L) \subset G$ for any point $x \in L$. 
		The following definition generalizes Definition \ref{def: orientation_line}.
		\begin{definition}\label{def: orientation_line}
			For a $G$-gradeable Lagrangian $L$, let $G(L)$ be the free abelian group of rank one generated by the two $G$-gradings of $L$ modulo the relation that the sum of the two gradings is zero. 
		\end{definition}
\noindent	Since $[L[1]] = -[L]$ in 	$K_0(\mathcal{W}(X; G))$, there is a tautological map	$G(L) \rightarrow K_0(\mathcal{W}(X; G))$ as in the previous section. 
		
	A symplectic manifold $X$ may have many different $\mathbb{Z}/2$-gradings. 	
	In fact, Seidel \cite{seidel_graded}, Lemma 2.2, showed that the  $\mathbb{Z}/2$-grading of $X$ are in correspondence with principle $\mathbb{Z}/2$-bundles $P$ over $X$, which are affine over $H^1(X; \mathbb{Z}/2)$. 
   By pulling back the principle bundle $P$ along the projection map $\pi: LGr(X) \rightarrow X$, we can form the principle $\mathbb{Z}/2$-bundle $\pi^*P$ over $LGr(X)$. Then the twisted bundle $LGr^{or}(X) \otimes_{\mathbb{Z}/2} \pi^*P$ is a 2-to-1 cover of $LGr(X)$ and its restriction to $LGr(T_p X)$ is isomorphic to $LGr^{or}(T_p X)$. In particular, $G_P := LGr^{or}(X) \otimes_{\mathbb{Z}/2} \pi^*P$ is a $\mathbb{Z}/2$-grading of $X$ and Seidel's result \cite{seidel_graded}is that all $\mathbb{Z}/2$-gradings are of this form.   
   A $G_P$-graded Lagrangian $L \subset X$ is a lift of the map $L \rightarrow LGr(X)$ to $LGr^{or}(X) \otimes_{\mathbb{Z}/2} \pi^*P$, i.e. a compatible choice of element of $|T_x L| \otimes_{\mathbb{Z}/2} P_x$ for each $x \in L$, where $|T_x L|$ are the two orientations of $T_x L$. 
 If $L$ is $G_P$-gradeable, then the $G_P$-grading is determined by an element of $|T_x L| \otimes_{\mathbb{Z}/2} P_x$ for any $x \in L$ and there is a canonical isomorphism
  $G(L) \cong O(T_x L) \otimes_{\mathbb{Z}/2} P_x$ for any $x \in L$.

	Now we consider twisted coefficients on the Morse homology side. Let $E \rightarrow X$ be a local system with fiber $E_p$ over $p \in X$. Then for any path $\gamma$ in $X$ from $x$ to $y$, there is a parallel transport map
$v_\gamma: E_x \rightarrow E_y$. Then 
for a Morse-Smale pair $(f, v)$, the Morse complex with coefficients in $E$ is the 
$\oplus_{x \in Crit(f)}O(W^u(x)) \otimes_\mathbb{Z} E_x$ generated by $Crit(f)$ with differential $d$ whose restriction to $O(W^u(x))$ equals
\begin{equation*}
d = 	\underset{y \in Crit_{k+1}(f)}{\bigoplus}
(\sum_{\gamma \in \mathcal{M}(x,y)} \gamma^* \otimes v_{\gamma}): 	O(W^u(x))\otimes_{\mathbb{Z}} E_x
\rightarrow	
\underset{y \in Crit_{k+1}(f)}{\bigoplus} O(W^u(y)) \otimes_{\mathbb{Z}} E_y
\end{equation*}
If $(X^{2n}, f, v)$ is a Weinstein domain,  there are no index $n+1$ critical points. In this case, 
$H^n(X; E)$ is the cokernel of the Morse differential and there is a quotient map
$
\oplus_{y \in Crit_n(f)} O(W^u(y)) \otimes_{\mathbb{Z}} E_y
\rightarrow H^n(X; E)$.

Now we combine twisted grading of the Fukaya
category and local systems on Morse cohomology. As mentioned above, a  $\mathbb{Z}/2$-principle bundle $P$ over $X$ defines a twisted $\mathbb{Z}/2$-grading $G_P$ of the Fukaya category. The bundle $P$ also  defines a $\mathbb{Z}$-local system 
$E_P: = \mathbb{Z} \otimes_{\mathbb{Z}/2} P$, where $\mathbb{Z}$ is the trivial local system on $X$, and all $\mathbb{Z}$-local systems are of this form. Let $C_y = W^u(y)$ be the Lagrangian co-core for $y \in Crit_n(f)$. Since $C_y$ is a disk, it is $G_P$-gradeable and there is a tautological map 
$\oplus_{y \in Crit_n(f)} G_P(C_y) \rightarrow K_0(\mathcal{W}(X; G_P))$.
Furthermore, 
$$
G_P(C_y) \cong
O(C_y)\otimes_{\mathbb{Z}/2} P_y
\cong 
O(C_y)\otimes_{\mathbb{Z}} \mathbb{Z} \otimes_{\mathbb{Z}/2} P_y\cong 
 O(W^u(y)) \otimes_{\mathbb{Z}} (E_P)_y
 $$
  and so we can write the quotient map as 
$\oplus_{y \in Crit_n(f)} G_P(C_y) \rightarrow H^n(X; E_P)$. The following result generalizes 
Theorem \ref{thm: K_0_surjective_map}.
\begin{proposition}\label{prop: map_twisted_grading}
	Let $X^{2n}$ be a Weinstein domain and $P \rightarrow X$ a principle $\mathbb{Z}/2$-bundle. Then the tautological map 
$\oplus_{y \in Crit_n(f)} G_P(C_y) \rightarrow  K_0(\mathcal{W}(X; G_P))$ factors through a surjective homomorphism 
	$H^n(X; E_P)\rightarrow K_0(\mathcal{W}(X; G_P))$. 
\end{proposition}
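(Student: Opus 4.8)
The plan is to run the argument of Proposition \ref{prop: map_fixed_presentation} essentially verbatim, but carrying the extra principal $\mathbb{Z}/2$-bundle $P$ along on both the Fukaya and the Morse side, and then checking that $P$ contributes the \emph{same} parallel-transport factor to the acyclic twisted complex as it does to the twisted Morse differential. For surjectivity, I would first note that the co-cores $C_y$ generate $\mathcal{W}(X)$ for \emph{any} choice of $\mathbb{Z}/2$-grading by \cite{chantraine_cocores_generate, ganatra_generation}, so the tautological map $\oplus_{y \in Crit_n(f)} G_P(C_y) \to K_0(\mathcal{W}(X; G_P))$ is already surjective; it therefore remains only to show that it factors through the cokernel $H^n(X; E_P)$ of the twisted Morse differential. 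Using the canonical isomorphism $G_P(C_y) \cong O(W^u(y)) \otimes_{\mathbb{Z}} (E_P)_y$, I identify the source of the tautological map with the degree-$n$ part of the Morse complex with coefficients in $E_P$, and by linearity it suffices to prove that for each index $n-1$ critical point $x$ the composite of $d|_x = \bigoplus_{y}\sum_{\gamma \in \mathcal{M}(x,y)} \gamma^* \otimes v_\gamma$ with the tautological map to $K_0(\mathcal{W}(X; G_P))$ vanishes.

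For the Fukaya side, the disk $C_x$ is $G_P$-gradeable since it is contractible, so Corollary \ref{cor: twisted_complex_global} applies in $\mathcal{W}(X; G_P)$ and produces an acyclic twisted complex on the $G_P$-graded linking disks $L_p$, $p \in P(x)$. Exactly as in Corollary \ref{cor: grothendieck_relation}, this shows that the composite $u^* \circ r^* : G_P(C_x) \to \oplus_y G_P(C_y) \to K_0(\mathcal{W}(X; G_P))$ is zero, where $r_p^*, u_p^*$ now denote the $G_P$-graded analogues of the restriction and radial-isotopy maps. Thus it is enough to match $u^* \circ r^*$ with $d|_x$ term by term: under the identification $G_P(\,\cdot\,) \cong O(\,\cdot\,) \otimes_{\mathbb{Z}} (E_P)_{(\cdot)}$, I must show that for each $p \in P(x,y)$ with corresponding trajectory $\gamma$ the map $u_p^* \circ r_p^*$ equals $\gamma^* \otimes v_\gamma$.

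The only genuinely new content is the last identification, and here I would split the $G_P$-grading into its orientation and $P$-factors via $G_P = LGr^{or}(X) \otimes_{\mathbb{Z}/2} \pi^* P$. On the orientation factor, $u_p^* \circ r_p^*$ is precisely the map analyzed in Proposition \ref{prop: signs}, where it is shown (under $\phi$) to equal $\gamma^*$, so on orientation lines the two sides already agree. For the $P$-factor, I would observe that all the isotopies and inclusions defining $u_p^* \circ r_p^*$ take place inside the two contractible handles $H^{n-1}_x$ and $H^n_y$, and that the trajectory $\gamma$ likewise runs from $x$ out through the intersection point $p$ into $H^n_y$ and on to $y$, so it too lies in $H^{n-1}_x \cup H^n_y$. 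Since $P$ is a principal $\mathbb{Z}/2$-bundle it is flat, so its parallel transport depends only on the homotopy class of a path; as $P$ restricts to the trivial bundle on each contractible handle, the $P$-transport along the Lagrangian isotopy agrees with the $P$-transport along $\gamma$, which by definition is $v_\gamma$ on $E_P = \mathbb{Z} \otimes_{\mathbb{Z}/2} P$. Hence $u_p^* \circ r_p^* = \gamma^* \otimes v_\gamma$; summing over $p \in P(x,y)$ and over $y$ recovers $d|_x$ composed with the tautological map, which vanishes by the previous paragraph.

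The hard part will be making this final matching rigorous: the Lagrangian isotopy traces a path that lives \emph{inside} the moving Lagrangians rather than at a single point of $X$, so one must be careful in comparing its $P$-transport with that of the flow line $\gamma$ and in reconciling the basepoint choices implicit in $G_P(L) \cong O(T_x L) \otimes_{\mathbb{Z}/2} P_x$. The cleanest route is to reduce everything to the flat-bundle / $H^1(X;\mathbb{Z}/2)$ description of $P$-gradings, so that only the homotopy class of paths within the contractible handles matters, exactly as in the handle-by-handle bookkeeping already carried out in Proposition \ref{prop: signs}.
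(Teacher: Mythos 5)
Your proposal is correct and takes essentially the same route as the paper's proof: the paper likewise reduces to the orientation case via the decoupling $G_P(C_y) \cong O(C_y) \otimes_{\mathbb{Z}} P_y$ for the (contractible, hence $G_P$-gradeable) disks, carries Corollary \ref{cor: grothendieck_relation} and Proposition \ref{prop: signs} over to the $G_P$-graded setting, notes that a Lagrangian isotopy in the top row induces parallel transport of $P$ matching $v_\gamma$ in the bottom row, and obtains surjectivity from the graded version of generation by co-cores. Your handle-by-handle flatness discussion merely spells out in detail what the paper asserts in one line.
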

\begin{proof}
	The proof is essentially the same as the proof of Theorem \ref{thm: K_0_surjective_map}. 
Since $H^n(X; E_P)$ is the cokernel of the Morse differential $d$, we need to show that the composition of $d$ with the tautological map to $K_0(\mathcal{W}(X; G_P))$ vanishes. 
The proof of Corollary \ref{cor: grothendieck_relation} carries over to the $G_P$-graded setting to show that 
$u\circ r: G(C_x ) \rightarrow \oplus_{y \in Crit_n(f)} G(W^u(y))$ composed with the tautological map to $K_0(\mathcal{W}(X; G_P))$ vanishes. Hence it suffices to prove that this map agrees with the Morse differential and we need the analog of Proposition \ref{prop: signs} holds. Namely, for each $p \in P(x,y)$ and corresponding $\gamma \in M(x,y)$, the following diagram commutes:
	\begin{equation}
\begin{tikzcd} 
G_P(C_x) \arrow{r}{u_p^* \circ r_p^*} \arrow{d}{\phi} & G_P(C_y) 
\arrow[d, equal]\\
O(W^u(x) )  \otimes_{\mathbb{Z}} (E_P)_x \arrow{r}{\gamma^*} &  O(W^u(y)) \otimes_{\mathbb{Z}} (E_P)_y
\end{tikzcd}
\end{equation}	
To prove this, we note that all the Lagrangians $C_x, L_p, C_y$ in the top row are disks and hence are orientable and hence $G_P(C_x), G_P(C_y)$ decouple as  $O(C_x) \otimes_{\mathbb{Z}} P_x, O(C_y) \otimes_{\mathbb{Z}} P_y$ and an isotopy through Lagrangians, as in the top row of the diagram, induces parallel transport of $P$, as in the bottom row of the diagram. Finally, we note that the tautological map is surjective since the proof that co-cores $C_y, y \in Crit_n(f),$ generate the wrapped Fukaya category   $\mathcal{W}(X; G_P)$ 
\cite{chantraine_cocores_generate, ganatra_generation}
carries over to the $G_P$-graded setting. Hence the map 	$H^n(X; E_P)\rightarrow K_0(\mathcal{W}(X; G_P))$ is also surjective.
\end{proof}
\begin{examples}\label{ex: grading_cotangent_bundle}
	If $M$ is an orientable smooth manifold, the zero-section $M \subset T^*M$ is oriented and so $\chi(CW(M, \_)): K_0(\mathcal{W}(T^*M)) \overset{\sim}{\rightarrow}  \mathbb{Z}$ is an isomorphism. If $M$ is non-orientable, $H^n(T^*M; \mathbb{Z}) \cong \mathbb{Z}/2$ and so by Theorem \ref{thm: K_0_surjective_map},  $K_0(\mathcal{W}(T^*M)) \cong \mathbb{Z}/2$;  the zero-section is not a graded object of the  wrapped Fukaya category with the canonical orientation grading and so its Euler characteristic is only defined mod 2. Indeed there is an isomorphism 
	$\gamma: T^*_x M \cong T^*_x M [1]$, where $\gamma \in \pi_1(M) \subset \mathbb{Z}/2[\pi_1(M)] \subset C_*(\Omega M) = CW(T^*_x M, T^*_x M)$ is an orientation-reversing loop, and so $[T^*_x M] = -[T^*_x M] \in K_0(\mathcal{W}(T^*M))$; see Example 
	\ref{ex: cotangent_isomorphism_acyclic}.
	If $M$ is orientable but there is a non-zero $\alpha \in H^1(M; \mathbb{Z}/2)$, there is a non-trivial $\mathbb{Z}$-local system $E$ on $T^*M$ so that 
	$H^n(T^*M; E) \cong \mathbb{Z}/2$ and so by Proposition \ref{prop: map_twisted_grading}, there is a twisted $\mathbb{Z}/2$-grading $G$ on $T^*M$ so that 
	$K_0(\mathcal{W}(T^*M; G)) \cong \mathbb{Z}/2$; again $\gamma:  T^*_x M \cong T^*_x M [1]$  where $\gamma$ is a loop so that $\alpha(\gamma) = 1$. If $M$ is non-orientable, 
	there is a grading $G$ so that 
	$K_0(\mathcal{W}(T^*M; G)) \cong \mathbb{Z}$; this  grading  comes from fibration by cotangent fibers on $T^*M$ and the zero-section is a graded object for this grading. Hence the Grothendieck group of the wrapped category depends very much on the grading of the symplectic manifold.
\end{examples}

	\subsection{Weinstein domains with stops}
	
	Now we explain our results for Weinstein domains with \textit{stop}s. Let $(X, \Lambda)$ be a Weinstein domain with a Weinstein hypersurface stop $\Lambda \subset \partial X$. As mentioned before,  the objects of the partially wrapped Fukaya category $\mathcal{W}(X, \Lambda)$ are graded exact Lagrangians $L \subset (X, \Lambda)$, i.e. $\partial L \subset \partial X \backslash \Lambda$; for simplicity, we assume the canonical orienation $\mathbb{Z}/2$-grading. 	Lagrangians that are isotopic through Lagrangians in $(X, \Lambda)$ are quasi-isomorphic in $\mathcal{W}(X, \Lambda)$.	So there is a tautological map $\oplus_{L\subset (X, \Lambda)} O(L) \rightarrow K_0(\mathcal{W}(X,\Lambda ))$ sending an oriented Lagrangian in $(X,\Lambda)$ to its class in the Grothendieck group. 	Let $(f, v)$ be a Weinstein structure on $X$; for generic choice of such structure,  the co-cores of the index $n$ critical points will be Lagrangian disks in $(X, \Lambda)$. Let  
	$(g, w)$ be a Weinstein structure on $\Lambda$. The cores of the index $n-1$ critical points of $g$ are Legendrian disks in $\partial X$ and hence their linking disks are Lagrangians in $(X, \Lambda)$. 
	By work of \cite{chantraine_cocores_generate, ganatra_generation}, the co-core disks of $X$ and the linking disks of $\Lambda$ generate $\mathcal{W}(X, \Lambda)$. 
	
	We define a Weinstein structure on $(X, \Lambda)$ to be $(f, v)$, where $f$ is a Morse function and $v$ is a gradient-like Liouville vector field that is inward pointing near $\Lambda$ and outward pointing away from $\Lambda$; see \cite{eliashberg_revisited}.
	As we now explain, the linking disks of $\Lambda$ are also co-cores of a suitable Weinstein structure on $(X, \Lambda)$.
	 Consider $T^*D^1$ as a 1-handle, i.e. equipped with a Liouville vector field that is inward pointing along a neighborhood of $\pm 1 \in \partial D^1$ and outward pointing away from this neighborhood and a compatible Morse function.  Using this structure and the Weinstein structure $(g,w)$ on $\Lambda$,  the product $T^*D^1 \times \Lambda$
has a Liouville vector field $\tilde{w}$ which is inward pointing along a neighborhood of $\pm 1 \times \Lambda$ and outward pointing outside this neighborhood and again a compatible Morse function $\tilde{g}$. Then we can 
glue  the Weinstein structures $(T^*D^1 \times \Lambda, \tilde{w}, \tilde{g})$ and $(X, f, v)$ along $1 \times \Lambda \subset \partial(T^*D^1 \times \Lambda, X), \Lambda \subset \partial X$. The resulting domain $X \cong X \coprod_{\Lambda} T^*D^1 \times \Lambda$ has a Liouville vector field $u$ that is inward pointing near $\Lambda \subset \partial X$ and a compatible Morse function $h$; in particular, $(u, h)$ is a Weinstein structure on $(X, \Lambda)$. The critical points of $h$ are the union of the critical points of the $f$ on $X$ and the critical points of $g$ on $\Lambda$, with index increased by 1. The co-cores of the critical points of $h$ corresponding to those of $g$ are precisely the linking disks of $\Lambda$. So by \cite{chantraine_cocores_generate, ganatra_generation}, the co-cores of the critical points of $h$ generate  $\mathcal{W}(X, \Lambda)$.
	
 The data $(h,u)$ can also be used to compute Morse cohomology. Namely, we consider the complex
	$\oplus_{x \in Crit(h)} O(W^u(x))$ with differential given by counts of $u$-trajectories. 
	Because $u$ points inward along $\Lambda$, this cohomology is isomorphic to relative singular cohomology $H^n(X, \Lambda; \mathbb{Z})$. Since $(X, h, u)$ is a Weinstein structure (with stops), there are no $n+1$ critical points and again, $H^n(X, \Lambda; \mathbb{Z})$ is the cokernel of 
	$d: \oplus_{x \in Crit_{n-1}(h)} O(W^u(x)) 
	\rightarrow \oplus_{y \in Crit_{n}(h)} O(W^u(y))$. We have $W^u(y) = C_y$ and so there is a tautological map $\oplus_{y \in Crit_{n}(h)} O(W^u(y)) \rightarrow K_0(\mathcal{W}(X, \Lambda))$. 
	More generally, using Poincar\'e- duality 
	$H^n(X^{2n}, \Lambda; \mathbb{Z}) \cong H_n(X^{2n}, \partial X \backslash \Lambda)$, we see that any orientable Lagrangian $L \subset (X, \Lambda)$ defines a class in	$H^n(X^{2n}, \Lambda; \mathbb{Z})$ and hence there is a tautological map 
	$\oplus_{L\subset (X, \Lambda)} O(L) \rightarrow H^n(X, \Lambda; \mathbb{Z})$. The following is the analog of Proposition \ref{prop: map_canonical} for stopped domains. 
			\begin{proposition}\label{prop:  relative_cohomology}
		For Weinstein domain $X^{2n}$
		and Weinstein hypersurface $\Lambda$,  the tautological map 	$\oplus_{L\subset (X, \Lambda)} O(L) \rightarrow  K_0(\mathcal{W}(X,\Lambda ))$ factors through a  surjective homomorphism 
			$\mathcal{L}: H^n(X, \Lambda; \mathbb{Z}) \rightarrow K_0(\mathcal{W}(X, \Lambda))$. In particular, if $[L_1] = [L_2] \in  	H^n(X, \Lambda; \mathbb{Z})$, then $[L_1] = [L_2] \in 	K_0(\mathcal{W}(X, \Lambda))$. 
			\end{proposition}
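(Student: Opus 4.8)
The plan is to run the proof of Proposition \ref{prop: map_fixed_presentation} essentially verbatim, but with respect to the Weinstein structure $(h, u)$ on $(X, \Lambda)$ constructed above rather than a structure $(f,v)$ on $X$. Recall from the preceding discussion that the critical points of $h$ consist of the critical points of $f$ on $X$ together with those of $g$ on $\Lambda$ (with index raised by one), that the co-cores of the index $n$ critical points of $h$ generate $\mathcal{W}(X, \Lambda)$ by \cite{chantraine_cocores_generate, ganatra_generation}, and that the Morse cohomology of $(h,u)$ is isomorphic to $H^n(X, \Lambda; \mathbb{Z})$ precisely because $u$ points inward along $\Lambda$. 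First I would observe that the tautological map $\bigoplus_{y \in Crit_n(h)} O(C_y) \rightarrow K_0(\mathcal{W}(X, \Lambda))$ is a surjective group homomorphism, since these co-cores generate $\mathcal{W}(X, \Lambda)$.

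Next, since $H^n(X, \Lambda; \mathbb{Z})$ is the cokernel of the Morse differential $d \colon \bigoplus_{x \in Crit_{n-1}(h)} O(W^u(x)) \rightarrow \bigoplus_{y \in Crit_n(h)} O(W^u(y))$, it suffices to show that the composition of $d$ with the tautological map vanishes in $K_0(\mathcal{W}(X, \Lambda))$; by linearity of $d$ it is enough to check this for a single index $n-1$ critical point $x$ of $h$. The key point is that the analysis of the index $n-1$/index $n$ handle interaction carried out in Corollary \ref{cor: twisted_complex_global}, Corollary \ref{cor: grothendieck_relation}, and Proposition \ref{prop: signs} is entirely local, taking place in neighborhoods of the handles $H^{n-1}_x$ and $H^n_y$; it is therefore insensitive to the presence of the stop $\Lambda$, provided only that the relevant linking disks and co-cores lie in $(X, \Lambda)$, which they do. Hence the composite $u^* \circ r^*$ agrees with $d \circ \phi$ just as in Proposition \ref{prop: signs} and composes to zero in $K_0(\mathcal{W}(X, \Lambda))$, yielding the surjective homomorphism $\mathcal{L} \colon H^n(X, \Lambda; \mathbb{Z}) \rightarrow K_0(\mathcal{W}(X, \Lambda))$ on the co-core tautological map.

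To promote this to the tautological map from all Lagrangians, I would repeat the argument of Proposition \ref{prop: map_canonical}. Given an oriented Lagrangian $L \subset (X, \Lambda)$, I would isotope it through Lagrangians in $(X, \Lambda)$ to be transverse to the cores of the index $n$ handles of $(h,u)$, so that near those cores $L$ looks like a disjoint union of co-cores; Proposition 1.25 of \cite{ganatra_generation} then expresses $L$ as a twisted complex of these co-core copies, and since twisted complexes split in the Grothendieck group, $[L] = \sum_j [C_j] \in K_0(\mathcal{W}(X, \Lambda))$. At the same time, using the Poincar\'e duality $H^n(X^{2n}, \Lambda; \mathbb{Z}) \cong H_n(X^{2n}, \partial X \backslash \Lambda)$ recorded above and the fact that restriction to a smaller collar neighborhood induces an isomorphism on relative cohomology, the class $[L] \in H^n(X, \Lambda; \mathbb{Z})$ also equals $\sum_j [C_j]$. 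The desired factorization then follows as before, and the final claim is immediate: if $[L_1] = [L_2]$ in $H^n(X, \Lambda; \mathbb{Z})$ then their images in $K_0(\mathcal{W}(X, \Lambda))$ coincide.

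The main obstacle will be verifying that the co-core and linking-disk analysis genuinely transfers to the new Weinstein structure $(h,u)$: in the stopped setting some index $n-1$ critical points of $h$ arise from $(g,w)$ on $\Lambda$, and their co-cores are linking disks rather than ordinary co-cores. I do not expect this to cause real trouble, since the entire purpose of the gluing construction $X \cong X \coprod_\Lambda T^*D^1 \times \Lambda$ is to realize these linking disks uniformly as co-cores of one Weinstein structure, and the sign computation of Proposition \ref{prop: signs} uses only the local normal form of a transverse index $n-1$/index $n$ handle pair. The one point demanding genuine care is the identification of the Morse cohomology of $(h,u)$ with relative singular cohomology $H^n(X, \Lambda; \mathbb{Z})$, together with the compatibility of the orientation-line bookkeeping under this identification; but this is exactly what the inward-pointing condition on $u$ along $\Lambda$ furnishes.
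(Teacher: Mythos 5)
Your proposal is correct and follows essentially the same route as the paper: the paper's proof likewise runs the unstopped argument verbatim with respect to the glued Weinstein structure $(h,u)$ on $(X,\Lambda)$ (acyclic twisted complexes from the index $n-1$ critical points with signs matching the Morse differential, surjectivity from generation by the co-cores of $h$, which include the linking disks of $\Lambda$), and deduces the final claim from Proposition 1.25 of \cite{ganatra_generation} exactly as in Proposition \ref{prop: map_canonical}. The subtlety you flag at the end is resolved precisely as you anticipate: the gluing $X \coprod_{\Lambda} T^*D^1 \times \Lambda$ is designed to realize the linking disks uniformly as co-cores of a single Weinstein structure, so the local sign analysis of Proposition \ref{prop: signs} applies unchanged.
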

		\begin{proof}
			Using the Weinstein structure $(h,u)$, the proof is the same as in the case without stops. Namely, for each $x \in Crit_{n-1}(h)$, the displaceable disk $C_x$ in the index $n-1$-handle $H^{n-1}_x$ gives an acyclic twisted complex in $\mathcal{W}(X, \Lambda)$. The terms in this twisted complex are co-cores $C_y$ 
		of $y \in Crit_n(y)$ corresponding to $v$-trajectories from $x$ to $y$ and the orientation of $C_y$ in this complex is determined by the sign in the Morse differential. 		
As in the unstopped case, the map is surjective because the co-cores of the index $n$ critical points of $h$ generate $\mathcal{W}(X, \Lambda)$ as proven in	\cite{chantraine_cocores_generate, ganatra_generation}. The last claim follows from Proposition 1.25 from \cite{ganatra_generation}, as in the proof of Proposition \ref{prop: map_canonical}.
		\end{proof}

\subsection{$C^0$-close Weinstein hypersurfaces}\label{sec: c0_close_weinstein}

Now we prove Theorem \ref{thm: c0_close_intro} for $C^0$-close Legendrians, as well as a more general version for $C^0$-close Weinstein hypersurfaces. 
If $\Lambda^{2n-2}_1 \subset \partial X^{2n}$ is a Weinstein hypersurface, then let $N(\Lambda_1) \subset \partial X$ denote a neighborhood of $\Lambda_1$; it is contactomorphic to $\Lambda_1 \times D^1$ with the standard contact form. Suppose that $\Lambda_0 \subset N(\Lambda_1)$ is another Weinstein hypersurface. 
As we will explain in Theorem \ref{thm: c0_weinstein_hypersurfaces} below, there is a functor $\mathcal{W}(X, \Lambda_1) \rightarrow \mathcal{W}(X, \Lambda_0)$,  which takes Lagrangians $L$ with $\partial L \subset \partial X \backslash \Lambda_1$ and (possibly after a small isotopy) considers them as Lagrangians with $\partial L \subset X\backslash \Lambda_0$, since $\Lambda_0 \subset N(\Lambda_1)$. We describe the effect of this functor on the linking disks of the cores of $\Lambda_1$, which generate $\mathcal{W}(X, \Lambda_1)$ (along with the co-cores of $X$); see \cite{ chantraine_cocores_generate, ganatra_generation}. 
Namely, for any index $n-1$ handle $H^{n-1}_j$ of the Weinstein domain $\Lambda^{2n-2}_1$, the core $D^{n-1}_j$ is a smooth Legendrian disk in $\partial X$, and hence has a linking disk. Its neighborhood  is $J^1(D^{n-1}_j) = T^*D^{n-1}_j \times D^1$, the 1-jet space of that Legendrian disk. By Sard's theorem, for a generic point $x \in D^{n-1}_j \subset \Lambda_1$, only the top dimensional strata of the skeleton of $\Lambda_0$ consisting of cores of the index $n-1$ handles intersects $T^*_x D^{n-1}_j \times D^1$ and this intersection is transverse. 
In particular, we can isotope $\Lambda_0$ transversely to $T^*_x D^{n-1}_j\times D^1$ so that it looks like 
$D^{n-1}_j \times \{p_1, \cdots, p_k\} \subset J^1(D^{n-1})$ 
in a neighborhood of $T^*_x D^{n-1}_j\times D^1$. Here  $p_1, \cdots, p_k \in D^1$ and $D^{n-1}_j \times \{p_i\}$ is part of the core of the some handle $H^{n-1}_i$ of $\Lambda_0$; see the proof of Proposition \ref{prop: attaching_spheres_n-1_handle}.
Note that these intersection points are the preimage of the projection map $\Lambda_0 \subset N(\Lambda_1) \rightarrow \Lambda_1$. Again, we can assign signs to these intersection points. 

The following result generalizes Theorem \ref{thm: c0_close_intro} stated in the Introduction. We make the identification $N(\Lambda_1 ) = \Lambda_1 \times D^1 = \Lambda_1 \times T^*_{-1} D^1 \subset \partial(\Lambda_1 \times T^*D^1) \backslash \Lambda_1 \times T^*_1 D^1$ and since $\Lambda_0 \subset N(\Lambda_1)$, we have $\Lambda_0 \coprod \Lambda_1 \times 1 \subset \partial(\Lambda_1 \times T^*D^1)$. 

\begin{theorem}\label{thm: c0_weinstein_hypersurfaces}
	If $\Lambda_0^{2n-2}, \Lambda_1^{2n-2} \subset \partial X^{2n}$ are Weinstein hypersurfaces and  $\Lambda_0 \subset N(\Lambda_1)$, then there is a homotopy pushout diagram of the form:
\begin{equation}\label{comm: pushout2}
\begin{tikzcd} 
\mathcal{W}(\Lambda_1) \arrow{r} \arrow{d} &  \mathcal{W}(X, \Lambda_1) \arrow{d}\\
\mathcal{W}(\Lambda_1 \times T^*D^1, \Lambda_0 \coprod \Lambda_1 \times 1) \arrow{r}  &  \mathcal{W}(X, \Lambda_0)
\end{tikzcd}
\end{equation}
If $T^*_x D_j^{n-1} \times D^1 \subset N(\Lambda_1)$ intersects the core $D_i^{n-1}$ of $\Lambda_0$ $p_{i,j}, q_{i,j}$ times positively, negatively respectively, we have  $\mathcal{W}(X, \Lambda_1) \rightarrow \mathcal{W}(X, \Lambda_0)$ takes the linking disk $L_j^n$ of the core $D_j^{n-1}$ of $\Lambda_1$ to 
a twisted complex whose terms are $p_{i,j}, q_{i,j}$ copies of the linking disks $L_i, \overline{L_i}$ respectively of $D_i^{n-1}$, over all $i$. 
\end{theorem}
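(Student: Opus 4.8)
The plan is to prove the two assertions in turn: the homotopy pushout square is a sectorial descent statement, whereas the formula for the image of a linking disk reduces, after localizing near a generic cotangent fiber of a core of $\Lambda_1$, to the local computation of Corollary \ref{cor: twisted_complex_local} together with the orientation bookkeeping of Proposition \ref{prop: signs}.

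For the pushout, I would realize the change of stop from $\Lambda_1$ to $\Lambda_0$ as a gluing of Liouville sectors. Since $\Lambda_0 \subset N(\Lambda_1)$ and $N(\Lambda_1)$ is identified with $\Lambda_1 \times D^1 = \Lambda_1 \times T^*_{-1}D^1$, I would decompose $(X, \Lambda_0)$, up to Liouville deformation, as the union of a copy of $(X, \Lambda_1)$ (the part away from the modification region) with the local model $(\Lambda_1 \times T^*D^1, \Lambda_0 \coprod \Lambda_1 \times 1)$ (which captures the entire change of stop inside $N(\Lambda_1)$), glued along a separating hypersurface whose partially wrapped category is $\mathcal{W}(\Lambda_1)$. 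The two functors emanating from $\mathcal{W}(\Lambda_1)$ send objects of the hypersurface category to their linking disks in the two sectors, the two functors landing in $\mathcal{W}(X, \Lambda_0)$ are the proper-inclusion functors of \cite{ganatra_generation}, and the square commutes because both composites send an object of $\mathcal{W}(\Lambda_1)$ to its linking disk in $(X, \Lambda_0)$. The homotopy pushout property is then precisely the Mayer--Vietoris/descent formula for wrapped categories of sectors glued along a hypersurface, which I would invoke from \cite{ganatra_generation}. The point requiring care is to arrange the decomposition so that all four inclusions are \emph{proper} --- i.e. that the Liouville vector field is suitably transverse on the overlaps --- which holds by construction of the local model.

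For the image of the linking disk $L_j^n$ of a core $D_j^{n-1}$ of $\Lambda_1$, I would localize. By Sard's theorem a generic fiber point $x \in D_j^{n-1}$ has $\Lambda_0$ meeting $T^*_x D_j^{n-1} \times D^1$ transversely, and after a small isotopy of $\Lambda_0$, exactly as in the proof of Proposition \ref{prop: attaching_spheres_n-1_handle}, $\Lambda_0$ looks like $D_j^{n-1} \times \{p_1, \dots, p_k\}$ inside $J^1(D_j^{n-1}) = T^*D^{n-1}_j \times D^1$, with one point $p_i$ for each core $D_i^{n-1}$ of $\Lambda_0$ passing through the fiber. This is the local model $(T^*D^{n-1}, T^*\partial D^{n-1}) \times (T^*D^1, P)$ of Corollary \ref{cor: twisted_complex_local}, in which $L_j^n$ is the disk $T^*_0 D^{n-1} \times T^*_0 D^1$. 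By Corollary \ref{cor: twisted_complex_local} this disk is quasi-isomorphic to the twisted complex $\{L_{p_1} \to \cdots \to L_{p_k}\}$ of linking disks of the $\Lambda_0$-cores; transporting this quasi-isomorphism under the proper inclusion into $(X, \Lambda_0)$ yields the asserted twisted complex in $\mathcal{W}(X, \Lambda_0)$. Finally, Proposition \ref{prop: signs} identifies the grading of each term with the sign of the corresponding intersection point, so a positive point contributes $L_i$ and a negative point $\overline{L_i}$, giving exactly $p_{i,j}$ copies of $L_i$ and $q_{i,j}$ copies of $\overline{L_i}$.

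The hard part will be the first assertion. Producing the correct sectorial decomposition of $(X, \Lambda_0)$ whose interface category is $\mathcal{W}(\Lambda_1)$, and checking that the resulting square is a genuine homotopy pushout rather than merely commutative, is where the real work lies; by contrast the linking-disk formula is a functorial transport of the already-established local statement, with the signs handled by Proposition \ref{prop: signs}. As a consistency check, taking $\Lambda_1$ loose forces $\mathcal{W}(X, \Lambda_1) = 0$, the pushout collapses, and the twisted complex becomes acyclic, recovering Theorem \ref{thm: twisted_complex} as promised in the Introduction.
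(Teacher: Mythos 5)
Your first half is fine and matches the paper exactly: the paper also realizes $(X, \Lambda_0)$ as the gluing of $(X, \Lambda_1)$ with $(\Lambda_1 \times T^*D^1, \Lambda_0 \coprod \Lambda_1 \times 1)$ along $\Lambda_1$ and invokes the gluing formula of \cite{ganatra_generation}, with all functors induced by proper inclusions; contrary to your assessment, this is the easy half, a direct citation.

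The second half has a genuine gap. You localize near $T^*_x D^{n-1}_j \times D^1$, identify the local picture with the stopped model $(T^*D^{n-1}, T^*\partial D^{n-1}) \times (T^*D^1, P)$ of Corollary \ref{cor: twisted_complex_local} with $L_j$ as the central disk $T^*_0 D^{n-1} \times T^*_0 D^1$, and transport under the induced functor. But Corollary \ref{cor: twisted_complex_local} asserts \emph{more} than the quasi-isomorphism with the twisted complex: it asserts that this disk is \emph{acyclic}, being displaceable from the skeleton of the local model. Acyclicity is preserved by any $A_\infty$-functor, so if your local sector (with stops only at $P$) properly included into $(X, \Lambda_0)$ with $L_j$ as the central disk, you would conclude $L_j \simeq 0$ in $\mathcal{W}(X, \Lambda_0)$ --- which is false in general, e.g.\ for $\Lambda_1 = \Lambda_u$ the unknot and $\Lambda_0$ a graphical copy with $p=1, q=0$ as in Example \ref{eqn: c0_close_unknot}, where the linking disk maps to a nonzero object. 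In other words, your argument as written cannot distinguish the $C^0$-close situation from the handle situation of Theorem \ref{thm: twisted_complex}, where the complex really is acyclic. The error is that no such proper inclusion exists: in $(X, \Lambda_0)$ the boundary region of the local annulus factor near $1 \in \partial T^*D^1$ is not free --- the face $\Lambda_1 \times 1$ is exactly where the rest of $X$ is glued on, so the local displacement of the disk that makes it acyclic in the model is blocked in the global domain. The paper's proof is built around precisely this point: it wraps $L_j = T^*_x D^{n-1}_j \times T^*_0 D^1$ globally in $(X, \Lambda_0)$ by rotating one boundary endpoint across the $\Lambda_0$-stops toward $1$, picking up a mapping cone with $L_i$ or $\overline{L_i}$ at each stop crossing (the cone formula of \cite{ganatra_generation}, which also supplies the signs --- your appeal to Proposition \ref{prop: signs}, which compares against the Morse differential for handle co-cores, is off-label here), and then uses that the \emph{final} wrapped position --- a twisted complex containing $L_j$ \emph{together with} all the cones --- is disjoint from the skeleton and hence acyclic; it is this acyclicity, not acyclicity of $L_j$, that yields the asserted expression. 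Your localization can be repaired by enlarging the stop set of the local model to $P \cup \{1\}$, recording the face $\Lambda_1 \times 1$: the corresponding acyclic complex then has an extra term, the linking disk of the stop at $1$, which becomes isotopic to $L_j$ after gluing, and solving for that term recovers the theorem --- but this repaired argument is essentially the paper's proof in different packaging.
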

\begin{proof}
First we prove the existence of the pushout diagram. 	
Note that 		
$(X, \Lambda_0)$ is the result of gluing $(X, \Lambda_1)$ to $(\Lambda_1 \times T^*D^1, \Lambda_0 \coprod \Lambda_1 \times 1)$ along $\Lambda_1$. Namely, gluing $(X, \Lambda_1)$ to $(\Lambda_1 \times T^*D^1, \Lambda_1 \times 1)$ is $X$ and by definition, $\Lambda_0 \subset N(\Lambda_1) \subset \partial (\Lambda_1 \times T^*D^1)$ is taken to $\Lambda_0 \subset \partial X$. 
Therefore the pushout diagram follows from the gluing formula from \cite{ganatra_generation} and all functors are induced by proper inclusions of stopped domains. In particular, the functor $\mathcal{W}(X, \Lambda_1) \rightarrow \mathcal{W}(X, \Lambda_0)$ is a proper inclusion. So the image of the linking disk $L_j$ of the handle $H^{n-1}_j$ of $\Lambda_0$ under this functor 
is $L_j$ viewed as a 
a Lagrangian in $(X, \Lambda_0)$.
More precisely, we view $L_j$ as $T^*_x D^{n-1}_j \times T^*_0 D^1 \subset \Lambda_1 \times T^*D^1$;  note that  $\partial L_j$ is disjoint from $\Lambda_0$ since $\Lambda_0 \subset N(\Lambda_1) = \Lambda_1 \times T^*_{-1} D^1$ and hence it is an object in $\mathcal{W}(X, \Lambda_0)$.  

Next we consider a Lagrangian isotopy of $L_j$ that displaces it from the skeleton of $(X, \Lambda_0)$. Namely, there is a Lagrangian isotopy $L_{j,t}:= T^*_x D^{n-1}_j \times \gamma_t$ in $(T^*D^{n-1}_j \times T^*D^1, T^*D^{n-1}_j \times T^*_1 D^1)$, where $\gamma_t(s) \subset (T^*D^1, T^*_1 D^1), t, s \in [0,1],$ is a Lagrangian curve.  We require that $\gamma_0 = T^*_0 D^1$ and $\gamma_1$ is contained in a small neighborhood of $(0,1) \in \partial T^*_0 D^1 = T^*D^1$, i.e. the north pole, so that it is disjoint from the zero-section $D^1 \subset T^*D^1$. Furthermore, $\gamma_t(1) = (0,1)$ for all $t$ and the path $\gamma_t(0) \subset S^1 \subset \partial T^*D^1$ over $t \in [0,1]$ is just constant clockwise rotation from $-1 \in S^1$ to a point in a neighborhood of $1 \in S^1$. In particular, $L_{j,0} =  L_j$ and $L_{j,1}$ is disjoint from the skeletons of $(X, \Lambda_1)$ and $(X, \Lambda_0)$, which in $\Lambda_1 \times T^*D^1$ are contained in a neighborhood of the zero-section $\Lambda_1 \times D^1 \subset \Lambda_1 \times T^*D^1$.

During this isotopy, the Legendrian boundary $\partial L_{j,t} \subset \Lambda_1 \times D^1$ passes through the core $D_i$ of $\Lambda_0$ precisely $p_{i,j}, q_{i,j}$ times with positive, negative sign respectively. Namely,  recall that $D_i$ looks like $D_j \times \{p_1, \cdots , p_k \} \subset T^*D_j \times D^1 = T^*D_j \times T^*_{-1} D^1$. Then $D_i$ and $L_{j,t}= T^*_x D^{n-1}_j  \times \gamma_t$ intersect at the $k$ points $x \times \gamma_t(-1)$, where $t$ is such that $\gamma_t(0) \in \{p_1, \cdots, p_k \}$.  As proven in \cite{ganatra_generation}, each time $\partial L_{j,t}$ crosses $D_i$, the resulting object in $\mathcal{W}(X, \Lambda_0)$ 
 is modified by taking the mapping cone with $L_i$ or $\overline{L_i}$ depending on the sign of the intersection point. 
Hence $L_{j,1}$ is a twisted complex consisting of $L_j = L_{j,0}$ and $p_{i,j}, q_{i,j}$ copies of $L_i, \overline{L_i}$ respectively, ranging over all $i$ since during the isotopy $\partial L_{j,t}$ crosses \textit{all} cores of $\Lambda_0$.  Since $L_{j,1}$ is disjoint from the skeleton of $(X, \Lambda_0)$, this twisted complex is acyclic in $\mathcal{W}(X, \Lambda_0)$ and so $L_j$ is quasi-isomorphic to a twisted complex of consisting of $p_{i,j}, q_{i,j}$ copies of $L_i, \overline{L_{i}}$ respectively, over all $i$.
 \end{proof}

\begin{examples}\label{eqn: c0_close_unknot}
Any Legendrian $\Lambda \subset (S^{2n-1}, \xi_{std}) = \partial B^{2n}_{std}$ can be isotoped into a neighborhood of the Legendrian unknot $\Lambda_{u}$ so that there is a point $x \in \Lambda_u$ with $p = 1, q = 0$; see \cite{Lazarev_critical_points}.
Hence by Theorem \ref{thm: c0_close_intro}, there is a functor 
$$
\mathcal{W}(B^{2n}_{std}, \Lambda_u) \rightarrow \mathcal{W}(B^{2n}_{std}, \Lambda)
$$
taking $D_u$ to $D$; the map  $\mathbb{K} \cong CW(D_u, D_u) \rightarrow CW(D, D)$ on Hom-spaces is the unit.
\end{examples}

The functor 
$\mathcal{W}(X, \Lambda_1) \rightarrow \mathcal{W}(X, \Lambda_0)$ generalizes the Viterbo transfer map 
defined in \cite{ganatra_generation, Sylvan_talks}. Namely, if $\Lambda_0 \subset \Lambda_1$ is a Liouville subdomain, then $\Lambda_0, \Lambda_1 \subset \partial(\Lambda_1 \times T^*D^1)$ are Weinstein hypersurfaces and $\Lambda_0 \subset \Lambda_1 \subset N(\Lambda_1)$. More precisely, we can view them as 
the Weinstein hypersurfaces 
$\Lambda_0 \times 0, \Lambda_1 \times 0$ in the stopped domain $(\Lambda_1 \times T^*D^1, \Lambda_1 \times 1)$. Then Theorem \ref{thm: c0_weinstein_hypersurfaces} produces a functor
	$
	\mathcal{W}(\Lambda_1\times T^*D^1, \Lambda_1 \times 0 \coprod
	 \Lambda_1 \times 1) \rightarrow 	\mathcal{W}(\Lambda_1\times T^*D^1, \Lambda_0 \times 0 \coprod
	 \Lambda_1 \times 1).
	$
	This is precisely the stop removal functor from 
	\cite{ganatra_generation, Sylvan_talks}, which is shown to be a Viterbo transfer since the source, target of this functor are equivalent to $\mathcal{W}(X_1), \mathcal{W}(X_0)$ respectively.
	
	Next we consider the induced maps on the Grothendieck group. Namely, the functor $\mathcal{W}(X, \Lambda_1) \rightarrow 
\mathcal{W}(X, \Lambda_0)$
in Theorem \ref{thm: c0_weinstein_hypersurfaces} induces a map
$K_0(\mathcal{W}(X, \Lambda_1)) \rightarrow 
K_0(\mathcal{W}(X, \Lambda_0)).
$
Similarly, the inclusion  $\Lambda_0 \subset N(\Lambda_1)$ induces a restriction 
map on cohomology 
$
H^n(X, \Lambda_1; \mathbb{Z})  \cong H^n(X, N(\Lambda_1); \mathbb{Z})  \rightarrow 	H^n(X, \Lambda_0; \mathbb{Z}).
$	
The following result shows that these maps are compatible.

\begin{corollary}\label{cor: c0-close_grothendieck}
	If $X$ is a Weinstein domain and $\Lambda_0, \Lambda_1 \subset \partial X$ are Weinstein hypersurfaces such that $\Lambda_0 \subset N(\Lambda_1)$, then the following diagram commutes: 
	\begin{equation}\label{comm: 2}
	\begin{tikzcd} 
	H^n(X, \Lambda_1; \mathbb{Z})  \arrow{r} \arrow{d}{\mathcal{L}} & 	H^n(X, \Lambda_0; \mathbb{Z})  \arrow{d}{\mathcal{L}} \\
	K_0(\mathcal{W}(X, \Lambda_1)) \arrow{r}  &  K_0(\mathcal{W}(X, \Lambda_0))
	\end{tikzcd}
	\end{equation}
\end{corollary}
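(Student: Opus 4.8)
The plan is to verify commutativity on a generating set and to exploit the fact that all four maps in Diagram \ref{comm: 2} are induced by one and the same geometric operation, namely reinterpreting a Lagrangian living in $(X,\Lambda_1)$ as a Lagrangian living in $(X,\Lambda_0)$. First I would fix the Weinstein structure $(h,u)$ on $(X,\Lambda_1)$ constructed before Proposition \ref{prop:  relative_cohomology}, whose index $n$ co-cores generate $\mathcal{W}(X,\Lambda_1)$ and whose classes generate $H^n(X,\Lambda_1;\mathbb{Z})$ (as the cokernel of the Morse differential). Since every arrow in the square is a homomorphism of abelian groups, it suffices to chase a single such co-core $L$, viewed as an oriented Lagrangian in $(X,\Lambda_1)$.

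Next I would track $L$ along the two sides. By Proposition \ref{prop:  relative_cohomology} the left-hand $\mathcal{L}$ sends the cohomology class $[L]$ to the Grothendieck class $[L]\in K_0(\mathcal{W}(X,\Lambda_1))$; and the functor of Theorem \ref{thm: c0_weinstein_hypersurfaces}, being a proper inclusion of stopped domains, carries the object $L$ to the object $L$ reconsidered in $(X,\Lambda_0)$ (after the small isotopy pushing $\partial L$ off $N(\Lambda_1)$, which is harmless since $\Lambda_0\subset N(\Lambda_1)$). Hence the down-then-right composite sends $[L]$ to $[L]\in K_0(\mathcal{W}(X,\Lambda_0))$. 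For the right-then-down composite the right-hand $\mathcal{L}$ again sends a cohomology class to the Grothendieck class of a Lagrangian representative, so it remains to identify the top restriction map on representatives.

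The heart of the argument, and the step I expect to be the main obstacle, is showing that under the Poincar\'e duality $H^n(X,\Lambda;\mathbb{Z})\cong H_n(X,\partial X\setminus\Lambda)$ the restriction map $H^n(X,\Lambda_1)\to H^n(X,\Lambda_0)$ is computed on Lagrangian representatives by this same reinterpretation of $L$. Concretely, because $\Lambda_0\subset N(\Lambda_1)$ there is an inclusion $\partial X\setminus N(\Lambda_1)\subset\partial X\setminus\Lambda_0$, and the restriction map is dual to the induced map $H_n(X,\partial X\setminus N(\Lambda_1))\to H_n(X,\partial X\setminus\Lambda_0)$; a Lagrangian $L$ with $\partial L\subset\partial X\setminus N(\Lambda_1)$ is a relative cycle for both pairs, and this map sends its class to the class of the very same cycle. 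I would take care to check that the orientation data carried by $\mathcal{L}$ is preserved throughout, i.e. that the maps of pairs and the proper-inclusion functor are orientation-compatible, so that no sign is introduced. Granting this, $r([L])=[L]\in H^n(X,\Lambda_0)$ and therefore $\mathcal{L}(r([L]))=[L]\in K_0(\mathcal{W}(X,\Lambda_0))$, matching the other composite and closing the square.

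Finally, as a consistency remark I would note that this tautological identification agrees with the explicit computations already at hand: the functor sends the linking disk $L_j$ of $\Lambda_1$ to a twisted complex of $p_{i,j}$ copies of $L_i$ and $q_{i,j}$ copies of $\overline{L_i}$ by Theorem \ref{thm: c0_weinstein_hypersurfaces}, so that $[L_j]=\sum_i(p_{i,j}-q_{i,j})[L_i]$ in $K_0(\mathcal{W}(X,\Lambda_0))$, and the very same signed intersection counts govern the restriction map in Morse cohomology. One could instead run the proof by matching these two signed counts directly, but the representative-chasing argument above is cleaner and avoids recomputing the Morse differential.
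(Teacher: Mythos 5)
Your proposal is correct and follows essentially the same route as the paper: the paper's proof is precisely the observation that all four maps are tautological, each obtained by viewing the same Lagrangian representative in a different space, so the square commutes by chasing a representative. Your version merely makes explicit the reduction to co-core generators and the Poincar\'e-duality identification of the restriction map, details the paper leaves implicit.
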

\begin{proof}
All the maps in the commutative diagram are tautological and are obtained by viewing objects in different spaces. For example, the map 
$K_0(\mathcal{W}(X, \Lambda_1))  \rightarrow  K_0(\mathcal{W}(X, \Lambda_0))$ views a Lagrangian in $(X, \Lambda_1)$ as a Lagrangian in $(X, \Lambda_0)$, if we consider this Lagrangian as a class in the Grothendieck group. The same holds for the restriction map on cohomology, if we consider this Lagrangian as a cohomology class. The map $\mathcal{L}$ take a Lagrangian viewed as a cohomology class to the same Lagrangian viewed as a class in the Grothendieck group. Hence the diagram commutes since 
we  start with a  Lagrangian viewed as a cohomology class in $H^n(X, \Lambda_1; \mathbb{Z})$ and either composition of maps in the diagram gives the same Lagrangian viewed as a class in $K_0(\mathcal{W}(X, \Lambda_0))$. 
\end{proof}
If $X = B^{2n}$ and $\Lambda_i$ are closed (orientable) Legendrians, then $H^n(X, \Lambda_i) \cong \mathbb{Z}$ and the restriction map on cohomology is precisely multiplication by the degree $d$ of the projection map $\Lambda_0 \subset N(\Lambda_1) \rightarrow \Lambda_1$, which proves Corollary \ref{cor: c0_leg_obstruction}.

		\section{Geometric presentations of Weinstein domains}\label{sec: geometric_presentations}

		\subsection{Handle-slides and flexible complements}\label{sec: flexible_complements}
		In this section, we prove Theorem \ref{thm: flexible_complement}: the complement of the boundary connected sum of all the index $n$ co-cores is a flexible domain. As explained in the Introduction, this result is a refined version of the main result of previous work \cite{Lazarev_critical_points}: there is a Weinstein homotopy of $X^{2n}$ to a Weinstein structure of the form $V_{flex}^{2n} \cup H^n_{\Lambda}$ for some Legendrian $\Lambda \subset \partial V_{flex}$.  Theorem \ref{thm: flexible_complement} identifies the co-core of 
$H^n_{\Lambda}$; namely, the flexible domain $X \backslash (C_1 \natural \cdots \natural C_k)$ is precisely $V_{flex}$ and the co-core of $H^n_{\Lambda}$ is $C_1 \natural \cdots \natural C_i$. The Weinstein homotopy in \cite{Lazarev_critical_points}  involves handle-sliding all handles over one fixed handle. So  to prove Theorem \ref{thm: flexible_complement}, we will study the affect of handle-slides on co-cores
\begin{remark}
In \cite{Lazarev_critical_points}, we showed that $X$ can be Weinstein homotoped to $X_{flex} \cup H^{n-1} \cup H^n_{\Lambda}$. This can be seen from the point of view of Theorem \ref{thm: flexible_complement}. Namely, if $X$ has co-cores $C_1, \cdots, C_i$, then there is a Weinstein homotopy to a new Weinstein structure with co-cores  $C_1, C_1', \cdots, C_i, C_i'$, 
i.e. double the number of co-cores of the original presentation.
 Here $C_j'$ is the parallel pushoff of $C_j$, i.e. we can identify a neighborhood of the Lagrangian disk $C_j$ with a neighborhood of the cotangent fiber $T^*_0 D^n \subset T^*D^n$ and then $C_j'$ is a parallel fiber $T^*_p D^n$ for some $p \ne 0 \in D^n$.  
Then
$X\backslash (C_1 \natural \overline{C_1'} \natural \cdots \natural C_i \natural \overline{C_i'}) = X_{flex} \cup H^{n-1}$. In particular, the co-core of $H^n_{\Lambda}$ in $X_{flex} \cup H^{n-1} \cup H^n_{\Lambda}$ is
$C_1 \natural \overline{C_1'} \natural \cdots \natural C_i \natural \overline{C_i'}$.
\end{remark}

We begin by reviewing handle-slides. A handle-slide is a certain Weinstein homotopy that modifies the Liouville vector field in a specific way. 
		Let $(X, f, v)$ be a Weinstein cobordism with two index $n$ two critical points $x_1, x_2$ with the same critical value $c=f(x_1) = f(x_2)$. 
		Let $\Lambda_1,\Lambda_2 \subset \partial_- X$ be the attaching spheres of $x_1, x_2$, i.e. the intersection of the $v$-stable manifolds of $x_1, x_2$ with $\partial_- X$. 
		A handle-slide requires the existence of a  special Darboux chart. Namely, let $U$ be a Darboux ball in $\partial_- X$ so that $\Lambda_1, \Lambda_2$ look like parallel Legendrian planes in their front projection, i.e. 
		$(U, U \cap (\Lambda_1 \coprod \Lambda_2))$ is contactomorphic to 
		$(B^{2n+1} = \{|x_i| \le 1, |y_i| \le 1, |z| \le 1\}, \{y_i = 0, z= 0\} \coprod \{y_i = 0, z=1\})$ equipped with the standard contact form $\xi_{std} = \ker(dz - \sum_i y_i dx_i)$. See the left diagram of Figure \ref{fig: handleslide_model}. 	
		Such a chart has a canonical ``short" Reeb chord $\gamma$ between $\Lambda_1, \Lambda_2$, as defined in \cite{ganatra_generation}; conversely, we will say that $\gamma$ is a ``short" Reeb chord between $\Lambda_1, \Lambda_2$ if there is such a chart so that $\gamma$ is the canonical chord in this chart. 	
		\begin{remark}
			The Darboux chart $U$ must be sufficiently large so that the front projection of $h_{\Lambda_1}(\Lambda_2)$ (to be defined below) makes sense.
			Taking the chart to be contactomorphic to 
			$B^{2n+1} = \{|x_i| \le 1, |y_i| \le 1, |z| \le 1\}$ suffices. 			
		\end{remark}

		 \begin{figure}
		 	\centering
		 	\includegraphics[scale=0.34]{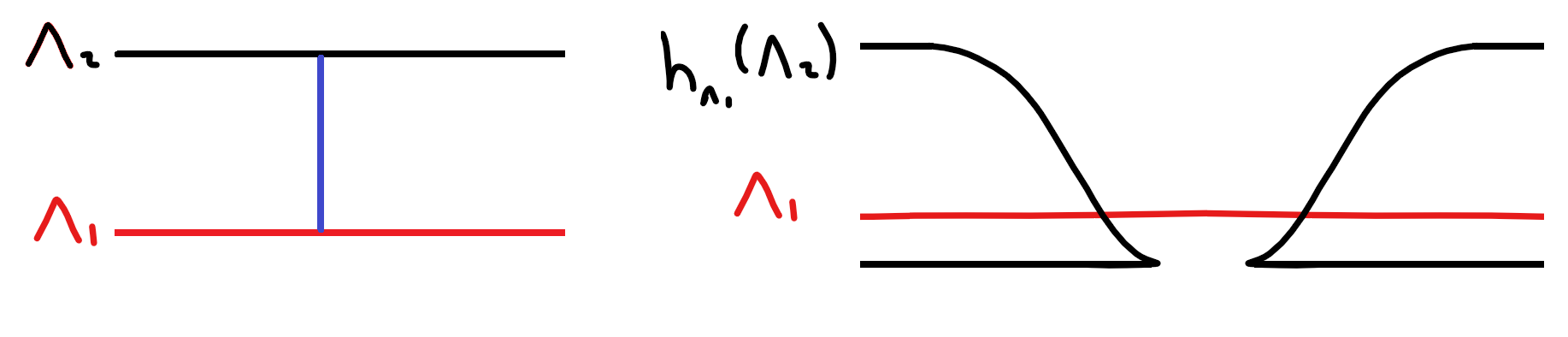}
		 	\caption{Left diagram: Darboux chart where $\Lambda_1, \Lambda_2$ look like parallel planes in their front projection. Right diagram: 
		 	the Legendrian $h_{\Lambda_1}(\Lambda_2)$, shown in its front projection,	 is the result of handle-sliding $\Lambda_2$ over $\Lambda_1$; the blue arc is a Reeb chord. 		
		 	}
		 	\label{fig: handleslide_model}
		 \end{figure}

The first step in the handle-slide is to do a Weinstein homotopy 
 $(X, f_t, v)$ of Morse functions $f_t$ from $f = f_0$ to $f_1$ 
so that $a = f_1(x_2) > b =f_1(x_1)$. This is always possible since there are no gradient-trajectories between $x_1, x_2$; see Lemma 12.20 of  \cite{CE12}. Then consider the regular level set $(Y, \xi) = f_1^{-1}(c)$, for some $c \in (a,b)$. 
 By flowing along $v$, we can identify a neighborhood of $(Y, \xi)$ with $(Y, \xi)\times[0,1]$; we will assume that $f^{-1}(c)=(Y, \xi)$ corresponds to $(Y, \xi) \times 0$. If $t$ is the coordinate on $[0,1]$, then $v =\frac{\partial}{\partial t}$ and so the flow of $-v$ induces the identity map $Y \times 1 \rightarrow Y \times t$ for all $t \in [0,1]$. 

The next step of the handle-slide is to modify the Liouville vector field in $(Y, \xi) \times [0,1]$. 
Let $\Gamma_1 \subset (Y, \xi)$ denote the belt sphere of $x_1$, i.e the intersection of the $v$-unstable manifold of $x_1$ with $(Y, \xi)$, and let $\Lambda_2 \subset (Y, \xi)$ denote the attaching sphere of $x_2$, i.e. the intersection of the $v$-stable manifold of $x_2$ with $(Y, \xi)$. Since there is a short Reeb chord in $\partial_- X$ between $\Lambda_1, \Lambda_2$, there is also such a chord in $(Y, \xi)$ between $\Lambda_2, \Gamma_1$ and a Darboux chart $U$ in $(Y, \xi)$ containing this chord.  
Using this chord, there is a Legendrian isotopy $\Lambda_2^t, t \in [0,1],$ supported in $U$ that pushes a point of $\Lambda_2 := \Lambda_2^0$ (namely the endpoint of this chord) 
past $\Gamma_1$ to a Legendrian  
$P(\Lambda_2): = \Lambda_2^1$; see Figure \ref{fig: isotopy1}. We  extend this Legendrian isotopy to an ambient contact isotopy $\phi_t$ of $(Y, \xi)$. By  Lemma 12.5 of \cite{CE12},
there is a homotopy $v_t, t\in [0,1], v_0 = v,$ of  Liouville vector fields in $(Y, \xi) \times [0,1]$ 
that are fixed near $(Y, \xi) \times \{0,1\}$ and  are transverse to the slices $(Y, \xi) \times c, c \in [0,1]$ so that the flow of $-v_1$ induces the contact isotopy
$\phi_t: Y \times 1 \rightarrow Y \times t$. We extend this homotopy to a Weinstein homotopy $(X, v_t, f_1)$ supported in $(Y, \xi) \times [0,1]$. Note that the intersection of the $v_1$-stable manifold of $x_2$ with $(Y, \xi) \times 1$ is $\Lambda_2$ while its intersection with $(Y, \xi) \times 0$ equals the image of $\Lambda_2 \subset Y\times 1$ under the holonomy of $-v_1$, namely $P(\Lambda_2) \subset Y\times 0$. See Figure \ref{fig: isotopy1} for a depiction of the positive flow of $v_1$ which takes $P(\Lambda_2) \subset Y\times 0$ to $\Lambda_2 \subset Y \times 1$. Finally, there is a Weinstein homotopy $(X, v_1, f_{1-t})$ of  Morse functions 
from $f_1$ back to $f_0 = f$. By definition, the handle-slide is the combination of these three homotopies from $(X, f, v):= (X, f, v_0)$ to $(X, f, v'):= (X,  f, v_1)$.  		
 		
  \begin{figure}
  	\centering
  	\includegraphics[scale=0.34]{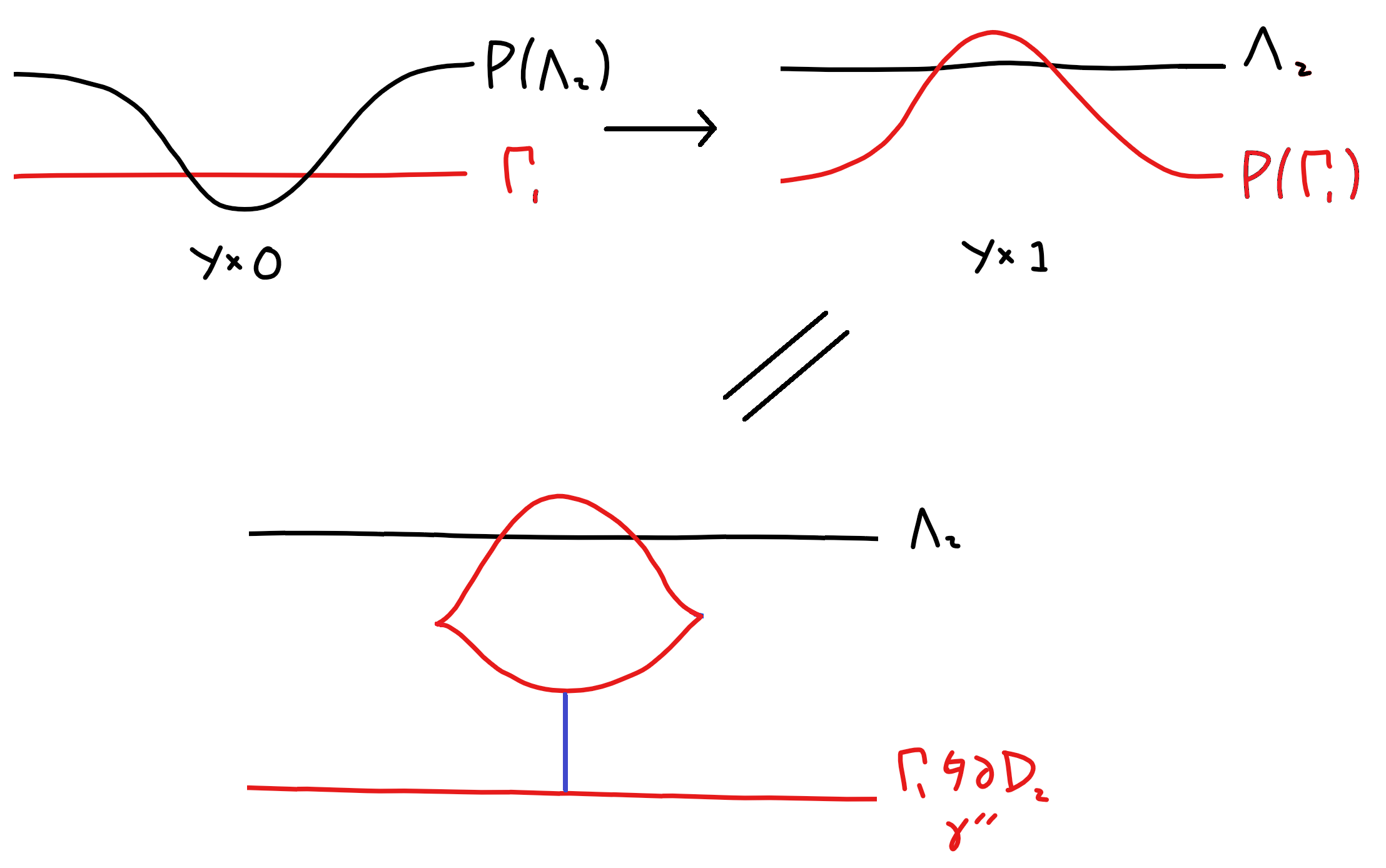}
  	\caption{			Isotopy from the Legendrian link $P(\Lambda_2) \coprod \Gamma_1$ to $\Lambda_2 \coprod P(\Gamma_1)$ realized by the holonomy of $v_1$ from $Y\times 0$ to $Y\times 1$. The last figure denotes the boundary connected sum of $\Gamma_1 = \partial C_1$ and $\partial D_2$ 
	along the Reeb chord $\gamma''$, in blue. 	
  	}
  	\label{fig: isotopy1}
  \end{figure}

The key part of the handle-slide involves modifying the Liouville vector field $v$ to $v'$. Therefore the stable manifolds of this vector field, i.e. cores of the critical points $x_1, x_2$, and their intersection with $\partial_- X$, i.e. the attaching spheres, are also modified. 
Let $\Lambda_1', \Lambda_2' \subset \partial_- X$ denote the attaching spheres of $x_1, x_2$ for the new Weinstein structure $(X, f, v')$.   		
  		Since the Liouville vector field is not modified near the core of $x_1$, the attaching sphere of $x_1$ does not change and hence $\Lambda_1' = \Lambda_1$. However $\Lambda_2'$ does change.   		 		
     Casals and Murphy \cite{Casals_Murphy_front} gave an explicit, local, description of $\Lambda_2'$ as a cusp connected sum of $\Lambda_1, \Lambda_2$ inside the Darboux chart $U$. We will use their notation $h_{\Lambda_1}(\Lambda_2):= \Lambda_2'$ to denote the dependence of $\Lambda_2'$ on $\Lambda_1, \Lambda_2'$;  see the right diagram in Figure \ref{fig: handleslide_model}. However it is important to note that $h_{\Lambda_1}(\Lambda_2)$ actually depends on the choice of chart $U$ used to perform the handle-slide. 
     Note that there is freedom to choose the vector field $v_1$ in $(Y, \xi) \times [0,1]$. As long as the image of $\Lambda_2 \subset Y\times 1$ under the holonomy of $-v_1$ equals $P(\Lambda_2) \subset Y \times 0$,     
     the $v_1$-attaching sphere of $x_2$ will be $h_{\Lambda_1}(\Lambda_2)$.

		The co-cores of the critical points are the unstable manifolds of the Liouville vector field. Since $(X, f, v)$ and $(X, f, v')$ have different Liouville vector fields, their critical points will also have different co-cores from the original presentation.   
		The following proposition describes the 
		 new co-cores obtained by doing a handle-slide in terms of the old co-cores.
		 Recall from Section \ref{sec: twisted_complexes_proof} that given two disjoint exact Lagrangians $L \coprod K$ and a `short' Reeb chord $\gamma$ between $\partial L, \partial K$, one can form another exact Lagrangian $L\natural_\gamma K$, the boundary connected sum of $L, K$ along $\gamma$. 		 
\begin{proposition}\label{prop:handle_slide_cocores}
Let $(X^{2n}, f, v)$ be a Weinstein cobordism with two index $n$ critical points $x_1, x_2$ whose attaching spheres are $\Lambda_1, \Lambda_2 \subset \partial_- X$ and co-cores are $C_1, C_2 \subset X$; suppose there is also a short Reeb chord $\gamma$ between $\Lambda_1, \Lambda_2$. Then there is a Weinstein homotopic cobordism $(X, v', f)$ so that the attaching spheres of $x_1, x_2$ are $\Lambda_1, h_{\Lambda_1}(\Lambda_2) \subset \partial_- X$ and co-cores are isotopic to $C_1 \natural_{\gamma'} C_2, C_2$ respectively, where $\gamma'$ is  a short Reeb chord  between $\partial C_1, \partial C_2$. 
\end{proposition}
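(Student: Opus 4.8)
The plan is to carry out the entire analysis with the new Liouville field $v' = v_1$ alone. The two Morse-function homotopies $(X,f_t,v)$ and $(X,v_1,f_{1-t})$ that bracket the handle-slide change only the function, never the Liouville field, so they leave all unstable manifolds unchanged; hence the new co-cores are exactly $W^u(x_1;v_1)$ and $W^u(x_2;v_1)$, and it suffices to compare these with $C_1 = W^u(x_1;v)$ and $C_2 = W^u(x_2;v)$. The fact I would lean on throughout is that $v_1 = v$ outside the collar $(Y,\xi)\times[0,1]$, where $Y=f_1^{-1}(c)$ lies above $x_1$ and below $x_2$ in the level sets of $f_1$.

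The co-core of $x_2$ is the easy case. Its unstable manifold flows toward $\partial_+X$, i.e.\ into the region $\{f_1 > a\}$, and so never enters the collar $(Y,\xi)\times[0,1]$ on which $v_1$ differs from $v$. Therefore $W^u(x_2;v_1)=W^u(x_2;v)=C_2$ on the nose, which is the second assertion.

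For $x_1$ the unstable manifold does cross the collar, and I would decompose it by levels. Below $Y\times 0$ the field is unmodified, so $W^u(x_1;v_1)$ reaches $Y\times 0$ in the belt sphere $\Gamma_1$; above $Y\times 1$ the field is again unmodified, so the remainder of the disk and its belt sphere in $\partial_+X$ are obtained simply by flowing out whatever slice $W^u(x_1;v_1)$ cuts out of $Y\times 1$. That slice is $P(\Gamma_1):=\phi_0^{-1}(\Gamma_1)$, the image of $\Gamma_1$ under the positive $v_1$-holonomy across the collar, where $\phi_0\colon Y\times 1\to Y\times 0$ is precisely the contact isotopy used to build $v_1$ --- the one that pushes a finger of $\Lambda_2$ past $\Gamma_1$ (Figure \ref{fig: isotopy1}). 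Everything therefore reduces to identifying the Lagrangian disk swept out when $P(\Gamma_1)$ is flowed out to $\partial_+X$.

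This local identification is the heart of the matter and the step I expect to be the main obstacle. I would work in the Darboux chart $U\subset(Y,\xi)$ in which $\Lambda_2$ and $\Gamma_1$ appear as parallel Legendrian sheets joined by the short chord inherited from $\gamma$, and use the explicit finger/cusp model of Casals--Murphy \cite{Casals_Murphy_front}, now applied to the \emph{belt} sphere: because $\phi_0$ drags a finger of $\Lambda_2$ across $\Gamma_1$, the inverse holonomy $\phi_0^{-1}$ drags a dual finger of $\Gamma_1$ back across $\Lambda_2=W^s(x_2;v_1)\cap Y$. The point to make precise is that this finger runs alongside $W^s(x_2)$, so that the $v_1$-flow lines issuing from it climb toward $x_2$ and then peel off along $W^u(x_2)=C_2$; consequently the disk $W^u(x_1;v_1)$ is $C_1$ with a parallel copy of $C_2$ grafted on. It remains to recognize this graft as the boundary connected sum $C_1\natural_{\gamma'}C_2$: one matches the front of $P(\Gamma_1)$ against the front of $\partial C_1\natural\partial C_2$ produced by the construction of \cite{ganatra_generation}, and checks that the gluing chord $\gamma'$ between $\partial C_1$ and $\partial C_2$ is short because the finger is supported in $U$ and $\gamma'$ is the trace of $\gamma$ under the flow. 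Since the proposition only claims the co-cores up to Lagrangian isotopy, I would avoid tracking the Liouville primitive and instead invoke the fact from \cite{ganatra_generation} that $\natural_{\gamma'}$ is well defined on exact Lagrangian disks up to isotopy, which closes the argument.
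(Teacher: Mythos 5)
Your proposal is correct and follows essentially the same route as the paper's proof: reduce to comparing $v_1$-unstable manifolds (the two Morse-function homotopies leave the Liouville field, hence the co-cores, untouched, and $W^u(x_2)$ stays in $f_1^{-1}(\ge a-\epsilon)$ where $v_1 = v_0$), slice $W^u(x_1)$ at the collar $(Y,\xi)\times[0,1]$, and identify the trace of $\Gamma_1$ crossing $\Lambda_2$ under the positive holonomy as a boundary connected sum. The one place the paper is sharper than your sketch is exactly your flagged ``main obstacle'': rather than deriving the dual finger from the inverse of a previously fixed $\phi_0$ (whose effect on $\Gamma_1$ depends on the ambient extension), the paper \emph{re-chooses} $v_1$ so that its positive holonomy realizes a Legendrian isotopy of the whole link $P(\Lambda_2)\coprod\Gamma_1$ to $\Lambda_2\coprod P(\Gamma_1)$ --- exploiting the freedom it had noted in the handle-slide construction --- and then replaces your flow-line grafting and front matching by citing Proposition 1.27 of \cite{ganatra_generation} (the Lagrangian cobordism of an isotopy crossing $\Lambda_2$ is the connected sum with the linking disk $D_2$) together with the in-handle isotopy $D_2 = T^*_p D^n$ to $C_2 = T^*_0 D^n$ in the complement of $C_1$, carrying the chord $\gamma''$ to the short chord $\gamma'$.
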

\begin{remark}
	As part of the Legendrian surgery formula from \cite{BEE12}, there is a correspondence between Reeb chords between the attaching spheres $\Lambda_1, \Lambda_2$ and Reeb chords between the belt spheres $\partial C_1,   \partial C_2$.   Under this correspondence, the chord $\gamma$ between $\Lambda_1, \Lambda_2$ in the hypothesis of Proposition \ref{prop:handle_slide_cocores} corresponds to chord $\gamma'$ between $\partial C_1, \partial C_2$.
\end{remark}
		\begin{proof}[Proof of Proposition \ref{prop:handle_slide_cocores}]
		Since we are now interested in the co-cores, which are unstable manifolds, we will study the positive flow of $v_1$, instead of the negative one used for the cores and attaching spheres previously. 					
			We first Weinstein homotope $(X, f= f_0, v)$ to $(X, f_1, v)$ via a homotopy of Morse functions $f_t$ 		
			as in the description of the handle-slide and consider $\Lambda_2, \Gamma_1 \subset (Y,\xi) = f_1^{-1}(c)$ as before. We will be slightly more precise in our choice of Liouville vector field $v_1$ in $(Y, \xi) \times [0,1]$ used to do the handle-slide. 
			There is a Legendrian isotopy from the \textit{link} $P(\Lambda_2) \coprod \Gamma_1$ to the link $\Lambda_2 \coprod P(\Gamma_1)$ supported in the Darboux chart $U \subset (Y, \xi)$
			that pushes a point of $\Gamma_1$ through $\Lambda_2$;  in the previous discussion of handle-slides, we only cared that the isotopy takes $P(\Lambda_2)$ to $\Lambda_2$. 		
			See Figure \ref{fig: isotopy1}. Again, we extend this to an ambient contact isotopy $\psi_t$ of $(Y, \xi)$ and find a homotopy of Liouville vector fields $v_t$  so that the positive flow of $v_1$  induces the contact isotopy 
			$\psi_t: Y \times 0 \rightarrow Y \times t$. 
		Clearly this Weinstein presentation is 
	homotopic to the original one.
	Furthermore, the attaching spheres $\Lambda_1', \Lambda_2' \subset \partial_- X$ for this new presentation are $\Lambda_1, h_{\Lambda_1}(\Lambda_2)$ since the image of $\Lambda_2 \subset Y\times 1$ under the  holonomy of $-v_1$ is still $P(\Lambda_2) \subset Y\times 0$. In particular, this is a  handle-slide in the sense described above. 
		
		We claim that the co-cores of $x_1, x_2$ for $(X, v_1, f_1)$
		are $C_1 \natural_{\gamma'} C_2, C_2$ for some Reeb chord $\gamma'$ between $\partial C_1, \partial C_2$. The co-core of $x_2$ does not change since $v_1 = v_0$ in $f^{-1}_1( \ge a-\epsilon)$, where $a = f_1(x_2)$. Since the vector field also does not change in $f^{-1}_1(\le c)$, the co-core of $x_1$ still equals $C_1$ in $f_1^{-1}(\le c)$ and has boundary $\Gamma_1 \subset Y\times 0 = f^{-1}_1(c)$. Then the portion of the co-core in $Y\times [0,1]$ is obtained by flowing $\Gamma_1$ using the modified vector field $v_1$. By construction, this vector field pushes a point of $\Gamma_1$ past $\Lambda_2$. 	
		As proven in Proposition 1.27 in \cite{ganatra_generation}, the Lagrangian cobordism given by this Legendrian isotopy is the same as taking the boundary connected sum with the linking disk of the second Legendrian. 			
		 Hence in $f^{-1}_1(\le c) \coprod (Y, \xi) \times [0,1]$, the co-core of $x_1$ is $C_1 \natural_{\gamma''} D_2$,   the boundary connected sum of $C_1$ with $D_2$, the linking disk of $\Lambda_2$, along the Reeb chord $\gamma''$ between $\Gamma_1 = \partial C_2$ and $\partial D_2$ in the Darboux chart $U$; see the last diagram in Figure \ref{fig: isotopy1}. 		 	
Finally, when the $n$-handle $H^n_{\Lambda_2}$ is attached along $\Lambda_2$, i.e. in $f^{-1}_1([c, b+\epsilon])$, the linking disk $D_2$ becomes isotopic to the co-core $C_2$ of $x_2$. This isotopy occurs in the $n$-handle $H^n_{\Lambda_2} = T^*D^n$ itself; namely $D_2$ is $T^*_p D^n$ for some $p \in \partial D^n$ and $C_2$ is $T^*_0 D^n$, where $0 \in D^n$ is the origin, and the isotopy is $T^*_{r(t)} D^n$, where $r(t)$ is a path in $D^n$ from $p$ to $0$. In particular, $D_2$ is isotopic to $C_2$ in the complement of $C_1$. 
As a result, $C_1 \natural_{\gamma''} D_2$ is Lagrangian isotopic to $C_1 \natural_{\gamma'} C_2$. Here $\gamma'$ is the short Reeb chord between $\partial C_1, \partial C_2$ 
that is the image of the Reeb chord $\gamma''$ between $\Gamma_1 = \partial C_1, \partial D_2$ under the contactomorphism induced by the Legendrian isotopy $\partial T^*_{p(t)} D^n$ taking $\partial D_2$ to $\partial C_2$.  To complete the Weinstein homotopy, we homotope $(X, v_1, f_1)$ to $(X,  v', f) := (X, v_1, f_0)$ via a homotopy $f_{1-t}$ of Morse functions. This does not change the Liouville vector field $v'$ and hence does not change the co-cores of $x_1, x_2$.  
\end{proof}

	To prove Theorem \ref{thm: flexible_complement} about flexible subdomains, we will need a slightly modified version of Proposition 	 \ref{prop:handle_slide_cocores}.	 
	  Namely, before applying Proposition \ref{prop:handle_slide_cocores}, we first apply a local modification to $\Lambda_2$ called the Reidemeister twist. The resulting Legendrian $R(\Lambda_2)$ is locally Legendrian isotopic to $\Lambda_2$.
	Since adding a Reideimeister twist is a local operation, if $\Lambda_1, \Lambda_2$ have a short Reeb chord between them, then so do $\Lambda_1, R(\Lambda_2)$ and hence we can handle-slide $R(\Lambda_2)$ over $\Lambda_1$; see Figure 
	\ref{fig: reidemeister}.  The following result describes the Lagrangian co-core disks of the new Weinstein presentation after the handle-slide. Here we let $C_1 \natural C_2$ denote the boundary connected sum of $C_1, C_2$ along a framed \textit{isotropic} arc between $\partial C_1, \partial C_2$; see \cite{Riz} for details. 
	Unlike the boundary connected sum along a short Reeb chord, the isotropic sum can always be performed on any two Lagrangians with boundary since they are always connected by isotropic arcs. 
	
		\begin{figure}
		\centering
		\includegraphics[scale=0.34]{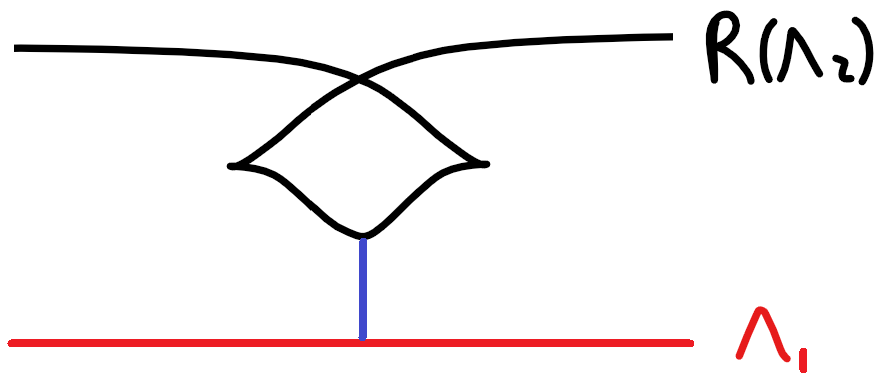}
		\caption{
$R(\Lambda_2)$ is the result doing a Reidemeister twist move to $\Lambda_2$; there is a still a Reeb chord (in blue) between $R(\Lambda_2)$ and $\Lambda_1$. 		
		}
\label{fig: reidemeister}		
	\end{figure}

		\begin{proposition}\label{prop: handleslide_reidemeister}
			Let $(X^{2n}, f, v)$ be a Weinstein cobordism with two index $n$ critical points $x_1, x_2$ whose attaching spheres are $\Lambda_1, \Lambda_2 \subset \partial_- X$ 	
			and  co-cores are $C_1, C_2 \subset X$; suppose there is also a short Reeb chord $\gamma$ from $\Lambda_1$ to $\Lambda_2$. Then there is a Weinstein homotopic cobordism $(X, v', f)$ 		
			so that the attaching spheres of $x_1, x_2$ are $\Lambda_1, h_{\Lambda_1}(R(\Lambda_2)) \subset \partial_- X$ and the co-cores are isotopic to $C_1 \natural C_2, C_2$ respectively.
		\end{proposition}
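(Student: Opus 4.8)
The plan is to reduce to Proposition \ref{prop:handle_slide_cocores} by first realizing the Reidemeister twist as a Weinstein homotopy, and then to account for the discrepancy between a short Reeb chord and an isotropic arc. First I would observe that since $R(\Lambda_2)$ is Legendrian isotopic to $\Lambda_2$, this isotopy extends to an ambient contact isotopy and hence to a Weinstein homotopy of $(X, f, v)$; under this homotopy the attaching sphere of $x_2$ becomes $R(\Lambda_2)$ while the co-core of $x_2$ remains Lagrangian isotopic to $C_2$, since the isotopy class of the co-core depends only on the Legendrian isotopy class of the attaching sphere. Because the Reidemeister twist is a local operation supported in a small ball disjoint from the short Reeb chord $\gamma$, there is still a short Reeb chord from $\Lambda_1$ to $R(\Lambda_2)$. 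Applying Proposition \ref{prop:handle_slide_cocores} to this configuration then produces a Weinstein homotopic cobordism with attaching spheres $\Lambda_1, h_{\Lambda_1}(R(\Lambda_2))$ and co-cores isotopic to $C_1 \natural_{\gamma'} C_2, C_2$ respectively, where $\gamma'$ is a short Reeb chord between $\partial C_1$ and $\partial C_2$.

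It then remains to identify $C_1 \natural_{\gamma'} C_2$ with the isotropic boundary connected sum $C_1 \natural C_2$. The key geometric point is that the Reidemeister twist is calibrated precisely to convert a boundary connected sum along a short Reeb chord into one along an isotropic arc. A short Reeb chord points in the Reeb direction, transverse to the contact distribution, whereas an isotropic arc is tangent to it; the two framings of the connecting arc therefore differ by a twist, and this twist is exactly what the Reidemeister move introduces. Via the Legendrian surgery correspondence of \cite{BEE12} between Reeb chords of the attaching spheres and Reeb chords of the belt spheres, the twist applied to $\Lambda_2$ corresponds to a dual local modification of the belt sphere $\partial C_2$ near the foot of $\gamma'$. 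I would show, by an explicit local model in the Darboux chart and a direct analysis of front projections, that this modification rotates the connecting arc from the Reeb direction into a contact-tangent direction. Exhibiting a Lagrangian isotopy rel boundary that unwinds the twist against the framing difference then yields $C_1 \natural_{\gamma'} C_2 \simeq C_1 \natural C_2$, which completes the proof.

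The main obstacle will be this last identification: making precise that the Reidemeister twist exactly cancels the framing discrepancy between a short Reeb chord and an isotropic arc, and that the resulting boundary sums are Lagrangian isotopic. This requires a careful local analysis of the front projection together with the surgery duality of \cite{BEE12}, and a verification that the framings (and any relevant orientation data entering the boundary connected sum) match up. Once this local model is in hand, the global statement follows formally by concatenating the Reidemeister-twist Weinstein homotopy with the handle-slide supplied by Proposition \ref{prop:handle_slide_cocores}.
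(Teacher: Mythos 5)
Your route is genuinely different in organization from the paper's, and it is worth saying what each buys. The paper does not twist $\Lambda_2$ first and then invoke Proposition \ref{prop:handle_slide_cocores} as a black box; instead it reruns the holonomy construction from scratch, choosing the Liouville field $v_1$ in $(Y,\xi)\times[0,1]$ so that its flow realizes the isotopy of the \emph{link} $P(R(\Lambda_2))\coprod \Gamma_1$ to $\Lambda_2 \coprod P(R(\Gamma_1))$. On the belt-sphere side this flow first performs the Reidemeister twist on $\Gamma_1$ itself and then pushes $R(\Gamma_1)$ past $\Lambda_2$, so the Lagrangian trace is \emph{manifestly} the isotropic boundary connected sum $C_1 \natural D_2$ with the linking disk, which becomes $C_1\natural C_2$ after handle attachment (Figure \ref{fig: isotopy2}). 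In other words, the paper implements the twist upstairs on $\partial C_1$, where the co-core computation is transparent, and never needs to compare a Reeb-chord sum with an isotropic-arc sum after the fact. Your plan implements the twist downstairs on the attaching sphere and defers exactly that comparison to the end.

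The deferred step is where your argument has a genuine gap as stated. After your step 1 (which is fine: modifying $v$ only in a collar of $\partial_- X$ leaves the unstable manifolds, hence $C_1, C_2$ and their boundaries, literally unchanged), Proposition \ref{prop:handle_slide_cocores} outputs ``$C_1 \natural_{\gamma'} C_2$ for a short Reeb chord $\gamma'$ between $\partial C_1, \partial C_2$,'' and the only place the twist survives in this conclusion is in \emph{which} chord $\gamma'$ appears (via the surgery correspondence it is dual to the chord from $\Lambda_1$ to $R(\Lambda_2)$, whose position is tied to the twisted region). Your closing identification therefore cannot be a general statement that ``a Reeb-chord sum equals an isotropic sum'': if it were insensitive to the particular chord and arc, Propositions \ref{prop:handle_slide_cocores} and \ref{prop: handleslide_reidemeister} would have isotopic co-core conclusions and iterated twists could never change the answer --- contradicting the paper's later bookkeeping, where two Reidemeister twists produce the \emph{oppositely oriented} summand $\overline{C}$ (Section \ref{sec: exotic_presentations}) and where the proof of Theorem \ref{thm: flexible_complement} needs the connected sum along the specific prescribed arcs $\gamma_i'$. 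So you must prove the identification for the specific $\gamma'$ produced by the twisted slide, tracking how the twist re-enters through the isotopy of the linking disk of $R(\Lambda_2)$ to $C_2$ across the handle; carried out honestly, that local analysis is essentially the same holonomy/trace argument the paper performs directly, so the apparent economy of quoting Proposition \ref{prop:handle_slide_cocores} buys less than it seems.
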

		\begin{proof} 
		As in the proof of Proposition \ref{prop:handle_slide_cocores}, the key will be to modify the Liouville vector field $v= v_0$ to a new vector field $v_1$ in $(Y, \xi) \times [0,1]$. As shown in Figure \ref{fig: isotopy2}, there is a Legendrian isotopy of $P(R(\Lambda_2)) \coprod \Gamma_1$ to the link $\Lambda_2 \coprod P(R(\Gamma_1))$. On the $\Gamma_1$ component, this Legendrian isotopy first isotopes $\Gamma_1$ to $R(\Gamma_1)$ and then pushes $R(\Gamma_1)$ past a point of $\Lambda_2$ to the Legendrian $P(R(\Gamma_1))$. As before, we extend this to an ambient contact isotopy $\psi_t$ of $(Y, \xi)$ and find a homotopy of Liouville vector fields $v_t$  so that the positive flow of $v_1$  induces the contact isotopy 
		$\psi_t: Y \times 0 \rightarrow Y \times t$. 
The Weinstein presentation $(X, v_1, f_1)$ is 
	homotopic to the original one. Furthermore, the attaching spheres $\Lambda_1', \Lambda_2' \subset \partial_- X$ for this new presentation are $\Lambda_1, h_{\Lambda_1}(R(\Lambda_2))$ since the image of $\Lambda_2 \subset Y\times 1$ under the  holonomy of $-v_1$ is  $P(R(\Lambda_2)) \subset Y\times 0$ by construction.

	\begin{figure}
		\centering
		\includegraphics[scale=0.34]{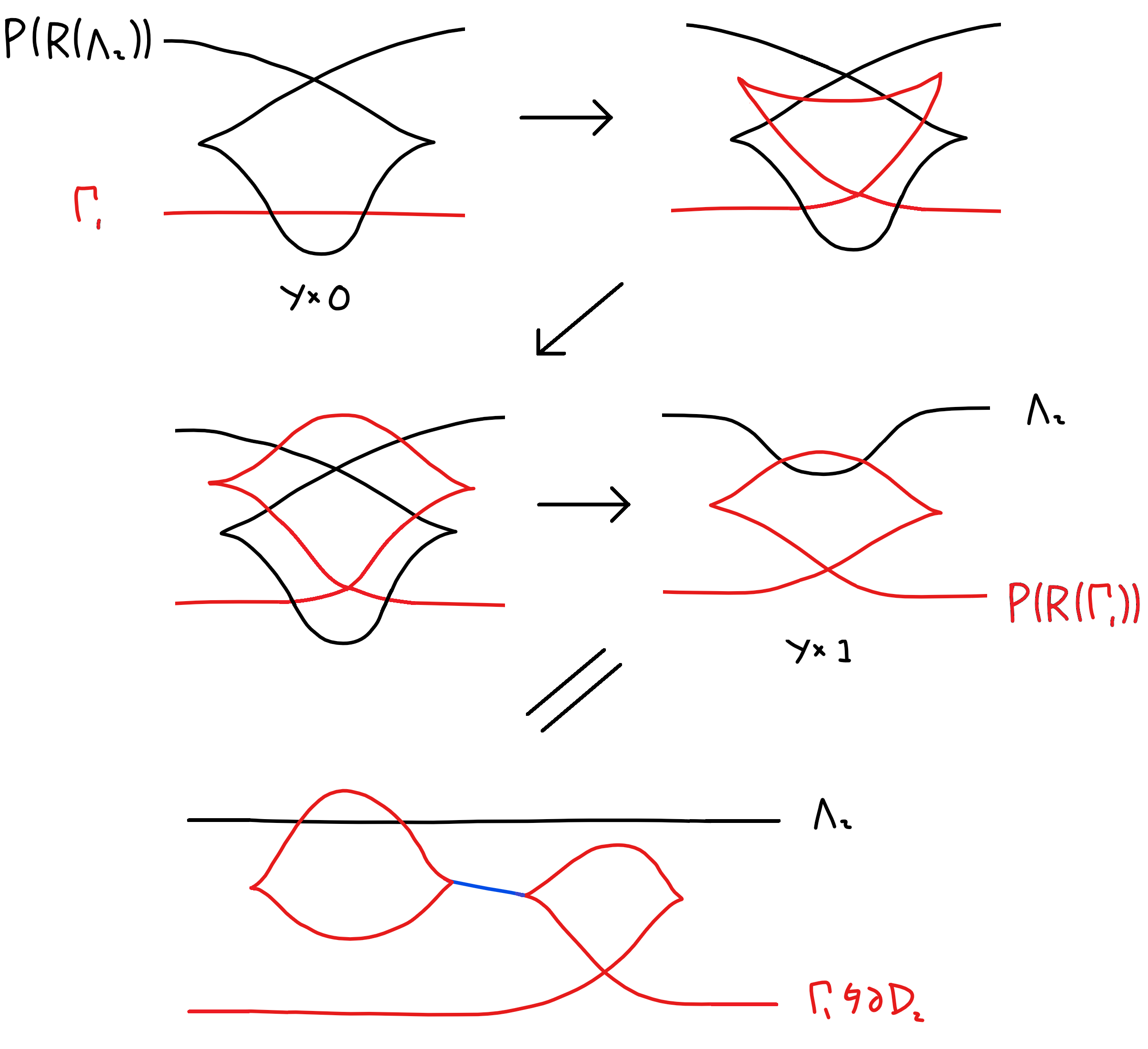}
		\caption{
			Isotopy from the Legendrian link $P(R(\Lambda_2)) \coprod \Gamma_1$ to the link $\Lambda_2 \coprod P(R(\Gamma_1))$
			realized by the holonomy of $v_1$ from $Y\times 0$ to $Y\times 1$. 	
			The last figure is the isotropic boundary connected sum
			of $\Gamma_1$ with $\partial D_2$, the boundary of the linking disk $D_2$ of $\Lambda_2$; the blue arc is isotropic.	
		}
		\label{fig: isotopy2}		
	\end{figure}
		
	We claim that the co-cores of $x_1, x_2$ for $(X, v_1, f_1)$ are $C_1 \natural C_2, C_2$ respectively. As in Proposition \ref{prop:handle_slide_cocores}, the co-core of $x_2$ does not change since $v_1 = v_0$ in $f^{-1}_1(\ge a -\epsilon)$, where $a =f_1(x_2)$. 	
	Since the vector field also does not change in $f^{-1}_1(\le c)$, the co-core of $x_1$ still equals $C_1$ in $f_1^{-1}(\le c)$ and has boundary $\Gamma_1 \subset Y\times 0 = f^{-1}_1(c)$. Then the portion of the co-core in $Y\times [0,1]$ is obtained by flowing $\Gamma_1$ using the modified vector field $v_1$. By construction, this vector field isotopes $\Gamma_1$ to $R(\Gamma_1)$ and then pushes a point of $R(\Gamma_1)$ past $\Lambda_2$ to the Legendrian $P(R(\Gamma_1))$.
	The Lagrangian cobordism given by this Legendrian isotopy is the same as taking the isotropic boundary connected sum with $D_2$, the linking disk of $\Lambda_2$; see the last diagram in Figure \ref{fig: isotopy2}. Hence in $f^{-1}_1(\le c) \coprod (Y, \xi) \times [0,1]$, the co-core of $x_1$ is $C_1 \natural D_2$. 	 Finally, we attach the $n$-handle along $\Lambda_2$ and, as before, $D_2$ becomes isotopic to $C_2$ and so the co-core  of $x_1$ is isotopic to $C_1 \natural C_2$ as desired. We homotope $(X, v_1, f_1)$ to $(X, v', f):= (X, v_1, f_0)$ which does not change the Liouville vector field and hence does not change the co-cores of $x_1,x_2$.		 		\end{proof} 
	
	Now we use Proposition \ref{prop: handleslide_reidemeister} to prove Theorem \ref{thm: flexible_complement}: the complement of the boundary connected sum of co-cores is flexible. 	
\begin{proof}[Proof of Theorem \ref{thm: flexible_complement}]
By assumption, $W^{2n}$ has a Weinstein presentation with index $n$ co-cores $C_1, \cdots, C_k$, i.e. $W = W_{sub} \cup H^n_{\Lambda_1} \cup \cdots \cup H^n_{\Lambda_k}$, where $W_{sub}$ only has handles of index less than $n$ and the co-core of $H^n_{\Lambda_i}$ is the Lagrangian disk $C_i$. The claim is that if we carve out the Lagrangian disk $D:= C_1 \natural_{\gamma_2} \cdots \natural_{\gamma_{k}} C_k$, the complement $W\backslash D$ is flexible.  Here $\gamma_i$ is a framed isotropic arc from $\partial C_{i-1}$ to $\partial C_{i}$, i.e. the disks are attached sequentially in a linear graph. 
By sliding these arcs along $\partial C_i$, we can instead consider the isotropic arcs $\gamma_i'$ from $\partial C_1$ to $\partial C_i$ and form the disk $D' := C_1 \natural_{\gamma_2'} C_2 \natural_{\gamma_3'} C_3 \cdots \natural_{\gamma_k'} C_k$; so now the disk $C_1$ is the central node to which we boundary connect sum all the other disks. The disk $D'$ is Lagrangian isotopic to the previous disk $D$; hence from the start we can consider the disk $D'$ and arcs $\gamma_i'$ instead of $D$ and $\gamma_i$.

Now we proceed as in \cite{Lazarev_critical_points} and handle-slide all the handles over $H_{\Lambda_1}$. First, we note that there are framed isotropic arcs $\gamma_i'' \subset \partial W_{sub}$ from $\Lambda_1$ to $\Lambda_i$ so that when we attach handles to $\Lambda_1, \cdots, \Lambda_k$, these arcs become 
the arcs $\gamma_i'$ from $\partial C_1$ to $\partial C_i$ used in the boundary connected sum $D'$. Next we  use the arcs $\gamma_i''$ to Legendrian isotope points of $\Lambda_i$ close to $\Lambda_1$; see \cite{Lazarev_critical_points} for details. We will still call the resulting Legendrians $\Lambda_1, \cdots, \Lambda_k$ and the co-cores of the resulting presentation are still $C_1, \cdots, C_k$. Now  there are disjoint handle-slide charts $U_i$ containing a short Reeb chord between $\Lambda_i, \Lambda_1$.  
	By iteratively handle-sliding $\Lambda_i$ over $\Lambda_1$ and repeatedly applying  Proposition \ref{prop: handleslide_reidemeister}, 
	 we see that there is a Weinstein homotopic domain 
	$W_{sub} \cup
	H^n_{\Lambda_1} \cup  H^n_{h_{\Lambda_1}(R(\Lambda_2 ))} \cup \cdots  \cup H^n_{h_{\Lambda_1}(R(\Lambda_k))}$ so that the co-core of $H^n_{\Lambda_1}$ is $C_1\natural \cdots \natural C_k$. More precisely, the co-core is $C_1 \natural L_2 \natural \cdots \natural L_k$, where $L_i \subset W_{sub}$ are the linking disks of $\Lambda_i$. When we attach handles to all the $\Lambda_i$, this disjoint collection of  disks $L_2 \coprod \cdots \coprod L_k$ is isotopic to the disjoint union of co-cores $C_2 \coprod \cdots \coprod C_k$ and so $C_1 \natural L_2 \natural \cdots \natural L_k$ is isotopic to $C_1 \natural C_2 \natural \cdots \natural C_k$. Since we pushed a point of $\Lambda_i$ to $\Lambda_1$ 
	via the isotropic arc $\gamma_i''$, the framed isotropic arcs used to do the boundary connected sum $C_1 \natural C_2 \natural \cdots \natural C_k$ is precisely the image of this arc after handle-attachment, namely the arc $\gamma_i'$ as desired. 	
	
	Since removing the co-core of a handle is the same as removing the handle, $W \backslash (C_1\natural \cdots \natural C_k)$ is 
	$W_{sub} \cup H^n_{h_{\Lambda_1}(R(\Lambda_2 ))} \cup \cdots \cup H^n_{h_{\Lambda_1}(R(\Lambda_k))}$. It is proven in \cite{Lazarev_critical_points} that the Legendrian link of attaching spheres $h_{\Lambda_1}(R(\Lambda_2 ) )\coprod \cdots\coprod  h_{\Lambda_1}(R(\Lambda_k))$ is loose if $n \ge 3$
	(but it is not loose in the complement of $\Lambda_1$); if $n =2$, this Legendrian link is stabilized. Since $W_{sub}^{2n}$ has handles of index less than $n$, the subdomain
	$W_{sub} \cup H^n_{h_{\Lambda_1}(R(\Lambda_2 ))} \cup \cdots \cup H^n_{h_{\Lambda_1}(R(\Lambda_k))}$ is flexible for $n \ge 3$ and stabilized for $n = 2$, which completes the proof. 	
\end{proof}

More generally, we can handle-slide \textit{subsets} of index $n$ handles over a subset of the other index $n$ handles and use this to prove that the complement of the boundary connected sum of a subset of co-cores  is also flexible. Namely, suppose that $C_1, \cdots, C_k$ are the co-cores of a Weinstein structure. Let $I= \{1, \cdots, k\}$ and consider a partition  $I = \coprod_{i=1}^m I_i$ of $I$ into $m$ disjoint subsets. Let $D_i = \natural_{j \in I_i} D_j$ and let $D = \coprod_{i=1}^m D_i$ be the disjoint union of $m$ disks. Then the above proof of Theorem \ref{thm: flexible_complement} carries over to show that $X\backslash D$ is also a flexible subdomain.  If $|I_i| = 1$ for all $i$, then $D$ is the disjoint union of all the co-cores and so $X \backslash D$ is in fact the subcritical part of the Weinstein structure $X$; if $m = 1, I_1 = I$, then $D$ is the boundary connected sum of all the co-cores and we recover  Theorem \ref{thm: flexible_complement}.

	\subsection{Exotic Weinstein presentations}\label{sec: exotic_presentations}

Next we use Theorem \ref{thm: flexible_complement} to construct exotic Weinstein presentations for certain standard Weinstein domains. We focus on $T^*S^n_{std}$ for simplicity. The standard presentation for this domain is as $B^{2n}_{std} \cup
 H^n_{\Lambda_u}$, an index $0$ handle and an index $n$ handle attached along the Legendrian unknot. As noted in Example \ref{example: cotangent_generators}, $T^*_p S^n \oplus T^*_p S^n \oplus \overline{T^*_p S^n}$ is an exotic generator of $\mathcal{W}(T^*S^n)$. We will now prove a geometric version of that result by showing that there is an exotic Weinstein presentation for $T^*S^n_{std}$ with a single index $n$ handle whose co-core is $T^*_{x_1} S^n \natural T^*_{x_2} S^n \natural \overline{T^*_{y_1} S^n}$, which therefore generates $\mathcal{W}(T^*S^n)$ by 
\cite{chantraine_cocores_generate, ganatra_generation}. 
\begin{corollary}\label{cor: non_injective_TSn}
	For $n \ge 3, s \ge 1$, there is a Legendrian sphere $\Lambda_s \subset (S^{2n-1}, \xi_{std})$ such that $B^{2n}_{std} \cup H^n_{\Lambda_s}$ is Weinstein homotopic to  $T^*S^n_{std} = B^{2n}_{std} \cup H^n_{\Lambda_{u}}$ and the  co-core $D_s^n$ of $H^n_{\Lambda_s}$ is 
	$\natural^{s}_{i=1} T^*_{x_i} S^n \natural^{s-1}_{i=1} \overline{T^*_{y_i} S^n}$. The  $\Lambda_s$ are formally Legendrian isotopic but not Legendrian isotopic for different $s$ and the Chekanov-Eliashberg DGA $CE(\Lambda_s)$ has an ungraded $(2s-1)$-dimensional representation but no finite-dimensional graded representations for $s > 1$.
\end{corollary}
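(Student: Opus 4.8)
The plan is to produce $\Lambda_s$ by feeding a suitable presentation of $T^*S^n_{std}$ into the flexible-complement machinery of Theorem \ref{thm: flexible_complement}, and then to read off its Chekanov--Eliashberg invariants from the co-core via the surgery formula. First I would start from the standard presentation $T^*S^n_{std} = B^{2n}_{std} \cup H^n_{\Lambda_u}$, whose single co-core is the cotangent fiber $T = T^*_p S^n$, and apply Weinstein homotopies that create cancelling $(n-1,n)$-handle pairs (as in the remark following Theorem \ref{thm: flexible_complement}, which replaces a co-core by itself together with a parallel pushoff) until the index-$n$ co-cores are $2s-1$ parallel cotangent fibers. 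Orienting $s$ of them as $T^*_{x_i} S^n$ and the remaining $s-1$ as $\overline{T^*_{y_i} S^n}$, their iterated boundary connected sum is exactly $D_s^n$; by Example \ref{example: cotangent_generators} its class $[D_s^n] = s-(s-1) = 1$ generates $H^n(T^*S^n;\mathbb{Z}) \cong \mathbb{Z}$. Since $\pi_1(T^*S^n) = 0$, Theorem \ref{thm: flexible_complement} then yields a presentation with a single index-$n$ handle $H^n_{\Lambda_s}$ of co-core $D_s^n$ and flexible complement $V := T^*S^n \setminus D_s^n$.

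Next I would identify $V$ with $B^{2n}_{std}$. As $T^*S^n = V \cup H^n_{\Lambda_s}$ and $T^*S^n \simeq S^n$, the subdomain $V$ is a simply connected flexible homotopy $2n$-ball, so for $n \ge 3$ the h-cobordism theorem together with the uniqueness of flexible Weinstein balls \cite{CE12} gives $V = B^{2n}_{std}$; hence $\Lambda_s \subset (S^{2n-1}, \xi_{std})$ and $B^{2n}_{std} \cup H^n_{\Lambda_s}$ is Weinstein homotopic to $T^*S^n_{std}$. The $\Lambda_s$ are formally Legendrian isotopic to one another because handle-slides and the creation and cancellation of flexible handle pairs preserve the formal class, which is that of $\Lambda_u$; compare the surjectivity of $\mathcal{H}_{crit}$ in \cite{Lazarev_critical_points}.

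To analyse representations I would invoke the Legendrian surgery formula \cite{BEE12, Ekholm_surgery} to identify $CE(\Lambda_s)$ with the $A_\infty$-endomorphism algebra $CW(D_s^n, D_s^n)$ of its co-core. By Example \ref{example: cotangent_generators} we have $D_s^n \cong T^{\oplus s} \oplus T[1]^{\oplus(s-1)}$ in $\mathcal{W}(T^*S^n)$, so $CW(D_s^n, D_s^n) \simeq \operatorname{End}(T^{\oplus s} \oplus T[1]^{\oplus(s-1)})$. Since $T$ generates, every finite-dimensional representation is of the form $M = \operatorname{Hom}(D_s^n, N)$ for a proper object $N$, and splitting along the $2s-1$ summand idempotents gives $M \cong V^{\oplus s} \oplus (V[-1])^{\oplus(s-1)}$ as graded vector spaces, with $V := \operatorname{Hom}(T, N)$; thus every representation dimension is a multiple of $2s-1$. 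Taking $N = S^n$ the zero section gives $V = CF(T^*_p S^n, S^n) = \mathbb{K}$ (one transverse intersection point, zero differential), producing a genuine ungraded representation of dimension $2s-1$. The minimality of $2s-1$ is a Legendrian invariant that distinguishes the $\Lambda_s$, so they are pairwise non-isotopic.

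The grading obstruction finishes the proof: a graded representation would force $M = V^{\oplus s} \oplus (V[-1])^{\oplus(s-1)}$ to be concentrated in a single degree, and since the shift between the two blocks is nonzero this requires $V$ to lie in two distinct degrees at once; for $s > 1$ both blocks are present, so $V = 0$ and $M = 0$, whereas for $s = 1$ the augmentation of $CW(T,T) = C_{-*}(\Omega S^n)$ supplies a graded representation. I expect the main obstacle to be the homological-algebra step in the third paragraph --- verifying that every finite-dimensional representation of the non-formal algebra $CW(D_s^n, D_s^n)$ genuinely arises as $\operatorname{Hom}(D_s^n, N)$ and splits as $V^{\oplus s} \oplus (V[-1])^{\oplus(s-1)}$ with the stated nonzero shift --- since the geometric inputs are by contrast direct applications of Theorem \ref{thm: flexible_complement} and the surgery formula.
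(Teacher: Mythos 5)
Your geometric half is the paper's argument almost verbatim: create $2s-1$ parallel cotangent-fiber co-cores by a Weinstein homotopy (the paper does this via a homotopy of Morse functions on $S^n$ rather than explicit cancelling pairs, but this is cosmetic), check $[D_s^n]=[T^*_xS^n]$ generates $H^n(T^*S^n;\mathbb{Z})$, apply Theorem \ref{thm: flexible_complement}, and identify the flexible complement with $B^{2n}_{std}$ by the h-cobordism theorem plus uniqueness of the flexible structure. Two caveats there: first, your formal-isotopy claim ("handle-slides and flexible handle creation/cancellation preserve the formal class") is only a sketch; the paper argues invariantly, noting that formal Legendrian spheres in $(S^{2n-1},\xi_{std})$ are classified by the Thurston--Bennequin invariant and rotation class, both of which are forced by the fact that attaching a handle yields $T^*S^n$ (self-intersection of $[S^n]$ and the clutching invariant of the bundle), so all $\Lambda_s$ are automatically formally isotopic without tracking any homotopy.

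The algebraic half has the same dimension-counting core as the paper, but the step you organize via Morita theory is precisely where your statement, as written, fails: it is not true (nor needed) that every finite-dimensional representation of $CW(D_s^n,D_s^n)$ is of the form $\operatorname{Hom}(D_s^n,N)$ for a \emph{proper geometric object} $N$ --- finite-dimensional modules over this $A_\infty$-algebra need not be representable in $\mathcal{W}(T^*S^n)$. The paper's repair is the one you should adopt: a representation $CW(D_s^n,D_s^n)\to \operatorname{Mat}(m,\mathbb{K})$ induces a functor $Tw^\pi D_s^n \to \mathbb{K}\text{-mod}$ sending $D_s^n$ to $\mathbb{K}^m$; since $T^*_xS^n$ is split-generated by $D_s^n$, let $P$ be its image (an arbitrary complex, not a geometric object), deduce $\mathbb{K}^m \simeq \oplus^s P\oplus^{s-1}P[-1]$, and run your dimension count allowing $H^*(P)$ to be spread over several degrees: for a graded representation and $s>1$ the count forces $H^*(P)=0$, hence $m=0$, exactly your "two blocks" contradiction. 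The same trick, run ungradedly, gives $m=(2s-1)\dim H^*(P)$ and hence your divisibility claim, so your minimal-ungraded-dimension invariant does distinguish the $\Lambda_s$; the paper instead rules out any $A_\infty$-map $CW(D_s,D_s)\to CW(D_t,D_t)$ for $t<s$ by the same count, which is marginally more direct but equivalent in content. Your existence statement (the $(2s-1)$-dimensional ungraded representation from the zero-section, via $CW(S^n,T^*_xS^n)\cong\mathbb{K}$) matches the paper's Yoneda-plus-forgetful construction. In short: the proposal is correct in outline and follows the paper's architecture; the single genuine soft spot is the representability assertion you yourself flagged, and it is fixed by the induced-functor argument rather than by classifying all modules.
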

\begin{proof}
	We first show  that $T^*S^n_{std} = B^{2n}_{std} \cup H^n_{\Lambda_{u}}$ is Weinstein homotopic to a Weinstein structure with $2s-1$ index $n$ co-cores $T^*_{x_1} S^n, \cdots, T^*_{x_{s}} S^n, 
	T^*_{y_{1}} S^n, \cdots,  T^*_{y_{s-1}} S^n$. To see this, note that there is a homotopy of smooth Morse functions on $S^n$ from the standard Morse function with one critical point of index $n$ to a Morse function with $2s-1$ critical points of index $n$  at $x_1, \cdots, x_{s}, y_1, \cdots, y_{s-1}$	(and $2s-2$ critical points of index $n-1$). This Morse homotopy induces a Weinstein homotopy to the desired Weinstein presentation.
	Let $D_s^n := T^*_{x_1} S^n \natural \cdots \natural T^*_{x_{s}} S^n \natural \overline{T^*_{y_{1}} S^n} \natural \cdots \natural \overline{T^*_{y_{s-1}} S^n}$; 
	here we pick an orientation of one cotangent fiber $T^*_{x_1} S^n$ and use that to orient all other nearby fibers $T^*_{x_2} S^n, \cdots, T^*_{x_s} S^n, T^*_{y_1} S^n, \cdots, T^*_{y_{s-1}} S^n$ and take   
$	\overline{ T^*_{y_1} S^n}, \cdots, \overline{T^*_{y_{s-1}} S^n}$ to be the opposite orientation. Note that $[D_s^n] = 
 \sum_{i=1}^s [T^*_{x_i }S^n] + 
  \sum_{i=1}^{s-1} [\overline{T^*_{y_i }S^n}]
  = (s- (s-1)) [T^*_x S^n]
  = [T^*_x S^n] \in H^n(T^*S^n; \mathbb{Z})$ (viewed as a cohomology class by Poincar\'e- duality) by our choice of orientations.
	
	 Next by Theorem \ref{thm: flexible_complement}, $T^*S^n_{std} \backslash D^n_s$  is flexible, i.e. $T^*S^n_{std}$ is Weinstein homotopic to $V_{flex}^{2n} \cup H^n_{\Lambda_s}$ for some flexible domain $V_{flex}^{2n}$ and  the co-core of $H^n_{\Lambda_s}$ is $D_s^n$. 	
	Since 
	$[D^n_s] = [T^*_x S^n] \in H^n(T^*S^n_{std}; \mathbb{Z})$ and $D^n_s$ is a disk, 
	we  have $H^*(V_{flex}; \mathbb{Z}) = 
	H^*(T^*S^n \backslash D_s^n; \mathbb{Z})
	\cong H^*(T^*S^n \backslash T^*_x S^n; \mathbb{Z}) = H^*(B^{2n}; \mathbb{Z})$, where the middle isomorphism comes from the cohomology long exact sequence of the 
	pair $(T^*S^n, D_s^n)$.
	 Since $n \ge 3$, $V_{flex}^{2n}$ is also simply-connected and therefore is diffeomorphic to $B^{2n}$ by the h-cobordism theorem. There is a unique almost symplectic structure on $B^{2n}$ and so $V_{flex}$ is almost symplectomorphic to $B^{2n}_{std}$. Since $V_{flex}$ is flexible, it is actually Weinstein homotopic to $B^{2n}_{std}$. 
   Therefore there is a Legendrian sphere $\Lambda_s \subset (S^{2n-1}, \xi_{std}) = \partial B^{2n}_{std}$ such that 
	 $B^{2n}_{std} \cup H^n_{\Lambda_s} = T^*S^n_{std}$ and the co-core of $C^n_{\Lambda_s}$ is $D^n_s$, as desired. 
Note that $\Lambda_1$ coincides with the Legendrian unknot $\Lambda_u$.

By the surgery formula \cite{BEE12, Ekholm_surgery}, the Chekanov-Eliashberg DGA $CE(\Lambda_s)$ is quasi-isomorphic to wrapped Floer cochains $CW(D_s, D_s)$ of the co-core $D_s$ of $\Lambda_s$. So it suffices to prove the claims for $CW(D_s, D_s)$ instead of $CE(\Lambda_s)$. First, we show that $CW(D_s^n, D_s^n)$ has an ungraded $(2s-1)$-dimensional representation. Since $CW(S^n, S^n) \cong \mathbb{K} \oplus \mathbb{K}[n]$, there is a Yoneda functor of $A_\infty$-categories $Hom(S^n, \_ ): \mathcal{W}(T^*S^n) \rightarrow \mathbb{K}\oplus \mathbb{K}[n]-mod$. The unit gives a ($A_\infty$-)map $\mathbb{K} \rightarrow \mathbb{K} \oplus \mathbb{K}[n]$ and hence induces a forgetful functor 
$\mathbb{K}\oplus \mathbb{K}[n]-mod
\rightarrow \mathbb{K}-mod$. 
Since $CW(S^n, T^*_x S^n) \cong \mathbb{K}$ and $D_s^n \cong \oplus^{s}T^*_ x S^n \oplus^{s-1} T^*_x S^n [-1]$, then the composition of these two functors
$\mathcal{W}(T^*S^n) \rightarrow \mathbb{K}-mod$ sends $D_s^n$ to $\oplus^{s} \mathbb{K}\oplus^{s-1} \mathbb{K}[-1]$
 and hence induces an $A_\infty$-map
	$CW(D^n_s, D^n_s) \rightarrow End_{\mathbb{K}}(\oplus^s \mathbb{K} \oplus^{s-1} \mathbb{K}[-1])$. The latter is a graded matrix algebra so that entry $(i, j)$ for $1 \le i,j \le 2s-1$ has degree 0 if $i, j \le s$ or $i, j > s$, degree 1 if $i \le s, j > s$ and degree -1 if $i > s, j \le s$. So $CW(D_s^n, D_s^n)$ has an $A_\infty$-map to a graded matrix algebra of size $2s-1$; however this is not a representation since the matrix algebra is not supported in degree zero. However by forgetting the grading on the matrix algebra, we get an ungraded $(2s-1)$-dimensional representation of $CW(D_s^n, D_s^n)$ as desired.

	However, we claim that $CW(D^n_s, D^n_s)$ has no finite-dimensional representations in the usual sense for $s > 1$, i.e. $A_\infty$-maps to the matrix algebra $\mbox{Mat}(m, \mathbb{K})$ supported in degree zero. 
	Indeed, this would induce an $A_\infty$- functor $D^n_s \rightarrow \mathbb{K}^m$
	(between categories with one object) 	
	 and hence a functor
on perfect complexes	$Tw^\pi D^n_s \rightarrow Tw^\pi \mathbb{K}^m \rightarrow \mathbb{K}-mod$, i.e. the split-closure of twisted complexes, taking $D^n_s$ to $\mathbb{K}^m$. On the other hand, $T^*_x S^n$ is split-generated by $D^n_s$ 
and hence is an object of $Tw^\pi D^n_s$. Suppose that 
	$T^*_x S^n$ is taken to $P$ by this functor. Since $D^n_s$ is quasi-isomorphic to $\oplus^s T^*_x S^n \oplus^{s-1} T^*_{x} S^n[-1]$,   $\mathbb{K}^m$ is quasi-isomorphic to $\oplus^s P \oplus^{s-1} P[-1]$ in $\mathbb{K}-mod$. Then 
	$\dim_\mathbb{K} H^*(\oplus^s P \oplus^{s-1} P[-1])
	=  s\dim_\mathbb{K}  H^*(P) + (s-1) \dim_\mathbb{K}  H^{*+1}(P) = 
	\dim_\mathbb{K} H^*(\mathbb{K}^m)$, which is $m$ if $* = 0$ and $0$ otherwise. So either $\dim_{\mathbb{K}}H^0(P)$ or $\dim_{\mathbb{K}}H^1(P)$ are non-zero which for $s >1$ implies that either  $\dim_{\mathbb{K}} H^{*}(\oplus^s P \oplus^{s-1} P[-1])$ is non-zero for either $* = 1$ or $-1$, a contradiction. 
	Therefore  there are no finite-dimensional representations for $s > 1$. 
	
	Finally, we show that  $CW(D_s, D_s)$ are not quasi-isomorphic for different $s$, which shows that $\Lambda_s$ are not Legendrian isotopic.  In fact, we show that there is no $A_\infty$-map $CW(D_s, D_s) \rightarrow CW(D_t, D_t)$ for $t < s$. Such a map would induce a functor 
	$Tw \ D_s \rightarrow Tw \ D_t \rightarrow \mathbb{K}-mod$ taking $D_s$ to  $\oplus^t \mathbb{K} \oplus^{t-1} \mathbb{K}[-1]$. Since $T^*_x S^n$ is an object of $Tw \ D_s$, it is sent to some object $P$ of $\mathbb{K}-mod$. As before, $D_s$ is quasi-isomorphic to $\oplus^s T^*_x S^n \oplus^{s-1} T^*_x S^n[-1]$ and so 	$\oplus^t \mathbb{K} \oplus^{t-1} \mathbb{K}[-1]$ is quasi-isomorphic to $\oplus^s P \oplus^{s-1} P[-1]$, which is impossible if $t < s$. Indeed, $\dim_\mathbb{K} H^0(\oplus^t \mathbb{K} \oplus^{t-1} \mathbb{K}[-1]) = t$ while 
	$\dim_\mathbb{K} H^0(\oplus^s P \oplus^{s-1} P[-1]) 
	= s \dim_\mathbb{K} H^0(P) + (s-1)\dim_\mathbb{K} H^1(P)$. This implies that $\dim_\mathbb{K} H^0(P) = 0$ and $s-1 = t$ and $\dim_\mathbb{K} H^1(P) = 1$. But then $\dim_\mathbb{K} H^1(\oplus^t \mathbb{K} \oplus^{t-1} \mathbb{K}[-1]) = 0$ while 
	$\dim_\mathbb{K} H^1(\oplus^s P \oplus^{s-1} P[-1])  = s\dim_\mathbb{K} H^1(P) + (s-1)\dim H^2(P) \ge s$, which is a contradiction. Alternatively, $\Lambda_s$ are not Legendrian isotopic since $D_s$ are not quasi-isomorphic since $WH(S^n, D_s) \cong \mathbb{K}^s \oplus \mathbb{K}^{s-1}$, which have different dimensions for different $s$. 
	
Even though $\Lambda_s$ are not Legendrian isotopic for different $s$, they are all
\textit{formally} Legendrian isotopic. This is because the only formal Legendrian invariants of Legendrian spheres $\Lambda\subset (S^{2n-1},\xi_{std})$ are the rotation invariant $rot(\Lambda) \in \pi_{n-1}U(n)$, and the Thurston-Bennequin invariant $tb(\Lambda) \in \mathbb{Z}$. 	
The $tb(\Lambda)$ is the linking number between $\Lambda$ and a small Reeb push-off.
Since $\Lambda_s \subset \partial B^{2n}_{std}$ is nullhomologous, when we attach a handle, we get the homology class $[S^n] \in H_n(T^*S^n; \mathbb{Z})$. Hence $tb(\Lambda)$ equals the self-intersection number of $[S^n]$, which is $\chi(S^n)$. 
The rotation invariant is precisely the clutching invariant of the bundle $T^*S^n$ over $S^n$. Therefore, both invariants are determined by the fact that we get $T^*S^n$ after attaching a handle to $\Lambda_s$. Hence all $\Lambda_s$ are formally Legendrian isotopic. 
\end{proof}

Now we discuss the Weinstein homotopy in Corollary \ref{cor: non_injective_TSn} in terms of attaching spheres and explain how to obtain an explicit description of $\Lambda_s$.
Figure \ref{fig: exotic_presentation} depicts the attaching spheres of each stage of the homotopy from $T^*S^n_{std} = B^{2n}_{std} \cup H^n_{\Lambda_{u}}$ to $T^*S^n_{std} = B^{2n}_{std} \cup H^n_{\Lambda_s}$ for the case $s = 2$, i.e. the co-core of $H^n_{\Lambda_2}$ is $T^*_{x_1} S^n \natural T^*_{x_2} S^n \natural \overline{T^*_{y_1} S^n}$. We note that the Weinstein homotopy in Figure \ref{fig: exotic_presentation} is slightly different than the homotopy in the proof of Corollary \ref{cor: non_injective_TSn} since it requires only one index $n-1$ handle instead of several; the main step of doing a  Reidemeister twist and handle-slide is still the same.

\begin{figure}
	\centering
	\includegraphics[scale=0.34]{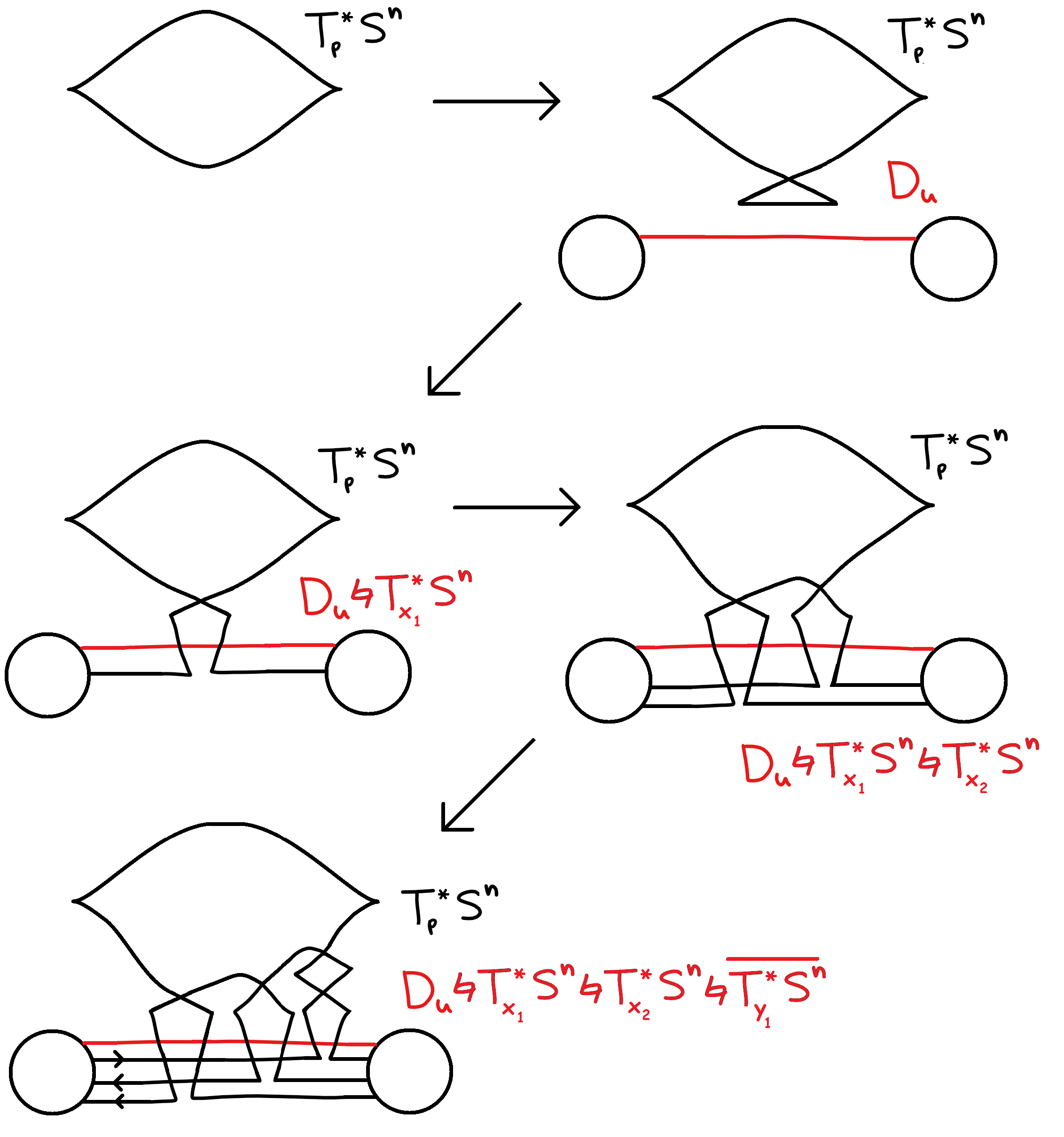}
	\caption{Weinstein homotopy from the standard presentation $T^*S^n_{std} = B^{2n}_{std} \cup H^n_{\Lambda_u}$, which has co-core $T^*_x S^n$, 	
		to a presentation $B^{2n}_{std} \cup H^n_{\Lambda_2}$, which has co-core 
		$D^n_{u} \natural T^*_{x_1} S^n \natural T^*_{x_2} S^n \natural \overline{T^*_{y_1} S^n}$; here $\Lambda$ is the red Legendrian in the last diagram. The co-cores are modified at each stage according to Proposition \ref{prop: handleslide_reidemeister}
	}
	\label{fig: exotic_presentation}
\end{figure}

 The first diagram in Figure \ref{fig: exotic_presentation} starts with the standard presentation of $T^*S^n_{std}$ as $B^{2n}_{std} \cup H^n_{\Lambda_{u}}$ with attaching sphere the Legendrian unknot $\Lambda_u$ and Lagrangian co-core $T^*_x S^n$. 
  In the second diagram, we introduce a cancelling pair of index $n, n-1$ handles $H^{n-1}, H^n_{\Lambda_{loose}}$, where $\Lambda_{loose}$ is the Legendrian in red;  the co-core of the latter handle is the Lagrangian unknot disk $D_u^n$. 
Then we add a Reidemeister twist to $\Lambda_{u}$ and handle-slide $H^n_{\Lambda_{u}}$ over $H^n_{\Lambda_{loose}}$. The third diagram of Figure \ref{fig: exotic_presentation} shows the resulting presentation, which has attaching spheres $h_{\Lambda_{loose}}R(\Lambda_{u} ), \Lambda_{loose}$ and co-cores (that are isotopic to) $T^*_x S^n,   D^n_{u} \natural T^*_{x_1}S^n$ by Proposition \ref{prop: handleslide_reidemeister}  since the linking disk of $\Lambda_u$ is isotopic to $T^*_{x_1} S^n$ after handle-attachment. 

 We repeat this procedure by again doing a Reidemeister twist to $h_{\Lambda_{loose}}R(\Lambda_{u})$ and then handle-sliding $H^n_{Rh_{\Lambda_{loose}}R(\Lambda_{u})}$ over $H^n_{\Lambda_{loose}}$. The fourth diagram in
Figure \ref{fig: exotic_presentation} shows the resulting presentation, which has attaching spheres $h_{\Lambda_{loose}}R h_{\Lambda_{loose}} R(\Lambda_{u}), \Lambda_{loose}$ and co-cores $T^*_x S^n,  D^n_u \natural T^*_{x_1} S^n\natural T^*_{x_2} S^n$. Finally, we do this procedure again, this time adding  \textit{two} Reidemeister twists to $h_{\Lambda_{loose}} Rh_{\Lambda_{loose}}R(\Lambda_{u})$ and handle-sliding this handle over $H^n_{\Lambda_{loose}}$. The resulting presentation, shown in the fifth diagram, has attaching spheres $h_{\Lambda_{loose}}RRh_{\Lambda_{loose}} Rh_{\Lambda_{loose}}R(\Lambda_{u}), 
\Lambda_{loose}$ and 
 co-cores $T^*_x S^n, D_u^n \natural T^*_{x_1} S^n \natural T^*_{x_2} S^n \natural \overline{T^*_{y_1} S^n}$. The last cotangent fiber has the opposite orientation since adding two Reideimeister twists has the effect of picking a different isotropic arc. 
Since taking the boundary connected sum with $D_u$ does not change the Lagrangian isotopy class, the co-core of $H^n_{\Lambda_{loose}}$ is just   $T^*_{x_1} S^n \natural T^*_{x_2} S^n \natural \overline{T^*_{y_1} S^n}$ as desired. 

Note that $h_{\Lambda_{loose}}RRh_{\Lambda_{loose}} Rh_{\Lambda_{loose}}R(\Lambda_{unknot})$ is a loose Legendrian and intersects the belt sphere of $H^{n-1}$ algebraically 1 time for $n$ even; see the arrows on this Legendrian in the last diagram on Figure \ref{fig: exotic_presentation}. The $n$ odd case requires a slight modification of this Legendrian to make the algebraic intersection number one; see the proof of Theorem 1.5 of \cite{Lazarev_critical_points}. 
Hence 
$h_{\Lambda_{loose}}RRh_{\Lambda_{loose}} Rh_{\Lambda_{loose}}R(\Lambda_{u})$ is Legendrian isotopic to a ``cancelling" Legendrian that intersects the belt sphere of $H^{n-1}$ exactly once. So the Weinstein domain $B^{2n}_{std} \cup H^{n-1} \cup H^n_{h_{\Lambda_{loose}}RRh_{\Lambda_{loose}} Rh_{\Lambda_{loose}}R(\Lambda_{u})}$
is Weinstein homotopic to $B^{2n}_{std}$. 
Since $\Lambda_{loose}$ is in the boundary of a Weinstein domain that is Weinstein homotopic to $B^{2n}_{std}$ and has co-core $T^*_{x_1} S^n \natural T^*_{x_2} S^n \natural \overline{T^*_{y_1} S^n}$, the Legendrian $\Lambda_2 \subset (S^{2n-1}, \xi_{std}) = \partial B^{2n}_{std}$ is the image of $\Lambda_{loose}$ under the Weinstein homotopy from $B^{2n}_{std} \cup H^{n-1} \cup H^n_{h_{\Lambda_{loose}}RRh_{\Lambda_{loose}} Rh_{\Lambda_{loose}}R(\Lambda_{u})}$ to 
$B^{2n}_{std}$. 
So get an explicit description of $\Lambda_2$, we just need to follow where $\Lambda_{loose}$ goes during this homotopy, which isotopes $h_{\Lambda_{loose}}RRh_{\Lambda_{loose}} Rh_{\Lambda_{loose}}R(\Lambda_{u})$ to a cancelling Legendrian and then handle-slide $\Lambda_{loose}$ over this cancelling Legendrian off of $H^{n-1}$.
For general $s >1$, we just do more handle-slides of $H^n_{\Lambda_{u}}$ over $H^n_{\Lambda_{loose}}$ and get a similar diagram as in Figure \ref{fig: exotic_presentation}.
\\

Corollary \ref{cor: non_injective_TSn} is a strictly high-dimensional result: its proof relies on symplectic flexibility and the smooth h-cobordism theorem. For $n =2$, $T^*S^2 \backslash D_s^2$ is a homology ball but may not be diffeomorphic to $B^4$. However the $n =2$ case of Theorem \ref{thm: flexible_complement} shows that this domain is stabilized, which proves the following result.  
 \begin{corollary}\label{cor: non_injective_dimension4}
There is a stabilized Weinstein homology ball $W^4_{stab,s}$ and a Legendrian circle $\Lambda_s
\subset \partial W^4_{stab,s}$ so that 
$W^4_{stab,s} \cup H^2_{\Lambda_s}$ is Weinstein homotopic to $T^*S^2_{std}= B^{4}_{std} \cup H^2_{\Lambda_u}$ and the co-core of  $H^2_{\Lambda_s}$ is 
 	$\natural^s_{i=1} T^*_{x_i} S^n \natural^{s-1}_{i=1} \overline{T^*_{y_i} S^n} \subset T^*S^2_{std}$.
 \end{corollary}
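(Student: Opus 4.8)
The plan is to run the proof of Corollary \ref{cor: non_injective_TSn} essentially verbatim in dimension four, stopping at precisely the point where the higher-dimensional argument invokes the smooth h-cobordism theorem. First I would begin with the standard presentation $T^*S^2_{std} = B^4_{std} \cup H^2_{\Lambda_u}$ and use a homotopy of smooth Morse functions on $S^2$ from the standard one (a single maximum) to one with $2s-1$ critical points of index two, located at $x_1, \dots, x_s, y_1, \dots, y_{s-1}$, together with $2s-2$ critical points of index one; the Euler characteristic count $(2s-1)-(2s-2)+1 = 2$ confirms such a Morse function exists. This Morse homotopy induces a Weinstein homotopy to a presentation of $T^*S^2_{std}$ whose index-two co-cores are the $2s-1$ cotangent fibers $T^*_{x_i} S^2$ and $T^*_{y_i} S^2$.

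Next I would form the disk $D_s^2 := \natural_{i=1}^s T^*_{x_i} S^2 \natural_{i=1}^{s-1} \overline{T^*_{y_i} S^2}$, orienting the fibers exactly as in the proof of Corollary \ref{cor: non_injective_TSn}, so that by the choice of orientations $[D_s^2] = (s-(s-1))[T^*_x S^2] = [T^*_x S^2] \in H^2(T^*S^2; \mathbb{Z})$. Applying the $n=2$ case of Theorem \ref{thm: flexible_complement}, the complement $W^4_{stab,s} := T^*S^2_{std} \backslash D_s^2$ is \emph{stabilized} (rather than flexible), and $T^*S^2_{std}$ is Weinstein homotopic to $W^4_{stab,s} \cup H^2_{\Lambda_s}$, where $\Lambda_s \subset \partial W^4_{stab,s}$ is the attaching circle of the remaining index-two handle and the co-core of $H^2_{\Lambda_s}$ is $D_s^2$. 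Finally, since $D_s^2$ is a disk with $[D_s^2] = [T^*_x S^2]$, the cohomology long exact sequence of the pair $(T^*S^2, D_s^2)$ yields $H^*(W^4_{stab,s}; \mathbb{Z}) \cong H^*(T^*S^2 \backslash T^*_x S^2; \mathbb{Z}) = H^*(B^4; \mathbb{Z})$, so $W^4_{stab,s}$ is a homology ball, completing the proof.

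The main point, and the place where the four-dimensional argument necessarily diverges, is that one cannot upgrade ``stabilized homology ball'' to ``standard ball.'' In dimensions $2n \ge 6$ the proof of Corollary \ref{cor: non_injective_TSn} uses the smooth h-cobordism theorem to conclude $V_{flex}$ is diffeomorphic to $B^{2n}$, and then flexibility forces $V_{flex}$ to be Weinstein homotopic to $B^{2n}_{std}$. Neither step is available here: the smooth h-cobordism theorem fails in dimension four, so $W^4_{stab,s}$ need not be diffeomorphic to $B^4$, and Theorem \ref{thm: flexible_complement} only produces a stabilized rather than flexible complement, so there is no flexibility-based uniqueness to exploit. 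The corollary is therefore deliberately stated in this weaker form, and the proof is exactly the high-dimensional argument truncated before these two unavailable steps.
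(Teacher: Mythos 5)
Your proposal is correct and follows essentially the same route as the paper: the paper likewise runs the construction of Corollary \ref{cor: non_injective_TSn} in dimension four, invokes the $n=2$ case of Theorem \ref{thm: flexible_complement} (where the complement is only stabilized rather than flexible), and observes that $T^*S^2_{std}\backslash D^2_s$ is a homology ball, stopping exactly where the smooth h-cobordism theorem would be needed. Your added details (the Euler characteristic check for the Morse function and the long exact sequence argument for the homology ball claim) are consistent with the paper's implicit reasoning.
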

Since the subdomains domains $W_{stab,s} \subset T^*S^2_{std}$ may not even be diffeomorphic, it does not make sense to compare the Legendrians $\Lambda_s \subset \partial W_{stab,s}$.
The direct 4-dimensional analog of Corollary \ref{cor: non_injective_TSn} (where the subdomain is actually $B^{4}_{std}$) is false. 
\begin{theorem}\label{thm: unique_legendrian_unknot2}
	If  $B^4_{std} \cup H^2_\Lambda$ is Weinstein homotopic to $T^*S^2_{std}$, then $\Lambda \subset (S^3, \xi_{std})$ is Legendrian isotopic to the Legendrian unknot and the co-core of $H^2_{\Lambda}$ is isotopic to $T^*_x S^2 \subset T^*S^2_{std}$.
\end{theorem}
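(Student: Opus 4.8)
The plan is to reduce the statement to showing that $\Lambda$ is Legendrian isotopic to the standard Legendrian unknot $\Lambda_u \subset (S^3,\xi_{std})$; the claim about the co-core then follows formally. Indeed, if $\Lambda$ is Legendrian isotopic to $\Lambda_u$, the Legendrian isotopy induces a Weinstein homotopy from $B^4_{std}\cup H^2_\Lambda$ to $B^4_{std}\cup H^2_{\Lambda_u} = T^*S^2_{std}$ carrying the co-core of $H^2_\Lambda$ to the co-core of $H^2_{\Lambda_u}$, namely the cotangent fiber $T^*_x S^2$, up to Lagrangian isotopy. So it suffices to pin down the Legendrian isotopy type of $\Lambda$.

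First I would extract the classical invariants. Since $X = B^4_{std}\cup H^2_\Lambda$ is diffeomorphic to $T^*S^2$, the group $H_2(X;\Z)\cong\Z$ is generated by the class of the core of $H^2_\Lambda$ capped with a Seifert surface in $B^4_{std}$, whose self-intersection is the attaching framing $tb(\Lambda)-1$. As the generator of $H_2(T^*S^2;\Z)$ has square $-\chi(S^2) = -2$, this forces $tb(\Lambda) = -1$. For the rotation number I would use the Lagrangian sphere supplied by the hypothesis: since $X$ is Weinstein homotopic to $T^*S^2_{std}$, it contains an exact Lagrangian sphere $L$ (the image of the zero-section) generating $H_2$ with $[L]^2 = -2$. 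Putting $L$ in handle position, so that it meets $H^2_\Lambda$ in its core disk and $B^4_{std}$ in an exact Lagrangian disk $D$ with $\partial D$ Legendrian isotopic to $\Lambda$, exhibits $\Lambda$ as the boundary of a Lagrangian disk filling in $(B^4_{std},\omega_{std})$. Orientability of this filling forces $rot(\Lambda) = 0$, and its existence shows that the underlying smooth knot $K$ is smoothly slice, i.e. $g_4(K) = 0$; together with $tb(\Lambda) = -1$ this is consistent with the slice--Bennequin inequality and re-confirms $rot(\Lambda) = 0$.

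The hard part will be upgrading ``smoothly slice'' to ``smoothly unknotted'': this is the genuinely four-dimensional input, and it is exactly where the analogue of the high-dimensional non-uniqueness result (Corollary \ref{cor: non_injective_TSn}, which relied on flexibility and the smooth $h$-cobordism theorem) breaks down. Two complementary routes seem available. One works on the boundary: the diffeomorphism $X\cong T^*S^2$ identifies $\partial X = S^3_{-2}(K)$ with $(\mathbb{RP}^3,\xi_{std}) = S^3_{-2}(U)$, and the classification of knots admitting an $\mathbb{RP}^3 = L(2,1)$ surgery (via the Heegaard Floer and monopole lens-space-surgery obstructions), combined with sliceness, should force $K = U$. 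The second, more symplectic, route is to prove directly that the Lagrangian disk filling $D$ forces $\Lambda$ to be the unknot --- a Lagrangian-disk rigidity statement in $B^4_{std}$ in the spirit of Hind's uniqueness of Lagrangian spheres in $T^*S^2$ and McDuff's uniqueness of fillings of $(\mathbb{RP}^3,\xi_{std})$, obtained by sweeping $D$ by holomorphic disks and reading off an unknotting of $\partial D$. I expect this symplectic route to be the cleaner one and would pursue it first, using it to conclude $K = U$ while simultaneously identifying $D$ with the flat Lagrangian disk.

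Finally, once $K$ is the unknot with $tb(\Lambda) = -1$ and $rot(\Lambda) = 0$, I would invoke the Eliashberg--Fraser classification of Legendrian unknots in $(S^3,\xi_{std})$, which are determined up to Legendrian isotopy by the pair $(tb, rot)$ and whose maximal-$tb$ representative has $(tb, rot) = (-1,0)$. This identifies $\Lambda$ with $\Lambda_u$ and, by the reduction of the first paragraph, identifies the co-core of $H^2_\Lambda$ with $T^*_x S^2$, completing the proof.
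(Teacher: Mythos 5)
Your first route on the boundary is exactly the paper's proof, and it is complete: the paper notes $\partial(B^4_{std}\cup H^2_\Lambda) \cong \partial T^*S^2 = \mathbb{RP}^3$, invokes the monopole lens-space-surgery theorem (its citation \cite{Kronheimer_unknot}) to conclude $\Lambda$ is smoothly the unknot, extracts $tb(\Lambda) = -1$ from the self-intersection $-2$ of the zero-section exactly as you do, and finishes with the Eliashberg--Fraser classification, since every stabilization of the Legendrian unknot has $tb < -1$; your deduction of the co-core statement from the Legendrian isotopy is also the paper's. One small simplification: ``combined with sliceness'' is unnecessary in this route --- the Kronheimer--Mrowka--Ozsv\'ath--Szab\'o theorem already says that no surgery on a nontrivial knot yields $\mathbb{RP}^3$, so the surgery description alone forces $K = U$. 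Note also that this route uses only the \emph{diffeomorphism} type of $B^4_{std}\cup H^2_\Lambda$, which is why the paper can remark that the Weinstein-homotopy hypothesis can be weakened to a diffeomorphism.

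Your preferred route, however, has a genuine gap. The Lagrangian-disk rigidity statement you hope to prove --- that an exact Lagrangian disk filling $D \subset B^4_{std}$ of $\Lambda$ forces $\partial D$ to be unknotted --- is false as stated: there exist Lagrangian slice knots, e.g.\ $m(9_{46})$ (Cornwell--Ng--Sivek), which bound exact Lagrangian disks in $B^4_{std}$, necessarily with $tb = -1$ and $rot = 0$, yet are not unknots. So no amount of sweeping $D$ by holomorphic disks can unknot $\partial D$ without feeding in the global hypothesis that attaching $H^2_\Lambda$ yields $T^*S^2$ --- and that hypothesis, translated to the boundary, is precisely the surgery information $S^3_{-2}(K) = \mathbb{RP}^3$ that your first route (and the paper) uses. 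Separately, your intermediate step of putting the exact Lagrangian sphere $L$ in ``handle position'' (core of $H^2_\Lambda$ union a Lagrangian disk in $B^4_{std}$) is asserted but not justified: a priori $L$ meets the co-core once algebraically but possibly many times geometrically, and making $L$ standard inside the handle requires an argument. Fortunately this step is inessential, since $rot(\Lambda)$ is never needed: $tb(\Lambda) = -1$ alone rules out all stabilizations in the Eliashberg--Fraser classification.
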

\begin{proof}
Note that	$\partial T^* S^2$ is $\mathbb{RP}^3$. Then by \cite{Kronheimer_unknot}, $\Lambda$ is smoothly isotopic to the unknot. By \cite{EFraser}, $\Lambda$ is Legendrian isotopic to the Legendrian unknot $\Lambda_{u}$ or some  stabilization of it. The self-intersection of $S^2$ in $T^*S^2_{std}$ is $-2$ and therefore the Thurston-Bennequin number of $\Lambda$ must be $-1$. Since all stabilizations of the Legendrian unknot have Thurston-Bennequin invariant less than 1, $\Lambda$ must be Legendrian isotopic to the Legendrian unknot as desired. Since the co-core of $H^2_{\Lambda_{u}}$ is $T^*_x S^2$ and $\Lambda$ is isotopic to $\Lambda_{u}$, the co-core of $H^2_{\Lambda}$ is also isotopic to $T^*_x S^2$. 
\end{proof}
\begin{remark}
The proof shows that the hypothesis that $B^{4}_{std}\cup H^2_{\Lambda}, T^*S^2_{std}$ are Weinstein homotopic can be weakened to the hypothesis that they are only diffeomorphic.
\end{remark}

In Corollary \ref{cor: non_injective_TSn}, we 
produced Legendrians $\Lambda_s \subset (S^{2n-1}, \xi_{std})$ so that $CE(\Lambda_s)$ has $2s-1$-dimensional ungraded representations but no graded representations (for $s> 1$). In the following variation on this result, we produce Legendrians $\Lambda_s$ in the different contact manifold $\partial T^*S^n_{flex}$ that have $s$-dimensional \textit{graded} representations but no $t$-dimensional representations for $t< s$. As we discuss after proof, we do not know whether such Legendrians exist in $(S^{2n-1}, \xi_{std})$. 
\begin{corollary}\label{cor: Legendrian_no_reps_flex}
	For $n \ge 3, s \ge 1$, there is a Legendrian sphere $\Lambda_s \subset \partial (T^*S^n_{flex})$ whose Chekanov-Eliashberg DGA $CE(\Lambda_s)$ has an $s$-dimensional representation but no $t$-dimensional representations for $t < s$.  In particular, the Legendrians $\Lambda_s$ are not Legendrian isotopic for different $s$. 
\end{corollary}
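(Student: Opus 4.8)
The plan is to rerun the construction of Corollary~\ref{cor: non_injective_TSn}, but with all cotangent fibers carrying the \emph{same} orientation, so that the co-core's endomorphism algebra is concentrated in non-negative degrees and admits genuine \emph{graded} representations. First I would present $T^*S^n_{std}$ via a Morse function on $S^n$ with $s$ index-$n$ critical points $x_1, \dots, x_s$ (joined by $s-1$ index-$(n-1)$ points), so that the index-$n$ co-cores are the cotangent fibers $T^*_{x_1}S^n, \dots, T^*_{x_s}S^n$; set $D_s := T^*_{x_1}S^n \natural \cdots \natural T^*_{x_s}S^n$ using a single fixed orientation throughout. By Theorem~\ref{thm: flexible_complement} the complement $T^*S^n_{flex} := T^*S^n_{std}\setminus D_s$ is flexible, and since removing the co-core of a handle is the same as removing the handle, $T^*S^n_{std} = T^*S^n_{flex}\cup H^n_{\Lambda_s}$ for a Legendrian sphere $\Lambda_s \subset \partial T^*S^n_{flex}$ whose handle has co-core $D_s$.

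Next I would compute the Chekanov--Eliashberg algebra. By the surgery formula \cite{BEE12, Ekholm_surgery}, $CE(\Lambda_s)\simeq CW(D_s, D_s)$; since the fibers are parallel copies of one oriented fiber, $D_s \cong \bigoplus^s T^*_x S^n$ in $\mathcal{W}(T^*S^n)$ with \emph{no} degree shifts, so using $CW(T^*_x S^n, T^*_x S^n)\cong C_*(\Omega S^n)$ \cite{abouzaid_cotangent_fiber} we obtain $CE(\Lambda_s)\simeq \mathrm{Mat}(s, C_*(\Omega S^n))$. For the $s$-dimensional graded representation I would use the augmentation $\epsilon\colon C_*(\Omega S^n)\to\mathbb{K}$ collapsing loops to their path components, a dg-algebra map since $H_0(\Omega S^n)=\mathbb{K}$; applying $\mathrm{Mat}(s,-)$ gives the graded $s$-dimensional representation $\mathrm{Mat}(s, C_*(\Omega S^n))\to\mathrm{Mat}(s,\mathbb{K})$. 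To exclude smaller graded representations, I would use that for $n\ge 3$ the homology $H_*(\Omega S^n)$ sits in degrees $0$ and $\ge n-1\ge 2$, so the degree-zero part of $\mathrm{Mat}(s, C_*(\Omega S^n))$ is exactly the constant-loop subalgebra $\mathrm{Mat}(s,\mathbb{K})$. A graded $t$-dimensional representation is an $A_\infty$-map to $\mathrm{Mat}(t,\mathbb{K})$ (concentrated in degree $0$); restricted to this degree-zero subalgebra all higher $A_\infty$-terms vanish for degree reasons, so it is an honest unital homomorphism $\mathrm{Mat}(s,\mathbb{K})\to\mathrm{Mat}(t,\mathbb{K})$, which forces $s\mid t$ and hence $t\ge s$.

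The hard part will be this final degree argument: I must rule out that the positive-degree generators of $C_*(\Omega S^n)$ combine to yield a faithful representation of dimension $t<s$, and the clean gap between degree $0$ and degree $n-1$ (valid only for $n\ge 3$) is precisely what prevents any such mixing and pins the minimal graded representation dimension to $s$. Since this minimal dimension is a Legendrian-isotopy invariant of $CE(\Lambda_s)$ and equals $s$, the algebras $CE(\Lambda_s)$ are pairwise non-isomorphic and the $\Lambda_s$ are not Legendrian isotopic for distinct $s$, completing the proof.
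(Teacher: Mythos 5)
Your algebraic analysis of $CW(D_s,D_s)$ is essentially sound and close in spirit to the paper's: your augmentation $\mathrm{Mat}(s,\epsilon)$ is the paper's Yoneda functor $CW(S^n,\_)$ in disguise, and your exclusion of graded $t$-dimensional representations for $t<s$ reaches the same conclusion $s \mid t$ as the paper's module-theoretic argument (the paper passes to $Tw^\pi$, sends $T^*_xS^n$ to an object $P$ with $\oplus^s P \simeq \mathbb{K}^t$, and reads off $s\dim_{\mathbb{K}}H^0(P)=t$; this sidesteps your chain-level issue of choosing a model of $C_*(\Omega S^n)$ whose degree-zero part is exactly $\mathbb{K}$, which your degree-gap argument needs but which is fixable).

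The genuine gap is geometric: your identification $T^*S^n_{flex} := T^*S^n_{std}\setminus D_s$ is false, and with it the claim that your Legendrians live in the fixed contact manifold $\partial(T^*S^n_{flex})$. Since all fibers in $D_s'=\natural_{i=1}^s T^*_{x_i}S^n$ carry the same orientation, $[D_s'] = s[T^*_xS^n] \in H^n(T^*S^n;\mathbb{Z})$, so the flexible complement $X_s := T^*S^n_{std}\setminus D_s'$ has $H^n(X_s;\mathbb{Z})\cong \mathbb{Z}/s\mathbb{Z}$: it is a rational homology ball whose topology, and hence contact boundary, varies with $s$. (Contrast Corollary \ref{cor: non_injective_TSn}, where the alternating orientations make $[D_s]=[T^*_xS^n]$ primitive and the complement a ball.) So Theorem \ref{thm: flexible_complement} hands you Legendrians $\Lambda_s \subset \partial X_s$ in pairwise non-diffeomorphic contact manifolds, where the statement of the corollary and the final isotopy comparison make no sense. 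The paper's proof has a second half you are missing: take a stabilization $\Lambda_{s,loose}\subset \partial X_s$, formally isotopic to $\Lambda_s$, disjoint from it, and loose in its complement; then $X_s\cup H^n_{\Lambda_{s,loose}}$ is flexible and formally symplectomorphic to $T^*S^n$, hence Weinstein homotopic to $T^*S^n_{flex}$ by the h-principle for flexible structures, and $\Lambda_s$ survives handle attachment as a Legendrian $\Lambda_s'\subset \partial T^*S^n_{flex}$. Looseness of $\Lambda_{s,loose}$ in the complement of $\Lambda_s$ guarantees $CE(\Lambda_s')\simeq CE(\Lambda_s)\simeq CW(D_s',D_s')$, so the representation-theoretic invariants transfer to a fixed contact manifold and the argument closes. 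Without this transfer step your construction proves a weaker statement analogous to Corollary \ref{cor: non_injective_dimension4}, not the corollary as stated.
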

\begin{proof}
As in the proof of Corollary \ref{cor: non_injective_TSn}, we start with a Weinstein presentation for $T^*S^n_{std}$ with $s$ index $n$ co-cores $T^*_{x_1} S^n, \cdots T^*_{x_{s}} S^n$. Then we form the disk $D_s': = \natural_{i=1}^{s} T^*_{x_i} S^n$;  unlike the disk $D_s$ in the proof of Corollary \ref{cor: non_injective_TSn}, in this disk we use the \textit{same} orientations of all cotangent fibers. 
Then Theorem \ref{thm: flexible_complement} shows that $T^*S^n_{std} \backslash D_s'$ is flexible, i.e. there is a flexible Weinstein subdomain $V_s \subset T^*S^n_{std}$ and a Legendrian $\Lambda_s \subset \partial V_s$ so that $V_{s} \cup H^n_{\Lambda_s}$ is Weinstein homotopic to $B^{2n}_{std} \cup H^n_{\Lambda_u}$ and the co-core of $ H^n_{\Lambda_s}$ is $D_s'$. Since $[D_s'] = s[T^*_x S^n] \in H^n(T^*S^n; \mathbb{Z})$, we have $H^n(V_s; \mathbb{Z}) \cong \mathbb{Z}/s\mathbb{Z}$ and $V_s$ is \textit{rational} homology ball (but not diffeomorphic to the standard ball). 

Now we study $CW(D_s', D_s')$. First, we show that it has an $s$-dimensional representation. 
As in Corollary \ref{cor: non_injective_TSn}, we consider the Yoneda functor
	$CW(S^n, \_): \mathcal{W}(T^*S^n) \rightarrow CW(S^n, S^n)-mod \rightarrow \mathbb{K}-mod$ that takes $D_s'$ to $\mathbb{K}^s$ and hence induces an $A_\infty$-map
	$CW(D_s', D_s') \rightarrow Hom_{\mathbb{K}-mod}(\mathbb{K}^s, \mathbb{K}^s) = Mat(\mathbb{K}, s)$,  which is the desired $s$-dimensional representation. 
	Conversely, suppose that there is a $t$-dimensional representation $CW(D_s', D_s') \rightarrow \mbox{Hom}_{\mathbb{K}-mod}(\mathbb{K}^t, \mathbb{K}^t) = \mbox{Mat}(t, \mathbb{K})$ for some $t$.
	 This induces an $A_\infty$-functor $D_s' \rightarrow \mathbb{K}^t$ between categories with a single object and 
	 as well as a functor on their perfect complexes $Tw^\pi D_s' \rightarrow Tw^\pi \mathbb{K}^m $ that sends $D_s'$ to $\mathbb{K}^t$.	
	 Note that $T_x^* S^n$ is an object of the former category since $D_s'$ split-generates $T_x^* S^n$. Suppose that $T^*_x S^n$ is sent to some object $P$ of $ Tw^\pi \mathbb{K}^t$. Since $D_s' \cong \oplus^s T^*_x S^n$ is sent to $\mathbb{K}^t$, then $\oplus^s P$ is quasi-isomorphic to $\mathbb{K}^t$. 	 
	 Then $s \dim_{\mathbb{K}} H^0(P) = t$ and so $t \ge s$; in particular, there are no $t$-dimensional representations of $CW(D_s', D_s')$ if $t < s$.

	 By the surgery formula \cite{BEE12, Ekholm_surgery},  $CW(D_s', D_s')$ is quasi-isomorphic to the Chekanov-Eliashberg DGA $CE(\Lambda_s)$ of the Legendrian $\Lambda_s \subset \partial X_s$. Here the ambient contact manifold $\partial X_s$ depends on $s$ and so it does not make sense to compare these Legendrians. However, we can use these Legendrians to find other Legendrians $\Lambda_s'$ in a \textit{fixed} contact manifold (which will be $\partial T^*S^n_{flex}$ ) with $CE(\Lambda_s') $ quasi-isomorphic to  $CW(D_s', D_s')$.  Let $\Lambda_{loose, s}$ be a stabilization of $\Lambda_s$, i.e. a Legendrian in $\partial X_{s}$ that is formally isotopic to $\Lambda_{s}$, 
	disjoint from $\Lambda_s$, and  loose in the complement of $\Lambda_s$. 
	 Since $X_s$ is flexible and $\Lambda_{s,loose}$ is loose, then $X_{s} \cup H^n_{\Lambda_{s, loose}}$ is also flexible. Since $\Lambda_{s,loose}$ is formally isotopic to $\Lambda_s$ and $X \cup H^n_{\Lambda_s} = T^*S^n_{std}$ by construction, then $X \cup H^n_{\Lambda_{s, loose }}$ is formally symplectomorphic to $T^*S^n_{std}$. In particular, $X_{s} \cup H^n_{\Lambda_{s, loose}}$ is Weinstein homotopic to $T^*S^n_{flex}$.
	 Since $\Lambda_{s}$ is disjoint from $\Lambda_{s, loose}$, it extends to a Legendrian $\Lambda'_s \subset \partial (X_{flex} \cup H^n_{\Lambda_{s,loose}}) = \partial T^*S^n_{flex}$ when we attach the handle $H^n_{\Lambda_{s,loose}}$. Since the Legendrian attaching sphere $\Lambda_{s, loose}$ is loose in the complement of $\Lambda_s$,  $CE(\Lambda_s)$ does not change under handle-attachment and so
	 $CE(\Lambda_s')$ is quasi-isomorphic to $CE(\Lambda_s)$. Since $CE(\Lambda_s)$ is quasi-isomorphic to $CW(D_s', D_s')$, which has an $s$-dimensional representation but no $t$-dimensional representations for $t < s$, the same holds for $CE(\Lambda_s')$, which proves the result. 	 
	\end{proof}

Unlike for $\partial T^*S^n_{flex}$, we do not know if there are Legendrians $\Lambda_s$  in the contact manifold $(S^{2n-1}, \xi_{std})$ with $s$-dimensional but no $t$-dimensional representations for $t < s$.
In fact, if the wrapped Fukaya category is always \textit{split-closed}, which we conjecture to be the case, then there cannot be a Legendrian $\Lambda_s \subset (S^{2n-1}, \xi_{std})$ with $CE(\Lambda_s) \cong CW(D_s', D_s')$. Indeed, if this were the case, then
$\mathcal{W}(B^{2n}_{std} \cup H^n_{\Lambda_s} ) \cong Tw \ CW(D_s', D_s')$ and the latter is not split-closed. This quasi-equivalence uses the fact that the co-core of $H^n_{\Lambda_s}$ in $B^{2n}_{std} \cup H^n_{\Lambda_s}$ generates $\mathcal{W}(B^{2n}_{std} \cup H^n_{\Lambda_s})$ because $B^{2n}_{std}$ has no index $n$ handles. This is false for $T^*S^n_{flex}$ or $X_s$, which do have index $n$ handles. 
In the examples in Corollary \ref{cor: Legendrian_no_reps_flex}, the  co-core $D_s' = \natural_{i=1}^s T^*_{x_i} S^n$ of  $H^n_{\Lambda_s}$ in $X_{s} \cup H^n_{\Lambda_s} = T^*S^n_{std}$ does not generate 
$\mathcal{W}(T^*S^n_{std})$ but it does \textit{split}-generate, i.e. $Tw^\pi CW(D_s', D_s') \cong \mathcal{W}(T^*S^n_{std})$, since $X_s$ is a flexible subdomain and so localizing by $D_s'$ makes $\mathcal{W}(T^*S^n_{std})$ trivial. Note that $Tw^\pi CW(D_s', D_s')$ is automatically split-closed and so there is no contradiction.

Finally, we prove a generalization of Corollary \ref{cor: non_injective_TSn} and produce exotic presentations for arbitrary Weinstein domains satisfying certain conditions on their wrapped categories. 
\begin{corollary}\label{cor: non_injective_diff_domains}
	 Let $X^{2n} = X_0^{2n} \cup H^n_{\Lambda}, n \ge 3,$ be a Weinstein domain and $C$ be the co-core of $H^n_{\Lambda}$. Suppose that there is an object $L$ of $\mathcal{W}(X)$ so that $WH(L, C)$ is  non-zero and finite-dimensional  over the ground field $\mathbb{K}$. Then  there are infinitely many different Legendrian spheres $\Lambda_s \subset \partial X_0$ so that $X_0 \cup H^n_{\Lambda_s}$ is Weinstein homotopic to $X^{2n} = X_0^{2n} \cup H^n_{\Lambda}$. 
\end{corollary}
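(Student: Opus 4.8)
The plan is to adapt the handle-slide construction behind Corollary \ref{cor: non_injective_TSn}, replacing the model pieces $T^*S^n_{std}, B^{2n}_{std}, T^*_x S^n$ by the general data $X, X_0, C$. The hypothesis on $WH(L,C)$ enters only at the very end, to distinguish the resulting Legendrians; the geometric construction itself works for any $X = X_0 \cup H^n_\Lambda$ with $n \ge 3$.

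First I would introduce a cancelling pair of handles $H^{n-1} \cup H^n_{\Lambda_{loose}}$ on top of $X = X_0 \cup H^n_\Lambda$, exactly as in the second diagram of Figure \ref{fig: exotic_presentation}. This is a handle creation, hence a Weinstein homotopy, so $X_0 \cup H^{n-1} \cup H^n_\Lambda \cup H^n_{\Lambda_{loose}}$ is Weinstein homotopic to $X$, and the co-core of $H^n_{\Lambda_{loose}}$ is the trivial Lagrangian unknot disk $D_u$. Arranging $\Lambda$ and $\Lambda_{loose}$ to share a short Reeb chord, I would then iterate Proposition \ref{prop: handleslide_reidemeister}: each Reidemeister twist followed by a handle-slide of $H^n_\Lambda$ over $H^n_{\Lambda_{loose}}$ leaves the co-core of $H^n_\Lambda$ equal to $C$ while replacing the co-core of $H^n_{\Lambda_{loose}}$ by its boundary connected sum with a fresh parallel copy of $C$, whose orientation is controlled by the number of Reidemeister twists (two twists flipping the orientation, as in the $s=2$ case). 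After $2s-1$ such slides, arranged to produce $s$ positively and $s-1$ negatively oriented copies, the co-core of $H^n_{\Lambda_{loose}}$ becomes $D_u \natural (\natural^s C \natural^{s-1}\overline{C}) \cong D_s := \natural^s C \natural^{s-1}\overline{C}$, since boundary connect summing with the unknot disk $D_u$ does not change the isotopy class, while the attaching sphere of $H^n_\Lambda$ becomes a heavily twisted Legendrian $\Lambda'$.

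The next step is to cancel $H^{n-1}$ against $H^n_{\Lambda'}$. As in the proof of Corollary \ref{cor: non_injective_TSn} and \cite{Lazarev_critical_points}, for $n \ge 3$ the twisted sphere $\Lambda'$ is loose and, after the appropriate number of Reidemeister twists (with the minor modification of \cite{Lazarev_critical_points} when $n$ is odd), meets the belt sphere of $H^{n-1}$ algebraically exactly once; by Cieliebak--Eliashberg \cite{CE12} the pair is then symplectically cancelling. Cancelling it returns the domain to $X_0 \cup H^n_{\Lambda_s}$, where $\Lambda_s \subset \partial X_0$ is the image of $\Lambda_{loose}$ under the cancellation homotopy, and the net class $[D_s]=(s-(s-1))[C]=[C]$ keeps this consistent with the original presentation. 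Thus $X_0 \cup H^n_{\Lambda_s}$ is Weinstein homotopic to $X$ and the co-core of $H^n_{\Lambda_s}$ is $D_s$.

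Finally I would distinguish the $\Lambda_s$. All these domains are Weinstein homotopic to $X$, so their wrapped categories are identified with $\mathcal{W}(X)$, in which $L$, $C$ and $D_s$ all live. Since boundary connected sum corresponds to direct sum \cite{ganatra_generation} and $\overline{C}\cong C[1]$, we have $D_s \cong C^{\oplus s}\oplus C[1]^{\oplus(s-1)}$, whence $WH(L,D_s)\cong WH(L,C)^{\oplus s}\oplus WH(L,C)[1]^{\oplus(s-1)}$ and $\dim_{\mathbb{K}} WH(L,D_s)=(2s-1)\dim_{\mathbb{K}} WH(L,C)$. By hypothesis $\dim_{\mathbb{K}} WH(L,C)$ is a nonzero finite integer, so these dimensions are pairwise distinct; as Legendrian isotopic attaching spheres have quasi-isomorphic co-cores, the $\Lambda_s$ are pairwise non-isotopic, giving infinitely many. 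The main obstacle is the bookkeeping in the cancellation step: one must check, for this general $X_0$, that the accumulated Reidemeister twists render $\Lambda'$ loose with algebraic intersection exactly one against the belt sphere of $H^{n-1}$, so that cancelling the pair returns precisely to $X_0$ rather than to a flexibly modified subdomain (which is why the carving argument of Theorem \ref{thm: flexible_complement} cannot be used directly here, as it would alter $X_0$). This is the only place where $n \ge 3$ and the sign control are essential, and it runs verbatim as in Corollary \ref{cor: non_injective_TSn}.
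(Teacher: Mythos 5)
Your strategy reaches the right conclusion but is genuinely different from the paper's proof, and your parenthetical dismissal of the carving argument is exactly backwards: the paper's proof \emph{is} the carving argument, applied not to $X$ but to the Weinstein \emph{cobordism} $X \setminus X_0$, which leaves $X_0$ untouched. Concretely, the paper homotopes $X \setminus X_0$ (a cobordism with the single co-core $C$) to a presentation with $2s-1$ index $n$ co-cores $C_1, \dots, C_{2s-1}$, all disjoint parallel push-offs of $C$ (mirroring the Morse-function homotopy in Corollary \ref{cor: non_injective_TSn}), sets $D_s = \natural^s_{i=1} C_i \natural^{2s-1}_{i=s+1} \overline{C_i}$, and invokes Theorem \ref{thm: flexible_complement} for this cobordism to conclude $X \setminus D_s = X_0$; since $[D_s] = [C]$ and $n \ge 3$, the carved flexible piece is determined by the formal class of $D_s$ and is trivial. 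No cancelling pair, no loose handle, no explicit handle cancellation is needed. Your final step --- $D_s \cong C^{\oplus s} \oplus C[1]^{\oplus(s-1)}$, so $\dim_{\mathbb{K}} WH(L, D_s) = (2s-1)\dim_{\mathbb{K}} WH(L,C)$ distinguishes the $\Lambda_s$ --- coincides with the paper's.

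The soft spot in your route is precisely the step you flag, and it does not ``run verbatim as in Corollary \ref{cor: non_injective_TSn}'': that corollary's proof never performs the cancellation of $H^{n-1}$ against the twisted Legendrian; the iterated slide-and-cancel picture appears in the paper only as the informal discussion around Figure \ref{fig: exotic_presentation}, for $T^*S^n$. To cancel $H^{n-1}$ against $H^n_{\Lambda'}$ you must isotope the loose $\Lambda'$ to meet the belt sphere geometrically once \emph{while controlling $\Lambda_{loose}$}, and it is not clear that $\Lambda'$ is loose in the complement of $\Lambda_{loose}$ (compare the warning in the proof of Theorem \ref{thm: flexible_complement} that the slid link is loose but not loose in the complement of the fixed sphere). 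The paper's figure discussion handles this by allowing additional handle-slides of $\Lambda_{loose}$ over the cancelling Legendrian as it crosses --- which, by Proposition \ref{prop:handle_slide_cocores}, preserves the co-core $D_s$ of $H^n_{\Lambda_{loose}}$ --- but your write-up assumes the isotopy simply avoids $\Lambda_{loose}$. Your approach, once this bookkeeping is supplied, buys an explicit front-projection description of $\Lambda_s$ (the paper's stated motivation for Figure \ref{fig: exotic_presentation}); the paper's approach buys a short, sign-robust proof in which the flexibility h-principle absorbs all Whitney-trick and cancellation arguments at once.
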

\begin{proof}	
To see this, we first Weinstein homotope the original Weinstein cobordism $X\backslash X_0$ with a single index $n$ co-core $C$ to a Weinstein presentation with $2s-1$ index $n$ co-cores $C_1, \cdots, C_{2s-1}$ that are all isotopic to $C$, i.e. disjoint parallel push-offs of $C$.
 Then let $D_s: = \natural^s_{i=1} C_i \natural^{2s-1}_{i= s+1}\overline{C_i}$. By applying Theorem \ref{thm: flexible_complement} to the Weinstein cobordism $X \backslash X_0$, we have that $X \backslash D_s = X_0$.
 Therefore there exists $\Lambda_s \subset \partial X_0$ so that $X_0 \cup H^n_{\Lambda_s}$ is Weinstein homotopic to $X$ and the co-core of $H^n_{\Lambda_s}$ is $D_s$. To show that the $\Lambda_s$ are not Legendrian isotopic for different $s$, it suffices to show that their co-cores $D_s$ are non-quasi-isomorphic objects of $\mathcal{W}(X)$ for different $s$. This is the case since $WH(L, D_s) \cong \oplus^{s}_{i=1} WH(L, C)  \oplus^{s-1}_{i=1} WH(L, C) [-1]$ have different dimensions for different $s$ since by assumption $WH(L, C)$ is finite-dimensional and non-zero.
\end{proof}
The condition on $\mathcal{W}(X)$ in Corollary \ref{cor: non_injective_diff_domains} holds if $X$ contains certain closed exact Lagrangians. For example, we have the following result, which produces exotic presentations for many different exotic cotangent bundles of spheres.  	
\begin{corollary}\label{cor: non_inj_exotic_cotangent}
If $X^{2n}, n \ge 3,$ is a Weinstein domain that is almost symplectomorphic to $T^*S^n$ and contains a closed exact Lagrangian $L$, then there are infinitely many different Legendrian spheres $\Lambda_s \subset (S^{2n-1}, \xi_{std})$ so that $B^{2n}_{std} \cup H^n_{\Lambda_s}$ is Weinstein homotopic to $X$. 
\end{corollary}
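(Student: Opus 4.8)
The plan is to reduce everything to Corollary~\ref{cor: non_injective_diff_domains}: I will put $X$ into the form $B^{2n}_{std}\cup H^n_\Lambda$ and then check that its single co-core $C$ satisfies the Floer-theoretic hypothesis of that corollary, using the given closed exact Lagrangian $L$ as the test object.

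First I would produce the presentation $X\simeq B^{2n}_{std}\cup H^n_\Lambda$, following the argument in the proof of Corollary~\ref{cor: non_injective_TSn}. Since $X$ is almost symplectomorphic to $T^*S^n$, it is diffeomorphic to $T^*S^n$, hence simply connected (as $n\ge 3$) with $H^n(X;\mathbb{Z})\cong\mathbb{Z}$. Start from any Weinstein presentation with index $n$ co-cores $C_1,\dots,C_i$; their classes $[C_j]$ generate $H^n(X;\mathbb{Z})\cong\mathbb{Z}$, so $\gcd_j[C_j]=1$. Using the description of how co-cores change under handle-slides in Proposition~\ref{prop:handle_slide_cocores} (together with orientation reversals to produce sign changes), I would run the Euclidean algorithm on the integers $[C_j]$, arriving after a Weinstein homotopy at a presentation whose co-cores $C_1',\dots,C_i'$ satisfy $[C_1'\natural\cdots\natural C_i']=\sum_j[C_j']=\pm 1$. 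Now $C:=C_1'\natural\cdots\natural C_i'$ represents a generator of $H^n(X;\mathbb{Z})$, so Theorem~\ref{thm: flexible_complement} gives a presentation with a single index $n$ handle $H^n_\Lambda$ of co-core $C$ and flexible complement $V:=X\setminus C$. Because $[C]$ generates $H^n(X;\mathbb{Z})\cong\mathbb{Z}$ and $C$ is a disk, the long exact sequence of $(X,C)$ gives $H^*(V;\mathbb{Z})\cong H^*(B^{2n};\mathbb{Z})$; as $V$ is simply connected ($C$ has codimension $n\ge 3$) and flexible, the $h$-cobordism theorem identifies $V$ with $B^{2n}$ smoothly, and flexibility then upgrades this to a Weinstein homotopy $V\simeq B^{2n}_{std}$. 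Hence $X\simeq B^{2n}_{std}\cup H^n_\Lambda$.

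Next I would verify the hypothesis of Corollary~\ref{cor: non_injective_diff_domains} for $X_0=B^{2n}_{std}$ and this $C$. Since $B^{2n}_{std}$ carries no index $n$ handles, $C$ is the unique index $n$ co-core of $X$ and therefore generates $\mathcal{W}(X)$ by \cite{chantraine_cocores_generate, ganatra_generation}. The given $L$ is a closed exact Lagrangian, hence a nonzero object of $\mathcal{W}(X)$ (its self-Floer cohomology is $HF(L,L)\cong H^*(L;\mathbb{K})\ne 0$), and $WH(L,C)$ is finite-dimensional because $L$ is compact (cf.\ Example~\ref{ex: lower_bound_k0_lagrangians}). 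Finally $WH(L,C)\ne 0$: as $C$ generates $\mathcal{W}(X)$, the Yoneda functor identifies $\mathcal{W}(X)$ with perfect modules over $CW(C,C)$, carrying the nonzero object $L$ to a nonzero perfect module $M$ and $WH(L,C)$ to its dual $M^\vee$, which is again nonzero; equivalently, a generator has trivial left orthogonal, so $\mathrm{Hom}^\bullet(L,C)=WH(L,C)$ cannot vanish for $L\ne 0$.

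With both inputs in hand, Corollary~\ref{cor: non_injective_diff_domains} applied to $X=B^{2n}_{std}\cup H^n_\Lambda$ yields infinitely many Legendrian spheres $\Lambda_s\subset\partial B^{2n}_{std}=(S^{2n-1},\xi_{std})$ with $B^{2n}_{std}\cup H^n_{\Lambda_s}$ Weinstein homotopic to $X$; their pairwise non-isotopy is automatic from that corollary, since $WH(L,D_s)\cong\oplus^{s}_{i=1}WH(L,C)\oplus^{s-1}_{i=1}WH(L,C)[-1]$ has $s$-dependent dimension. The hardest parts are the two verifications rather than any new geometry: arranging the connected sum of co-cores to represent a generator of $H^n$ (where the Euclidean algorithm via Proposition~\ref{prop:handle_slide_cocores} is essential and uses $\gcd_j[C_j]=1$) and, most importantly, establishing $WH(L,C)\ne 0$ in the correct variance. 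The latter relies on $C$ being the unique co-core and hence a generator of $\mathcal{W}(X)$, which is precisely what reducing the complement to the \emph{standard} ball (rather than a merely flexible one) secures; this reduction, together with the identification $\partial X_0=(S^{2n-1},\xi_{std})$ demanded by the statement, crucially uses $n\ge 3$.
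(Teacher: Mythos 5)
Your proposal is correct, but it takes a genuinely different route from the paper on the key first step. The paper's proof is three lines: it simply cites \cite{Lazarev_critical_points} for the fact that $X$, being almost symplectomorphic to $T^*S^n$, is Weinstein homotopic to $B^{2n}_{std}\cup H^n_{\Lambda}$, then observes that the unique co-core $C$ generates $\mathcal{W}(X)$, that $WH(C,L)$ is nonzero and finite-dimensional, and applies Corollary \ref{cor: non_injective_diff_domains}. You instead re-derive that presentation internally: normalize a presentation by handle-slide arithmetic (Propositions \ref{prop:handle_slide_cocores}, \ref{prop: handleslide_reidemeister} realize the transvections $[C_j]\mapsto[C_j]\pm[C_k]$, so with $\gcd_j[C_j]=1$ you can indeed arrange $\sum_j[C_j']=\pm1$), apply Theorem \ref{thm: flexible_complement}, and identify the flexible complement with $B^{2n}_{std}$ via the h-cobordism theorem and flexibility, exactly as in the proof of Corollary \ref{cor: non_injective_TSn}. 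What your route buys is self-containedness and an explicit account of the cohomological normalization that the citation hides; what the paper's route buys is brevity, since the presentation result is precisely the surjectivity of $\mathcal{H}_{crit}$ already proved in \cite{Lazarev_critical_points}. Two further points in your favor: you are more careful than the paper about variance --- the paper asserts $WH(C,L)\neq 0$ while Corollary \ref{cor: non_injective_diff_domains} requires $WH(L,C)\neq 0$, and your left-orthogonal argument ($Hom(L,C)=0$ forces $Hom(L,L)=0$ since $C$ generates, contradicting $HF(L,L)\cong H^*(L;\mathbb{K})\neq 0$) is the clean fix; and you correctly flag that generation (not just split-generation) of $\mathcal{W}(X)$ by $C$ is what the reduction to the standard ball secures. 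The only step where you go beyond what is literally stated is the Euclidean algorithm: each slide needs a short Reeb chord chart between the relevant attaching spheres, but these are produced by the isotoping trick used in the proof of Theorem \ref{thm: flexible_complement}, so this is a matter of bookkeeping rather than a gap.
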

		\begin{proof}
		By \cite{Lazarev_critical_points}, $X^{2n}$ is a Weinstein homotopic to $B^{2n}_{std} \cup H^n_{\Lambda}$ for some $\Lambda \subset (S^{2n-1}, \xi_{std})$. The co-core $C$ of $H^n_{\Lambda}$ generates $\mathcal{W}(X)$ since there is only one index $n$ handle. Since $L$ is an non-zero object of $\mathcal{W}(X)$ and $C$ is a generator, then $WH(C, L) \ne 0$; furthermore, $WH(C, L)$ is finite-dimensional since $L$ is closed. The claim now follows from Corollary \ref{cor: non_injective_diff_domains}. 	
	\end{proof}
 Note that if $C$ is the trivial object, then our strategy breaks down to distinguish the different $\Lambda_s$. For example, we do not know whether there is a non-loose Legendrian $\Lambda \subset (S^{2n-1}, \xi_{std})$ so that $B^{2n}_{std} \cup H^n_{\Lambda}$ is Weinstein homotopic to $B^{2n}_{std} \cup H^n_{\Lambda_{loose}} = T^*S^n_{flex}$.

		\bibliographystyle{abbrv}
		\bibliography{sources}

\begin{thebibliography}{10}

\bibitem{abouzaid_cotangent_fiber}
M.~Abouzaid.
\newblock A cotangent fibre generates the {F}ukaya category, 2010.
\newblock arXiv:1003.4449.

\bibitem{Abouzaid_splitgenerate}
M.~Abouzaid.
\newblock A geometric criterion for generating the {F}ukaya category.
\newblock {\em Publ. Math. Inst. Hautes \'{E}tudes Sci.}, (112):191--240, 2010.

\bibitem{Akbulut_knot_trace}
S.~Akbulut.
\newblock On {$2$}-dimensional homology classes of {$4$}-manifolds.
\newblock {\em Math. Proc. Cambridge Philos. Soc.}, 82(1):99--106, 1977.

\bibitem{Biran_Cornea_Lag_cob_fukayacat}
P.~Biran and O.~Cornea.
\newblock Lagrangian cobordism and {F}ukaya categories.
\newblock {\em Geom. Funct. Anal.}, 24(6):1731--1830, 2014.

\bibitem{BEE12}
F.~Bourgeois, T.~Ekholm, and Y.~Eliashberg.
\newblock Effect of {L}egendrian surgery.
\newblock {\em Geom. Topol.}, 16(1):301--389, 2012.
\newblock With an appendix by Sheel Ganatra and Maksim Maydanskiy.

\bibitem{Casals_Murphy_front}
R.~Casals and E.~Murphy.
\newblock Legendrian fronts for affine varieties, 2016.
\newblock arXiv:1610.06977.

\bibitem{chantraine_cocores_generate}
B.~Chantraine, G.~D. Rizell, P.~Ghiggini, and R.~Golovko.
\newblock Geometric generation of the wrapped {F}ukaya category of {W}einstein
  manifolds and sectors, 2017.
\newblock arXiv:1712.09126.

\bibitem{CE12}
K.~Cieliebak and Y.~Eliashberg.
\newblock {\em From {S}tein to {W}einstein and back}, volume~59 of {\em
  American Mathematical Society Colloquium Publications}.
\newblock American Mathematical Society, Providence, RI, 2012.
\newblock Symplectic geometry of affine complex manifolds.

\bibitem{DR_Golovko_estimating}
G.~Dimitroglou~Rizell and R.~Golovko.
\newblock Estimating the number of {R}eeb chords using a linear representation
  of the characteristic algebra.
\newblock {\em Algebr. Geom. Topol.}, 15(5):2887--2920, 2015.

\bibitem{Ekholm_surgery}
T.~Ekholm.
\newblock Holomorphic curves for {L}egendrian surgery, 2019.
\newblock arXiv:1906.07228.

\bibitem{eliashberg_revisited}
Y.~Eliashberg.
\newblock Weinstein manifolds revisited.
\newblock In {\em Modern geometry: a celebration of the work of {S}imon
  {D}onaldson}, volume~99 of {\em Proc. Sympos. Pure Math.}, pages 59--82.
  Amer. Math. Soc., Providence, RI, 2018.

\bibitem{EFraser}
Y.~Eliashberg and M.~Fraser.
\newblock Topologically trivial {L}egendrian knots.
\newblock {\em J. Symplectic Geom.}, 7(2):77--127, 2009.

\bibitem{eliashberg_mishachev}
Y.~Eliashberg and N.~M. Mishachev.
\newblock {\em Introduction to the h-principle}.
\newblock Number~48. American Mathematical Soc., 2002.

\bibitem{FukSS}
K.~Fukaya, P.~Seidel, and I.~Smith.
\newblock The symplectic geometry of cotangent bundles from a categorical
  viewpoint.
\newblock In {\em Homological mirror symmetry}, volume 757 of {\em Lecture
  Notes in Phys.}, pages 1--26. Springer, Berlin, 2009.

\bibitem{Ganatra_thesis}
S.~Ganatra.
\newblock Symplectic cohomology and duality for the wrapped {F}ukaya category,
  2013.
\newblock arXiv:1304.7312.

\bibitem{ganatra_generation}
S.~Ganatra, J.~Pardon, and V.~Shende.
\newblock Structural results in wrapped {F}loer theory, 2018.

\bibitem{gromov_hprinciple}
M.~Gromov.
\newblock {\em Partial differential relations}, volume~9 of {\em Ergebnisse der
  Mathematik und ihrer Grenzgebiete (3) [Results in Mathematics and Related
  Areas (3)]}.
\newblock Springer-Verlag, Berlin, 1986.

\bibitem{Kronheimer_unknot}
P.~Kronheimer, T.~Mrowka, P.~Ozsvath, and Z.~Szabo.
\newblock Monopoles and lens space surgeries, 2003.
\newblock arXiv:math/0310164.

\bibitem{Lazarev_critical_points}
O.~Lazarev.
\newblock Simplifying {W}einstein {M}orse functions, 2018.
\newblock arXiv:1808.03676.

\bibitem{mclean_dissertation}
M.~McLean.
\newblock Second year dissertation, cambridge university, 2006.

\bibitem{MM}
M.~McLean.
\newblock Lefschetz fibrations and symplectic homology.
\newblock {\em Geom. Topol.}, 13(4):1877--1944, 2009.

\bibitem{Murphy11}
E.~Murphy.
\newblock Loose {L}egendrian embeddings in high dimensional contact manifolds,
  2012.
\newblock arXiv:1201.2245.

\bibitem{nadler_cyclic}
D.~Nadler.
\newblock Cyclic symmetries of {$A_n$}-quiver representations.
\newblock {\em Adv. Math.}, 269:346--363, 2015.

\bibitem{Riz}
G.~D. Rizell.
\newblock Legendrian ambient surgery and {L}egendrian contact homology.
\newblock {\em Journal of Symplectic Geometry}, 14(3), 2016.
\newblock To appear.

\bibitem{Rizell_sullivan_c0}
G.~D. Rizell and M.~G. Sullivan.
\newblock The persistence of the {C}hekanov-{E}liashberg algebra, 2018.
\newblock arXiv:1810.10473.

\bibitem{sackel_contact_handle}
K.~Sackel.
\newblock Getting a handle on contact manifolds, 2019.
\newblock arXiv:1905.11965.

\bibitem{seidel_graded}
P.~Seidel.
\newblock Graded {L}agrangian submanifolds.
\newblock {\em Bull. Soc. Math. France}, 128(1):103--149, 2000.

\bibitem{S_06}
P.~Seidel.
\newblock A biased view of symplectic cohomology.
\newblock In {\em Current developments in mathematics, 2006}, pages 211--253.
  Int. Press, Somerville, MA, 2008.

\bibitem{shende_takeda_CY}
V.~Shende and A.~Takeda.
\newblock Calabi-{Y}au structures on topological fukaya categories, 2016.
\newblock arXiv:1605.02721.

\bibitem{Sivek_maximal}
S.~Sivek.
\newblock The contact homology of {L}egendrian knots with maximal
  {T}hurston-{B}ennequin invariant.
\newblock {\em J. Symplectic Geom.}, 11(2):167--178, 2013.

\bibitem{smale_structure_manifolds}
S.~Smale.
\newblock On the structure of manifolds.
\newblock {\em Amer. J. Math.}, 84:387--399, 1962.

\bibitem{Sylvan_talks}
Z.~Sylvan.
\newblock Talks at {MIT} workshop on {L}efschetz fibrations, 2015.

\bibitem{sylvan_partially}
Z.~Sylvan.
\newblock On partially wrapped {F}ukaya categories.
\newblock {\em J. Topol.}, 12(2):372--441, 2019.

\bibitem{Thomason}
R.~W. Thomason.
\newblock The classification of triangulated subcategories.
\newblock {\em Compositio Math.}, 105(1):1--27, 1997.

\bibitem{W}
A.~Weinstein.
\newblock Contact surgery and symplectic handlebodies.
\newblock {\em Hokkaido Math. J.}, 20(2):241--251, 1991.

\end{thebibliography}

	\end{document}